\numberwithin{equation}{section}
\numberwithin{figure}{section}
\theoremstyle{plain}
\newtheorem{thm}{\protect\theoremname}[section]
\theoremstyle{definition}
\newtheorem{defn}[thm]{\protect\definitionname}
\theoremstyle{remark}
\newtheorem{notation}[thm]{\protect\notationname}
\theoremstyle{remark}
\newtheorem*{notation*}{\protect\notationname}
\theoremstyle{plain}
\newtheorem{assumption}{\protect\assumptionname}
\theoremstyle{remark}
\newtheorem{rem}[thm]{\protect\remarkname}
\theoremstyle{plain}
\newtheorem{lem}[thm]{\protect\lemmaname}
\theoremstyle{plain}
\newtheorem{prop}[thm]{\protect\propositionname}
\setlist[itemize]{leftmargin=*}
\setlist[enumerate]{leftmargin=*}
\DeclareFontFamily{U}{matha}{\hyphenchar\font45}
\DeclareFontShape{U}{matha}{m}{n}{
      <5> <6> <7> <8> <9> <10> gen * matha
      <10.95> matha10 <12> <14.4> <17.28> <20.74> <24.88> matha12
      }{}
\DeclareSymbolFont{matha}{U}{matha}{m}{n}
\DeclareFontFamily{U}{mathx}{\hyphenchar\font45}
\DeclareFontShape{U}{mathx}{m}{n}{
      <5> <6> <7> <8> <9> <10>
      <10.95> <12> <14.4> <17.28> <20.74> <24.88>
      mathx10
      }{}
\DeclareSymbolFont{mathx}{U}{mathx}{m}{n}
\DeclareMathDelimiter{\vvvert}{0}{matha}{"7E}{mathx}{"17}
\DeclareMathAlphabet{\scal}{U}{dutchcal}{m}{n}
\numberwithin{equation}{section}
\def\th@plain{\thm@notefont{}\itshape}
\def\th@definition{\thm@notefont{}\normalfont}
\providecommand{\assumptionname}{Assumption}
\providecommand{\definitionname}{Definition}
\providecommand{\lemmaname}{Lemma}
\providecommand{\notationname}{Notation}
\providecommand{\propositionname}{Proposition}
\providecommand{\remarkname}{Remark}
\providecommand{\theoremname}{Theorem}
\begin{document}
\title{Exit Time Analysis for Kesten's stochastic recurrence equations}
\author{Chang-Han Rhee, Jeeho Ryu, Insuk Seo}

\begin{abstract}
Kesten's stochastic recurrent equation is a classical subject of research in probability theory and its applications. 
Recently, it has garnered attention as a model for stochastic gradient descent with a quadratic objective function and the emergence of heavy-tailed dynamics in machine learning.
This context calls for analysis of its asymptotic behavior under both negative and positive Lyapunov exponents.  
This paper studies the exit times of the Kesten's stochastic recurrence equation in both cases. 
Depending on the sign of Lyapunov exponent, the exit time scales either polynomially or logarithmically as the radius of the exit boundary increases. 
\end{abstract}

\maketitle
\setcounter{tocdepth}{3}

\section{Introduction}
\noindent
\subject{Model}%
\topic{Kesten's recursion}%
The stochastic recurrence equation 
\begin{equation}\label{eq:kesten-intro}
\mathbf X_{n+1} = 
\mathbf A_{n+1} \mathbf X_{n}  + \mathbf B_{n+1}    
\end{equation}
has been a subject of extensive research over the past decades 
\topic{applications}%
and appears in a wide range of contexts, including GARCH time series models and ARMA processes with random coefficients,  random walks in random environments, Mandelbrot cascades, fractals, and stochastic optimization; see, for example,  \cite{basrak2002regular,barnsley2014fractals, hutchinson1981fractals,liu2000generalized, buraczewski2016stochastic,gurbuzbalaban2021heavy} and references therein. 
\topic{\eqref{eq:kesten-intro} is interesting}%
Despite its simplicity, \eqref{eq:kesten-intro} exhibits rich and intricate dynamics, and analyzing these dynamics is highly non-trivial. 
\point{emergence of heavy-tails}%
In particular, the classical results by Kesten and Goldie \cite{kesten1973random, goldie1991, goldie2000stability} reveal that, under certain assumptions, 
the limiting distribution of $\mathbf X_n$ exhibits heavy tails even when $\mathbf A_n$ and $\mathbf B_n$ are light-tailed.
\point{AI connection}%
More recently, due to the associations between the heavy-tailed behaviors of stochastic gradient descent (SGD) and the generalization performance of the neural network it trains,
\chr{references}%
the emergence and characterization of heavy tails in \eqref{eq:kesten-intro} has been revisited in the context of stochastic optimization \cite{hodgkinson2021multiplicative, gurbuzbalaban2021heavy, damek2024analysing}. 

\subject{Motivation: Edge of Stability}%
\topic{EOS background}%
Mathematically, training a deep neural network can be formulated as an optimization problem for the loss function associated with training data, and gradient-based numerical optimization algorithms are the methods of choice. 
Curious phenomena, referred to as the \emph{edge of stability}, were observed in a wide variety of neural network architectures and artificial intelligence (AI) tasks \cite{cohen2021gradient} and drew significant attention from the machine learning theory community;
see, for example, \cite{arora2022understanding, damian2022self, jastrzkebskirelation, jastrzebskibreak}.
\chr{references}%
\topic{EOS phenomenon}\point{progressive sharpening}%
In a nutshell, the observation is that when deterministic gradient descent is used to train deep neural networks, 
there is an overwhelming tendency for the algorithm to be drawn to regions where the curvature of the objective function is sharper---i.e., regions where the maximum eigenvalue of the training loss Hessian is larger.
\point{edge of stability}%
Once the sharpness reaches a threshold, the algorithm enters a regime where the sharpness oscillates around the threshold. 
\point{continued decrease in training loss}%
Interestingly, the objective value (i.e., training loss) consistently decreases over long timescales. 
\topic{Quadratic objective function and EOS}%
\point{threshold coincides with the boundary of the stability region for quadratic objective functions}%

This phenomenon is termed the edge of stability because the sharpness threshold coincides with the boundary of the stability region for deterministic gradient descent when applied to quadratic objective functions. 
Specifically, if $\eta$ denotes the step size (or the learning rate, as referred to in machine learning), 
deterministic gradient descent becomes unstable (explosive) when the curvature of the quadratic objective exceeds $2/\eta$, whereas it becomes stable (contracting) when the curvature is below this threshold. 
This boundary coincides with the empirically observed threshold in the edge of stability phenomena.
The oscillations around the threshold seem to arise as the gradient descent algorithm alternates between contracting and explosive dynamics by repeatedly entering an energy corridor, sliding down to a sharper area of the corridor, and then transitioning into a new corridor. 
The transitions occur via the explosive dynamics as the sharpness exceeds the stability threshold. 
For more details, see \cite{cohen2021gradient}.  

\topic{EOS implications}%
\point{special structure apparent; implication obscure}%
These observations strongly suggest the presence of special structures in the energy landscape of the deep neural networks, with the dynamics of gradient-based algorithms both within and outside of the stability region playing critical roles in training. 
\point{need for analysis of stochastic counterpart}%
However, the practical implications of these findings remain obscure.
To fully understand and leverage such phenomena, it is imperative to examine their stochastic counterpart, 
because, in practice, \emph{deterministic} gradient descent is almost never employed due to the prohibitive computational cost of exact gradient evaluations. 
Instead, \emph{stochastic} gradient descent (SGD) is the method of choice. 
\point{quadratic objective function for SGD}%
Given the critical role of the contraction/explosion dichotomy for quadratic objective functions in the edge of stability phenomena, a natural starting point is to investigate the large excursions of SGD for quadratic objectives. 
For such objective functions, Kesten's recursion \eqref{eq:kesten-intro} provides a good model for the dynamics of SGD---see, for example, \cite{srikant2019finite, hodgkinson2021multiplicative, gurbuzbalaban2021heavy, damek2024analysing} and Section~\ref{sec31}. 
In this paper, we study the asymptotic behaviors of \eqref{eq:kesten-intro}, both within and outside of its stability region.

\subject{Problem Formulation}%
\topic{Contraction/Explosion Dichotomy for $\mathbf X_n$}%
\point{Intuitively, dichotomy should depend on the Lyapunov exponent.}%
At an intuitive level, it seems evident that the asymptotic behavior of $\mathbf{X}_{n}$ depends on the Lyapunov exponent of $\mathbf A_n$. 
\point{case i. negative Lyapunov exponent: Kesten-Goldie suggests contraction}%
Indeed, when the Lyapunov exponent of $\mathbf A_n$ is negative, it is well known \cite{kesten1973random, goldie1991, goldie2000stability} that $\mathbf{X}_{n}$ is generally stable in the sense that the distribution of $\mathbf{X}_{n}$ converges to an equilibrium distribution as $n$ tends to infinity.
\point{case ii. positive Lyapunov exponent: common sense suggests exponential exlosion}%
Conversely, if the Lyapunov exponent is positive, it is natural to conjecture that $\mathbf X_n$ will explode geometrically. 
\topic{Formulation of dichotomy in terms of expected exit times}
Due to stochasticity, however, the dichotomy is more nuanced than in the deterministic case. 
For instance, $\mathbf X_n$ will eventually exhibit arbitrarily large excursion even in the contractive (or stable) case if one waits long enough.
A proper characterization of the dichotomy in the stochastic context can be formulated in terms of the expected exit times: 
if the system is contractive, the exit time scales superlinearly, whereas if the system is explosive, it scales sublinearly.  
In the case the Lyapunov exponent is negative, and all the entries of $A_n$ and $B_n$ are non-negative, it has been shown in \cite{collamore2018large} that the exit time scales polynomially, with both the polynomial rate and the prefactor explicitly identified. 
However, as illustrated in Section~\ref{sec3}, it is crucial to allow for negative entries and address both contractive and explosive cases to accommodate the applications motivating this paper. 

\subject{Contribution}%
In this paper, we prove that the exit times scale polynomially in the contractive case and logarithmically in the explosive case.
Notably, our results do not require the entries of $\mathbf A_n$ and $\mathbf B_n$ to be non-negative. 
Specifically, Theorem \ref{Thm_main_cont} establishes that when the Lyapunov exponent of random matrix $\mathbf A_n$ is negative,
\[
\lim_{R\rightarrow\infty}\frac{\log\mathbb{E}[\tau_{R}(\mathbf{x}_{0})]}{\log R}=\alpha\;\;\;\text{for all }\mathbf{x}_{0}\in\mathbb{R}^{d}\;,
\]
where $\tau_{R}(\mathbf{x})$ is the exit time from the closed ball 
$
\mathcal{B}_{R}=
\{\mathbf{y}\in\mathbb{R}^{d}:|\mathbf{y}|\le R\}
$ and $\alpha>0$ is the solution to the equation $h_{\mathbf{A}}(\alpha)=1$, with $h_{\mathbf{A}}$ defined in \eqref{eq:h_A}.
This $\alpha$ coincides with the index of the power-law associated with the tail of the stationary distribution of $\mathbf X_n$.
On the other hand, Theorem \ref{Thm_main_expl} shows that when the Lyapunov exponent is positive, 
\[
\frac{1}{\gamma_{L}}\le\liminf_{R\rightarrow\infty}\frac{\mathbb{E}[\tau_{R}(\mathbf{x}_{0})]}{\log R}\le\limsup_{R\rightarrow\infty}\frac{\mathbb{E}[\tau_{R}(\mathbf{x}_{0})]}{\log R}<\infty\;.
\]
where $\gamma_L$ denotes Lyapunov exponent.  
Summing up, under a reasonable set of assumptions,
we show that in the regime $R\rightarrow\infty$, the mean exit time grows as follows:
\[
\mathbb{E}\left[\tau_{R}(\mathbf{x}_{0})\right]\sim\begin{cases}
R^{\alpha} & \text{if Lyapunov exponent is negative}\\
\log R & \text{if Lyapunov exponent is positive}
\end{cases}
\]
where $\alpha>0$. 
Thus, the behavior of the process \eqref{eq:AMP} undergoes a qualitative shift as the Lyapunov exponent changes sign. 
In the critical case, i.e., the case $\gamma_{L}=0$, preliminary simulation experiments suggest that the mean exit time grows poly-logarithimically. 
However, at present, it is unclear whether this is the only possible behavior.
The study of the critical case appears to be very challenging and is an interesting direction for future research. 

\subject{Related literature}%
\topic{Exit Times of Processes with Scaling Limit}%
Finally, we note that exit time analysis for stochastic processes has a long and rich history in probability theory.
Classical results are the Eyring-Kramers formula
\cite{eyring1935chemical, kramers1940brownian, glasstone1941theory} in chemical reaction rate theory, 
Freidlin-Wentzell theory and pathwise approach based on large deviations
\cite{freidlin1970onsmall, freidlin1973some, freidlin1984random, cassandro1984metastable, Olivieri_Vares_2005},
and potential theoretic approach \cite{bovier2001metastability, bovier2004metastability, bovier2005metastability, bovier2016metastability}.
More recent developments include
extensions of the potential theoretic approach to non-reversible Markov chains\cite{slowik2012note, landim2014metastability, gaudilliere2014dirichlet, lee2022non}, 
as well as analyses of processes driven by heavy-tailed noise \cite{imkeller2006first, imkeller2008levy, pavlyukevich2008metastable, imkeller2010first, wang2023large}. 
In particular, the heavy-tailed context has gained renewed attention due to the empirical success of deep neural networks and the observed association between their remarkably good performance and SGD's ability to escape local attraction fields and explore multiple local minima \cite{nguyen2019first, wang2022eliminating}.
\chr{more references in particular, light-tailed contexts}%
\topic{Our paper considers different regime}%
It is important to note that the aforementioned results primarily focus on how the exit times scale as the learning rate (or step size) decreases, and hence, investigates a different asymptotic regime from that studied in \cite{collamore2018large} and in this paper.


\subject{Outline of the paper}%
The remainder of the article is organized as follows. 
Section~\ref{sec2} presents our main results. 
In Section \ref{sec3},
we provide several important time series models for which we can apply
our main results. In Sections \ref{sec4} and \ref{sec5}, we analyze
the contractive regime and prove Theorems \ref{Thm_main_cont} and
\ref{Thm_main_cont_uni}. In Section \ref{sec6}, we investigate the
explosive regime and prove Theorem \ref{Thm_main_expl}. In Appendix
A, we provide some sufficient conditions to check Assumption \ref{Ass_Cont4},
and in Appedix B, we explain why the examples given in Section \ref{sec3}
fall into our framework.

\section{Preliminaries}
This section sets the notation and reviews known results regarding Kesten's stochastic recurrence equation. 
Throughout the paper, we write
\newnota{Z-mathbb^+}{$\mathbb{Z}^{+}$}
and 
\newnota{Z-mathbb_0^+}{$\mathbb{Z}_{0}^{+}$} 
to denote the 
\defnota{Z-mathbb^+}{set of positive integers} 
and the
\defnota{Z-mathbb_0^+}{set of non-negative integers}, respectively. 
\begin{defn}
\label{def:AMP}Let $d\ge1$ and let 
$(\mathbf{A}_{n},\,\mathbf{B}_{n})_{n\in\mathbb{Z}^{+}}$
be an independent and identically distributed sequence of random matrices
and vectors, where 
\newnota{A-mathbf-n}{$\mathbf{A}_{n}$}
is a
\defnota{A-mathbf-n}{$d\times d$ random matrix}
and 
\newnota{B-mathbf-n}{$\mathbf{B}_{n}$}
is a 
\defnota{B-mathbf-n}{$d\times1$ random vector}. 
Then, Kesten's stochastic recurrence equation on $\mathbb{R}^{d}$
associated with $(\mathbf{A}_{n},\mathbf{B}_{n})_{n\in\mathbb{Z}^{+}}$
refers to the process 
\newnota{X-mathbf-n}{$(\mathbf{X}_{n})_{n\in\mathbb{Z}_{0}^{+}}$} 
\mdefnota{X-mathbf-n}{$\mathbf{X}_{n+1}=\mathbf{A}_{n+1}\mathbf{X}_{n}+\mathbf{B}_{n+1}$}
defined by 
\begin{equation}\label{eq:AMP}
    \mathbf{X}_{n+1}=\mathbf{A}_{n+1}\mathbf{X}_{n}+\mathbf{B}_{n+1},\quad n\in\mathbb{Z}_{0}^{+}.
\end{equation}
\begin{enumerate}
\item 
    For $\mathbf{x}\in\mathbb{R}^{d}$, we denote by 
    \newnota{X-mathbf-n(mathbf-x)}
        {$(\mathbf{X}_{n}(\mathbf{x}))_{n\in\mathbb{Z}_{0}^{+}}$}
    \mdefnota{X-mathbf-n(mathbf-x)}
        {$\mathbf{X}_{n}$ with initial condition $\mathbf X_0 = \mathbf x$}
    the process \eqref{eq:AMP} starting at $\mathbf{x}$, i.e., $\mathbf{X}_{0}(\mathbf{x})=\mathbf{x}$. 
    \chra{$(\mathbf{X}_{n}(\mathbf{x}))_{n\in\mathbb{Z}_{0}^{+}}$ and $(\mathbf{X}_{n}(\mu))_{n\in\mathbb{Z}_{0}^{+}}$ should be
    $(\mathbf{X}_{n}(\mathbf{x}))_{n\in\mathbb{Z}_0^{+}}$ and $(\mathbf{X}_{n}(\mu))_{n\in\mathbb{Z}_0^{+}}$}%
\item 
    For a Borel probability measure $\mu$ on $\mathbb{R}^{d}$, we denote by 
    \newnota{X-mathbf-n(mu)}
        {$(\mathbf{X}_{n}(\mu))_{n\in\mathbb{Z}_{0}^{+}}$}
    \mdefnota{X-mathbf-n(mu)}
        {$\mathbf{X}_{n}$ with initial condition $\mathbf X_0 \sim \mu$}
    the process driven by \eqref{eq:AMP} with initial distribution
    $\mu$.
\item 
 We denote by 
    \newnota{Omega-mathcal-F-mathbb-P}
    {$(\Omega,\,\mathcal{F},\,\mathbb{P})$} 
    \defnota{Omega-mathcal-F-mathbb-P} the probability space associated with the randomness introduced so far, i.e., with the elements
    $(\mathbf{A}_{n},\,\mathbf{B}_{n})_{n\in\mathbb{Z}^{+}}$ and the initial distribution of the process. We denote by $\mathbb{E}$ the expectation associated with $\mathbb{P}$. 
\item Denote by 
    \newnota{F-mathcal-n}{$\mathcal{F}_{n}$} 
    \defnota{F-mathcal-n}{the $\sigma$-algebra on $\Omega$ generated
    by $(\mathbf{A}_{k},\,\mathbf{B}_{k})_{k=1}^{n}$} 
    and $\mathcal{F}_{0}$ the null $\sigma$-algebra.
    Then, it is clear from \eqref{eq:AMP} that $\mathbf{X}_{n}(\mathbf{x})$ is $\mathcal{F}_{n}$-measurable
    for each $n\in\mathbb{Z}_{0}^{+}$ and furthermore $(\mathbf{X}_{n}(\mathbf{x}))_{n\in\mathbb{Z}_{0}^{+}}$
    is a Markov process adapted to the filtration $(\mathcal{F}_{n})_{n\in\mathbb{Z}_{0}^{+}}$.
\end{enumerate}
\end{defn}

For $R>0$, denote by $\tau_{R}(\mathbf{x})$ the exit time of the process $\mathbf{X}_{n}(\mathbf{x})$
from the ball 
\begin{equation}
\newnota{B-mathcal_R}{\mathcal{B}_{R}}=
\defnota{B-mathcal_R}{\{\mathbf{y}\in\mathbb{R}^{d}:|\mathbf{y}|\le R\}},\label{eq:B_R}
\end{equation}
i.e., 
\[
\newnota{tau_R}{\tau_{R}(\mathbf{x})}
:=
\defnota{tau_R}
    {\inf\{n\in\mathbb{Z}_{0}^{+}:|\mathbf{X}_{n}(\mathbf{x})|>R\}},
\] where we set $\inf \phi :=\infty $ as usual. In this article, our primary concern is the asymptotic behavior of
the mean exit time $\mathbb{E}[\tau_{R}(\mathbf{x})]$, in the regime
$R\rightarrow\infty$. \\
For $d\times d$ matrix $\mathbf{M}$, we write 
\[
\dnewnota{Vert-Vert}{\Vert\cdot\Vert}
    {\Vert\mathbf{M}\Vert}
:=
    {\sup_{|\mathbf{x}|=1}|\mathbf{M}\mathbf{x}|}
\]
\mdefnota{Vert-Vert}{%
${\Vert\mathbf{M}\Vert}:={\sup_{|\mathbf{x}|=1}|\mathbf{M}\mathbf{x}|}$
}%
the matrix norm of $\mathbf{M}$. 
Throughout this article, it will be frequently used that this norm is sub-multiplicative: for any $d\times d$ matrices $\mathbf{M}$ and $\mathbf{N}$,
\begin{equation}
\Vert\mathbf{M}\mathbf{N}\Vert\le\Vert\mathbf{M}\Vert\Vert\mathbf{N}\Vert\;.\label{eq:subm-1}
\end{equation}
For $n\in\mathbb{Z}^{+}$, define 
\begin{equation}
\newnota{Pi_n}{\Pi_{n}}
:=
\defnota{Pi_n}{\mathbf{A}_{n}\mathbf{A}_{n-1}\cdots\mathbf{A}_{1}}
.
\label{eq:Pi_n}
\end{equation}
Then, it is well-known (cf. \cite[Theorem 1, page 457]{furstenberg1960products} and \cite[Theorem 2, page 460]{furstenberg1960products})
that the limit 
\begin{equation}
\newnota{gamma_L}{\gamma_L}=
\lim_{n\rightarrow\infty}\frac{1}{n}\mathbb{E}\left[\log\Vert\Pi_{n}\Vert\right]
\label{eq:lya}
\end{equation}
exists, and moreover it almost surely holds that 
\begin{equation}
\gamma_{L}=\lim_{n\rightarrow\infty}\frac{1}{n}\log\Vert\Pi_{n}\Vert
\label{eq:lyaas}
\end{equation}
\mdefnota{gamma_L}{$\displaystyle \lim_{n\rightarrow\infty}\frac{1}{n}\mathbb{E}\left[\log\Vert\Pi_{n}\Vert\right] \stackrel{a.s.}{=} \lim_{n\rightarrow\infty}\frac{1}{n}\log\Vert\Pi_{n}\Vert$}%
by the multiplicative ergodic theorem. The constant $\gamma_{L}$
is called a \newnota{Lyapunov-exponent}{\emph{Lyapunov exponent}}\mdefnota{Lyapunov-exponent}{$\gamma_L = \displaystyle \lim_{n\rightarrow\infty}\frac{1}{n}\mathbb{E}\left[\log\Vert\Pi_{n}\Vert\right] \stackrel{a.s.}{=} \lim_{n\rightarrow\infty}\frac{1}{n}\log\Vert\Pi_{n}\Vert$} associated with sequence $(\mathbf{A}_{n})_{n\in\mathbb{Z}^{+}}.$

At a heuristic level, it is clear that the behavior of the process $(\mathbf{X}_{n}(\mathbf{x}))_{n\in\mathbb{Z}_{0}^{+}}$ depends crucially on the sign of
the Lyapunov exponent $\gamma_{L}$. 
If $\gamma_{L}<0$, as the elements of $\Pi_{n}$ converges to $0$ exponentially fast, the process $\mathbf{X}_{n}$
contracts to its stable attractors.
On the other hand, if $\gamma_{L}>0$, as the elements of $\Pi_{n}$ exponentially diverges to $+\infty$,
we can expect that the process $\mathbf{X}_{n}$ is explosive in the sense that it escapes any bounded domain very quickly. 
The investigation of the mean exit time $\mathbb{E}[\tau_{R}(\mathbf{x})]$ also depends on the sign of the Lyapunov exponent.

\subsection{Known results}

For the contractive regime, i.e., for the case when $\gamma_{L}$
defined in \eqref{eq:lya} is negative, the asymptotic behavior of the process \eqref{eq:AMP} has been investigated extensively over the past decades. 
We briefly summarize important results before introducing our main contributions. 

\subsubsection*{Existence and uniqueness of stationary measure}
The following set of assumptions is commonly assumed in the study
of the contractive regime. 
\begin{notation}
We denote by $(\mathbf{A},\,\mathbf{B})$ the pair of random matrix
and vector having the same distribution with $(\mathbf{A}_{n},\,\mathbf{B}_{n})$
appeared in Definition \ref{def:AMP}. 
\end{notation}

\begin{assumption}[Standard conditions for contractive regime]
\label{Ass_Cont1} $\,$
\begin{enumerate}
\item The Lyapunov exponent is negative, i.e., $\gamma_{L}<0$.
\item There exists $s>0$ such that $\mathbb{E}\left[\Vert\mathbf{A}\Vert^{s}\right]<\infty$.
\item $\mathbb{E}\log^{+}|\mathbf{B}|<\infty$ where $\log^{+}x:=\log(x\lor1)$.
\item There is no fixed point in the sense that $\mathbb{P}(\mathbf{A}\mathbf{x}+\mathbf{B}=\mathbf{x})<1$
holds for all $\mathbf{x}\in\mathbb{R}^{d}$. 
\end{enumerate}
\end{assumption}
 
We will always work under Assumption~\ref{Ass_Cont1} in the context of a contractive regime. 
In the next theorem, we recall Kesten's theorem (cf. \cite{kesten1973random}), which proves that Assumption \ref{Ass_Cont1} ensures the existence of a unique stationary distribution. 
\begin{thm}[Kesten's Theorem]
\label{Thm_kes0}Under Assumption \ref{Ass_Cont1}, there exists a
unique stationary distribution \newnota{nu_infty}{$\nu_{\infty}$}\mdefnota{nu_infty}{unique stationary distribution of $\mathbf X_n$} of the process \eqref{eq:AMP}. 
\end{thm}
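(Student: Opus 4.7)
The plan is to construct the stationary distribution explicitly via the time-reversed iteration and to deduce uniqueness by a coupling argument. Because $(\mathbf A_n,\mathbf B_n)_{n\ge 1}$ is i.i.d., reversing the indices $1,\ldots,n$ preserves the joint law of the driving sequence, so for each fixed $n$,
\[
\mathbf X_n(\mathbf x)
\;\stackrel{d}{=}\;
\mathbf Y_n(\mathbf x)
:= \mathbf A_1\mathbf A_2\cdots \mathbf A_n\, \mathbf x + \sum_{k=1}^n \mathbf A_1\mathbf A_2\cdots \mathbf A_{k-1}\mathbf B_k,
\]
with the convention that the empty product is the identity. The crucial gain is that, unlike $\mathbf X_n$, the backward iterate $\mathbf Y_n$ is a partial sum of a fixed random series, so asking for its almost-sure convergence is meaningful.

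The first substantive step is to show that $\mathbf Y_n(\mathbf x)\to \mathbf Y_\infty$ almost surely, with $\mathbf Y_\infty$ a.s.\ finite and independent of $\mathbf x$. For the leading term, Kingman's subadditive ergodic theorem applied to the stationary subadditive array $\log\Vert\mathbf A_{m+1}\cdots\mathbf A_n\Vert$ yields
\[ \tfrac{1}{n}\log\Vert \mathbf A_1\cdots\mathbf A_n\Vert \;\longrightarrow\; \gamma_L \;<\; 0 \quad\text{a.s.,} \]
so $\mathbf A_1\cdots\mathbf A_n\mathbf x$ vanishes exponentially fast. For the series, Assumption~\ref{Ass_Cont1}(3) together with Borel--Cantelli gives $k^{-1}\log^+|\mathbf B_k|\to 0$ a.s., and combining this with the exponential decay of the matrix norms,
\[ \limsup_{k\to\infty} \tfrac{1}{k}\log\bigl(\Vert \mathbf A_1\cdots\mathbf A_{k-1}\Vert\cdot|\mathbf B_k|\bigr) \;\le\; \gamma_L \;<\; 0 \quad\text{a.s.} \]
The root test then delivers a.s.\ absolute convergence. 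Let $\nu_\infty$ be the law of the limit $\mathbf Y_\infty$.

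To verify stationarity, I would peel off the first summand:
\[ \mathbf Y_\infty \;=\; \mathbf B_1 + \mathbf A_1\sum_{k=2}^\infty \mathbf A_2\cdots \mathbf A_{k-1}\mathbf B_k \;=\; \mathbf B_1 + \mathbf A_1\mathbf Y'_\infty, \]
where $\mathbf Y'_\infty\stackrel{d}{=}\mathbf Y_\infty$ is independent of $(\mathbf A_1,\mathbf B_1)$. This is exactly the fixed-point identity characterizing stationarity of \eqref{eq:AMP}. For uniqueness, given any stationary $\nu$, I would take $\mathbf Z_0\sim\nu$ independent of the noise and run both $\mathbf Z_n$ and $\mathbf X_n(\mathbf 0)$ against the \emph{same} driving sequence; subtracting the two recursions yields $\mathbf Z_n-\mathbf X_n(\mathbf 0)=\Pi_n\mathbf Z_0$, which tends to $0$ a.s.\ by \eqref{eq:lyaas} because $\Vert\Pi_n\Vert\to 0$ a.s.\ and $\mathbf Z_0$ is a.s.\ finite. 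Since $\mathbf X_n(\mathbf 0)\stackrel{d}{=}\mathbf Y_n(\mathbf 0)\to\mathbf Y_\infty$ weakly while $\mathbf Z_n\sim\nu$ for every $n$, this forces $\nu=\nu_\infty$.

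The only substantive obstacle is the a.s.\ absolute summability of $\sum_{k\ge 1}\mathbf A_1\cdots\mathbf A_{k-1}\mathbf B_k$: this is where parts (1)--(3) of Assumption~\ref{Ass_Cont1} genuinely enter, since one must balance the geometric decay supplied by $\gamma_L<0$ against the potentially unbounded growth of $|\mathbf B_k|$, which is controlled only through the logarithmic moment condition. Everything else reduces to the one-step recursion identity and a single-noise coupling. I note that Assumption~\ref{Ass_Cont1}(4) plays no role here; the coupling accommodates Dirac masses, and the no-fixed-point hypothesis enters only later, to rule out degenerate stationary laws in the heavy-tailed analysis.
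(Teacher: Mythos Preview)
Your argument is correct and is in fact the standard backward-series construction; the paper itself gives no proof here, simply citing \cite[Theorem~6, p.~247]{kesten1973random}. That said, the paper implicitly relies on exactly the representation you build: Remark~\ref{rem:ass} later invokes the a.s.\ convergence of $\sum_{n\ge 1}\mathbf A_1\cdots\mathbf A_{n-1}\mathbf B_n$ to a random variable with law $\nu_\infty$, again by citation, so your sketch fills in precisely the content the paper defers to the literature.

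Two minor remarks. First, your appeal to \eqref{eq:lyaas} for the reversed product $\mathbf A_1\cdots\mathbf A_n$ is fine but deserves a word: the paper states the limit for $\Pi_n=\mathbf A_n\cdots\mathbf A_1$, and one needs either the i.i.d.\ symmetry or a direct application of Kingman to the other ordering. Second, your observation that Assumption~\ref{Ass_Cont1}(4) is not used is correct for existence and uniqueness as stated; the no-fixed-point condition only serves to exclude $\nu_\infty$ being a Dirac mass, which matters for the later tail and exit-time analysis but not here.
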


\begin{proof}
We refer to \cite[Theorem 6. page 247]{kesten1973random} for a proof. 
\end{proof}

\subsubsection*{Tail of the stationary measure}

An important object in the study of the contractive regime is the
function \newnota{h_A(s)}{$h_{\mathbf{A}}:[0,\,\infty)\rightarrow[0,\,\infty]$} defined
by
\begin{equation}
\defnota{h_A(s)}{
h_{\mathbf{A}}(s)=
\lim_{n\rightarrow\infty}\left[\mathbb{E}\Vert\Pi_{n}\Vert^{s}\right]^{\frac{1}{n}}}\;\;\;\;;\;s\ge0\;.\label{eq:h_A}
\end{equation}
whose existence is guaranteed by Kingman's subadditive theorem; see
\cite[page 167]{buraczewski2016stochastic}. We now summarize basic facts regarding this
function. 
\begin{enumerate}
\item Although in general it is notoriously difficult to
compute the exact value of $h_{\mathbf{A}}(s)$, we can readily get
lower and upper bounds when $\mathbb{P}(\det\mathbf{A}=0)=0$.
\chra{How strong is this condition?}
\jhra{Elaborated in the verification of applications section}%
Inserting (cf.\ \eqref{eq:subm-1}) the bound 
\[
\frac{1}{\Vert\mathbf{A}_{1}^{-1}\Vert\cdots\Vert\mathbf{A}_{n}^{-1}\Vert}\le\Vert\Pi_{n}\Vert\le\Vert\mathbf{A}_{1}\Vert\cdots\Vert\mathbf{A}_{n}\Vert\;,
\]
to the definition \eqref{eq:h_A}, we get 
\begin{equation}
\mathbb{E}\frac{1}{\Vert\mathbf{A}^{-1}\Vert^{s}}\le h_{\mathbf{A}}(s)\le\mathbb{E}\Vert\mathbf{A}\Vert^{s}\;,\;\;\;\quad s\ge0\;.\label{eq:lbh}
\end{equation}
\item Write\footnote{We note that in all of our application explained in Section \ref{sec3},
the tail of $\mathbf{A}$ is light so that $\alpha_{\infty}=\infty$.}
\mdefnota{alpha_infty}{$\alpha_{\infty}=\sup\left\{ s:\mathbb{E}\Vert\mathbf{A}\Vert^{s}<\infty\right\}$}
\begin{equation}
\newnota{alpha_infty}{\alpha_\infty}:=\sup\left\{ s:\mathbb{E}\Vert\mathbf{A}\Vert^{s}<\infty\right\},\label{eq:s_infty}
\end{equation}
where $\alpha_{\infty}>0$ by Assumption \ref{Ass_Cont1}-(2), so
that by \eqref{eq:lbh}, we have 
\[
h_{\mathbf{A}}(\beta)<\infty\text{ for all }\beta\in[0,\,\alpha_{\infty})\;.
\]
\item By the H\"{o}lder inequality, we can readily check that the function
$\log h_{\mathbf{A}}$ and hence $h_{\mathbf{A}}$ is convex on $[0,\,\infty)$.  
Furthermore, it is known that in the case $\gamma_L<0$, the function $h_{\mathbf{A}}$ is decreasing at $0$ due to Theorem \ref{Thm_h_A} below. 
From this observation, we notice that there are two possible cases
as illustrated in Figure \ref{fig:h_A}: 
\begin{enumerate}
\item Firstly, the function $h_{\mathbf{A}}$ may be a decreasing function
as in Figure \ref{fig:h_A}-(left), and in this case Goldie-Gr\"{u}bel
theorem (e.g., \cite[Theorem 2.4.1]{buraczewski2016stochastic}) shows that, with several
additional technical assumptions, the tail of $\nu_{\infty}$ is exponentially
light. This case is of independent interest but will not be handled
in the current article.
\chra{The left panel of the figure 2.1 is missing.}
\item Another possibility is the case where $h_{\mathbf{A}}(s)$ diverges
to $+\infty$ as $s$ increases to $+\infty$ as in Figure \ref{fig:h_A}-(right),
and this case is handled by Kesten in \cite{kesten1973random}. 
We will focus mainly on this case; see Assumption~\ref{Ass_Cont2}.
In this  case, as $h_{\mathbf{A}}(0)=1$, by
the intermediate value theorem, we have a unique solution to the equation
$h_{\mathbf{A}}(s)=1$ (cf. as in Figure \ref{fig:h_A}-(right)). 

\end{enumerate}
\end{enumerate}

\begin{figure}[h!]
\label{fig:h_A}
    \centering
    \begin{minipage}{0.45\textwidth}
        \centering
        \includegraphics[width=\linewidth]{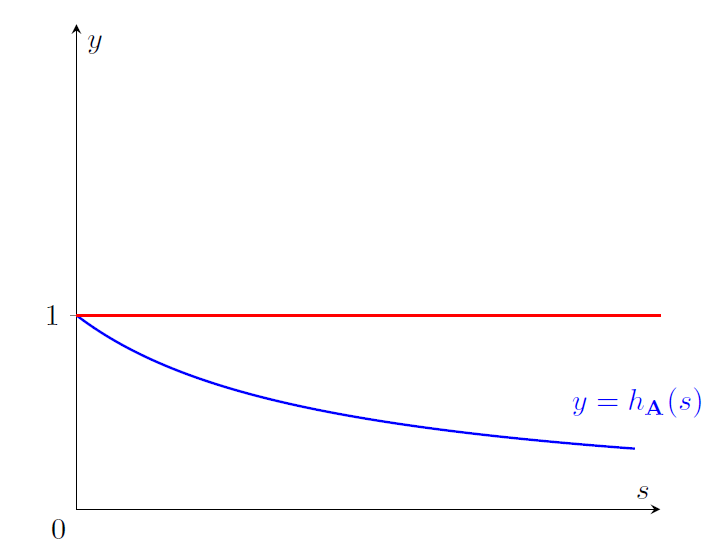}
        \label{fig:figure1}
    \end{minipage}
    \hfill
    \begin{minipage}{0.45\textwidth}
        \centering
        \includegraphics[width=\linewidth]{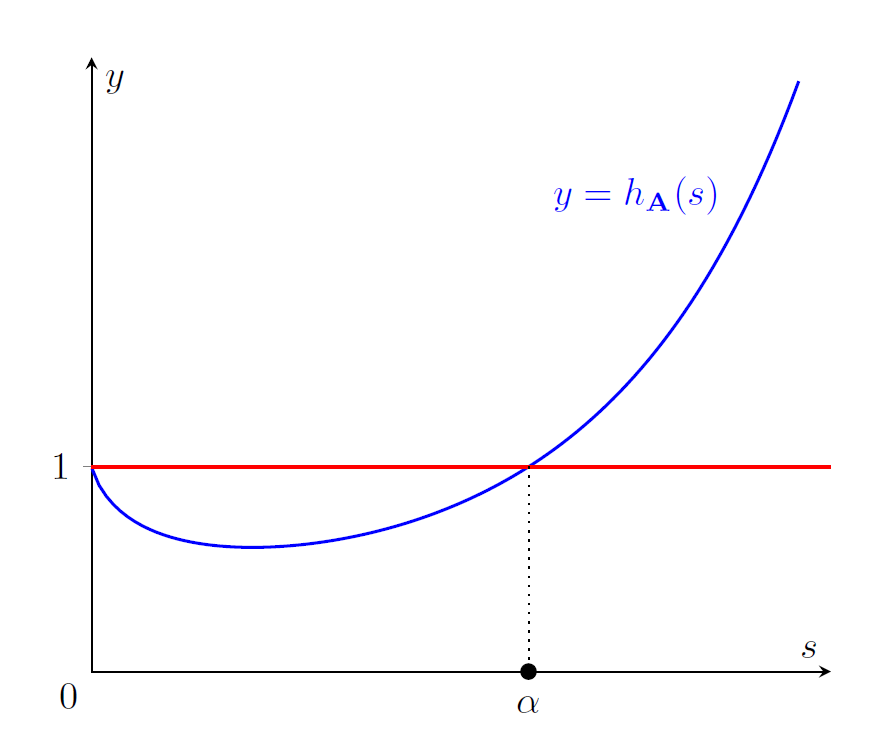}
        \label{fig:figure2}
    \end{minipage}
    \caption{Two possible shapes of $h_\mathbf{A}$.}
\end{figure}

\begin{thm}
\label{Thm_h_A} {Assume that the   Lyapunov exponent $\gamma_L$ is negative and $h_\mathbf{A}(s)$ is finite for some $s>0$. Then there exists $s_0>0$ such that $h_\mathbf{A}(s_0)<1$.}
\end{thm}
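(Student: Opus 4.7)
The plan is to locate an index $n_{0}$ and a small exponent $s_{0}>0$ with $\mathbb{E}\Vert\Pi_{n_{0}}\Vert^{s_{0}}<1$. Once such $n_{0}, s_{0}$ are in hand, the sub-multiplicativity $\mathbb{E}\Vert\Pi_{n+m}\Vert^{s}\le\mathbb{E}\Vert\Pi_{n}\Vert^{s}\,\mathbb{E}\Vert\Pi_{m}\Vert^{s}$ (obtained from $\Pi_{n+m}=(\mathbf{A}_{n+m}\cdots\mathbf{A}_{n+1})\,\Pi_{n}$ with the two factors independent) and the definition \eqref{eq:h_A} evaluated along multiples of $n_{0}$ immediately give
\[
h_{\mathbf{A}}(s_{0})\le\bigl(\mathbb{E}\Vert\Pi_{n_{0}}\Vert^{s_{0}}\bigr)^{1/n_{0}}<1.
\]

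To choose $n_{0}$, I will first use $h_{\mathbf{A}}(s_{1})<\infty$ together with sub-multiplicativity to infer $\mathbb{E}\Vert\Pi_{n}\Vert^{s_{1}}<\infty$ for all large $n$; this yields $\mathbb{E}\log^{+}\Vert\Pi_{n}\Vert\le\mathbb{E}\Vert\Pi_{n}\Vert^{s_{1}}/s_{1}<\infty$. Replacing $\mathbf{A}$ by the product over a block of fixed size so that $\mathbb{E}\log^{+}\Vert\mathbf{A}\Vert<\infty$ if needed, the Furstenberg--Kesten theorem then gives $\frac{1}{n}\mathbb{E}\log\Vert\Pi_{n}\Vert\to\gamma_{L}$. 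Since $\gamma_{L}\in(-\infty,0)$, we must also have $\mathbb{E}\log^{-}\Vert\Pi_{n}\Vert<\infty$ for all large $n$ (otherwise $\frac{1}{n}\mathbb{E}\log\Vert\Pi_{n}\Vert=-\infty$). I fix $n_{0}$ satisfying $\mathbb{E}\Vert\Pi_{n_{0}}\Vert^{s_{1}}<\infty$, $\mathbb{E}\bigl|\log\Vert\Pi_{n_{0}}\Vert\bigr|<\infty$, and $\mu:=\mathbb{E}\log\Vert\Pi_{n_{0}}\Vert<0$.

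The heart of the proof is then a right-derivative computation for $g(s):=\mathbb{E}\Vert\Pi_{n_{0}}\Vert^{s}$. We have $g(0)=1$, and the difference quotient $(\Vert\Pi_{n_{0}}\Vert^{s}-1)/s$ converges almost surely to $\log\Vert\Pi_{n_{0}}\Vert$ as $s\to 0+$. To swap limit and expectation via dominated convergence, I bound separately on the two regions: on $\{\Vert\Pi_{n_{0}}\Vert\ge 1\}$, convexity of $s\mapsto\Vert\Pi_{n_{0}}\Vert^{s}$ yields $(\Vert\Pi_{n_{0}}\Vert^{s}-1)/s\le\Vert\Pi_{n_{0}}\Vert^{s_{1}}/s_{1}$ for $s\in(0,s_{1}]$; on $\{0<\Vert\Pi_{n_{0}}\Vert<1\}$, the elementary inequality $|e^{z}-1|\le|z|$ for $z\le 0$ gives $\bigl|(\Vert\Pi_{n_{0}}\Vert^{s}-1)/s\bigr|\le\bigl|\log\Vert\Pi_{n_{0}}\Vert\bigr|$. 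Both dominating functions are integrable by the choice of $n_{0}$, so dominated convergence yields $g'(0+)=\mu<0$. Consequently $g(s)=1+\mu s+o(s)<1$ for all small enough $s>0$, and any such $s$ is the sought $s_{0}$.

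The delicate step is the dominated-convergence argument on $\{\Vert\Pi_{n_{0}}\Vert<1\}$, where one needs $\mathbb{E}\log^{-}\Vert\Pi_{n_{0}}\Vert<\infty$; this is precisely where the assumption that $\gamma_{L}$ is a \emph{finite} negative number (rather than $-\infty$) is genuinely used. The remainder is a routine combination of convexity and elementary bounds.
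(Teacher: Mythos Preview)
The paper does not supply its own proof of this statement; it simply cites Lemma~4.4.2 of Buraczewski--Damek--Mikosch and Kesten's original paper. Your argument---choosing a block length $n_{0}$ with $\mathbb{E}\Vert\Pi_{n_{0}}\Vert^{s_{1}}<\infty$ and $\mathbb{E}\log\Vert\Pi_{n_{0}}\Vert\in(-\infty,0)$, computing the right derivative of $s\mapsto\mathbb{E}\Vert\Pi_{n_{0}}\Vert^{s}$ at $0$ by dominated convergence, and then transferring the resulting inequality to $h_{\mathbf{A}}$ via sub-multiplicativity---is exactly the standard proof appearing in those references, and it is correct.

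One small quibble: from $h_{\mathbf{A}}(s_{1})<\infty$ and sub-multiplicativity you can only conclude $\mathbb{E}\Vert\Pi_{n}\Vert^{s_{1}}<\infty$ for some fixed $n_{1}$ and then for its multiples, not literally ``for all large $n$''. You already anticipated this with your remark about passing to blocks, and working along multiples of $n_{1}$ is enough to locate an $n_{0}$ satisfying all three of your conditions simultaneously, so the argument goes through unchanged.
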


\begin{proof}
  We refer to \cite[Lemma 4.4.2. page 168]{buraczewski2016stochastic} or \cite[page 231]{kesten1973random} for the proofs.  
\end{proof}

\begin{assumption}[\emph{Condition on $h_{\mathbf{A}}$}]
\label{Ass_Cont2} 
There exists $s_1>0$ such that $h_{\mathbf{A}}(s_1)>1$. In particular, under the Assumption \ref{Ass_Cont1}, by the continuity of $h_{\mathbf{A}}$ which follows from its convexity, Theorem \ref{Thm_h_A}, and the intermediate value theorem, there exists \newnota{alpha}{$\alpha\in(0,\,\alpha_{\infty})$}
such that \defnota{alpha}{$h_{\mathbf{A}}(\alpha)=1$}.
\end{assumption} 

\begin{rem}[Remarks on Assumption \ref{Ass_Cont2}]
\label{rem:h_A} $\,$
\begin{enumerate}
\item Throughout our discussion on contractive regime in the current article,
the constant $\alpha>0$ always refer to the one appeared in Assumption
\ref{Ass_Cont2}.
\item In order to check Assumption \ref{Ass_Cont2} for a specific model,
by \eqref{eq:lbh} and theorem \ref{Thm_h_A} above, it suffices to find
$s>0$ such that $\mathbb{E}\frac{1}{\Vert\mathbf{A}^{-1}\Vert^{s}}>1$.
Clearly, the last inequality is guaranteed if and
only if $\mathbb{P}(\Vert\mathbf{A}^{-1}\Vert<1)>0$. Thus, this assumption
is easy to check whenever $\mathbf{A}$ is invertible. 
\item By the convexity of $h_{\mathbf{A}}$, we have $h_{\mathbf{A}}<1$
on $(0,\,\alpha)$ and $h_{\mathbf{A}}>1$ on $(\alpha,\,\infty)$. 
\end{enumerate}
\end{rem}

Kesten \cite{kesten1973random} proved that under Assumptions \ref{Ass_Cont1} and \ref{Ass_Cont2},
along with several technical assumptions, the stationary distribution
$\nu_{\infty}$ has a heavy-tailed distribution with a power-law tail of
exponent $\alpha$ given in Assumption \ref{Ass_Cont2}. More precisely,
it is proven that, there exists a positive function $g:\mathbb{S}^{d-1}\rightarrow(0,\,\infty)$
such that, for all $\mathbf{u}\in\mathbb{S}^{d-1}$,
\begin{equation}
\lim_{z\rightarrow\infty}z^{\alpha}\nu_{\infty}(\{\mathbf{x}:\mathbf{u}\cdot\mathbf{x}>z\})=g(\mathbf{u}),
\label{eq:kesten}
\end{equation}
where 
\begin{equation}
\newnota{S-mathbb^d-1}{\mathbb{S}^{d-1}}
:=
\defnota{S-mathbb^d-1}{\left\{ \mathbf{x}\in\mathbb{R}^{d}:|\mathbf{x}|=1\right\}}
.
\label{eq:sphere}
\end{equation}
This result, known as Kesten's theorem for process \eqref{eq:AMP},
is interesting in that the stationary distribution is power-law
tailed even in the case where $\mathbf{A}$ and $\mathbf{B}$ are
light-tailed (e.g., normal-type distribution). An interested reader
may refer to the original paper \cite[Theorem 6]{kesten1973random} or textbook 
\cite[page 169-171]{buraczewski2016stochastic} for more details.

\section{\label{sec2}Main Results}

\subsection{Main results for the contractive regime}

This section presents our main result on the contractive regime, which provides
asymptotics of the mean exit time $\mathbb{E}[\tau_{R}(\mathbf{x})]$
for $\mathbf{x}\in\mathbb{R}^{d}$ in the regime $R\rightarrow\infty$. 

\subsubsection*{Technical assumptions }
Our main result, stated in Theorem \ref{Thm_main_cont}, requires additional assumptions. 
Kesten's theorem \cite[Theorem 6]{kesten1973random} provides a set of sufficient conditions. 
However, working with them require further introduction of definitions and notations, and the verification of those assumptions are difficult for the example we consider. 
Instead, we provide a set of assumptions which are concise and easy to confirm
for the examples given in Section \ref{sec3}. 
Our assumptions are weaker than those of \cite[Theorem 6]{kesten1973random}; see Remarks \ref{rem:ass_cont3}, \ref{rem:ass_cont4}, and \ref{rem:ass_cont5}.
\chra{Do they still guarantee existence and uniqueness of stationary distribution?}%
\jhra{Assumption 1 ensures it and we assume the assumption 1 throughout the whole text}%

The first additional assumption is on the unboundedness of the support
of the stationary distribution. 
\begin{assumption}
\label{Ass_Cont3}The support of $\nu_{\infty}$ is unbounded. 
\end{assumption}

This assumption is obviously necessary in our context. 
Without this, the process \eqref{eq:AMP}
is confined to the support of $\nu_{\infty}$, and hence, the mean of
the exit time from a domain $\mathcal{B}_{R}$ with sufficiently large
$R$ would be $+\infty$. 
\begin{rem}[Remarks on Assumption \ref{Ass_Cont3}]\label{rem:ass_cont3}$\,$
\begin{enumerate}
\item In view of \eqref{eq:kesten}, the assumptions of \cite[Theorem 6]{kesten1973random}
imply Assumption \ref{Ass_Cont3}. In particular, the assumption of
\cite[Condition (A) in Section 4.4]{kesten1973random,buraczewski2016stochastic} regarding the denseness
of the additive subgroup of $\mathbb{R}$ generated by the log of
absolute value of largest eigenvalue of $\Pi_{n}$ implies Assumption
\ref{Ass_Cont3}. 
\item Note that, when the support of a random vector $\mathbf{Ax+B}$ is
unbounded for all $\mathbf{x}\in\mathbb{R}^{d}$, then it is immediate
that the Assumption \ref{Ass_Cont3} holds. This is indeed the case
for the models we discuss in Section \ref{sec3}. 
More detailed scheme to verify Assumptipon \ref{Ass_Cont3} when the support
of $\mathbf{A}$ and $\mathbf{B}$ are bounded is explained in  \cite[Proposition 4.3.1, page 160]{buraczewski2016stochastic} at which a characterization of the support $\nu_{\infty}$ is provided. 
\end{enumerate}
\end{rem}

The following technical assumption prevents the effect of single outcome of random variable
$\mathbf{A},\mathbf{B}$ from dominating the long-term dynamics of \eqref{eq:AMP}; see, for example, \cite{konstantinides2005large, buraczewski2012asymptotics}.
Again, it is weaker than assumptions of \cite[Theorem 6]{kesten1973random}.
\begin{assumption}
\label{Ass_Cont4}There exist $R_{0},\,z_{0}>1$, $C_{0}>0$, and
\newnota{alpha-plus}{$\alpha_{+}\in(\alpha,\,\alpha_{\infty}]$} such that, we have 
\begin{equation}
\defnota{alpha-plus}{
\mathbb{P}\big(|\mathbf{A}\mathbf{x}+\mathbf{B}|>zR\big)\le\frac{C_{0}}{z^{\alpha_{+}}}\,\mathbb{P}\big(|\mathbf{A}\mathbf{x}+\mathbf{B}|>R\big)}\label{eq:nc2-1}
\end{equation}
for all $R\in[R_{0},\,\infty),\,z\in[z_{0},\,\infty)$ and $\mathbf{x}\in\mathcal{B}_{R}$.
\end{assumption}

This assumption guarantees that $\mathbf{A}\mathbf{x}+\mathbf{B}$
has tails strictly lighter than power-law distribution with index
$\alpha$. If it has thicker tails than power-law distribution with
index $\alpha$, the overall tail behavior of the process \eqref{eq:AMP} is simply
dominated by the tail behavior of either $\mathbf{A}$ or $\mathbf{B}$, rather
than the structural properties of stochastic recurrence equations. 
\begin{rem}[Remarks on Assumption \ref{Ass_Cont4}]$\,$
\label{rem:ass_cont4}

\begin{enumerate}
\item We say that the random matrix $\mathbf{A}$ or the random vector $\mathbf{B}$
has a tail lighter than the power-law distribution of index $\alpha$
if we can find $R_{1},\,z_{1}>1$, $C_{1}>0$, and $\alpha_{1}>\alpha$
such that 
\begin{equation}
\mathbb{P}(|\mathbf{A}\mathbf{x}|>Rz)\le\frac{C_{1}}{z^{\alpha_{1}}}\,\mathbb{P}(|\mathbf{A}\mathbf{x}|>R)\;\;\;\;\text{or}\;\;\;\;\mathbb{P}(|\mathbf{B}|>Rz)\le\frac{C_{1}}{z^{\alpha_{1}}}\,\mathbb{P}(|\mathbf{B}|>R)\label{eq:tail_cond}
\end{equation}
for all $R\in[R_{1},\,\infty),\,z\in[z_{1},\,\infty)$ and $\mathbf{x}\in\mathbb{S}^{d-1}$.
Then, one can conjecture that Assumption
\ref{Ass_Cont4} is valid when both $\mathbf{A}$ and $\mathbf{B}$
have tails lighter than power-law distribution of index $\alpha$.
 
\item Considering the case $\mathbf{x}=\mathbf{0}$ in \eqref{eq:nc2-1}, we note that Assumption
\ref{Ass_Cont4} implies that the random vector $\mathbf{B}$ has
a tail lighter than power-law distribution of index $\alpha$. In
particular, by applying the layer-cake formula, we get 
\begin{equation}
\mathbb{E}|\mathbf{B}|^{\gamma}<\infty\;\;\;\text{for all }\gamma\in[0,\,\alpha_{+})\;.\label{eq:nc2-2}
\end{equation}
 
\end{enumerate}
\end{rem}

The final assumption concerns the non-degeneracy of the process \eqref{eq:AMP}. 
\begin{assumption}
\label{Ass_Cont5}There exists $n_{0}\in\mathbb{Z}^{+}$ such that,
for all $\mathbf{x},\,\mathbf{y}\in\mathbb{R}^{d}\setminus\left\{ \mathbf{0}\right\} $
(cf. \eqref{eq:Pi_n})
\[
\mathbb{P}\left[\mathbf{x}\cdot\Pi_{n_{0}}\mathbf{y}=0\right]<1\;.
\]
\end{assumption}

\begin{rem}[Remarks on Assumption \ref{Ass_Cont5}]\label{rem:ass_cont5}$\,$
\begin{enumerate}
\item Of course, this assumption with $n_{0}=1$ holds if the elements of $\mathbf{A}$ are stochastically linearly independent in the sense that 
\[ \mathbb{P} \left( \sum_{i,j=1}^{d} c_{ij}\mathbf{A}_{i,j}=0 \right)=1 \] implies that all the coefficients $c_{ij}$ are zero.
We note that our models in Section \ref{sec3} satisfy this assumption. 
\item On the other hand, our assumption is strictly weaker than this invertibility
assumption. 
For instance, let $d=2$ and $\mathbf{A}=\begin{pmatrix}c & cZ\\
cZ & cZ^{2}
\end{pmatrix}$ where $Z$ is a standard normal random variables, and $c>0$ is a
constant small enough to guarantee that the Lyapunov exponent associated
with $\mathbf{A}$ is negative. Then $\mathbf{A}$ is always non-invertible,
while for all $\mathbf{x},\mathbf{y}\in\mathbb{R}^{d}\setminus\left\{ \mathbf{0}\right\} $,
we have $\mathbb{P}(\mathbf{x}\cdot\mathbf{A}\mathbf{y}=0)=0$ so
that Assumption \ref{Ass_Cont5} is satisfied with $n_{0}=1$. 
\end{enumerate}
\end{rem}

\subsubsection*{Main results}

Now we are ready to state main results. We assume in the next theorem
that Assumptions \ref{Ass_Cont1}, \ref{Ass_Cont2}, \ref{Ass_Cont3},
\ref{Ass_Cont4} and \ref{Ass_Cont5} are in force. 
We emphasize that, taken together, all of these assumptions are weaker than those of Kesten's
theorem. 
\begin{thm}
\label{Thm_main_cont} For all $\mathbf{x}_{0}\in\mathbb{R}^{d}$,
it holds that 
\[
\lim_{R\rightarrow\infty}\frac{\log\mathbb{E}[\tau_{R}(\mathbf{x}_{0})]}{\log R}=\alpha\;\;\;\text{for all }\mathbf{x}_{0}\in\mathbb{R}^{d}\;,
\]
where $\alpha>0$ is the exponent that appeared in Assumption \ref{Ass_Cont2}. 
\end{thm}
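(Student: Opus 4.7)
My plan is to establish the matching logarithmic-scale bounds $\liminf_{R\to\infty}\log\mathbb{E}[\tau_R(\mathbf{x}_0)]/\log R\geq\alpha$ and $\limsup_{R\to\infty}\log\mathbb{E}[\tau_R(\mathbf{x}_0)]/\log R\leq\alpha$ separately. For the lower half, fix $s\in(0,\alpha)$; convexity of $h_{\mathbf{A}}$ together with $h_{\mathbf{A}}(0)=h_{\mathbf{A}}(\alpha)=1$ (Remark~\ref{rem:h_A}(3)) gives $h_{\mathbf{A}}(s)<1$, so by \eqref{eq:h_A} there exist $\rho\in(0,1)$ and $C<\infty$ with $\mathbb{E}\Vert\Pi_n\Vert^s\leq C\rho^n$ for all $n\geq 0$. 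Combined with the telescoping representation
\[
\mathbf{X}_n(\mathbf{x}_0)=\Pi_n\mathbf{x}_0+\sum_{k=1}^{n}\mathbf{A}_n\cdots\mathbf{A}_{k+1}\mathbf{B}_k,
\]
sub-multiplicativity \eqref{eq:subm-1}, independence, and $\mathbb{E}|\mathbf{B}|^s<\infty$ (Remark~\ref{rem:ass_cont4}(2), since $s<\alpha<\alpha_+$), this yields $\sup_n\mathbb{E}|\mathbf{X}_n(\mathbf{x}_0)|^s\leq C'(1+|\mathbf{x}_0|^s)$. Markov's inequality and a union bound then give $\mathbb{P}(\tau_R(\mathbf{x}_0)\leq N)\leq NC'(1+|\mathbf{x}_0|^s)/R^s$; picking $N=\lfloor R^s/(2C'(1+|\mathbf{x}_0|^s))\rfloor$ produces $\mathbb{E}[\tau_R(\mathbf{x}_0)]\geq N\,\mathbb{P}(\tau_R>N)\geq N/2$, and sending $s\uparrow\alpha$ finishes this half.

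For the upper half, fix $s_+>\alpha$ and aim to establish a uniform exit-probability estimate
\[
p(R):=\inf_{\mathbf{x}\in\mathcal{B}_R}\mathbb{P}\bigl(\tau_R(\mathbf{x})\leq T_R\bigr)\;\geq\;cR^{-s_+}
\]
for some block length $T_R=O((\log R)^K)$. Granted this, the strong Markov property applied successively at times $kT_R$ stochastically dominates $\tau_R(\mathbf{x}_0)$ by $T_R\cdot G$ with $G\sim\mathrm{Geom}(p(R))$, so that $\mathbb{E}[\tau_R(\mathbf{x}_0)]\leq T_R/p(R)\leq C(\log R)^K R^{s_+}$; letting $s_+\downarrow\alpha$ then delivers the claim.

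The main obstacle is the uniform lower bound $p(R)\gtrsim R^{-s_+}$, which I would attack in two phases. In a burn-in of $T_1=O(\log R)$ steps, the almost-sure convergence \eqref{eq:lyaas} with $\gamma_L<0$, upgraded to a moderate-deviation bound using $\mathbb{E}\Vert\mathbf{A}\Vert^s<\infty$ (Assumption~\ref{Ass_Cont1}(2)), forces $|\Pi_{T_1}\mathbf{x}|=o(R)$ with overwhelming probability uniformly in $\mathbf{x}\in\mathcal{B}_R$; after the burn-in, $\mathbf{X}_{T_1}$ is therefore essentially the partial sum $\sum_{k=1}^{T_1}\mathbf{A}_{T_1}\cdots\mathbf{A}_{k+1}\mathbf{B}_k$, independent of initialization and with law close to $\nu_\infty$. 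In a subsequent hitting phase of length $T_2=O(\log R)$, I need $\mathbb{P}(|\mathbf{X}_{T_1+T_2}|>R\mid\mathcal{F}_{T_1})\gtrsim R^{-\alpha}$; the natural tool is an exponential change of measure with parameter $\alpha$. The identity $h_{\mathbf{A}}(\alpha)=1$ identifies $\alpha$ as the Cram\'er root, so that under the tilt with density proportional to $\Vert\Pi_n\Vert^{\alpha}$, the process $\log\Vert\Pi_n\Vert$ acquires positive drift, and reverting the tilt yields $\mathbb{P}(\Vert\Pi_n\Vert>R)\gtrsim R^{-\alpha}$ on scales $n\asymp\log R$. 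Converting this matrix-norm estimate into a vector lower bound $|\Pi_n\hat{\mathbf{y}}|>R$ for the relevant unit directions $\hat{\mathbf{y}}$ is the delicate step: Assumption~\ref{Ass_Cont5} is essential for preventing $\Pi_{n_0}\hat{\mathbf{y}}$ from collapsing orthogonally to the norm-maximising subspace, Assumption~\ref{Ass_Cont4} rules out heavy-tailed contributions from a single $\mathbf{A}_k$ or $\mathbf{B}_k$, and Assumption~\ref{Ass_Cont3} guarantees that $\nu_\infty(\mathcal{B}_R^c)>0$ for every $R$. Carrying out this projective large-deviation analysis without invoking Kesten's precise tail asymptotic \eqref{eq:kesten} is the technical heart of the argument.
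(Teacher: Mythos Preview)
Your lower-bound argument is correct and more elementary than the paper's. The paper (Proposition~\ref{prop:cont_lbd}) builds a Lyapunov function $g(\mathbf{x})=|\mathbf{x}|^\gamma+c_1|\mathbf{x}|^{\gamma-1}+\cdots$ so that $g(\widehat{\mathbf{X}}_n)-nG_\gamma$ is a supermartingale on an $L_\gamma$-skeleton, then applies optional stopping. Your moment-bound/union-bound/Markov route gets the same $R^s$ lower bound for every $s<\alpha$ with less machinery; it is a genuine simplification.

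Your upper-bound strategy, however, has a real gap, and the paper's route is substantially different. The paper (Proposition~\ref{prop:cont_ubd}) does \emph{not} go through a uniform exit-probability estimate $p(R)\gtrsim R^{-s_+}$. Instead, for $\gamma\in(\alpha,\alpha_+)$ it builds a \emph{submartingale} $M_n=|\widehat{\mathbf{X}}_n|^\gamma-\sum_{k<n}F(\widehat{\mathbf{X}}_k)$ on an $N_\gamma$-skeleton and applies optional stopping, using Lemma~\ref{lem:exit} (the overshoot control from Assumption~\ref{Ass_Cont4}) to bound $\mathbb{E}|\mathbf{X}_{\tau_R}|^\gamma\lesssim R^\gamma$. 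The single technical ingredient that makes this work is the \emph{scattering property}, Proposition~\ref{prop:scat}:
\[
\mathbb{E}|\Pi_n\mathbf{x}|^\gamma\;\ge\;\delta_0\,\mathbb{E}\Vert\Pi_n\Vert^\gamma\,|\mathbf{x}|^\gamma
\qquad\text{uniformly in }\mathbf{x},\ n\ge N_0,
\]
proved from Assumption~\ref{Ass_Cont5} via a measure-theoretic argument on $\mathbb{S}^{d-1}$. This is exactly the ``matrix-norm-to-vector'' conversion you identify as the obstacle, but resolved at the level of $\gamma$-th moments rather than probabilities. Since $h_{\mathbf{A}}(\gamma)>1$, it yields $\mathbb{E}\bigl[|\mathbf{X}_n|^\gamma\mid\mathcal{F}_{n-N_\gamma}\bigr]\ge 2|\mathbf{X}_{n-N_\gamma}|^\gamma-C$ (Lemma~\ref{lem:pre30}), which drives the submartingale.

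Your geometric-trials plan would instead require a pathwise lower bound $\mathbb{P}(|\Pi_{T_2}\hat{\mathbf{y}}|>R)\gtrsim R^{-s_+}$ uniformly in $\hat{\mathbf{y}}$; the tilting you sketch is essentially a derivation of Kesten's tail lower bound, which the paper's assumptions are explicitly designed not to presuppose, and which is considerably harder than the moment inequality above. The moment/submartingale route both sidesteps the projective large-deviation analysis and is complete under the stated hypotheses.
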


The proof of Theorem \ref{Thm_main_cont}, given in Section \ref{sec5}, is based on the refined construction of suitable sub- and super-martingales.
More precisely, 
we prove in Propositions \ref{prop:cont_ubd} and \ref{prop:cont_lbd}
that, for all $\gamma_{1}\in(0,\,\alpha)$ and $\gamma_{2}\in(\alpha,\,\alpha_{\infty})$
where $\alpha_{\infty}$ is the one defined in \eqref{eq:s_infty},
we find constants $C_{\gamma_{1}},\,C'_{\gamma_{1}},\,C_{\gamma_{2}}>0$
such that for all $\mathbf{x}_{0}\in\mathbb{R}^{d}$ and $R>0$, 
\begin{equation}
C_{\gamma_{1}}R^{\gamma_{1}}-C'_{\gamma_{1}}\left(|\mathbf{x}_{0}|^{\gamma_{1}}+1\right)\le\mathbb{E}\left[\tau_{R}(\mathbf{x}_{0})\right]\le C_{\gamma_{2}}\left(R^{\gamma_{2}}+1\right)\;.\label{eq:bdn}
\end{equation}
Then, Theorem \ref{Thm_main_cont} is a direct consequence of this
bound. 

\subsubsection*{Univariate case }

Theorem \ref{Thm_main_cont} implies that $\mathbb{E}[\tau_{R}(\mathbf{x}_{0})]$ grows on the order of $R^{\alpha}$ possibly multiplied by a sub-polynomial prefactor. 
One can also expect more precise estimate, e.g., 
\begin{equation}
\lim_{R\rightarrow\infty}\frac{\mathbb{E}[\tau_{R}(\mathbf{x}_{0})]}{R^{\alpha}}=C
\label{eq:cont_conj}
.
\end{equation}
Obtaining such a sharp estimate in full generality under the current set of assumptions seems to be very difficult, as proving a bound of the form \eqref{eq:bdn} is not feasible at this time for $\gamma_{1}=\alpha$ or $\gamma_{2}=\alpha$. 
However, for the univariate case, namely the case with $d=1$, we can get a stronger result close to \eqref{eq:cont_conj}. 
The main
benefit of the univariate case is that the Lyapunov exponent $\gamma_{L}$
and the function $h_{\mathbf{A}}(\cdot)$ simplify to
\begin{equation}
\gamma_{L}=\mathbb{E}[\log|\mathbf{A}|]\;\;\;\text{and}\;\;\;h_{\mathbf{A}}(s)=\mathbb{E}|\mathbf{A}|^{s}\label{eq:univ_lya_h}
\end{equation}
as \eqref{eq:subm-1} now becomes an equality as the norm $\Vert\cdot\Vert$
is reduced to the absolute value of a real number. 
Thanks to this simplification, we get the following refinement of Theorem~\ref{Thm_main_cont}.
\begin{thm}
\label{Thm_main_cont_uni}Suppose that $d=1$ and that $\alpha\ge2$.
Then, for all $\mathbf{x}_{0}\in\mathbb{R}$, we have that 
\begin{equation}
0<\liminf_{R\rightarrow\infty}\frac{\mathbb{E}[\tau_{R}(\mathbf{x}_{0})]}{R^{\alpha}}\le\limsup_{R\rightarrow\infty}\frac{\mathbb{E}[\tau_{R}(\mathbf{x}_{0})]}{R^{\alpha}}<\infty\;.\label{eq:uni_bd}
\end{equation}
\end{thm}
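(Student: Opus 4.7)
The plan is to sharpen the sub- and super-martingale constructions underlying \eqref{eq:bdn} so that the mean exit time can be compared directly against the critical power $R^{\alpha}$, rather than against $R^{\gamma}$ for some $\gamma\ne\alpha$. Two features of the univariate setting are essential: (i)~$h_{\mathbf A}(s)=\mathbb E|\mathbf A|^{s}$, so the defining relation $h_{\mathbf A}(\alpha)=1$ becomes the \emph{exact} identity $\mathbb E|\mathbf A|^{\alpha}=1$; and (ii)~the hypothesis $\alpha\ge 2$ guarantees that $x\mapsto|x|^{\alpha}$ is $C^{2}$ and that the moments $\mathbb E[|\mathbf A|^{\alpha-1}|\mathbf B|]$ and $\mathbb E[|\mathbf A|^{\alpha-2}\mathbf B^{2}]$ are finite via H\"older's inequality together with $\mathbb E|\mathbf B|^{\alpha}<\infty$ from Remark~\ref{rem:ass_cont4}.

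The first step is a second-order Taylor expansion of $|\mathbf A x+\mathbf B|^{\alpha}$ in $\mathbf B$ around $0$. After taking expectations and invoking $\mathbb E|\mathbf A|^{\alpha}=1$, this will yield, for $|x|$ large,
\[
\mathbb E\bigl[|X_{n+1}|^{\alpha}\bigm|X_{n}=x\bigr]\;=\;|x|^{\alpha}\;+\;c_{1}\,\mathrm{sign}(x)\,|x|^{\alpha-1}\;+\;c_{2}\,|x|^{\alpha-2}\;+\;\mathcal R(x),
\]
with $c_{1}=\alpha\,\mathbb E[\mathrm{sign}(\mathbf A)\,|\mathbf A|^{\alpha-1}\mathbf B]$, $c_{2}=\tfrac{\alpha(\alpha-1)}{2}\,\mathbb E[|\mathbf A|^{\alpha-2}\mathbf B^{2}]$, and a remainder $\mathcal R(x)$ of strictly lower order in $|x|$. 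Next, I would construct test functions
\[
V_{\pm}(x)\;=\;\pm|x|^{\alpha}\;+\;a_{1}\,x|x|^{\alpha-2}\;+\;a_{2}\,|x|^{\alpha-2}\;+\;\cdots\;+\;K_{\pm},
\]
using at most $\lceil\alpha-1\rceil$ polynomial corrections. The coefficients $a_{1},a_{2},\ldots$ are chosen recursively so that the $|x|^{\alpha-1},|x|^{\alpha-2},\ldots$ contributions to the one-step drift of $V_{\pm}$ cancel and only an $O(1)$ remainder of controlled sign survives on $\mathcal B_{R}$. The free constant $K_{\pm}$ is then tuned so that $V_{+}\ge 0$ with $V_{+}(x)\le CR^{\alpha}$ on $\mathcal B_{R}$ (for the upper bound), and $V_{-}(x)\ge c|x|^{\alpha}-C'$ globally (for the lower bound). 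The standard Foster--Lyapunov/Dynkin inequalities then deliver
\[
\mathbb E[\tau_{R}(x_{0})]\;\le\;V_{+}(x_{0})\;\le\;CR^{\alpha}\qquad\text{and}\qquad\mathbb E[\tau_{R}(x_{0})]\;\ge\;\mathbb E[V_{-}(X_{\tau_{R}})]-V_{-}(x_{0}),
\]
while $\mathbb E[V_{-}(X_{\tau_{R}})]\ge cR^{\alpha}-C'$ is immediate from $|X_{\tau_{R}}|>R$. The overshoot bound $\mathbb E|X_{\tau_{R}}|^{\alpha}=O(R^{\alpha})$, needed to justify optional stopping via uniform integrability, follows from $|X_{\tau_{R}}|\le|\mathbf A|\,R+|\mathbf B|$, $\mathbb E|\mathbf A|^{\alpha}=1$, and $\mathbb E|\mathbf B|^{\alpha}<\infty$.

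The hardest part will be to justify the Taylor expansion uniformly in $x$: it fails on the event $\{|\mathbf B|\gtrsim|\mathbf A x|\}$, whose contribution to $\mathbb E|\mathbf A x+\mathbf B|^{\alpha}$ must be controlled using the tail bound in Assumption~\ref{Ass_Cont4} to produce an error that is uniform in $R$. A secondary, more combinatorial difficulty is that the residual drift after all corrections must end up with the correct sign for both $V_{+}$ and $V_{-}$; degenerate cases in which one of the coefficients $a_{k}$ would require dividing by zero (e.g.\ when a moment such as $\mathbb E[\mathrm{sign}(\mathbf A)|\mathbf A|^{\alpha-1}]$ lands on a critical value) need to be accommodated by an alternative correction ansatz, but they do not affect the overall strategy.
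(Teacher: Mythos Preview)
Your overall strategy is sound and would likely succeed, but the paper takes a cleaner route on both halves that avoids precisely the two difficulties you flag (controlling the Taylor remainder uniformly in $x$, and forcing the residual drift to have the correct sign).

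For the lower bound, the paper does not introduce any new test function: it simply observes that in dimension $1$ the identity $h_{\mathbf A}(\alpha)=\mathbb E|\mathbf A|^{\alpha}=1$ means the hypothesis $\mathbb E\|\mathbf U\|^{r}\le 1$ of Lemma~\ref{lem:lyap0} is satisfied with $r=\alpha$ (and with $L_\gamma=1$, no time-skipping needed). Thus the supermartingale from Proposition~\ref{prop:cont_lbd} is already available at the critical exponent, and the bound $\mathbb E[\tau_R(\mathbf x_0)]\ge C_1 R^{\alpha}-C_2(|\mathbf x_0|^{\alpha}+1)$ follows verbatim. Your correction scheme for $V_-$ is essentially a rediscovery of this Lyapunov function.

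For the upper bound, the paper replaces your Taylor expansion by a single exact inequality (Lemma~\ref{lem_ele3}): for $\alpha\ge 2$ there is $C_\alpha>0$ with
\[
|x+y|^{\alpha}\;\ge\;|x|^{\alpha}+\alpha|x|^{\alpha-2}xy+C_\alpha|y|^{\alpha}\qquad(x,y\in\mathbb R).
\]
Applied with $x=\mathbf A_{n+1}X_n$ and $y=\mathbf B_{n+1}$, and using $\mathbb E|\mathbf A|^{\alpha}=1$, this gives a one-step drift of $|X_n|^{\alpha}$ equal to $\alpha\,\mathbb E[|\mathbf A|^{\alpha-2}\mathbf A\mathbf B]\cdot|X_n|^{\alpha-2}X_n+C_\alpha\mathbb E|\mathbf B|^{\alpha}$. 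The cross term is then eliminated not by adding polynomial corrections but by a \emph{translation}: one chooses $c_0$ so that $\widetilde X_n:=X_n-c_0$ satisfies a recursion with $\widetilde{\mathbf A}=\mathbf A$, $\widetilde{\mathbf B}=\mathbf B+c_0\mathbf A-c_0$, and $\mathbb E[|\widetilde{\mathbf A}|^{\alpha-2}\widetilde{\mathbf A}\widetilde{\mathbf B}]=0$ (possible since $\mathbb E|\mathbf A|^{\alpha-1}<\mathbb E|\mathbf A|^{\alpha}$). For the translated process the drift is exactly the constant $C_\alpha\mathbb E|\widetilde{\mathbf B}|^{\alpha}>0$, so $|{\widetilde X}_n|^{\alpha}-C_\alpha\mathbb E|\widetilde{\mathbf B}|^{\alpha}\cdot n$ is a submartingale and optional stopping (with the overshoot estimate of Lemma~\ref{lem:exit}) yields $\mathbb E[\tau_R]\le C R^{\alpha}$. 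This sidesteps both your remainder estimate and the sign issue entirely: there is no approximation, and the positive drift $C_\alpha\mathbb E|\widetilde{\mathbf B}|^{\alpha}$ comes for free from the inequality.
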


\begin{rem}
We note that condition $\alpha\ge2$ is equivalent to $\mathbb{E}\left[\mathbf{A}^{2}\right]\le1$.
\end{rem}

The proof of Theorem \ref{Thm_main_cont} is given in Section \ref{sec55}.
We note that, the proof therein shows the lower bound 
\[
0<\liminf_{R\rightarrow\infty}\frac{\mathbb{E}[\tau_{R}(\mathbf{x}_{0})]}{R^{\alpha}}
\]
for all $\alpha>0$, and the condition $\alpha\ge2$ is required only
in the proof of the last inequality of \eqref{eq:uni_bd}. We believe
that for $\alpha\ge2$, the estimate of the form \eqref{eq:cont_conj}
holds, but for $\alpha<2$, we are not able to exclude the possibility
of $\mathbb{E}[\tau_{R}(\mathbf{x}_{0})]\simeq\kappa(R)R^{\alpha}$
where $\kappa(R)$ is a sub-polynomial prefactor satisfying $\kappa(R)\rightarrow+\infty$
as $R\rightarrow\infty$ slower than any polynomial. 

\subsection{Main results for the explosive regime}

Now we consider the explosive regime, i.e., the case when the Lyapunov
exponent of $\gamma_{L}$ is positive, namely,
\begin{equation}
\lim_{n\rightarrow\infty}\frac{1}{n}\log\Vert\Pi_{n}\Vert=\gamma_{L}>0\label{eq:exp_lya}
\end{equation}
almost surely. To estimate the mean of the exit time under this condition,
we impose the following assumptions. 
\begin{assumption}[Assumptions for explosive regime]$\,$
\label{Ass_Exp}
\begin{enumerate}
\item $\mathbb{P}(\mathbf{\mathbf{A}}\text{ is singular})=0$ and moreover
$\mathbf{A}$ is irreducible, i.e. there is no proper nontrivial subspace $V$
of $\mathbb{R}^{d}$ such that $\mathbf{A}V\subset V$ almost surely. 
\item We have that 
\begin{equation}
\inf_{\mathbf{x}\in\mathbb{R}^{d}}\mathbb{E}\log\left|\mathbf{A}\mathbf{x}+\mathbf{B}-\mathbf{x}\right|>-\infty\;.\label{eq:ass_expl}
\end{equation}
\item There exist $R_{0},\,z_{0}>1$, $C_{0}>0$, and $\beta_{0}>1$ such
that, we have 
\begin{equation}
\mathbb{P}\left[|\mathbf{A}\mathbf{x}+\mathbf{B}|>zR\right]\le\frac{C_{0}}{(\log z)^{\beta_{0}}}\,\mathbb{P}\left[|\mathbf{A}\mathbf{x}+\mathbf{B}|>R\right]\label{eq:nc2-1-1}
\end{equation}
for all $R\in[R_{0},\,\infty),\,z\in[z_{0},\,\infty)$ and $\mathbf{x}\in\mathcal{B}_{R}$.
{[}This is a much weaker version of Assumption \ref{Ass_Cont4}
of the contractive regime{]}
\end{enumerate}
\end{assumption}

Part (1) of the previous assumption is standard in the study of Kesten's stochastic recurrence equation \eqref{eq:AMP}, as most models trivially satisfy it. We will explain
the meaning of part (2) and (3) of Assumption \ref{Ass_Exp} after
stating the main theorem. In the explosive regime, the norm $\Vert\Pi_{n}\Vert=\Vert\mathbf{A}_{n}\cdots\mathbf{A}_{1}\Vert$
exponentially diverges to $+\infty$, we can guess that the exit time
is logarithmically small, and the next result confirm that it is indeed
the case. Of course, we assume \eqref{eq:exp_lya} and Assumption
\ref{Ass_Exp} in the next theorem. 
\begin{thm}
\label{Thm_main_expl}For all $\mathbf{x}_{0}\in\mathbb{R}^{d}$,
we have that 
\[
\frac{1}{\gamma_{L}}\le\liminf_{R\rightarrow\infty}\frac{\mathbb{E}[\tau_{R}(\mathbf{x}_{0})]}{\log R}\le\limsup_{R\rightarrow\infty}\frac{\mathbb{E}[\tau_{R}(\mathbf{x}_{0})]}{\log R}<\infty\;.
\]
 
\end{thm}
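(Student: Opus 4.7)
The proof splits into the lower bound $\liminf_{R\to\infty}\mathbb{E}[\tau_R(\mathbf{x}_0)]/\log R\ge 1/\gamma_L$ and the upper bound $\limsup_{R\to\infty}\mathbb{E}[\tau_R(\mathbf{x}_0)]/\log R<\infty$; both exploit the positive Lyapunov exponent through careful martingale arguments.

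For the upper bound, I would proceed by a two-phase approach. In Phase~1, Assumption~\ref{Ass_Exp}-(2) is used to show that from any initial condition $\mathbf{x}_0$ the process reaches a fixed level $|\mathbf{X}|\ge r_0$ in $O(1)$ expected time: the condition $\inf_{\mathbf{x}}\mathbb{E}\log|(\mathbf{A}-I)\mathbf{x}+\mathbf{B}|>-\infty$ rules out stagnation at any point. In Phase~2, starting from $|\mathbf{X}_n|\ge r_0$, Furstenberg's theorem (applicable under Assumption~\ref{Ass_Exp}-(1), which gives irreducibility and invertibility of $\mathbf{A}$) provides an integer $M$ and a rate $\gamma'\in(0,\gamma_L)$ such that $\inf_{\hat{\mathbf{x}}\in\mathbb{S}^{d-1}}\mathbb{E}\log|\Pi_M\hat{\mathbf{x}}|\ge M\gamma'$. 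Writing $\mathbf{X}_{n+M}=\Pi_M^{(n)}\mathbf{X}_n+\mathbf{E}_n$ with $\mathbf{E}_n$ the $\mathbf{B}$-driven correction (whose log-moment is bounded by Assumption~\ref{Ass_Exp}-(3)), this yields the submartingale-type inequality $\mathbb{E}[\log|\mathbf{X}_{n+M}|-\log|\mathbf{X}_n|\mid\mathcal{F}_n]\ge M\gamma'/2$ for $|\mathbf{X}_n|\ge r_0$. A standard optional-stopping estimate on the subsampled process $(\log|\mathbf{X}_{kM}|)_{k\ge 0}$ then gives $\mathbb{E}[\tau_R(\mathbf{x}_0)]=O(\log R)$.

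For the lower bound, the challenge is to attain the sharp constant $1/\gamma_L$ rather than $1/\gamma_L^{(1)}$, where $\gamma_L^{(1)}:=\mathbb{E}\log\|\mathbf{A}\|\ge\gamma_L$, which is what a naive one-step drift analysis of $\log|\mathbf{X}_n|$ would produce. My plan is to use a Lyapunov-type functional
\[
f(\mathbf{x}):=\log|\mathbf{x}|+g(\mathbf{x}/|\mathbf{x}|),
\]
where $g:\mathbb{S}^{d-1}\to\mathbb{R}$ is a bounded solution to the Poisson equation
\[
g(\hat{\mathbf{x}})-\mathbb{E}\bigl[g\bigl(\widehat{\mathbf{A}\hat{\mathbf{x}}}\bigr)\bigr]=\mathbb{E}\log|\mathbf{A}\hat{\mathbf{x}}|-\gamma_L,\qquad\hat{\mathbf{x}}\in\mathbb{S}^{d-1},
\]
whose existence follows from Furstenberg--Guivarc'h--Raugi theory (geometric ergodicity of the induced Markov chain on the projective space) under Assumption~\ref{Ass_Exp}-(1). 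A direct computation then shows that for $|\mathbf{X}_n|$ sufficiently large,
\[
\mathbb{E}[f(\mathbf{X}_{n+1})-f(\mathbf{X}_n)\mid\mathcal{F}_n]\le\gamma_L+\varepsilon(|\mathbf{X}_n|),
\]
where the error $\varepsilon(r)\to 0$ as $r\to\infty$ and is controlled uniformly via Assumption~\ref{Ass_Exp}-(3), whose weak tail condition on $|\mathbf{A}\mathbf{x}+\mathbf{B}|$ bounds $\mathbb{E}\log(1+|\mathbf{B}|/|\mathbf{A}\mathbf{X}_n|)$. Doob's optional stopping applied at $\tau_R$, together with $f(\mathbf{X}_{\tau_R})\ge\log R-\|g\|_\infty$, then gives $\mathbb{E}[\tau_R(\mathbf{x}_0)]\ge\bigl(\log R-f(\mathbf{x}_0)-\|g\|_\infty\bigr)/\gamma_L-O(1)$, proving the lower bound upon sending $R\to\infty$.

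The principal obstacle is constructing the corrector $g$ and verifying the drift inequality uniformly in the direction $\mathbf{X}_n/|\mathbf{X}_n|$: the existence of a bounded Poisson solution requires geometric ergodicity of the sphere dynamics, which under mere irreducibility (Assumption~\ref{Ass_Exp}-(1)) typically calls for additional structure such as Furstenberg's contraction condition; if this is not directly available, one may need to work instead with an asymptotic form of the Poisson equation or with subsampled time scales so that the averaging substitutes for the corrector. A secondary technical issue is the breakdown of $f$ near $\mathbf{0}$, addressed by Phase~1 of the upper-bound argument (escape from a neighborhood of the origin in $O(1)$ expected time via Assumption~\ref{Ass_Exp}-(2)), which also permits us to reduce both bounds to the case $|\mathbf{x}_0|\ge r_0$.
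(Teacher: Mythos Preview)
Your plan diverges from the paper in both directions, and the upper bound has a real gap.

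\textbf{Lower bound.} The paper does not use a Poisson corrector at all. Instead it proves the stronger almost-sure statement $\liminf_{R\to\infty}\tau_R(\mathbf x_0)/\log R\ge 1/\gamma_L$ and then applies Fatou. The argument is a block comparison: for each $N_0\ge 1$ and residue $s\in\{1,\dots,N_0\}$, the one-dimensional process $Y_k^{(s)}$ with multiplicative part $a_k=\lVert\mathbf A_{kN_0+s}\cdots\mathbf A_{(k-1)N_0+s+1}\rVert$ and additive part $b_k$ dominates $|\mathbf X_{kN_0+s}|$; an elementary SLLN argument (Lemma~\ref{lem:lbun}) gives $\zeta_R^{(s)}/\log R\to 1/\mathbb E\log a_1$ a.s., whence $\liminf_R\tau_R/\log R\ge N_0/\mathbb E\log\lVert\Pi_{N_0}\rVert$. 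Letting $N_0\to\infty$ recovers the sharp constant $1/\gamma_L$ from the definition of $\gamma_L$. This is more elementary than your corrector route and works under the stated assumptions without any spectral-gap input on the sphere chain, which (as you yourself flag) is not guaranteed by mere irreducibility.

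\textbf{Upper bound.} Your Phase~2 inequality $\mathbb E[\log|\mathbf X_{n+M}|-\log|\mathbf X_n|\mid\mathcal F_n]\ge M\gamma'/2$ for $|\mathbf X_n|\ge r_0$ is not justified by Assumptions~\ref{Ass_Exp}(1)--(3). Even if $\inf_{\hat{\mathbf x}}\mathbb E\log|\Pi_M\hat{\mathbf x}|\ge M\gamma'$ (which does follow from Lemma~\ref{Lem_expl_Lya}), the additive term $\mathbf E_n$ can nearly cancel $\Pi_M^{(n)}\mathbf X_n$; Assumption~\ref{Ass_Exp}(3) only controls the \emph{upper} tail of $|\mathbf A\mathbf x+\mathbf B|$ and says nothing about $\mathbb P(|\mathbf X_{n+M}|$ is small$)$, so a uniform lower bound on $\mathbb E\log|\Pi_M\hat{\mathbf x}+\mathbf E_n/|\mathbf X_n||$ is missing. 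The same difficulty bites your Fatou step in the lower bound (the process may revisit neighbourhoods of $\mathbf 0$, not just start there).

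The paper circumvents this by never lower-bounding $\log|\mathbf X_n|$ directly. It couples two trajectories one step apart: $\mathbf X_{\sigma_j+n+1}-\mathbf X_{\sigma_j+n}=\Pi'_n(\mathbf X_{\sigma_j+1}-\mathbf X_{\sigma_j})$, which carries \emph{no} additive error. Using $\log^+|a|+\log^+|b|\ge\log^+|a-b|-\log 2$ together with $|\Pi'_n\mathbf v|\ge\lVert(\Pi'_n)^{-1}\rVert^{-1}|\mathbf v|$, Lemma~\ref{Lem_expl_Lya}, and Assumption~\ref{Ass_Exp}(2) to lower-bound $\mathbb E\log|\mathbf X_{\sigma_j+1}-\mathbf X_{\sigma_j}|$, one obtains $\mathbb E[\log^+|\mathbf X_{\sigma_j+n+1}|+\log^+|\mathbf X_{\sigma_j+n}|\mid\mathcal F_{\sigma_j}]\ge \tfrac{2n}{3}\gamma_L-C_1$. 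This is the true role of Assumption~\ref{Ass_Exp}(2): it bounds the log of the one-step \emph{increment}, not escape from the origin. From here the paper builds an adaptive sequence of stopping times $(\sigma_j)$ so that $M_j=\log^+|\mathbf X_{\sigma_j}|-\tfrac{\gamma_L}{4}\sigma_j$ is a submartingale, and optional stopping together with the exit-location control of Lemma~\ref{lem_expl_exit loc} (this is where Assumption~\ref{Ass_Exp}(3) enters) yields $\mathbb E[\tau_R(\mathbf x_0)]\le\kappa_1(1+\log R)$.
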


The proof of this theorem is given in Section \ref{sec6}. In particular,
we prove in Proposition \ref{prop:exp_lb} a strong lower bound of
the form 
\[
\liminf_{R\rightarrow\infty}\frac{\tau_{R}(\mathbf{x}_{0})}{\log R}\ge\frac{1}{\gamma_{L}}
\]
which holds almost surely. Then, the lower bound given in Theorem
\ref{Thm_main_expl} is a direct consequence of Fatou's lemma. The
proof of upper bound is quite technical and again based on highly
complicated construction of a sub-martingale which leads us an estimate
of the form 
\[
\mathbb{E}\left[\tau_{R}(\mathbf{x}_{0})\right]\le\kappa_{1}(1+\log R)-\kappa_{2}\log^{+}|\mathbf{x}_{0}|
\]
for some constants $\kappa_{1},\,\kappa_{2}>0$. We further note that
we can even compute these two constants explicitly. 
\begin{rem}[Remarks on Assumption \ref{Ass_Exp} and Theorem \ref{Thm_main_expl}]$\,$
\label{rem:Ass_Exp}
\begin{enumerate}
\item If part (2) of Assumption \ref{Ass_Exp} is violated, the escape from
the domain $\mathcal{B}_{R}$ may take much longer time scale, esepcially
when the dynamical system starts from $\mathbf{x}_{0}$ such that
\begin{equation}
\mathbb{E}\log\left|\mathbf{A}\mathbf{x}_{0}+\mathbf{B}-\mathbf{x}_{0}\right|=-\infty\;.\label{eq:condens}
\end{equation}
In particular, \eqref{eq:condens} implies that $\mathbf{X}_{1}(\mathbf{x}_{0})$
is very close to $\mathbf{x}_{0}$, and indicates that it might be
very hard to escape from a neighborhood of $\mathbf{x}_{0}$. 
\item In view of Theorem \ref{Thm_main_expl}, we confirmed that the escaping
time from a domain $\mathcal{B}_{R}$ is of $O(\log R)$. However,
if the random vector $\mathbf{A}\mathbf{x}+\mathbf{B}$ for some $\mathbf{x}$
is super-heavy tail of the form $\mathbb{P}\left[|\mathbf{Ax}+\mathbf{B}|\ge z\right]\sim\frac{1}{(\log z)^{\gamma}}$
for some $\gamma<1$, then the escape from the domain is not governed
by the structure of the process \eqref{eq:AMP} but by a single large
outcome and hence the analysis is trivialized significantly. For this
reason we assume part (3) of Assumption \ref{Ass_Exp} to investigate
the structural property of the process \eqref{eq:AMP}. 
\item We conjecture that it holds that 
\[
\lim_{R\rightarrow\infty}\frac{\mathbb{E}[\tau_{R}(\mathbf{x}_{0})]}{\log R}=\frac{1}{\gamma_{L}}
\]
as the multiplicative factor $\mathbf{A}_{n}$ satisfies, in the sense
of \eqref{eq:exp_lya}, $\Vert\mathbf{A}_{n}\cdots\mathbf{A}_{1}\Vert\sim e^{\gamma_{L}n}$
and hence we can guess that $|\mathbf{X}_{n}|$ grows as $e^{\gamma_{L}n}$.
Our lower bound is in coincidence with this conjecture, while getting
a tight upper bound leading this result seems to be very difficult
at this moment and probably requires additional assumption on the
behavior of $(\mathbf{A},\,\mathbf{B})$ for the reason indicated
in part (1) of the current remark. 
\item If $\mathbb{P}(\mathbf{\mathbf{A}}\text{ is singular})=0$ is satisfied,
Assumption \ref{Ass_Cont5} implies (1) of Assumption \ref{Ass_Exp}.
Indeed, suppose that there exists such $V\subset\mathbb{R}^{d}$.
Taking any nonzero vectors $v\in V$ and $v^{\perp}\in V^{\perp}$,
it holds that
\[
\mathbb{P}(v^{\perp}\cdot\Pi_{n_{0}}v=0)=1\ ,
\]
violating Assumption \ref{Ass_Cont5}. Also, as mentioned in the assumption,
(3) of Assumption \ref{Ass_Exp} is a direct consequence of Assumption
\ref{Ass_Cont4}. Hence, if Assumptions \ref{Ass_Cont4}
and \ref{Ass_Cont5} are guaranteed, only (2) of Assumption \ref{Ass_Exp}
is left to be shown to fulfill the whole Assumption for explosive
regime.
\end{enumerate}
\end{rem}

\section{\label{sec3}Applications }

Before delving into the main proofs, we will first introduce some
significant examples that emerge in various fields such as machine
learning or economics that fall into the framework explained in the
previous section. 

\subsection{\label{sec31}Stochastic gradient descent on a quadratic loss function}

\subsubsection*{Vanilla Mini-batch stochastic gradient descent}

The first example appears in the field of machine learning, as observed
in \cite{gurbuzbalaban2021heavy}. To be concrete, the stochastic gradient descent (SGD)
method with mini-batch runs on a quadratic loss function has a form
of \eqref{eq:AMP}. The quadratic loss function $F:\mathbb{R}^{d}\rightarrow\mathbb{R}$
for this model is defined by 

\[
F(\mathbf{x})=\frac{1}{2}\mathbb{E}_{(\mathbf{a},\,b)}\left[(\mathbf{a}\cdot\mathbf{x}-b)^{2}\right]
\]
where the data $(\mathbf{a},\,b)\in\mathbb{R}^{d}\times\mathbb{R}$
is a random element with unknown distribution $\mu$. We assume at
this moment that we can sample the random data $(\mathbf{a},\,b)$
according to $\mu$ although we do not know exactly what the distribution
is. Then ,the mini-batch SGD finding approximate minimum of $F$ with
batch size $m$ is conducted as follows. 
\begin{enumerate}
\item We set $\mathbf{X}_{0}=\mathbf{x}_{0}\in\mathbb{R}^{d}$ any initial
point.
\item At each $n$th step, $n\in\mathbb{Z}^{+}$, sample $m$ independent
data $(\mathbf{a}_{1}^{(n)},\,b_{1}^{(n)}),\;\dots,\,(\mathbf{a}_{m}^{(n)},\,b_{m}^{(n)})$
(according to the law $\mu$) and then define 
\[
F_{n}(\mathbf{x})=\frac{1}{m}\sum_{i=1}^{m}\frac{1}{2}(\mathbf{a}_{i}^{(n)}\cdot\mathbf{x}-b_{i}^{(n)})^{2}\;.
\]
\item Perform the gradient descent with learning rate $\eta>0$ to the approximating
function $F_{n}$, i.e., 
\begin{equation}
\mathbf{X}_{n+1}=\mathbf{X}_{n}-\eta\nabla F_{n+1}(\mathbf{X}_{n})\;.\label{eq:sgd}
\end{equation}
\end{enumerate}
Then, a simple computation shows that the \eqref{eq:sgd} can be re-written
as 
\[
\mathbf{X}_{n+1}=\mathbf{A}_{n+1}\mathbf{X}_{n}+\mathbf{B}_{n+1}\;\;\;;\;n\in\mathbb{Z}_{0}^{+}
\]
where
\begin{equation}
\mathbf{A}_{n}=\mathbf{I}_{d}-\frac{\eta}{m}\sum_{i=1}^{m}\mathbf{a}_{i}^{(n)}(\mathbf{a}_{i}^{(n)})^{\dagger}\;\;\;\;\text{and}\;\;\;\;\mathbf{B}_{n}=\frac{\eta}{m}\sum_{i=1}^{m}b_{i}^{(n)}\mathbf{a}_{i}^{(n)}\;.\label{eq:sgd_AB}
\end{equation}
Here, $\mathbf{I}_{d}$ denotes $d\times d$ identity matrix and $\mathbf{v}^{\dagger}$
denotes the transpose of $\mathbf{v}$ and therefore $\mathbf{a}_{i}^{(n)}(\mathbf{a}_{i}^{(n)})^{\dagger}$
denotes a $d\times d$ matrix. 

By assuming that the data are normal as in \cite{gurbuzbalaban2021heavy} , i.e.,
\begin{equation}
\mathbf{\text{\ensuremath{\mathbf{a}}\ensuremath{\sim}}}\mathcal{N}(\mathbf{0},\,\Sigma)\;\;\;\;\text{and}\;\;\;\;b\sim\mathcal{N}(0,\,\sigma_{B}^{2})\label{eq:sgd_ass}
\end{equation}
for some $\sigma_{B}>0$, we can verify that all the assumptions of
the previous section holds. We refer to Appendix B for a brief explanation. 

An interesting remark is that, by tuning learning rate $\eta$ from
a large number to small one as in the real application of this algorithm,
the lyapunov exponent decrease from positive number to the negative
number. The algorithm will be stabilized and find the minimum when
we take $\eta$ sufficiently small, (but not that small so that the
algorithm is way too slow) so that the Lyapunov exponenent associated
with the matrix $\mathbf{A}_{n}$ becomes negative. 

\subsubsection*{Mini-batch stochastic gradient descent with momentum}
We can sometimes accelerate the mini-batch SGD algorithim by exploiting
the momentum. This algorithm is a variant of the vanilla SGD explained in \eqref{eq:sgd}. 
All the settings of mini-batch SGD with momentum are identical to the one with vanilla one, but we update not only $\mathbf{X}_{n}$ but also momentum $\mathbf{V}_{n}$ together. 
For a parameter $\gamma\in[0,\,1)$ tuning the strength of the momentum, we update $(\mathbf{X}_{n+1},\,\mathbf{V}_{n+1})$ by 
\[
\begin{cases}
\mathbf{X}_{n+1}=\mathbf{X}_{n}-\eta\mathbf{V}_{n+1}\;,\\
\mathbf{V}_{n+1}=\gamma\mathbf{V}_{n}+(1-\gamma)\nabla F_{n+1}(\mathbf{X}_{n})\;.
\end{cases}
\]
Note that the case $\gamma=0$ corresponds to the vanilla mini-batch
SGD. Then, writing 
\[
\mathbf{Y}_{n}:=\begin{pmatrix}\mathbf{X}_{n}\\
\mathbf{V}_{n}
\end{pmatrix}\;,\;\;\;\;\mathbf{C}_{n}=\begin{pmatrix}\mathbf{I}_{d}-\eta(1-\gamma)\mathbf{A}_{n} & \ -\eta\gamma\mathbf{I}_{d}\\
(1-\gamma)\mathbf{A}_{n} & \gamma\mathbf{I}_{d}
\end{pmatrix},\;\;\;\ \mathbf{D}_{n}=\begin{pmatrix}-\eta(1-\gamma)\mathbf{B}_{n}\\
(1-\gamma)\mathbf{B}_{n}
\end{pmatrix}\;,
\]
where $\mathbf{A}_{n}$ and $\mathbf{B}_{n}$ are the ones defined
in \eqref{eq:sgd_AB}, we get 
\[
\mathbf{Y}_{n+1}=\mathbf{C}_{n+1}\mathbf{Y}_{n}+\mathbf{D}_{n+1}\;\;\;;\;n\in\mathbb{Z}_{0}^{+}
\]
and therefore $(\mathbf{Y}_{n})_{n\in\mathbb{Z}_{0}^{+}}$ can be
regarded as a $2d$-dimensional process \eqref{eq:AMP}. Again, by
assuming \eqref{eq:sgd_ass}, we can check that the process $(\mathbf{Y}_{n})_{n\in\mathbb{Z}_{0}^{+}}$
also satisfies the assumptions of Section \ref{sec2}. We will explain this in a companion paper of the current article. 

\subsection{\label{sec32}Time series models }

The remaining examples refer to financial models. As mentioned in
\cite[Chapter 1]{buraczewski2016stochastic}, the ARCH$(p)$ and GARCH$(1,q)$ are well
known examples of stochastic recurrence equation in econometrics or financial engineering. 

\subsubsection*{Autoregressive conditional heteroskedasticity (ARCH) model }

ARCH($p$) is a time series modeling log-returns of assets. More precisely,
in this model, the speculative prices of certain asset at time $t$
is denoted by $P_{t}>0$ and define the log-return of $P_{t}$ as
\[
X_{t}=\log\frac{P_{t+1}}{P_{t}}\;\;\;\;;\;t\in\mathbb{Z}^{+}\;.
\]
Then, for each $p\in\mathbb{Z}^{+}$, ARCH$(p)$ models the behavior
of $X_{t}$ as 
\begin{equation}
X_{t}=\sigma_{t}Z_{t}\;\;\;\;;\;t\ge p\label{eq:arch}
\end{equation}
where $(Z_{t})_{t\in\mathbb{Z}^{+}}$ is an i.i.d. sequence of random
variables with mean zero and variance one, and where 
\begin{equation}
\sigma_{t}^{2}=\alpha_{0}+\sum_{i=1}^{p}\alpha_{t}X_{t-i}^{2}\label{eq:arch2}
\end{equation}
for some non-negative constants $\alpha_{0},\,\alpha_{1},\,\dots,\,\alpha_{p}\ge0$
such that $\alpha_{0}\alpha_{p}>0$. We can re-interpret this equation
as 
\[
\begin{pmatrix}X_{t+1}^{2}\\
X_{t}^{2}\\
X_{t-1}^{2}\\
\vdots\\
X_{t-p+2}^{2}
\end{pmatrix}=\begin{pmatrix}\alpha_{1}Z_{t+1}^{2} & \alpha_{2}Z_{t+1}^{2} & \cdots & \alpha_{p-1}Z_{t+1}^{2} & \alpha_{p}Z_{t+1}^{2}\\
1 & 0 & \cdots & 0 & 0\\
0 & 1 & \cdots & 0 & 0\\
\vdots & \vdots & \cdots & \vdots & \vdots\\
0 & 0 & \cdots & 1 & 0
\end{pmatrix}\begin{pmatrix}X_{t}^{2}\\
X_{t-1}^{2}\\
X_{t-2}^{2}\\
\vdots\\
X_{t-p+1}^{2}
\end{pmatrix}+\begin{pmatrix}\alpha_{0}Z_{t+1}^{2}\\
0\\
0\\
\vdots\\
0
\end{pmatrix}
\]
and therefore, by letting 
\begin{equation}
\text{\ensuremath{\mathbf{X}_{t}=\begin{pmatrix}X_{t}^{2}\\
X_{t-1}^{2}\\
X_{t-2}^{2}\\
\vdots\\
X_{t-p+1}^{2}
\end{pmatrix}\;,\;\;\;\;}}\mathbf{A}_{t}=\begin{pmatrix}\alpha_{1}Z_{t}^{2} & \alpha_{2}Z_{t}^{2} & \cdots & \alpha_{p-1}Z_{t}^{2} & \alpha_{p}Z_{t}^{2}\\
1 & 0 & \cdots & 0 & 0\\
0 & 1 & \cdots & 0 & 0\\
\vdots & \vdots & \cdots & \vdots & \vdots\\
0 & 0 & \cdots & 1 & 0
\end{pmatrix}\;,\;\;\;\ \mathbf{B}_{t}=\begin{pmatrix}\alpha_{0}Z_{t}^{2}\\
0\\
0\\
\vdots\\
0
\end{pmatrix}\;,\label{eq:app_ex3}
\end{equation}
we can write the ARCH$(p)$ model exactly the same as \eqref{eq:AMP}.

Note that one might be interested in the exit time of $X_{t}$ itself,
instead of $\mathbf{X}_{t}$. This is not an issue since, if we denote
by $\tau_{R}$ and $\widehat{\tau}_{R}$ the exit time of the process
$\mathbf{X}_{t}$ and $X_{t}^2$, respectively (disregarding the initial
location at this moment), we have that, for all $R>0$
\[
\tau_{R}\le\widehat{\tau}_{R}\le\tau_{\sqrt{p}R}+p\ ,
\]
and therefore we can deliver the estimate of $\tau_{R}$ directly
to that of $\widehat{\tau}_{R}$. 

If Z is assumed to be normally distributed and $\alpha_1 \neq 0$, we proved that this model satisfies all the assumptions for our result. The proof is presented in the appendix A.

\subsubsection*{Generalized autoregressive conditional heteroskedasticity (GARCH)
model }

For $q\in\mathbb{Z}^{+}$, GARCH$(1,\,q)$ is a variation of ARCH
and defined by a sequence $(X_{t})_{t\in\mathbb{Z}^{+}}$ of random
variables as in \eqref{eq:arch} for $t\ge q$ with the same sequence
$(Z_{t})_{t\in\mathbb{Z}^{+}}$ of random variables and with a new
relation for $\sigma_{t}$:
\[
\sigma_{t}^{2}=\alpha_{0}+\alpha_{1}X_{t-1}^{2}+\sum_{j=1}^{q}\beta_{j}\sigma_{t-j}^{2}\;,
\]
for some non-negative constants $\alpha_{0},\,\alpha_{1},\,\beta_{1},\,\dots,\,\beta_{q}\ge0$
such that $\alpha_{0}\alpha_{1}\beta_{q}>0$. This equation can be
rewritten as:
\[
\begin{pmatrix}\sigma_{t+1}^{2}\\
\sigma_{t}^{2}\\
\sigma_{t-1}^{2}\\
\vdots\\
\sigma_{t-q+2}^{2}
\end{pmatrix}=\begin{pmatrix}\alpha_{1}Z_{t+1}^{2}+\beta_{1} & \beta_{2} & \cdots & \beta_{q-1} & \beta_{q}\\
1 & 0 & \cdots & 0 & 0\\
0 & 1 & \cdots & 0 & 0\\
\vdots & \vdots & \cdots & \vdots & \vdots\\
0 & 0 & \cdots & 1 & 0
\end{pmatrix}\begin{pmatrix}\sigma_{t}^{2}\\
\sigma_{t-1}^{2}\\
\sigma_{t-2}^{2}\\
\vdots\\
\sigma_{t-q+1}^{2}
\end{pmatrix}+\begin{pmatrix}\alpha_{0}\\
0\\
0\\
\vdots\\
0
\end{pmatrix}\;,
\]
and therefore, by letting 
\begin{equation}
\mathbf{X}_{t}=\begin{pmatrix}\sigma_{t}^{2}\\
\sigma_{t-1}^{2}\\
\sigma_{t-2}^{2}\\
\vdots\\
\sigma_{t-q+1}^{2}
\end{pmatrix}\;,\;\;\;\;\mathbf{A}_{t}=\begin{pmatrix}\alpha_{1}Z_{t}^{2}+\beta_{1} & \beta_{2} & \cdots & \beta_{q-1} & \beta_{q}\\
1 & 0 & \cdots & 0 & 0\\
0 & 1 & \cdots & 0 & 0\\
0 & 0 & \cdots & 0 & 0\\
0 & 0 & \cdots & 1 & 0
\end{pmatrix}\;,\;\;\;\;\mathbf{B}_{t}=\begin{pmatrix}\alpha_{0}\\
0\\
0\\
\vdots\\
0
\end{pmatrix}\;,\label{eq:app_ex4}
\end{equation}
we again get the expression of \eqref{eq:AMP} for the GARCH
model. 

Similar to the ARCH model, we proved that the assumptions holds true if Z is normally distributed, see appendix.

\section{\label{sec4}Preliminary Results for contractive Regime}

We study in this and the next section the contractive regime. In particular,
we shall always assume that $\gamma_{L}<0$ and that Assumptions \ref{Ass_Cont1},
\ref{Ass_Cont2}, \ref{Ass_Cont3}, \ref{Ass_Cont4}, and \ref{Ass_Cont5}
throughout Sections \ref{sec4} and \ref{sec5}. 

The proof of Theorems \ref{Thm_main_cont} and \ref{Thm_main_cont_uni}
are given in the next section. In this section, we provide several
preliminary results required in the proof given in the next section. 

\subsection{Coupling} 
Recall that $(\Omega,\,\mathcal{E},\,\mathbb{P})$ denotes the probability
space containing $\{(\mathbf{A}_{n},\,\mathbf{B}_{n}):n\in\mathbb{Z}^{+}\}$.
Since the process $(\mathbf{X}_{t}(\mathbf{x}))_{t\ge0}$ is completely
determined by outcomes of $(\mathbf{A}_{n},\,\mathbf{B}_{n})_{n\in\mathbb{Z}^{+}}$
and its starting location $\mathbf{X}_{0}$, we can couple all the
processes $(\mathbf{X}_{n}(\mathbf{y}))_{n\in\mathbb{Z}_{0}^{+}},\,\mathbf{y}\in\mathbb{R}^{d}$,
at the same probability space. Note that, under this coupling, all
the processes $(\mathbf{X}_{n}(\mathbf{y}))_{n\in\mathbb{Z}_{0}^{+}},\,\mathbf{y}\in\mathbb{R}^{d}$
share the outcomes $(\mathbf{A}_{n},\,\mathbf{B}_{n})_{n\in\mathbb{Z}^{+}}$.
From this moment on, when we consider several processes
start at different starting point, we shall assume that they are always
coupled throughout this manner. 

Note that, we can deduce by direct computation from \eqref{eq:AMP}
that 
\begin{equation}
\mathbf{X}_{n}(\mathbf{x})=\Pi_{n}\mathbf{x}+\mathbf{A}_{2}\cdots\mathbf{A}_{n}\mathbf{B}_{1}+\cdots+\mathbf{A}_{n}\mathbf{B}_{n-1}+\mathbf{B}_{n}\;.\label{eq:AMP_rep}
\end{equation}
Thus, under the coupling explained above, for all $n\in\mathbb{Z}^{+}$
and $\mathbf{y},\,\mathbf{z}\in\mathbb{R}^{d}$, we have 
\begin{equation}
\mathbf{X}_{n}(\mathbf{y})-\mathbf{X}_{n}(\mathbf{z})=\Pi_{n}(\mathbf{y}-\mathbf{z})\;.\label{eq:couple}
\end{equation}

\begin{rem}
\label{rem:ass}We shall assume in this section Assumptions 
\ref{Ass_Cont1}, \ref{Ass_Cont2},
and \ref{Ass_Cont3} so that by Theorem \ref{Thm_kes0} there exists
$\alpha>0$ satisfying $h_{\mathbf{A}}(\alpha)=1$ and there exists
a stationary measure $\nu_{\infty}$ of \eqref{eq:AMP}. Also, for the contractive case, it is known that
the infinite series
\[
\sum_{n=1}^{\infty}\mathbf{A}_{1}\cdots\mathbf{A}_{n-1}\mathbf{B}_{n}
\]
converges to a random variable $\mathbf{R}$ with distribution $\nu_{\infty}$
almost surely. see \cite[page 235]{kesten1973random}. This representation will be
used to prove Lemma \ref{lem:pre1}.
\end{rem}

\subsection{Finiteness of exit time}

In the contractive regime, since the process $\mathbf{X}_{t}$ tends
to $\mathbf{0}$ due to contracting force, and therefore it is not
even clear whether the exit time $\tau_{R}$ for large $R$ is finite
or not. In this section, we prove that for any $R>0$, the exit time
$\tau_{R}$ is finite almost surely even in the contractive regime. 
\begin{rem}
\label{Rem: lim_Pi}Before starting, we will summarize handy facts
regarding the evolution of the moment of $\Vert\Pi_{n}\Vert$ which
will be used frequently in the argument given in Sections \ref{sec4}
and \ref{sec5}. Namely, by Remark \ref{rem:h_A}-(3), we have that
\begin{equation}
\begin{cases}
\lim_{n\rightarrow\infty}\mathbb{E}\Vert\Pi_{n}\Vert^{\gamma}=0, & \gamma\in(0,\,\alpha)\;,\\
\lim_{n\rightarrow\infty}\mathbb{E}\Vert\Pi_{n}\Vert^{\gamma}=\infty. & \gamma\in(\alpha,\,\infty)\;.
\end{cases}\label{eq:mom_sm}
\end{equation}
Moreover, for all $\gamma\in(\alpha,\,\infty)$, we can find $n_{\gamma}\in\mathbb{Z}_{0}^{+}$
and $\delta_{\gamma}>0$ such that 
\begin{equation}
\mathbb{E}\Vert\Pi_{n}\Vert^{\gamma}\ge\left(1+\delta_{\gamma}\right)^{n}\ \;\;\;\text{for all }n>n_{0}\;.\label{eq:mom_sm2}
\end{equation}
\end{rem}

We first nvestigate the tail of the stationary measure $\nu_{\infty}$
(cf. Remark \ref{rem:ass}). Note that we did not assume the requirements
of Kesten's theorem, and therefore the next lemma cannot be obtained
from it. 
\begin{lem}
\label{lem:pre1}For all $\beta\in(0,\,\alpha)$, we have that
\begin{equation}
\int_{\mathbb{R}^{d}}|\mathbf{x}|^{\beta}\nu_{\infty}(d\mathbf{x})<\infty\;.\label{eq:lp2}
\end{equation}
\end{lem}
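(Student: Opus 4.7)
The plan is to use the series representation of $\nu_\infty$ noted in Remark \ref{rem:ass}: the random variable
\[
\mathbf{R} \;=\; \sum_{n=1}^{\infty} \mathbf{A}_1 \mathbf{A}_2 \cdots \mathbf{A}_{n-1} \mathbf{B}_n
\]
is distributed according to $\nu_{\infty}$, so it suffices to prove $\mathbb{E}|\mathbf{R}|^\beta < \infty$. The strategy is to bound this by splitting the sum term-wise, using submultiplicativity of $\Vert\cdot\Vert$, independence between $\Pi_{n-1}$ and $\mathbf{B}_n$, and the fact that $h_{\mathbf{A}}(\beta)<1$ for $\beta\in(0,\alpha)$ (Remark \ref{rem:h_A}-(3)) to get a convergent geometric series. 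Moment bounds on $|\mathbf B|$ come from Assumption \ref{Ass_Cont4}: since $\alpha_+>\alpha>\beta$, \eqref{eq:nc2-2} yields $\mathbb{E}|\mathbf{B}|^\beta < \infty$.

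First, the geometric decay of $\mathbb{E}\Vert\Pi_n\Vert^\beta$. By definition, $h_{\mathbf{A}}(\beta)=\lim_{n\to\infty}[\mathbb{E}\Vert\Pi_n\Vert^\beta]^{1/n}$, and by Remark \ref{rem:h_A}-(3) we have $h_{\mathbf{A}}(\beta)<1$. Hence there exist $r\in(0,1)$ and $n_\beta\in\mathbb{Z}^+$ such that
\[
\mathbb{E}\Vert\Pi_n\Vert^\beta \;\le\; r^{n}\quad\text{for all }n\ge n_\beta,
\]
and for $n<n_\beta$ each term is finite because $\beta<\alpha<\alpha_\infty$ together with $\mathbb{E}\Vert \mathbf A\Vert^\beta<\infty$ and submultiplicativity \eqref{eq:subm-1} give $\mathbb{E}\Vert\Pi_n\Vert^\beta \le (\mathbb{E}\Vert\mathbf{A}\Vert^\beta)^n <\infty$.

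The case split on $\beta$ is routine. If $\beta\in(0,1]$, I would use subadditivity $|x+y|^\beta \le |x|^\beta + |y|^\beta$ to get
\[
\mathbb{E}|\mathbf{R}|^\beta
\;\le\; \sum_{n=1}^{\infty} \mathbb{E}\bigl|\Pi_{n-1}\mathbf{B}_n\bigr|^\beta
\;\le\; \sum_{n=1}^{\infty} \mathbb{E}\Vert\Pi_{n-1}\Vert^\beta\,\mathbb{E}|\mathbf{B}_n|^\beta
\;=\; \mathbb{E}|\mathbf{B}|^\beta \sum_{n=0}^{\infty} \mathbb{E}\Vert\Pi_n\Vert^\beta,
\]
where I used submultiplicativity $|\Pi_{n-1}\mathbf{B}_n|\le \Vert\Pi_{n-1}\Vert\,|\mathbf{B}_n|$ and the independence of $\mathbf{B}_n$ from $(\mathbf{A}_1,\dots,\mathbf{A}_{n-1})$, with $\Pi_0 := \mathbf{I}_d$. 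The sum converges by the geometric bound above. If instead $\beta\in(1,\alpha)$, Minkowski's inequality in $L^\beta$ gives
\[
\bigl(\mathbb{E}|\mathbf{R}|^\beta\bigr)^{1/\beta}
\;\le\; \sum_{n=1}^{\infty} \bigl(\mathbb{E}|\Pi_{n-1}\mathbf{B}_n|^\beta\bigr)^{1/\beta}
\;\le\; (\mathbb{E}|\mathbf{B}|^\beta)^{1/\beta}\sum_{n=0}^{\infty}\bigl(\mathbb{E}\Vert\Pi_n\Vert^\beta\bigr)^{1/\beta},
\]
and the summand is eventually bounded by $r^{n/\beta}$, yielding convergence.

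I do not anticipate a major obstacle; the only point requiring a little care is converting the qualitative statement $h_{\mathbf{A}}(\beta)<1$ into a true geometric upper bound $\mathbb{E}\Vert\Pi_n\Vert^\beta\le r^n$ valid for all $n$ (not just asymptotically), which is handled by extracting a uniform constant from the finitely many initial terms. After that, the two case analyses on $\beta$ are mechanical and the conclusion \eqref{eq:lp2} follows immediately.
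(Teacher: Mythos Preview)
Your proof is correct and follows essentially the same approach as the paper's: both bound $\mathbb{E}|\mathbf{R}|^\beta$ via the series representation, submultiplicativity, independence, and the geometric decay coming from $h_{\mathbf{A}}(\beta)<1$. The paper packages this slightly differently---it groups the series into blocks of a fixed length $n_1$ chosen so that $\mathbb{E}\Vert\Pi_{n_1}\Vert^\beta<1$, and replaces your case split on $\beta\lessgtr 1$ by the unified quasi-norm $\Vert X\Vert_\beta=(\mathbb{E}|X|^\beta)^{\max\{1,1/\beta\}}$ together with Fatou's lemma---but these are cosmetic variations on the same argument.
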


\begin{proof}
Let us fix $\beta\in(0,\,\alpha)$. By Remark \ref{Rem: lim_Pi},
we can take $n_{1}\in\mathbb{Z}^{+}$ such that 
\begin{equation}
\mathbb{E}\Vert\Pi_{n_{1}}\Vert^{\beta}<1\;.\label{eq:condn_1}
\end{equation}
For $n\in\mathbb{Z}^{+}$, define 
\[
\mathbf{R}_{n}=\sum_{k=0}^{n}\Pi_{kn_{1}}\mathbf{W}_{k}
\]
where, for $k\in\mathbb{Z}_{0}^{+},$ 
\[
\mathbf{W}_{k}:=\mathbf{B}_{kn_{1}+1}+\mathbf{A}_{kn_{1}+1}\mathbf{B}_{kn_{1}+2}+\cdots+\mathbf{A}_{kn_{1}+1}\cdots\mathbf{A}_{(k+1)n_{1}-1}\mathbf{B}_{(k+1)n_{1}}
\]
so that by Remark \ref{rem:ass}, the series $(\mathbf{R}_{n})_{n\in\mathbb{Z}^{+}}$
converges almost surely to a random variable $\mathbf{R}$ with distribution
$\nu_{\infty}$. 

By the elementary inequality 
\begin{equation}
(x_{1}+\cdots+x_{n})^{r}\le n^{r}(x_{1}^{r}+\cdots+x_{n}^{r})\label{elem_ine}
\end{equation}
which holds for all $x_{1},\,\dots,\,x_{n},\,r\ge0$, the submultiplicativeness
of the matrix norm, and independence of sequence $(\mathbf{A}_{n},\,\mathbf{B}_{n})_{n\in\mathbb{Z}^{+}}$,
we have 
\begin{equation}
\mathbb{E}|\mathbf{W}_{k}|^{\beta}\le n_{1}^{\beta}\sum_{i=0}^{n_{1}-1}\left(\mathbb{E}\Vert\mathbf{A}\Vert^{\beta}\right)^{i}\,\mathbb{E}|\mathbf{B}|^{\beta}<\infty\;,\label{eq:momcon}
\end{equation}
where $\mathbb{E}\Vert\mathbf{A}\Vert^{\beta}<\infty$ follows from
Assumption \ref{Ass_Cont2}. For $d$-dimensional random vector $\mathbf{X}$,
we define the norm $\Vert\cdot\Vert_{\beta}$ by
\[
\Vert\mathbf{X}\Vert_{\beta}:=\left(\mathbb{E}|\mathbf{X}|^{\beta}\right)^{\max\left\{ 1,\,1/\beta\right\} }\;.
\]
Then, by the triangle inquality and the independence of random variables,
we obtain
\[
\Vert\mathbf{R}_{n}\Vert_{\beta}\le\sum_{k=0}^{n}\left\Vert \Pi_{kn_{1}}\mathbf{W}_{k}\right\Vert _{\beta}\le\sum_{k=0}^{n}\left\Vert \Pi_{n_{1}}\right\Vert _{\beta}^{k}\left\Vert \mathbf{W}_{1}\right\Vert _{\beta}\le\frac{\left\Vert \mathbf{W}_{1}\right\Vert _{\beta}}{1-\left\Vert \Pi_{n_{1}}\right\Vert _{\beta}}\;.
\]
where the last inequality follows from $\left\Vert \Pi_{n_{1}}\right\Vert _{\beta}<1$
which comes from \eqref{eq:condn_1}. Hence, by Fatou's inequality,
we get 
\[
\Vert\mathbf{R}\Vert_{\beta}\le\liminf_{n\rightarrow\infty}\Vert\mathbf{R}_{n}\Vert_{\beta}\le\frac{\left\Vert \mathbf{W}_{1}\right\Vert _{\beta}}{1-\left\Vert \Pi_{n_{1}}\right\Vert _{\beta}}<\infty
\]
which leads to \eqref{eq:lp2}.
\end{proof}
The next lemma regarding the ergodic behavior of the process \eqref{eq:AMP} is a consequence of the previous one.
\begin{lem}
\label{lem:pre1-2}The followings hold. 
\begin{enumerate}
\item For all $\psi\in C_{c}^{\infty}(\mathbb{R}^{d})$ and a Borel probability
measure $\pi$, we have 
\[
\lim_{n\rightarrow\infty}\mathbb{E}\psi(\mathbf{X}_{n}(\pi))=\int_{\mathbb{R}^{d}}\psi d\nu_{\infty}\;.
\]
\item For all $\mathbf{x}\in\mathbb{R}^{d}$ and $n\in\mathbb{Z}^{+}$,
we have $\mathbb{P}(\mathbf{X}_{n}(\mathbf{x})=\mathbf{x})<1$. 
\end{enumerate}
\end{lem}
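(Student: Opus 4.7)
The plan is to establish (1) via a coupling against the stationary process and then derive (2) by contradiction from (1) combined with Assumption \ref{Ass_Cont3}. For (1), I would couple $(\mathbf{X}_n(\pi))_{n \ge 0}$ and the stationary process $(\mathbf{X}_n(\nu_\infty))_{n \ge 0}$ on a common probability space driven by the same sequence $(\mathbf{A}_j, \mathbf{B}_j)_{j \in \mathbb{Z}^+}$, as in the discussion preceding \eqref{eq:couple}, with the two initial states drawn independently of the driving sequence (enlarging the probability space if necessary). Then \eqref{eq:couple} gives
\[
\mathbf{X}_n(\pi) - \mathbf{X}_n(\nu_\infty) = \Pi_n\bigl(\mathbf{X}_0(\pi) - \mathbf{X}_0(\nu_\infty)\bigr).
\]
Since $\gamma_L < 0$, \eqref{eq:lyaas} forces $\|\Pi_n\| \to 0$ almost surely, and together with the a.s.\ finiteness of $|\mathbf{X}_0(\pi) - \mathbf{X}_0(\nu_\infty)|$ this shows that the right-hand side vanishes a.s. Continuity and boundedness of $\psi \in C_c^\infty(\mathbb{R}^d)$ combined with bounded convergence then yield $\mathbb{E}\psi(\mathbf{X}_n(\pi)) - \mathbb{E}\psi(\mathbf{X}_n(\nu_\infty)) \to 0$. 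Stationarity gives $\mathbb{E}\psi(\mathbf{X}_n(\nu_\infty)) = \int \psi \, d\nu_\infty$, and (1) follows (the case of a deterministic initial state $\mathbf{x}$ being the particular instance $\pi = \delta_{\mathbf{x}}$).

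For (2), I would argue by contradiction. Suppose $\mathbb{P}(\mathbf{X}_n(\mathbf{x}) = \mathbf{x}) = 1$ for some $\mathbf{x} \in \mathbb{R}^d$ and $n \in \mathbb{Z}^+$. Since the block of increments $(\mathbf{A}_{kn+1},\mathbf{B}_{kn+1}),\dots,(\mathbf{A}_{(k+1)n},\mathbf{B}_{(k+1)n})$ is independent of $\mathcal{F}_{kn}$ and equidistributed with $(\mathbf{A}_1,\mathbf{B}_1),\dots,(\mathbf{A}_n,\mathbf{B}_n)$, an induction on $k$ via the Markov property gives $\mathbf{X}_{kn}(\mathbf{x}) = \mathbf{x}$ almost surely for every $k \in \mathbb{Z}^+$. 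On the other hand, Assumption \ref{Ass_Cont3} guarantees that $\nu_\infty$ has unbounded support, so in particular $\nu_\infty \neq \delta_{\mathbf{x}}$; hence one can choose $\psi \in C_c^\infty(\mathbb{R}^d)$ vanishing on a neighborhood of $\mathbf{x}$ with $\int \psi \, d\nu_\infty > 0$. Then $\mathbb{E}\psi(\mathbf{X}_{kn}(\mathbf{x})) = \psi(\mathbf{x}) = 0$ for every $k$, contradicting (1) along the subsequence $kn \to \infty$.

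The only subtleties are the independence setup underlying the coupling used for (1), and the inductive passage from $\mathbf{X}_n(\mathbf{x}) = \mathbf{x}$ a.s.\ to $\mathbf{X}_{kn}(\mathbf{x}) = \mathbf{x}$ a.s., both of which are routine applications of the Markov property. Beyond these, the proof reduces to a bounded convergence argument driven by the contraction identity \eqref{eq:couple} and a judicious choice of test function engineered to exploit the unboundedness provided by Assumption \ref{Ass_Cont3}.
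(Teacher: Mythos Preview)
Your argument is correct, and for part (2) it is essentially identical to the paper's (the paper also deduces $\nu_\infty=\delta_{\mathbf{x}}$ from (1) along the subsequence $kn$ and reaches a contradiction). For part (1), however, you take a genuinely different and more elementary route. The paper fixes $\alpha_0=\min\{\alpha/2,1\}$, uses H\"older continuity of $\psi$ with exponent $\alpha_0$ to bound
\[
\Bigl|\mathbb{E}\psi(\mathbf{X}_n(\mathbf{x}))-\int\psi\,d\nu_\infty\Bigr|\le M_\psi\,\mathbb{E}\|\Pi_n\|^{\alpha_0}\int|\mathbf{x}-\mathbf{y}|^{\alpha_0}\,\nu_\infty(d\mathbf{y}),
\]
and then invokes Lemma~\ref{lem:pre1} (finiteness of $\int|\mathbf{y}|^{\alpha_0}\nu_\infty(d\mathbf{y})$) together with $\mathbb{E}\|\Pi_n\|^{\alpha_0}\to 0$. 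Your approach bypasses Lemma~\ref{lem:pre1} entirely: you use only the almost-sure contraction $\|\Pi_n\|\to 0$ from \eqref{eq:lyaas}, uniform continuity of $\psi\in C_c^\infty$ (implicit in your ``continuity'' since $\psi$ has compact support), and bounded convergence. This is cleaner and requires no moment information on $\nu_\infty$; the paper's quantitative H\"older bound, on the other hand, would give an explicit rate if one wanted it.
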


\begin{proof}
(1) Let us fix $\psi\in C_{c}^{\infty}(\mathbb{R}^{d})$. By writing
$\alpha_{0}=\min\left\{ \frac{\alpha}{2},\,1\right\} $ where $\alpha$
as defined in \ref{Thm_kes0}, we deduce from \eqref{eq:mom_sm} that
\begin{equation}
\lim_{n\rightarrow\infty}\mathbb{E}\Vert\Pi_{n}\Vert^{\alpha_{0}}=0\;.\label{eq:pre1}
\end{equation}
Since $\psi$ is compactly supported, it is straightforward that $\psi$
is H{\"o}lder continuous with exponent $\alpha_{0}$, i.e., there exists
$M_{\psi}>0$ such that 
\[
|\psi(\mathbf{x})-\psi(\mathbf{y})|\le M_{\psi}|\mathbf{x}-\mathbf{y}|^{\alpha_{0}}\;\;\;\text{for all }\mathbf{x},\,\mathbf{y}\in\mathbb{R}^{d}\;,
\]
and therefore, for any $\mathbf{y},\,\mathbf{z}\in\mathbb{R}^{d}$,
by \eqref{eq:couple}, we have 
\[
\left|\mathbb{E}\left[\psi(\mathbf{X}_{n}(\mathbf{y}))-\psi(\mathbf{X}_{n}(\mathbf{z}))\right]\right|\le M_{\psi}\mathbb{E}\left[\left|\mathbf{X}_{n}(\mathbf{y})-\mathbf{X}_{n}(\mathbf{\mathbf{z}})\right|^{\alpha_{0}}\right]\le M_{\psi}|\mathbf{y}-\mathbf{z}|^{\alpha_{0}}\cdot\mathbb{E}\Vert\Pi_{n}\Vert^{\alpha_{0}}\;.
\]
Hence, for each $\mathbf{x}\in\mathbb{R}^{d}$,
\begin{align*}
\left|\mathbb{E}\psi(\mathbf{X}_{n}(\mathbf{x}))-\int_{\mathbb{R}^{d}}\psi\,d\mathbf{\nu_{\infty}}\right| & =\left|\int_{\mathbb{R}^{d}}\mathbb{E}\left[\psi(\mathbf{X}_{n}(\mathbf{x}))-\psi(\mathbf{X}_{n}(\mathbf{y}))\right]\nu_{\infty}(d\mathbf{y})\right|\\
 & \le M_{\psi}\,\mathbb{E}\Vert\Pi_{n}\Vert^{\alpha_{0}}\,\int_{\mathbb{R}^{d}}|\mathbf{x}-\mathbf{y}|^{\alpha_{0}}\nu_{\infty}(d\mathbf{y})
\end{align*}
where the first identity uses the stationarity of $\nu_{\infty}$.
Since $\alpha_{0}<\alpha$, by Lemma \ref{lem:pre1} and \eqref{eq:pre1},
we get 
\[
\lim_{n\rightarrow\infty}\mathbb{E}\psi(\mathbf{X}_{n}(\mathbf{x}))=\int_{\mathbb{R}^{d}}\psi d\nu_{\infty}\;\;\;\;\text{for all }\mathbf{x}\in\mathbb{R}^{d}\;.
\]
Thus, the conclusion of the lemma follows from the dominated convergence
theorem as $\psi$ is bounded. 

\medskip{}

\noindent (2) Suppose that there exists $n_{0}\in\mathbb{Z}^{+}$
and $\mathbf{x}_{0}\in\mathbb{R}^{d}$ such that $\mathbb{P}(\mathbf{X}_{n_{0}}(\mathbf{x}_{0})=\mathbf{x}_{0})=1$
so that we have 
\begin{equation}
\mathbb{P}(\mathbf{X}_{kn_{0}}(\mathbf{x}_{0})=\mathbf{x}_{0})=1\;\;\;\text{for all }k\in\mathbb{Z}^{+}\;.\label{eq:ap3}
\end{equation}
Hence, for $\psi\in C_{c}^{\infty}(\mathbb{R}^{d})$, we have 
\[
\lim_{k\rightarrow\infty}\mathbb{E}\left[\psi(\mathbf{X}_{kn_{0}}(\mathbf{x}_{0}))\right]=\int_{\mathbb{R}^{d}}\psi\,d\nu_{\infty}\;.
\]
Thus, by \eqref{eq:ap3}, we have $\int_{\mathbb{R}^{d}}\psi\,d\nu_{\infty}=\psi(\mathbf{x}_{0})$.
Namely, $\nu_{\infty}$ is the Dirac measure at $\mathbf{x}_{0}$.
This contradicts to Assumption \ref{Ass_Cont1}-(3). 
\end{proof}
Now we are ready to prove the finiteness of exit time. 
\begin{prop}
\label{Prop_cont_exit time finite}For all $\mathbf{x}\in\mathbb{R}^{d}$
and $R>0$, we have $\tau_{R}(\mathbf{x})<\infty$ almost surely. 
\end{prop}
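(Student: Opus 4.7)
The plan is to establish a uniform lower bound on the one-step escape probability over all starting points in $\mathcal{B}_R$ and then iterate using the Markov property. If $|\mathbf{x}|>R$ the claim is trivial since $\tau_R(\mathbf{x})=0$, so I restrict attention to $\mathbf{x}\in\mathcal{B}_R$. The two key ingredients I would invoke are Assumption \ref{Ass_Cont3} (unbounded support of $\nu_\infty$) and the convergence-in-distribution result Lemma \ref{lem:pre1-2}(1), whose proof in fact provides a quantitative rate that is uniform on bounded sets.

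By Assumption \ref{Ass_Cont3} there exists $\mathbf{y}^*\in\operatorname{supp}(\nu_\infty)$ with $|\mathbf{y}^*|>R+2$, and every open neighborhood of $\mathbf{y}^*$ carries positive $\nu_\infty$-mass. I would then pick a bump function $\psi\in C_c^\infty(\mathbb{R}^d)$ with $0\le\psi\le 1$, $\operatorname{supp}(\psi)\subset\{\mathbf{y}:|\mathbf{y}|>R+1\}$, and $\kappa:=\int\psi\,d\nu_\infty>0$. Inspection of the proof of Lemma \ref{lem:pre1-2}(1) furnishes the quantitative estimate
$$\left|\mathbb{E}\psi(\mathbf{X}_n(\mathbf{y}))-\int\psi\,d\nu_\infty\right|\le M_\psi\,\mathbb{E}\Vert\Pi_n\Vert^{\alpha_0}\int_{\mathbb{R}^d}|\mathbf{y}-\mathbf{z}|^{\alpha_0}\,\nu_\infty(d\mathbf{z}),$$
and combined with the $\alpha_0$-moment bound in Lemma \ref{lem:pre1} and $\mathbb{E}\Vert\Pi_n\Vert^{\alpha_0}\to 0$ from \eqref{eq:mom_sm}, the right-hand side tends to $0$ uniformly over $\mathbf{y}$ in the compact set $\mathcal{B}_R$. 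Hence there is some $n_0\in\mathbb{Z}^+$ with $\inf_{\mathbf{y}\in\mathcal{B}_R}\mathbb{E}\psi(\mathbf{X}_{n_0}(\mathbf{y}))\ge\kappa/2$, and since $\psi$ is supported outside $\mathcal{B}_R$ with $\psi\le 1$,
$$\inf_{\mathbf{y}\in\mathcal{B}_R}\mathbb{P}\bigl(\mathbf{X}_{n_0}(\mathbf{y})\notin\mathcal{B}_R\bigr)\ge\kappa/2=:\epsilon>0.$$

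Finally, I would iterate via the Markov property. On the event $\{\tau_R(\mathbf{x})>(k-1)n_0\}$ we have $\mathbf{X}_{(k-1)n_0}(\mathbf{x})\in\mathcal{B}_R$, and staying inside $\mathcal{B}_R$ throughout the next $n_0$ steps requires in particular that $\mathbf{X}_{kn_0}(\mathbf{x})\in\mathcal{B}_R$. Conditioning on $\mathcal{F}_{(k-1)n_0}$ and applying the uniform bound above therefore gives
$$\mathbb{P}\bigl(\tau_R(\mathbf{x})>kn_0\mid\mathcal{F}_{(k-1)n_0}\bigr)\le(1-\epsilon)\,\mathbf{1}_{\{\tau_R(\mathbf{x})>(k-1)n_0\}},$$
so that $\mathbb{P}(\tau_R(\mathbf{x})>kn_0)\le(1-\epsilon)^k\to 0$ as $k\to\infty$, and consequently $\tau_R(\mathbf{x})<\infty$ almost surely. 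The only delicate point in this plan is obtaining the uniform-in-starting-point convergence used to produce $n_0$, but this is essentially free from the explicit pointwise rate already present in the proof of Lemma \ref{lem:pre1-2}(1); I do not expect any substantial technical obstacle beyond bookkeeping.
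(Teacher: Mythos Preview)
Your proof is correct and takes a genuinely different route from the paper's. The paper argues by contradiction: assuming $\mathbb{P}[\tau_R(\mathbf{x})=\infty]>0$, it constructs the sequence of conditional laws $\mu_n$ of $\mathbf{X}_n$ given the process has stayed in $\mathcal{B}_R$, shows the one-step survival probabilities $q_n\to 1$, extracts (via Prokhorov and Skorokhod) a weak limit $\mu_\infty$ supported on $\mathcal{B}_R$ with $\mathbb{P}[\mathbf{X}_n(\mu_\infty)\in\mathcal{B}_R]=1$ for all $n$, and finally invokes Lemma~\ref{lem:pre1-2}(1) to force $\nu_\infty(\mathcal{B}_{R+1}^c)=0$, contradicting Assumption~\ref{Ass_Cont3}.

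Your argument is more direct and more elementary: you exploit the explicit Hölder bound inside the proof of Lemma~\ref{lem:pre1-2}(1), observe that the factor $\int|\mathbf{y}-\mathbf{z}|^{\alpha_0}\nu_\infty(d\mathbf{z})$ is bounded uniformly over $\mathbf{y}\in\mathcal{B}_R$ (since $\alpha_0\le 1$ and Lemma~\ref{lem:pre1} gives $\int|\mathbf{z}|^{\alpha_0}\nu_\infty(d\mathbf{z})<\infty$), and thereby obtain a uniform $n_0$-step escape probability. The Markov iteration then yields a geometric tail $\mathbb{P}(\tau_R(\mathbf{x})>kn_0)\le(1-\epsilon)^k$, which is strictly more than almost-sure finiteness. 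The trade-off is that you rely on the quantitative content of a proof rather than the statement of a lemma, whereas the paper's compactness argument uses only the qualitative convergence in Lemma~\ref{lem:pre1-2}(1); but your route avoids Prokhorov and Skorokhod entirely and is shorter.
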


\begin{proof}
We fix $\mathbf{x}\in\mathbb{R}^{d}$ and $R>0$, and suppose on the
contrary that 
\begin{equation}
\mathbb{P}\left[\tau_{R}(\mathbf{x})=\infty\right]=p_{0}>0\;.\label{eq:p_0}
\end{equation}
It is immediate from \eqref{eq:p_0} that $\mathbf{x}\in\mathcal{B}_{R}$. 

\smallskip{}

\noindent {[}Step 1{]} We first recursively define some probabilities
$(q_{n})_{n\in\mathbb{Z}^{+}}$ and distributions $(\mu_{n})_{n\in\mathbb{Z}^{+}}$.
First, we set
\begin{equation*}
\begin{aligned}
\newnota{q_1}{q_{1}} & =\defnota{q_1}{\mathbb{P}\left[\mathbf{A}\mathbf{x}+\mathbf{B}\in\mathcal{B}_{R}\right]}\\
\mu_{1}(\cdot) & =\mathbb{P}\left[\mathbf{A}\mathbf{x}+\mathbf{B}\in\cdot\mid\mathbf{A}\mathbf{x}+\mathbf{B}\in\mathcal{B}_{R}\right]\;.
\end{aligned}
\end{equation*}
Note that $\mu_{1}$ is supported on $\mathcal{B}_{R}$. Then, we
recursively define, for $n\in\mathbb{Z}^{+}$, 
\begin{align*}
q_{n+1} & =\mathbb{P}\left[\mathbf{A}\mathbf{Z}_{n}+\mathbf{B}\in\mathcal{B}_{R}\right]\;,\\
\mu_{n+1}(\cdot) & =\mathbb{P}\left[\mathbf{A}\mathbf{Z}_{n}+\mathbf{B}\in\cdot\mid\mathbf{A}\mathbf{Z}_{n}+\mathbf{B}\in\mathcal{B}_{R}\right]\;,
\end{align*}
where $\mathbf{Z}_{n}\in\mathbb{R}^{d}$ is a random vector distributed
according to $\mu_{n}$ and independent of $\mathbf{A}$ and $\mathbf{B}$.
Then, by the Markov property, we can write 
\begin{align}
 & \mathbb{P}\left[\mathbf{X}_{1}(\mathbf{x}),\,\dots,\,\mathbf{X}_{n}(\mathbf{x})\in\mathcal{B}_{R}\right]=q_{1}q_{2}\cdots q_{n}\;\;\text{and}\label{eq:rec1}\\
 & \mathbb{P}\left[\mathbf{X}_{1}(\mu_{k}),\,\dots,\,\mathbf{X}_{n}(\mu_{k})\in\mathcal{B}_{R}\right]=q_{k+1}q_{k+2}\cdots q_{k+n}\label{eq:rec2}
\end{align}
for all $n,\,k\in\mathbb{Z}^{+}$. In particular, by \eqref{eq:rec1},
we have, for all $n\in\mathbb{Z}^{+}$, 
\[
p_{0}:=\mathbb{P}\left[\tau_{R}(\mathbf{x})=\infty\right]\le\mathbb{P}\left[\mathbf{X}_{1}(\mathbf{x}),\,\dots,\,\mathbf{X}_{n}(\mathbf{x})\in\mathcal{B}_{R}\right]=q_{1}q_{2}\cdots q_{n}\;.
\]
Since $p_{0}>0$, we must have 
\begin{equation}
\lim_{n\rightarrow\infty}q_{n}=1\;.\label{eq:conv_qn}
\end{equation}

\smallskip{}

\noindent {[}Step 2{]} Compactness argument along with Skorokhod representation
theorem

\noindent Since each measure $\mu_{n}$ is supported on the compact
set $\mathcal{B}_{R}$, by the Prokhorov theorem we can find a subsequence
$\{\mu_{n_{k}}\}_{k\in\mathbb{Z}^{+}}$of $\{\mu_{n}\}_{n\in\mathbb{Z}^{+}}$
that converges weakly to a probability measure $\mu_{\infty}$ supported
on $\mathcal{B}_{R}$. 

Let us fix $n\in\mathbb{Z}^{+}$. Note from \eqref{eq:rec2} that
\[
\mathbb{P}\left[\mathbf{X}_{n}(\mu_{n_{k}})\in\mathcal{B}_{R}\right]\ge q_{n_{k}+1}q_{n_{k}+2}\cdots q_{n_{k}+n}
\]
and therefore by \eqref{eq:conv_qn}, we have 
\begin{equation}
\lim_{k\rightarrow\infty}\mathbb{P}\left[\mathbf{X}_{n}(\mu_{n_{k}})\in\mathcal{B}_{R}\right]=1\;.\label{eq:limPk}
\end{equation}
We next claim that 
\begin{equation}
\mathbb{P}\left[\mathbf{X}_{n}(\mu_{\infty})\in\mathcal{B}_{R}\right]=1\;.\label{eq:lim1}
\end{equation}
To that end, it suffices to prove that 
\begin{equation}
\mathbb{P}\left[\mathbf{X}_{n}(\mu_{\infty})\in\mathcal{B}_{R+\epsilon}\right]=1\text{ for all }\epsilon>0\;.\label{eq:lim2}
\end{equation}
Fix $\epsilon>0$. By the Skorokhod representation thoerem, we can
find a sequence of random variables $\{\mathbf{y}_{k}\}_{k\in\mathbb{Z}^{+}}$
and $\mathbf{y}_{\infty}$ such that $\mathbf{y}_{k}\sim\mu_{n_{k}}$
for $k\in\mathbb{Z}^{+}$, $\mathbf{y}_{\infty}\sim\mu_{\infty}$
and $\mathbf{y}_{k}\rightarrow\mathbf{y}_{\infty}$ almost surely.
Then, by \eqref{eq:couple}, we have 

\begin{align*}
\mathbb{P}\left[\mathbf{X}_{n}(\mu_{n_{k}})\in\mathcal{B}_{R}\right]-\mathbb{P}\left[\mathbf{X}_{n}(\mu_{\infty})\in\mathcal{B}_{R+\epsilon}\right] & =\mathbb{P}\left[\mathbf{X}_{n}(\mathbf{y}_{k})\in\mathcal{B}_{R}\right]-\mathbb{P}\left[\mathbf{X}_{n}(\mathbf{y}_{\infty})\in\mathcal{B}_{R+\epsilon}\right]\\
 & \le\mathbb{P}\left[\left|\mathbf{X}_{n}(\mathbf{y}_{k})-\mathbf{X}_{n}(\mathbf{y}_{\infty})\right|>\epsilon\right]\\
 & =\mathbb{P}\left[\left|\Pi_{n}(\mathbf{y}_{k}-\mathbf{y}_{\infty})\right|>\epsilon\right]\\
 & \le\mathbb{P}\left[\left|\mathbf{y}_{k}-\mathbf{y_{\infty}}\right|\ge\frac{\epsilon}{M}\right]+\mathbb{P}\left[\Vert\Pi_{n}\Vert\ge M\right]\;.
\end{align*}
Since $\mathbf{y}_{k}$ converges to $\mathbf{y}_{\infty}$ in probability,
by sending $k\rightarrow\infty$ and then $M\rightarrow\infty$, and
recalling \eqref{eq:limPk}, we get 
\[
1-\mathbb{P}_{\mu_{\infty}}\left[\mathbf{X}_{n}\in\mathcal{B}_{R+\epsilon}\right]\le0\;.
\]
This implies \eqref{eq:lim2}, and hence \eqref{eq:lim1}. 

\smallskip{}

\noindent {[}Step 3{]} Contradiction to the non-boundedness of support
of $\nu_{\infty}$

\noindent Let us take an auxiliary function $\psi:\mathbb{R}^{d}\rightarrow[0,\,1]$
which is a smooth function on $\mathbb{R}^{d}$ satisfying
\[
\psi|_{\mathcal{B}_{R}}\equiv0\;,\;\;\psi|_{\mathcal{B}_{R+1}^{c}}\equiv1\;,\text{ and}\;\;|D\psi|_{\infty}\le2\;.
\]
Then, by Lemma \ref{lem:pre1-2}, we get 
\[
\lim_{n\rightarrow\infty}\mathbb{E}\psi(\mathbf{X}_{n}(\mu_{\infty}))=\int_{\mathbb{R}^{d}}\psi\,d\nu_{\infty}\;.
\]
However, by \eqref{eq:lim1} and the fact that $\psi|_{\mathcal{B}_{R}}\equiv0$,
we can notice that the left-hand side of the previous display is $0$.
This implies that $\int_{\mathbb{R}^{d}}\psi\,d\nu_{\infty}=0$, and
in particular $\nu_{\infty}(\mathcal{B}_{R+1}^{c})=0$. This contradicts
to the non-boundedness of the support of $\nu_{\infty}$ from the
Assumption \ref{Ass_Cont3}, and the proof is completed. 
\end{proof}

\subsection{Scattering property of $\Pi_{n}$}

We denote by 
\[
\mathbb{S}^{d-1}=\{\mathbf{x}\in\mathbb{R}^{d}:|\mathbf{x}|=1\}
\]
the $d$-dimensional unit sphere and denote by $\sigma_{d}$ the uniform
surface measure on $\mathbb{S}^{d-1}$ normalized in a way that $\sigma_{d}(\mathbb{S}^{d-1})=1$.
We denote by 
\[
\mathcal{D}_{r}(\mathbf{z}):=\left\{ \mathbf{y}\in\mathbb{S}^{d-1}:|\mathbf{y}-\mathbf{z}|<r\right\} 
\]
the disk of radius $r>0$ (in fact, the intersection of ball of radius
$r$ and unit sphere) on the surface $\mathbb{S}^{d-1}$ centered
at $\mathbf{z}$. 

The main result of this subsection is the following
proposition which is a consequence of Assumption \ref{Ass_Cont5}
and asserts that, the matrix $\Pi_{n}$ is uniformly acts on a fixed
vector $\mathbf{x}\in\mathbb{R}^{d}$ regardless of its direction
in terms of $\beta$-th moment. 
\begin{prop}
\label{prop:scat}Suppose that Assumption \ref{Ass_Cont5}
holds. Then, for all $\beta\in(0,\,\alpha_{\infty})$, there exist $\delta_{0}>0$ and $N_{0}>0$ such
that, for all $n\ge N_{0}$ and $\mathbf{x}\in\mathbb{R}^{d}$, we
have
\[
\mathbb{E}|\Pi_{n}\mathbf{x}|^{\beta}\ge\delta_{0}\mathbb{E}\Vert\Pi_{n}\Vert^{\beta}\cdot|\mathbf{x}|^{\beta}\;.
\]
\end{prop}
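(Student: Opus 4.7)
By homogeneity of both sides in $\mathbf{x}$, I may restrict to $\mathbf{x}\in\mathbb{S}^{d-1}$. The plan is to split the product at time $n_{0}$ (the integer furnished by Assumption \ref{Ass_Cont5}) and exploit independence of the two factors. Fix any $n\ge n_{0}$ and write
\[
\Pi_{n}=\widetilde{\Pi}\cdot\Pi_{n_{0}}\ ,\qquad\widetilde{\Pi}:=\mathbf{A}_{n}\cdots\mathbf{A}_{n_{0}+1}\;,
\]
so that $\widetilde{\Pi}$ is independent of $\Pi_{n_{0}}$ and distributed as $\Pi_{n-n_{0}}$. Let $\mathbf{v}^{*}(\widetilde{\Pi})\in\mathbb{S}^{d-1}$ denote a measurably chosen top right singular vector of $\widetilde{\Pi}$. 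The singular value decomposition then yields the pointwise lower bound
\[
|\widetilde{\Pi}\mathbf{y}|\;\ge\;\|\widetilde{\Pi}\|\cdot\bigl|\mathbf{v}^{*}(\widetilde{\Pi})\cdot\mathbf{y}\bigr|\quad\text{for every }\mathbf{y}\in\mathbb{R}^{d}\;.
\]
Applying this with $\mathbf{y}=\Pi_{n_{0}}\mathbf{x}$, taking the $\beta$-th power, and conditioning on $\widetilde{\Pi}$ (which is independent of $\Pi_{n_{0}}$), I get
\[
\mathbb{E}|\Pi_{n}\mathbf{x}|^{\beta}\;\ge\;\mathbb{E}\bigl[\|\widetilde{\Pi}\|^{\beta}\,\phi(\mathbf{v}^{*}(\widetilde{\Pi}),\mathbf{x})\bigr]\ ,\qquad\text{where}\quad\phi(\mathbf{v},\mathbf{x}):=\mathbb{E}\bigl|\mathbf{v}\cdot\Pi_{n_{0}}\mathbf{x}\bigr|^{\beta}.
\]

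The key step is to show that $\phi$ admits a uniform positive lower bound $c_{0}>0$ on the compact set $\mathbb{S}^{d-1}\times\mathbb{S}^{d-1}$. For this I argue as follows. The integrand $(\mathbf{v},\mathbf{x})\mapsto|\mathbf{v}\cdot\Pi_{n_{0}}(\omega)\mathbf{x}|^{\beta}$ is continuous in $(\mathbf{v},\mathbf{x})$ for each $\omega$, and is dominated by $\|\Pi_{n_{0}}\|^{\beta}$, which is integrable because $\beta<\alpha_{\infty}$ (use sub-multiplicativity and $\mathbb{E}\|\mathbf{A}\|^{\beta}<\infty$). The dominated convergence theorem therefore gives continuity of $\phi$. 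Pointwise positivity follows from Assumption \ref{Ass_Cont5}: if $\phi(\mathbf{v},\mathbf{x})=0$ for some pair $(\mathbf{v},\mathbf{x})\in\mathbb{S}^{d-1}\times\mathbb{S}^{d-1}$, then $\mathbf{v}\cdot\Pi_{n_{0}}\mathbf{x}=0$ almost surely, contradicting $\mathbb{P}[\mathbf{v}\cdot\Pi_{n_{0}}\mathbf{x}=0]<1$. Continuity plus compactness then yields $c_{0}:=\min_{\mathbb{S}^{d-1}\times\mathbb{S}^{d-1}}\phi>0$.

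Plugging this uniform bound into the display above and using sub-multiplicativity together with independence,
\[
\mathbb{E}\|\Pi_{n}\|^{\beta}\;\le\;\mathbb{E}\|\widetilde{\Pi}\|^{\beta}\cdot\mathbb{E}\|\Pi_{n_{0}}\|^{\beta}\ ,
\]
I obtain
\[
\mathbb{E}|\Pi_{n}\mathbf{x}|^{\beta}\;\ge\;c_{0}\,\mathbb{E}\|\widetilde{\Pi}\|^{\beta}\;\ge\;\frac{c_{0}}{\mathbb{E}\|\Pi_{n_{0}}\|^{\beta}}\,\mathbb{E}\|\Pi_{n}\|^{\beta}\ ,
\]
which is the claim with $\delta_{0}:=c_{0}/\mathbb{E}\|\Pi_{n_{0}}\|^{\beta}>0$ and $N_{0}:=n_{0}$. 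The main technical obstacle is the uniformity in Step~3: the pointwise information provided by Assumption \ref{Ass_Cont5} must be upgraded to a strictly positive infimum on the full sphere, which is exactly where the dominated convergence argument (made possible by $\beta<\alpha_{\infty}$) combined with compactness is essential.
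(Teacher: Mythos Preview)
Your proof is correct and takes a genuinely different, more elementary route than the paper's.

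The paper argues by contradiction: it extracts a sequence $(N_k,\mathbf{x}_k)$ along which $\mathbb{E}|\Pi_{N_k}\mathbf{x}_k|^\beta/\mathbb{E}\|\Pi_{N_k}\|^\beta\to 0$, passes to a limit $\mathbf{x}_\infty$ on the sphere, and then invokes three auxiliary lemmas --- a Lebesgue-differentiation statement on $\mathbb{S}^{d-1}$ (Lemma~\ref{lem_dmu}), a basis-extraction lemma for the directional distribution of $\Pi_{n_0}\mathbf{x}_\infty$ (Lemma~\ref{lem_dmu2}), and a linear-algebra bound relating $\|\mathbf{M}\|$ to $\sum_i|\mathbf{M}\mathbf{z}_i|^\beta$ over a basis (Lemma~\ref{lem_Mz}) --- to reach a contradiction. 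Your argument bypasses all of this machinery: by projecting onto the top right singular direction of the outer factor $\widetilde{\Pi}$, you reduce the question to the positivity of $\phi(\mathbf{v},\mathbf{x})=\mathbb{E}|\mathbf{v}\cdot\Pi_{n_0}\mathbf{x}|^\beta$ on $\mathbb{S}^{d-1}\times\mathbb{S}^{d-1}$, which follows \emph{directly} from Assumption~\ref{Ass_Cont5}, and then upgrade pointwise positivity to a uniform lower bound via continuity (dominated convergence, using $\beta<\alpha_\infty$) and compactness. The only mild technicality you incur that the paper does not is the measurable selection of $\mathbf{v}^*(\widetilde{\Pi})$, but this is routine. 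In exchange, you get a shorter, constructive proof with the explicit constants $N_0=n_0$ and $\delta_0=c_0/\mathbb{E}\|\Pi_{n_0}\|^\beta$, whereas the paper's contradiction argument does not identify $N_0$ or $\delta_0$ concretely.
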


The proof of this proposition requires several preliminary technical
lemmata below. We start with an elemetary observation about finite
Borel measures on $\mathbb{S}^{d-1}$.
\begin{lem}
\label{lem_dmu}Let $\mu$ be a finite Borel measure on $\mathbb{S}^{d-1}$
and let $A\subset\mathbb{S}^{d-1}$ be a Borel set. Suppose that 
\[
\liminf_{\delta\rightarrow0}\frac{\mu(\mathcal{D}_{\delta}(\mathbf{z}))}{\sigma_{d}(\mathcal{D}_{\delta}(\mathbf{z}))}=0\;\;\;\text{for all }\mathbf{z}\in A\;.
\]
Then, we have $\mu(A)=0$. 
\end{lem}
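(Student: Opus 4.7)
The plan is to reduce the statement to a Vitali-type covering argument, exploiting that $\mu$ is a finite (hence Radon) Borel measure on the compact metric space $\mathbb{S}^{d-1}$. The hypothesis $\liminf_{\delta\to 0} \mu(\mathcal{D}_\delta(\mathbf{z}))/\sigma_d(\mathcal{D}_\delta(\mathbf{z})) = 0$ at every $\mathbf{z}\in A$ guarantees that the lower density of $\mu$ with respect to $\sigma_d$ vanishes on $A$. Morally, this forces the $\mu$-mass on $A$ to be absorbed by a collection of disks whose total $\sigma_d$-measure we can make as small as we please.

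Concretely, I would fix $\epsilon > 0$ and, for each $\mathbf{z} \in A$, use the $\liminf$ assumption to extract a sequence of radii $\delta_n(\mathbf{z})\downarrow 0$ such that
\[
\mu\bigl(\mathcal{D}_{\delta_n(\mathbf{z})}(\mathbf{z})\bigr) \;\le\; \epsilon \,\sigma_d\bigl(\mathcal{D}_{\delta_n(\mathbf{z})}(\mathbf{z})\bigr).
\]
The resulting family
\[
\mathcal{V}_\epsilon \;=\; \bigl\{\mathcal{D}_{\delta_n(\mathbf{z})}(\mathbf{z}) : \mathbf{z}\in A,\; n \in \mathbb{Z}^+\bigr\}
\]
is a Vitali cover of $A$: every point of $A$ lies in disks of arbitrarily small radius. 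Viewing disks on $\mathbb{S}^{d-1}$ locally as balls in $\mathbb{R}^{d-1}$ via smooth charts, standard Vitali coverings apply.

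I would then apply the Vitali covering theorem for the Radon measure $\mu$ (see, e.g., Mattila's textbook on geometric measure theory) to extract a countable disjoint subfamily $\{\mathcal{D}_{\delta_i}(\mathbf{z}_i)\}_{i\in\mathbb{Z}^+} \subset \mathcal{V}_\epsilon$ satisfying $\mu\bigl(A\setminus \bigcup_i \mathcal{D}_{\delta_i}(\mathbf{z}_i)\bigr) = 0$. Then, using disjointness of the disks and the fact that $\sigma_d(\mathbb{S}^{d-1})=1$,
\[
\mu(A) \;\le\; \sum_{i} \mu\bigl(\mathcal{D}_{\delta_i}(\mathbf{z}_i)\bigr) \;\le\; \epsilon \sum_i \sigma_d\bigl(\mathcal{D}_{\delta_i}(\mathbf{z}_i)\bigr) \;\le\; \epsilon.
\]
Letting $\epsilon \downarrow 0$ gives $\mu(A)=0$. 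A fully equivalent alternative, avoiding the Radon-measure version of Vitali, is to apply the Besicovitch covering theorem directly to extract finitely many (at most $N_{d-1}$) disjoint subfamilies whose union covers $A$, which yields the bound $\mu(A) \le \epsilon N_{d-1}$ and again lets $\epsilon\to 0$.

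The main technical subtlety, rather than an obstacle, is invoking the appropriate covering theorem on the manifold $\mathbb{S}^{d-1}$ instead of in Euclidean space; this is handled either by citing the manifold version or by localizing via smooth charts and using the fact that disks $\mathcal{D}_\delta(\mathbf{z})$ are bi-Lipschitz comparable (with uniform constants on a small neighborhood) to Euclidean balls in $\mathbb{R}^{d-1}$, which preserves both the $\liminf$ ratio condition up to a harmless constant and the disjointness required by the covering theorem.
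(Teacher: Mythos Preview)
Your proof is correct but takes a different route from the paper. The paper applies the Lebesgue decomposition $\mu=\mu_a+\mu_s$ with $\mu_a\ll\sigma_d$ and $\mu_s\perp\sigma_d$, and then invokes two differentiation theorems from Rudin: for $\sigma_d$-a.e.\ $\mathbf{z}$ the density $d\mu_a/d\sigma_d(\mathbf{z})$ equals the limit of the ratio $\mu_a(\mathcal{D}_\delta(\mathbf{z}))/\sigma_d(\mathcal{D}_\delta(\mathbf{z}))$, which is dominated by the $\liminf$ of the full ratio and hence vanishes on $A$; and for $\mu_s$-a.e.\ $\mathbf{z}$ the ratio $\mu_s(\mathcal{D}_\delta)/\sigma_d(\mathcal{D}_\delta)$ tends to $+\infty$, which is incompatible with the hypothesis on $A$. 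Thus $\mu_a(A)=\mu_s(A)=0$. Your argument instead bypasses the decomposition entirely and works at the covering level, using the $\liminf$ hypothesis to build, for each $\epsilon$, a fine cover by disks on which $\mu\le\epsilon\,\sigma_d$, then extracting a disjoint subfamily via the Vitali/Besicovitch theorem for the Radon measure $\mu$. The paper's approach is shorter once the differentiation theorems are taken as black boxes; your approach is more self-contained and geometric, effectively re-proving the relevant piece of the differentiation theorem in situ, and it never needs to separate the absolutely continuous and singular parts.
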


\begin{proof}
By the Lebesgue decomposition theorem, we can decompose $\mu=\mu_{a}+\mu_{s}$
such that $\mu_{a}\ll\sigma_{d}$ and $\mu_{s}\perp\sigma_{d}$ where
both $\mu_{a}$ and $\mu_{s}$ are positive measures. Then, by \cite[Theorem 7.14 at page 143]{rudin1987real},
for almost all (with respect to $\sigma_{d}$ or equivalently $\mu_{a}$)
$\mathbf{z}\in A$, we have 
\[
\frac{d\mu_{a}}{d\sigma_{d}}(\mathbf{z})=\lim_{\delta\rightarrow0}\frac{\mu_{a}(\mathcal{D}_{\delta}(\mathbf{z}))}{\sigma_{d}(\mathcal{D}_{\delta}(\mathbf{z}))}\le\liminf_{\delta\rightarrow0}\frac{\mu(\mathcal{D}_{\delta}(\mathbf{z}))}{\sigma_{d}(\mathcal{D}_{\delta}(\mathbf{z}))}=0
\]
and thus we get $\mu_{a}(A)=0$.

On the other hand, by \cite[Theorem 7.15 at page 143]{rudin1987real}, we know
that 
\[
\lim_{\delta\rightarrow0}\frac{\mu_{s}(\mathcal{D}_{\delta}(\mathbf{z}))}{\sigma_{d}(\mathcal{D}_{\delta}(\mathbf{z}))}=\infty,\quad\text{a.e.}\ \mu_{s}\;.
\]
Since the limit at the left-hand side is $0$ for all $\mathbf{z}\in A$,
we can conclude that $\mu_{s}(A)=0$ as well. This completes the proof. 
\end{proof}
The next technical lemma is a key in the proof of Proposition \ref{prop:scat}. 
\begin{lem}
\label{lem_dmu2}Suppose that Assumption \ref{Ass_Cont5} holds. Let $\phi:\mathbb{R}^{d}\rightarrow[0,\infty)$ be a non-negative measurable
function satisfies $\phi>0$ on $\mathbb{R}^{d}\setminus\left\{ \mathbf{0}\right\} $.
Fix $n\in\mathbb{Z}^{+}$, $\mathbf{x}\in\mathbb{R}^{d}\setminus\left\{ \mathbf{0}\right\} $
and suppose that $\mathbb{E}[\phi(\Pi_{n}\mathbf{x})]<\infty$. Define
a subset $S=S(\mathbf{x})$ of $\mathbb{S}^{d-1}$ by the collection
of all $\mathbf{z}\in\mathbb{S}^{d-1}$ such that 
\[
\liminf_{\delta\rightarrow0}\frac{1}{\sigma_{d}(\mathcal{D}_{\delta}(\mathbf{z}))}\mathbb{E}\left[\phi(\Pi_{n}\mathbf{x})\cdot\mathbf{1}\left\{ \Pi_{n}\mathbf{x}\neq\mathbf{0},\,\frac{\Pi_{n}\mathbf{x}}{|\Pi_{n}\mathbf{x}|}\in\mathcal{D}_{\delta}(\mathbf{z})\right\} \right]>0\;.
\]
Then, the set $S$ contains a normal basis of $\mathbb{R}^{d}$.
\end{lem}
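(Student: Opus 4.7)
The plan is to recognize $S$ as the set of points of positive upper density for a natural finite Borel measure on $\mathbb{S}^{d-1}$, and then combine Lemma \ref{lem_dmu} with Assumption \ref{Ass_Cont5} to argue that this measure cannot be supported on a proper linear subspace of $\mathbb{R}^d$.

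First, I would introduce the Borel measure $\mu$ on $\mathbb{S}^{d-1}$ defined by
\[
\mu(A) := \mathbb{E}\!\left[\phi(\Pi_n\mathbf{x})\,\mathbf{1}\!\left\{\Pi_n\mathbf{x}\neq\mathbf{0},\ \frac{\Pi_n\mathbf{x}}{|\Pi_n\mathbf{x}|}\in A\right\}\right],\qquad A\subset\mathbb{S}^{d-1}\text{ Borel},
\]
whose finiteness is immediate from the hypothesis $\mathbb{E}[\phi(\Pi_n\mathbf{x})]<\infty$. By the very definition of $S$, the complement $S^{c}$ is precisely the set where $\liminf_{\delta\to 0}\mu(\mathcal{D}_\delta(\mathbf{z}))/\sigma_d(\mathcal{D}_\delta(\mathbf{z}))$ vanishes, so Lemma \ref{lem_dmu} yields $\mu(S^{c})=0$. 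Because $\phi>0$ on $\mathbb{R}^{d}\setminus\{\mathbf{0}\}$, the integrand defining $\mu(S^{c})$ is strictly positive whenever $\Pi_n\mathbf{x}\neq\mathbf{0}$ and $\Pi_n\mathbf{x}/|\Pi_n\mathbf{x}|\in S^{c}$, forcing
\[
\mathbb{P}\!\left[\Pi_n\mathbf{x}\neq\mathbf{0},\ \frac{\Pi_n\mathbf{x}}{|\Pi_n\mathbf{x}|}\in S^{c}\right]=0.
\]
In other words, on the event $\{\Pi_n\mathbf{x}\neq\mathbf{0}\}$, the direction of $\Pi_n\mathbf{x}$ lies almost surely in $S$.

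Next, I would argue by contradiction. If $S$ contains no basis of $\mathbb{R}^{d}$, then $\mathrm{span}(S)$ is a proper subspace, hence contained in some hyperplane $H_\mathbf{w}=\{\mathbf{v}:\mathbf{w}\cdot\mathbf{v}=0\}$ with $\mathbf{w}\neq\mathbf{0}$. Combined with the preceding paragraph, this forces $\mathbb{P}[\mathbf{w}\cdot\Pi_n\mathbf{x}=0]=1$. However, Assumption \ref{Ass_Cont5} asserts $\mathbb{P}[\mathbf{w}\cdot\Pi_{n_0}\mathbf{x}=0]<1$; extending this to $n\ge n_0$ by writing $\Pi_n=(\mathbf{A}_n\cdots\mathbf{A}_{n_0+1})\Pi_{n_0}$, conditioning on the independent prefactor, and observing that $\mathbf{A}_{n_0+1}^{\!\top}\cdots\mathbf{A}_n^{\!\top}\mathbf{w}$ is nonzero with positive probability (again via Assumption \ref{Ass_Cont5} applied coordinate-wise), one obtains $\mathbb{P}[\mathbf{w}\cdot\Pi_n\mathbf{x}=0]<1$, a contradiction.

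The principal technical obstacle lies in this last extension: Assumption \ref{Ass_Cont5} is stated only for the single index $n_0$, while the lemma is phrased for arbitrary $n$. A clean transfer is delicate when $n<n_0$, but in the intended use within Proposition \ref{prop:scat}, which is invoked only for $n\ge N_0$, choosing $N_0\ge n_0$ sidesteps the issue. A secondary interpretive point concerns whether "normal basis" is to be read as "orthonormal basis" rather than merely "any basis consisting of unit vectors"; in the former case, one would iterate the construction above---restricting $\mu$ to the unit sphere of the orthogonal complement of an already-extracted direction $\mathbf{z}_1\in S$---to peel off mutually orthogonal directions one at a time.
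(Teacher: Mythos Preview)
Your proposal is correct and follows essentially the same route as the paper: define the pushforward measure $\mu$ on $\mathbb{S}^{d-1}$, invoke Lemma~\ref{lem_dmu} to get $\mu(S^{c})=0$, deduce that the direction of $\Pi_n\mathbf{x}$ lies in $S$ almost surely, and then contradict Assumption~\ref{Ass_Cont5} via a vector $\mathbf{w}$ orthogonal to $\mathrm{span}(S)$.

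Two remarks on the points you flagged. First, the paper's own proof simply ends with ``contradiction to Assumption~\ref{Ass_Cont5}'' without addressing the $n$ versus $n_0$ mismatch; this is harmless in context because the only invocation of the lemma (in the proof of Proposition~\ref{prop:scat}) is with $n=n_0$, so your caution here is well-placed but the extension argument is not actually needed. Second, ``normal basis'' in the paper should be read as a basis of $\mathbb{R}^d$ consisting of unit vectors, not an orthonormal one: the only downstream use is Lemma~\ref{lem_Mz}, which requires merely $\mathbf{z}_1,\dots,\mathbf{z}_d\in\mathbb{S}^{d-1}$ forming a basis, so your iterative orthogonal-complement construction is unnecessary.
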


\begin{proof}
Define a positive Borel measure $\mu$ on $\mathbb{S}^{d-1}$ by
\[
\mu(D):=\mathbb{E}\left[\phi(\Pi_{n}\mathbf{x})\cdot\mathbf{1}\left\{ \Pi_{n}\mathbf{x}\neq\mathbf{0},\,\frac{\Pi_{n}\mathbf{x}}{|\Pi_{n}\mathbf{x}|}\in D\right\} \right],
\]
for any Borel measurable set $D$ of $\mathbb{S}^{d-1}$. Then, by
Lemma \ref{lem_dmu}, we have $\mu(\mathbb{S}^{d-1}\setminus S)=0$
and thus by condition on $\phi,$ we have 
\[
\mathbf{1}\left\{ \Pi_{n}\mathbf{x}\neq\mathbf{0},\,\frac{\Pi_{n}\mathbf{x}}{|\Pi_{n}\mathbf{x}|}\in\mathbb{S}^{d-1}\setminus S\right\} =0
\]
almost surely with respect to $\mathbb{P}$. 

Now suppose on the contrary that $S$ does not contain a basis of
$\mathbb{R}^{d}$ so that there is a non-zero vector $\mathbf{w}\in\mathbb{R}^{d}$
perpendicular to all the vectors in $S$. Then, we have 
\[
\mathbb{P}(\mathbf{w}\cdot\Pi_{n}\mathbf{x}\neq0)\le\mathbb{P}\left(\Pi_{n}\mathbf{x}\neq\mathbf{0},\,\frac{\Pi_{n}\mathbf{x}}{|\Pi_{n}\mathbf{x}|}\in\mathbb{S}^{d-1}\setminus S\right)=0
\]
and thus we get a contradiction to Assumption \ref{Ass_Cont5}.
\end{proof}
The next lemma asserts that, for each matrix $\mathbf{M}\in\mathbb{R}^{d\times d}$
and basis $\{\mathbf{z}_{1},\,\dots,\,\mathbf{z}_{d}\}\in\mathbb{S}^{d-1}$,
the norm of vectors $\mathbf{M}\mathbf{z}_{i}$ can be controlled
from below. 
\begin{lem}
\label{lem_Mz}Let $\mathbf{M}\in\mathbb{R}^{d\times d}$, let $\{\mathbf{z}_{1},\,\dots,\,\mathbf{z}_{d}\}\in\mathbb{S}^{d-1}$
be a basis of $\mathbb{R}^{d}$, and let $\beta>0$. Denote by $\mathbf{Z}\in\mathbb{R}^{d\times d}$
the matrix whose $k$th column is $\mathbf{z}_{k}$ for $k=1,\,\dots,\,d$.
Then, we have 
\begin{equation}
\sum_{i=1}^{d}|\mathbf{M}\mathbf{z}_{i}|^{\beta}\ge\frac{\Vert\mathbf{M}\Vert^{\beta}}{d^{\beta}\Vert\mathbf{Z}^{-1}\Vert^{\beta}}\;.\label{eq:cond_k_0}
\end{equation}
\end{lem}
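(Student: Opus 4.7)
The plan is to reduce the inequality to a statement about the $\ell^1$ norm of the sequence $(|\mathbf{M}\mathbf{z}_i|)_{i=1}^d$ by expanding an extremal unit vector in the basis $\{\mathbf{z}_1,\ldots,\mathbf{z}_d\}$, and then to pass from $\ell^1$ to $\ell^\beta$ via a pigeonhole argument. Since $\mathbb{S}^{d-1}$ is compact and $\mathbf{x} \mapsto |\mathbf{M}\mathbf{x}|$ is continuous, I would begin by choosing $\mathbf{x}_0 \in \mathbb{S}^{d-1}$ with $|\mathbf{M}\mathbf{x}_0| = \Vert \mathbf{M}\Vert$.

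Next, I would express $\mathbf{x}_0$ in the basis: $\mathbf{x}_0 = \sum_{i=1}^d c_i \mathbf{z}_i$, with coefficient vector $\mathbf{c} = \mathbf{Z}^{-1}\mathbf{x}_0$. Since $|\mathbf{x}_0|=1$, we have $|\mathbf{c}| \le \Vert \mathbf{Z}^{-1}\Vert$, and in particular $|c_i| \le \Vert \mathbf{Z}^{-1}\Vert$ for every $i$. Applying the triangle inequality to $\mathbf{M}\mathbf{x}_0 = \sum_i c_i \mathbf{M}\mathbf{z}_i$ yields
\[
\Vert \mathbf{M}\Vert \;=\; |\mathbf{M}\mathbf{x}_0| \;\le\; \sum_{i=1}^d |c_i|\,|\mathbf{M}\mathbf{z}_i| \;\le\; \Vert \mathbf{Z}^{-1}\Vert\,\sum_{i=1}^d |\mathbf{M}\mathbf{z}_i|,
\]
which gives the key intermediate bound $\sum_{i=1}^d |\mathbf{M}\mathbf{z}_i| \ge \Vert \mathbf{M}\Vert/\Vert \mathbf{Z}^{-1}\Vert$.

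To finish, I would note that by the pigeonhole principle there exists some index $i^\star$ with $|\mathbf{M}\mathbf{z}_{i^\star}| \ge \Vert \mathbf{M}\Vert/(d\,\Vert \mathbf{Z}^{-1}\Vert)$, and then bound
\[
\sum_{i=1}^d |\mathbf{M}\mathbf{z}_i|^\beta \;\ge\; |\mathbf{M}\mathbf{z}_{i^\star}|^\beta \;\ge\; \frac{\Vert \mathbf{M}\Vert^\beta}{d^\beta\,\Vert \mathbf{Z}^{-1}\Vert^\beta},
\]
which is precisely \eqref{eq:cond_k_0}. This final step works uniformly for all $\beta > 0$, so we avoid any case split on whether $\beta$ is above or below $1$. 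There is no real obstacle in this argument; the only subtle point is choosing to pass through the maximum (rather than, say, applying a power-mean inequality directly), which handles both regimes of $\beta$ in a single line and matches the $d^\beta$ factor in the stated bound.
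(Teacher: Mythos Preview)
Your proof is correct and follows essentially the same approach as the paper: both pick a unit vector $\mathbf{x}_0$ achieving $\Vert\mathbf{M}\Vert$, expand it in the basis $\{\mathbf{z}_i\}$, and bound the coefficients via $|\mathbf{Z}^{-1}\mathbf{x}_0|\le\Vert\mathbf{Z}^{-1}\Vert$. The only difference is in the final algebraic step: the paper applies the inequality $(x_1+\cdots+x_d)^\beta\le d^\beta\sum x_i^\beta$ directly to $|\mathbf{M}\mathbf{x}_0|^\beta\le(\sum|a_i||\mathbf{M}\mathbf{z}_i|)^\beta$, whereas you pass through the $\ell^1$ bound and then pigeonhole to the maximum term; both routes yield the same constant $d^\beta\Vert\mathbf{Z}^{-1}\Vert^\beta$.
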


\begin{proof}
Let $\mathbf{x}_{0}\in\mathbb{S}^{d-1}$ be the unit vector satisfy
$|\mathbf{M}\mathbf{x}_{0}|=\Vert\mathbf{M}\Vert$. Write $\mathbf{x}_{0}=a_{1}\mathbf{z}_{1}+\cdots+a_{d}\mathbf{z}_{d}=\mathbf{Z}\mathbf{a}$
where $\mathbf{a}=(a_{1},\,\dots,\,a_{d})^{\dagger}$ so that 
\[
|\mathbf{a}|=|\mathbf{Z}^{-1}\mathbf{x}_{0}|\le\Vert\mathbf{Z}^{-1}\Vert
\]
since $|\mathbf{x}_{0}|=1$. Thus, by \eqref{elem_ine}, we have 
\[
\Vert\mathbf{M}\Vert^{\beta}=|\mathbf{M}\mathbf{x}_{0}|^{\beta}\le d^{\beta}\sum_{i=1}^{d}|a_{i}|^{\beta}|\mathbf{M}\mathbf{z}_{i}|^{\beta}\le d^{\beta}\Vert\mathbf{Z}^{-1}\Vert^{\beta}\sum_{i=1}^{d}|\mathbf{M}\mathbf{z}_{i}|^{\beta}\;.
\]
\end{proof}
Now we are ready to prove Proposition \ref{prop:scat}. 
\begin{proof}[Proof of Proposition \ref{prop:scat}]
 Through scaling, it suffices to show that
\[
\mathbb{E}|\Pi_{n}\mathbf{x}|^{\beta}\ge\delta_{0}\mathbb{E}\Vert\Pi_{n}\Vert^{\beta}
\]
holds for all $\mathbf{x}\in\mathbb{S}^{d-1}$. Suppose that this
statement of proposition does not hold so that we can find a sequence
$N_{1}<N_{2}<\cdots$ of positive integers and vectors $\mathbf{x}_{1},\mathbf{x}_{2},\cdots\in\mathbb{S}^{d-1}$
such that 
\begin{equation}
\mathbb{E}|\Pi_{N_{k}}\mathbf{x}_{k}|^{\beta}<\frac{1}{k}\mathbb{E}\Vert\Pi_{N_{k}}\Vert^{\beta}.\label{eq:cond11}
\end{equation}
We can assume here that $N_{1}>n_{0}$. Moreover, since $\mathbb{S}^{d-1}$
is compact, by extracting a suitable subseuqence, we can assume in
addition that there exists $\mathbf{x}_{\infty}\in\mathbb{S}^{d-1}$
such that 
\begin{equation}
\lim_{k\rightarrow\infty}|\mathbf{x}_{\infty}-\mathbf{x}_{k}|=0\;.\label{eq:cond12}
\end{equation}
Then, by \eqref{elem_ine} and \eqref{eq:cond11}, 
\begin{align*}
\mathbb{E}|\Pi_{N_{k}}\mathbf{x}_{\infty}|^{\beta} & \le2^{\beta}(\mathbb{E}|\Pi_{N_{k}}\mathbf{x}_{k}|^{\beta}+\mathbb{E}|\Pi_{N_{k}}(\mathbf{x}_{\infty}-\mathbf{x}_{k})|^{\beta})\\
 & \le2^{\beta}\,\mathbb{E}\Vert\Pi_{N_{k}}\Vert^{\beta}\,\left(\frac{1}{k}+|\mathbf{x}_{\infty}-\mathbf{x}_{k}|^{\beta}\right)\;.
\end{align*}
Thus, write $\rho_{k}=2^{\beta}(\frac{1}{k}+|\mathbf{x}_{\infty}-\mathbf{x}_{k}|^{\beta})$
so that by \eqref{eq:cond12}, we have 
\begin{equation}
\lim_{k\rightarrow\infty}\rho_{k}=0\;\;\;\;\text{and\;\;\;\;}\mathbb{E}|\Pi_{N_{k}}\mathbf{x}_{\infty}|^{\beta}\le\rho_{k}\mathbb{E}\Vert\Pi_{N_{k}}\Vert^{\beta}\;.\label{eq:e10}
\end{equation}

Now applying Lemma \ref{lem_dmu2} with $\phi(\mathbf{x})=|\mathbf{x}|^{\beta}$
and $\mathbf{x}=\mathbf{x}_{\infty}$, which is possible since $\beta\in(0,\,\alpha_{\infty})$, (cf. Assumption \ref{Ass_Cont2})
\[
\mathbb{E}[\phi(\Pi_{n_{0}}\mathbf{x}_{\infty})]=\mathbb{E}\left[|\Pi_{n_{0}}\mathbf{x}_{\infty}|^{\beta}\right]\le\mathbb{E}\Vert\Pi_{n_{0}}\Vert^{\beta}<\infty\;,
\]
we can find a normal basis $\left\{ \mathbf{z}_{1},\dots,\mathbf{z}_{d}\right\} \in S(\mathbf{x}_{\infty})$
{[}see the definition in Lemma \ref{lem_dmu2} for $S(\mathbf{x}_{\infty})${]}
of $\mathbb{R}^{d}$. We next claim that, for all $\ell\in\{1,\,\dots,\,d\}$,
\begin{equation}
\lim_{k\rightarrow\infty}\max\frac{\mathbb{E}|\Pi_{N_{k}-n_{0}}\mathbf{z}_{\ell}|^{\beta}}{\mathbb{E}\Vert\Pi_{N_{k}-n_{0}}\Vert^{\beta}}=0\;.\label{eq:step2}
\end{equation}
To that end, for $\ell\in\{1,\,\dots,\,d\}$ and $\delta>0$, define
\[
\mathcal{T}_{\ell,\,\delta}:=\left\{ \Pi_{n_{0}}\mathbf{x}_{\infty}\neq\mathbf{0},\,\frac{\Pi_{n_{0}}\mathbf{x}_{\infty}}{|\Pi_{n_{0}}\mathbf{x}_{\infty}|}\in\mathcal{D}_{\delta}(\mathbf{z}_{\ell})\right\} \;.
\]
Since $\mathbf{z}_{\ell}\in S(\mathbf{x}_{\infty})$, we can find
$\epsilon_{0},\,\delta_{0}>0$ such that, for all $\delta\in(0,\,\delta_{0})$,
\begin{equation}
\mathbb{E}\left[|\Pi_{n_{0}}\mathbf{x}_{\infty}|^{\beta}\cdot\mathbf{1}_{\mathcal{T}_{\ell,\,\delta}}\right]\ge\epsilon_{0}\cdot\sigma_{d}(\mathcal{D}_{\delta}(\mathbf{z}_{\ell}))\;.\label{eq:e11}
\end{equation}
We note from \eqref{elem_ine} that, for all $\mathbf{y}\in\mathcal{D}_{\delta}(\mathbf{z}_{\ell})$
and $n\in\mathbb{Z}^{+}$, 
\begin{align*}
|\Pi_{n}\mathbf{z}_{\ell}|^{\beta} & \le2^{\beta}(|\Pi_{n}\mathbf{y}|^{\beta}+|\Pi_{n}(\mathbf{z}_{\ell}-\mathbf{y})|^{\beta})\le2^{\beta}(|\Pi_{n}\mathbf{y}|^{\beta}+\delta^{\beta}\Vert\Pi_{n}\Vert^{\beta})\;.
\end{align*}
Since $\frac{\Pi_{n_{0}}\mathbf{x}_{\infty}}{|\Pi_{n_{0}}\mathbf{x}_{\infty}|}\in\mathcal{D}_{\delta}(\mathbf{z}_{\ell})$
under $\mathcal{T}_{\ell,\,\delta}$, applying the previous bound
with $\mathbf{y}=\frac{\Pi_{n_{0}}\mathbf{x}_{\infty}}{|\Pi_{n_{0}}\mathbf{x}_{\infty}|}$,
we get, for any $k\in\mathbb{Z}^{+}$, (recall that $N_{k}\ge N_{1}>n_{0}$)
\begin{align*}
 & \mathbb{E}\left[|\Pi_{N_{k}-n_{0}}\mathbf{z}_{\ell}|^{\beta}\right]\cdot\mathbb{E}\left[|\Pi_{n_{0}}\mathbf{x}_{\infty}|^{\beta}\cdot\mathbf{1}_{\mathcal{T}_{\ell,\,\delta}}\right]\\
 & =\mathbb{E}\left[|\mathbf{A}_{N_{k}}\cdots\mathbf{A}_{n_{0}+1}\mathbf{z}_{\ell}|^{\beta}\cdot|\Pi_{n_{0}}\mathbf{x}_{\infty}|^{\beta}\cdot\mathbf{1}_{\mathcal{T}_{\ell,\,\delta}}\right]\\
 & \le2^{\beta}\mathbb{E}\left[\left(\left|\mathbf{A}_{N_{k}}\cdots\mathbf{A}_{n_{0}+1}\frac{\Pi_{n_{0}}\mathbf{x}_{\infty}}{|\Pi_{n_{0}}\mathbf{x}_{\infty}|}\right|^{\beta}+\delta^{\beta}\Vert\mathbf{A}_{N_{k}}\cdots\mathbf{A}_{n_{0}+1}\Vert^{\beta}\right)\cdot|\Pi_{n_{0}}\mathbf{x}_{\infty}|^{\beta}\cdot\mathbf{1}_{\mathcal{T}_{\ell,\,\delta}}\right]\\
 & \le2^{\beta}\,\mathbb{E}|\Pi_{N_{k}}\mathbf{x}_{\infty}|^{\beta}+2^{\beta}\delta^{\beta}\,\mathbb{E}\Vert\Pi_{N_{k}-n_{0}}\Vert^{\beta}\cdot\mathbb{E}\left[|\Pi_{n_{0}}\mathbf{x}_{\infty}|^{\beta}\cdot\mathbf{1}_{\mathcal{T}_{\ell,\,\delta}}\right]\\
 & \le2^{\beta}\mathbb{E}\Vert\Pi_{N_{k}-n_{0}}\Vert^{\beta}\cdot\left(\rho_{k}\mathbb{E}\Vert\Pi_{n_{0}}\Vert^{\beta}+\delta^{\beta}\mathbb{E}\left[|\Pi_{n_{0}}\mathbf{x}_{\infty}|^{\beta}\cdot\mathbf{1}_{\mathcal{T}_{\ell,\,\delta}}\right]\right)
\end{align*}
where the last line follows from \eqref{eq:e10}
and submultiplicativity of matrix operator norm. Dividing both sides
by $\mathbb{E}\Vert\Pi_{N_{k}-n_{0}}\Vert^{\beta}\mathbb{E}\left[|\Pi_{n_{0}}\mathbf{x}_{\infty}|^{\beta}\cdot\mathbf{1}_{\mathcal{T}_{\ell,\,\delta}}\right]$
and applying \eqref{eq:e11}, we get, for all $\delta\in(0,\,\delta_{0})$,
\begin{equation}
\frac{\mathbb{E}|\Pi_{N_{k}-n_{0}}\mathbf{z}_{\ell}|^{\beta}}{\mathbb{E}|\Pi_{N_{k}-n_{0}}\mathbf{z}_{\ell}|^{\beta}}\le2^{\beta}\left(\frac{\rho_{k}}{\epsilon_{0}\cdot\sigma_{d}(\mathcal{D}_{\delta}(\mathbf{z}_{\ell}))}\mathbb{E}\Vert\Pi_{n_{0}}\Vert^{\beta}+\delta^{\beta}\right)\,\le C_{\beta}\left(\frac{\rho_{k}}{\delta^{d-1}}+\delta^{\beta}\right)\label{eq:bdd_Piz}
\end{equation}
for some constant $C_{\beta}>0$ since $\sigma_{d}(\mathcal{D}_{\delta}(\mathbf{z}_{\ell}))=O(\delta^{d-1})$.
Hence, by \eqref{eq:e10}, we get 
\[
\limsup_{k\rightarrow\infty}\frac{\mathbb{E}|\Pi_{N_{k}-n_{0}}\mathbf{z}_{\ell}|^{\beta}}{\mathbb{E}\Vert\Pi_{N_{k}-n_{0}}\Vert^{\beta}}\le C\delta^{\beta}
\]
and hency by letting $\delta\rightarrow0$ we get \eqref{eq:step2}. 

On the other hand, by Lemma \ref{lem_Mz}, we get 
\[
\sum_{\ell=1}^{d}\frac{\mathbb{E}|\Pi_{N_{k}-n_{0}}\mathbf{z}_{\ell}|^{\beta}}{\mathbb{E}\Vert\Pi_{N_{k}-n_{0}}\Vert^{\beta}}\ge\frac{1}{d^{\beta}\Vert\mathbf{Z}^{-1}\Vert^{\beta}}
\]
where $\mathbf{Z}\in\mathbb{R}^{d\times d}$ is the matrix whose $k$th
column is $\mathbf{z}_{k}$ for $k=1,\,\dots,\,d$. Letting $k\rightarrow\infty$
and recalling \eqref{eq:step2} yield a contradiction.
\end{proof}

\section{\label{sec5}Escape Time Analysis for contractive Regime}

The purpose of the current section is to prove Theorem \ref{Thm_main_cont}.
Hence, we assume that the Lyapunov exponent $\gamma_{L}$ is negative,
and moreover assume Assumptions \ref{Ass_Cont1}, \ref{Ass_Cont2},
\ref{Ass_Cont3}, \ref{Ass_Cont4} and \ref{Ass_Cont5} throughout
section. 

The proof of Theorem \ref{Thm_main_cont} is based on the following
propositions on the estimate of the mean exit time in the super- and
sub-critical regime. We recall the constnat $\alpha_{+}\in(\alpha,\,\infty)$
from Assumption \ref{Ass_Cont4}. We note that, we shall use $C_{\gamma},\,D_{\gamma},\,\dots$
to denote constants depending only on $\gamma$ (except for the distribution
of $(\mathbf{A},\,\mathbf{B})$), and different appearances of $C_{\gamma}$,
for instance, may denote different constants. 
\begin{prop}
\label{prop:cont_ubd}For each $\gamma\in(\alpha,\,\alpha_{+})$,
there exists a constant $C_{\gamma}>0$ such that, for all $\mathbf{x}_{0}\in\mathbb{R}^{d}$
and $R>0$, 
\begin{equation}
\mathbb{E}\left[\tau_{R}(\mathbf{x}_{0})\right]\le C_{\gamma}\left(R^{\gamma}+1\right)\;.\label{eq:pp1}
\end{equation}
\end{prop}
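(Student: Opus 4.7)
The plan is to establish the upper bound by constructing a bounded sub-martingale on the $N$-step skeleton chain $\mathbf{Y}_k := \mathbf{X}_{kN}$ and applying Doob's optional stopping theorem. Since $\gamma \in (\alpha, \alpha_+) \subset (\alpha, \alpha_\infty)$, both $\mathbb{E}\Vert\mathbf{A}\Vert^\gamma$ and $\mathbb{E}|\mathbf{B}|^\gamma$ are finite (via Assumption \ref{Ass_Cont1}(2) and Remark \ref{rem:ass_cont4}(2), respectively). Combining Proposition \ref{prop:scat} with the exponential growth of $\mathbb{E}\Vert\Pi_n\Vert^\gamma$ from \eqref{eq:mom_sm2}, one can choose $N$ large enough that $\delta_0 \mathbb{E}\Vert\Pi_N\Vert^\gamma$ is as large as desired. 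Decomposing $\mathbf{X}_N(\mathbf{x}) = \Pi_N \mathbf{x} + \mathbf{W}$, where $\mathbf{W} := \sum_{k=1}^N \mathbf{A}_N\cdots\mathbf{A}_{k+1}\mathbf{B}_k$ is independent of $\Pi_N$ with $\mathbb{E}|\mathbf{W}|^\gamma < \infty$, and using the elementary bound $|\mathbf{a}+\mathbf{b}|^\gamma \ge 2^{-\gamma}|\mathbf{a}|^\gamma - 2^\gamma|\mathbf{b}|^\gamma$, we obtain the skeleton moment inequality
\[
\mathbb{E}\bigl[|\mathbf{Y}_{k+1}|^\gamma \bigm|\mathcal{F}_{kN}\bigr] \;\ge\; \lambda\,|\mathbf{Y}_k|^\gamma - C_{*},
\]
with $\lambda$ as large as one wishes and $C_{*} = 2^\gamma \mathbb{E}|\mathbf{W}|^\gamma < \infty$.

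Next I would build a bounded Lyapunov function of the form $V(\mathbf{x}) = \bigl(|\mathbf{x}|^\gamma \vee K\bigr) \wedge \bigl(c_1 R^\gamma\bigr)$, where $K$ is a threshold depending on $\lambda$, $C_{*}$, and $\mathbb{E}|\mathbf{W}|^\gamma$, and $c_1 \ge 1$ is an absolute constant. By construction $V \le c_1 R^\gamma$. The key requirement is that $\mathcal{L}_N V(\mathbf{x}) := \mathbb{E}[V(\mathbf{Y}_{k+1}) \mid \mathbf{Y}_k = \mathbf{x}] - V(\mathbf{x}) \ge 1$ for every $\mathbf{x} \in \mathcal{B}_R$. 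In the region $\{|\mathbf{x}|^\gamma \ge K\}$, the skeleton moment inequality directly gives drift at least $(\lambda - 1)K - C_{*} - \mathbb{E}[(|\mathbf{Y}_{k+1}|^\gamma - c_1 R^\gamma)^{+}]$, where the overshoot correction at the cap is shown to be of lower order as $R \to \infty$ by Assumption \ref{Ass_Cont4}, whose exponent $\alpha_+ > \gamma$ forces $\mathbb{E}[(|\mathbf{Y}_{k+1}|^\gamma - c_1 R^\gamma)^{+}] = o(R^\gamma)$ with an absolute constant coefficient. In the region $\{|\mathbf{x}|^\gamma < K\}$, $V$ is flat and the drift reduces to $\mathbb{E}\bigl[(|\mathbf{Y}_1(\mathbf{x})|^\gamma - K)^{+} \wedge (c_1 R^\gamma - K)\bigr]$, which must be bounded below by $1$ uniformly in $\mathbf{x}$.

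With such a $V$ in hand, Dynkin's formula for the skeleton chain stopped at $\tilde\tau_R := \inf\{k : |\mathbf{Y}_k| > R\}$ --- which is a.s.\ finite, since $\nu_\infty$ has unbounded support by Assumption \ref{Ass_Cont3} and $\mathbf{Y}_k$ converges in distribution to $\nu_\infty$ through Lemma \ref{lem:pre1-2} --- yields
\[
\mathbb{E}\bigl[V(\mathbf{Y}_{\tilde\tau_R \wedge n})\bigr] \;\ge\; V(\mathbf{x}_0) + \mathbb{E}[\tilde\tau_R \wedge n].
\]
Combining with $V \le c_1 R^\gamma$ and passing to the limit $n \to \infty$ by bounded/monotone convergence gives $\mathbb{E}[\tilde\tau_R] \le c_1 R^\gamma$. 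Since the original process must lie outside $\mathcal{B}_R$ by time $N\tilde\tau_R$, we have $\tau_R \le N\tilde\tau_R$, and hence $\mathbb{E}[\tau_R(\mathbf{x}_0)] \le Nc_1 R^\gamma$; the ``$+1$'' in the claimed bound absorbs edge effects for small $R$.

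The main technical obstacle is verifying the uniform drift $\ge 1$ on the small region $\{|\mathbf{x}|^\gamma < K\}$. The scattering bound from Proposition \ref{prop:scat} is uninformative there, since $\lambda|\mathbf{x}|^\gamma - C_{*}$ may be negative; one must instead exploit the non-degeneracy of $(\mathbf{A},\mathbf{B})$ embedded in Assumption \ref{Ass_Cont5}, together with the independence of $\Pi_N$ and $\mathbf{W}$ and the spread of the distribution of $\mathbf{W}$, to show that $|\mathbf{Y}_1(\mathbf{x})|^\gamma > K$ with sufficient probability uniformly in $\mathbf{x}$ on the small region. A second delicate point is controlling the overshoot $\mathbb{E}[(|\mathbf{Y}_1|^\gamma - c_1 R^\gamma)^{+}]$ uniformly in $\mathbf{x} \in \mathcal{B}_R$; this is precisely where Assumption \ref{Ass_Cont4} enters in an essential way, as it gives uniform polynomial tail decay of $|\mathbf{A}\mathbf{x}+\mathbf{B}|$ with exponent $\alpha_+ > \gamma$, making the overshoot contribution asymptotically negligible relative to the ``active'' drift produced by the scattering.
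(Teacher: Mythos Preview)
Your architecture—pass to an $N$-step skeleton, extract a moment drift from Proposition~\ref{prop:scat}, and close via optional stopping with overshoot control from Assumption~\ref{Ass_Cont4}—matches the paper's. The execution diverges at exactly the point you flag as the ``main technical obstacle,'' and the paper resolves it by a different device than your floored-and-capped Lyapunov function.

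The paper never truncates $|\mathbf x|^\gamma$. It works with the unbounded submartingale
\[
M_n \;=\; |\widehat{\mathbf X}_n|^\gamma - \sum_{k=0}^{n-1}\bigl(|\widehat{\mathbf X}_k|^\gamma - G_\gamma\bigr),
\qquad G_\gamma := \mathbb{E}|\mathbf X_{N_\gamma}(\mathbf 0)|^\gamma,
\]
which is a $(\widehat{\mathcal F}_n)$-submartingale by Lemma~\ref{lem:pre30} (your skeleton inequality with $\lambda=2$). Optional stopping at $\widehat\tau_R := \inf\{n:\tau_R\le nN_\gamma\}$ gives $\mathbb{E}|\widehat{\mathbf X}_{n\wedge\widehat\tau_R}|^\gamma \ge \mathbb{E}\sum_{k<n\wedge\widehat\tau_R}(|\widehat{\mathbf X}_k|^\gamma - G_\gamma)$. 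The left side is bounded by $C_\gamma(R^\gamma+1)$ via Lemma~\ref{lem:dtau}, which propagates the one-step overshoot bound of Lemma~\ref{lem:exit} across the at most $N_\gamma-1$ extra steps using Lemma~\ref{lem:gub}; this replaces your cap entirely. For the right side the paper does \emph{not} seek pointwise drift $\ge 1$: instead Lemma~\ref{lem:pre3-0} shows $\mathbb{E}|\mathbf X_n(\mathbf x)|^\gamma > G_\gamma+1$ for every $\mathbf x$ once $n\ge M_\gamma$, so the time-averaged drift satisfies $\frac1n\sum_{k<n}\mathbb{E}(|\widehat{\mathbf X}_k|^\gamma-G_\gamma)\ge\frac12$ for $n\ge K_\gamma$, and on $\{\widehat\tau_R<K_\gamma\}$ the sum is trivially $\ge -K_\gamma G_\gamma$. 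The proof of Lemma~\ref{lem:pre3-0} is exactly the ``spread'' argument you gesture at: couple $\mathbf X_n(\mathbf X'_{N_\gamma}(\mathbf x))-\mathbf X_n(\mathbf x)=\Pi_n(\mathbf X'_{N_\gamma}(\mathbf x)-\mathbf x)$, apply Proposition~\ref{prop:scat}, and invoke the uniform positivity $\inf_{\mathbf x}\mathbb{E}|\mathbf X_{N_\gamma}(\mathbf x)-\mathbf x|^\gamma>0$ (Lemma~\ref{lem:gamma3}, which is where Assumption~\ref{Ass_Cont5} enters via Lemma~\ref{lem:gamma2}).

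Two smaller issues in your outline: (i) your skeleton exit time $\tilde\tau_R=\inf\{k:|\mathbf Y_k|>R\}$ is \emph{not} the paper's $\widehat\tau_R$, and its a.s.\ finiteness does not follow from weak convergence of $\mathbf Y_k$ to $\nu_\infty$; you would need Proposition~\ref{Prop_cont_exit time finite} applied to the skeleton (itself a Kesten recursion with the same stationary law). (ii) The overshoot-at-the-cap term you need is an $N$-step quantity, not a one-step one, so you would still need a transfer lemma of the Lemma~\ref{lem:gub}/\ref{lem:dtau} type. If you want to rescue the bounded-Lyapunov route, Lemma~\ref{lem:gamma3} is the missing piece for the floor region; but the paper's running-sum formulation sidesteps both the floor and the cap and is cleaner.
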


\begin{prop}
\label{prop:cont_lbd}For each $\gamma\in(0,\,\alpha)$, there exist
constants $C_{\gamma},\,D_{\gamma}>0$ such that, for all $\mathbf{x}_{0}\in\mathbb{R}^{d}$
and $R>0$,
\begin{equation}
\mathbb{E}\left[\tau_{R}(\mathbf{x}_{0})\right]\ge C_{\gamma}R^{\gamma}-D_{\gamma}\left(|\mathbf{x}_{0}|^{\gamma}+1\right)\;.\label{eq:pp1-1}
\end{equation}
\end{prop}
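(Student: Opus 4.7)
The strategy is to construct a Lyapunov function $V:\mathbb{R}^{d}\to[0,\infty)$ with $V(\mathbf{x})\geq|\mathbf{x}|^{\gamma}$ pointwise and $V(\mathbf{x})\leq C(|\mathbf{x}|^{\gamma}+1)$, for which $V(\mathbf{X}_{n})-K_{0}n$ is a supermartingale with universal drift constant $K_{0}$. Optional stopping at $\tau_{R}\wedge N$, combined with $\mathbb{P}(\tau_{R}<\infty)=1$ (Proposition \ref{Prop_cont_exit time finite}) and the inequality $V(\mathbf{X}_{\tau_{R}})\geq R^{\gamma}$, will then yield $R^{\gamma}\leq V(\mathbf{x}_{0})+K_{0}\mathbb{E}\tau_{R}$; rearranging with the upper bound on $V(\mathbf{x}_{0})$ gives the claim with $C_{\gamma}=1/K_{0}$ and $D_{\gamma}=C/K_{0}$.

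Since $\gamma<\alpha$, Remark \ref{rem:h_A}(3) gives $h_{\mathbf{A}}(\gamma)<1$, so by \eqref{eq:mom_sm} we have $\mathbb{E}\Vert\Pi_{m}\Vert^{\gamma}\leq C\rho^{m}$ for some $\rho<1$. Choose $n_{0}\in\mathbb{Z}^{+}$ so large that
\[c:=2^{(\gamma-1)_{+}}\,\mathbb{E}\Vert\Pi_{n_{0}}\Vert^{\gamma}<1,\]
where the factor $2^{(\gamma-1)_{+}}$ comes from the elementary inequality $(a+b)^{\gamma}\leq 2^{(\gamma-1)_{+}}(a^{\gamma}+b^{\gamma})$ valid for all $\gamma>0$. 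Set $f_{k}(\mathbf{y}):=\mathbb{E}|\mathbf{X}_{k}(\mathbf{y})|^{\gamma}$. Using the decomposition \eqref{eq:AMP_rep}, submultiplicativity \eqref{eq:subm-1}, and standard moment estimates (subadditivity for $\gamma\leq 1$, Minkowski for $\gamma>1$) --- with the noise term $\mathbb{E}\bigl|\sum_{j=1}^{k}\mathbf{A}_{j+1}\cdots\mathbf{A}_{k}\mathbf{B}_{j}\bigr|^{\gamma}$ remaining bounded in $k$ by the geometric decay above together with $\mathbb{E}|\mathbf{B}|^{\gamma}<\infty$ from \eqref{eq:nc2-2} (valid since $\gamma<\alpha<\alpha_{+}$) --- one obtains the uniform bound $f_{k}(\mathbf{y})\leq C_{0}(|\mathbf{y}|^{\gamma}+1)$ for every $k$ and the sharper contraction $f_{n_{0}}(\mathbf{y})\leq c\,|\mathbf{y}|^{\gamma}+K_{0}$.

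Now define $V(\mathbf{x}):=\sum_{k=0}^{n_{0}-1}f_{k}(\mathbf{x})$, so that $V(\mathbf{x})\geq f_{0}(\mathbf{x})=|\mathbf{x}|^{\gamma}$ and $V(\mathbf{x})\leq n_{0}C_{0}(|\mathbf{x}|^{\gamma}+1)$. The Markov property gives $\mathbb{E}[f_{k}(\mathbf{X}_{n+1})\mid\mathcal{F}_{n}]=f_{k+1}(\mathbf{X}_{n})$, and the sum telescopes:
\[\mathbb{E}[V(\mathbf{X}_{n+1})\mid\mathcal{F}_{n}]=V(\mathbf{X}_{n})+f_{n_{0}}(\mathbf{X}_{n})-|\mathbf{X}_{n}|^{\gamma}\leq V(\mathbf{X}_{n})+K_{0},\]
since $f_{n_{0}}(\mathbf{X}_{n})-|\mathbf{X}_{n}|^{\gamma}\leq(c-1)|\mathbf{X}_{n}|^{\gamma}+K_{0}\leq K_{0}$. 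Thus $V(\mathbf{X}_{n})-K_{0}n$ is an $L^{1}$ supermartingale. Optional stopping at $\tau_{R}\wedge N$ yields $\mathbb{E}V(\mathbf{X}_{\tau_{R}\wedge N})\leq V(\mathbf{x}_{0})+K_{0}\mathbb{E}[\tau_{R}\wedge N]$; on $\{\tau_{R}\leq N\}$ we have $V(\mathbf{X}_{\tau_{R}})>R^{\gamma}$, while $V\geq 0$ everywhere, so the left-hand side is at least $R^{\gamma}\mathbb{P}(\tau_{R}\leq N)$. Sending $N\to\infty$ and using $\mathbb{P}(\tau_{R}<\infty)=1$ with monotone convergence on the right gives $R^{\gamma}\leq V(\mathbf{x}_{0})+K_{0}\mathbb{E}\tau_{R}\leq n_{0}C_{0}(|\mathbf{x}_{0}|^{\gamma}+1)+K_{0}\mathbb{E}\tau_{R}$, which is the advertised bound.

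The main obstacle is establishing the uniform-in-$k$ moment estimate in the regime $\gamma>1$, where subadditivity of $t\mapsto t^{\gamma}$ fails and one incurs the multiplicative penalty $2^{\gamma-1}$. This penalty is absorbed only because the contraction $h_{\mathbf{A}}(\gamma)<1$ allows $\mathbb{E}\Vert\Pi_{n_{0}}\Vert^{\gamma}$ to be driven arbitrarily close to $0$ by enlarging $n_{0}$, and this is the sole --- but essential --- use of the hypothesis $\gamma<\alpha$.
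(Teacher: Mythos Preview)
Your proof is correct and takes a genuinely different route from the paper's. The paper proceeds by first building, via Lemma~\ref{lem:lyap0}, an explicit polynomial Lyapunov function
\[
g(\mathbf{x})=|\mathbf{x}|^{\gamma}+c_{1}|\mathbf{x}|^{\gamma-1}+\cdots+c_{\lfloor\gamma\rfloor}|\mathbf{x}|^{\gamma-\lfloor\gamma\rfloor}
\]
satisfying a one-step drift inequality for the $L_{\gamma}$-skipped chain (where $L_{\gamma}$ plays the role of your $n_{0}$). It then runs the supermartingale argument separately on each of the $L_{\gamma}$ residue-class subchains $\widehat{\mathbf{X}}^{(i)}_{n}=\mathbf{X}_{nL_{\gamma}+i}$, and recovers the bound for the original exit time by observing that one of the stopped values $\widehat{\mathbf{X}}^{(i)}_{\widehat{\tau}^{(i)}_{R}}$ coincides with $\mathbf{X}_{\tau_{R}}$ and hence has norm exceeding $R$.

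Your construction $V=\sum_{k=0}^{n_{0}-1}f_{k}$ with $f_{k}(\mathbf{y})=\mathbb{E}|\mathbf{X}_{k}(\mathbf{y})|^{\gamma}$ is the standard ``averaging over shifts'' device that converts the $n_{0}$-step contraction directly into a one-step drift for the original chain, via the telescoping identity $\mathbb{E}[f_{k}(\mathbf{X}_{n+1})\mid\mathcal{F}_{n}]=f_{k+1}(\mathbf{X}_{n})$. This bypasses both the explicit polynomial construction of Lemma~\ref{lem:lyap0} (and its supporting Lemmas~\ref{lem_ele1}--\ref{lem:ut}) and the residue-class bookkeeping with the maximum over $i$. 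It is shorter and more conceptual for the purpose of this proposition. The paper's explicit $g$, on the other hand, has the virtue of being a concrete function of $|\mathbf{x}|$ alone, which makes it reusable---for instance, the same polynomial form is exploited again in the sharper univariate analysis of Section~\ref{sec55}.
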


The proof of these propositions are given in Sections \ref{sec52}
and \ref{sec53}, respectively. We assume these propositions now and
then complete the proof of Theorem \ref{Thm_main_cont}. 
\begin{proof}[Proof of Theorem \ref{Thm_main_cont}]
\noindent Fix $\mathbf{x}_{0}\in\mathbb{R}^{d}$. Let $\gamma\in(\alpha,\,\alpha_{+})$.
Then, by Proposition \ref{prop:cont_ubd}, we have 
\[
\frac{\log\mathbb{E}\left[\tau_{R}(\mathbf{x}_{0})\right]}{\log R}\le\frac{\log C_{\gamma}}{\log R}+\frac{\log(R^{\gamma}+1)}{\log R}
\]
and therefore by letting $R\rightarrow\infty$ and then $\gamma\searrow\alpha$,
we get 
\begin{equation}
\limsup_{R\rightarrow\infty}\frac{\log\mathbb{E}\left[\tau_{R}(\mathbf{x}_{0})\right]}{\log R}\le\alpha\;.\label{eq:ubf}
\end{equation}
On the other hand, for $\gamma\in(0,\,\alpha)$, we take $R$ sufficiently
large so that, by Proposition \ref{prop:cont_ubd}, $\mathbb{E}\left[\tau_{R}(\mathbf{x}_{0})\right]\ge C_{\gamma}R^{\gamma}$.
Then, taking $\log$arithm both sides, dividing both sides by $\log R$,
and sending $R\rightarrow\infty$ and then $\gamma\nearrow\alpha$,
we get 
\begin{equation}
\liminf_{R\rightarrow\infty}\frac{\log\mathbb{E}\left[\tau_{R}(\mathbf{x}_{0})\right]}{\log R}\ge\alpha\;.\label{lbf}
\end{equation}
Combining \eqref{eq:ubf} and \eqref{lbf} completes the proof. 
\end{proof}

\subsection{\label{sec51}Control of escaping location}

We control, based on Assumption \ref{Ass_Cont4}, that we can not
exit the ball $\mathcal{B}_{R}$ far away from the ball. This is consequence
of the assumption that the tail of $\mathbf{A}\mathbf{x}+\mathbf{B}$
is not that heavy. 
\begin{notation*}
From now on, we simply write $\mathbf{X}_{\tau_{R}}(\mathbf{x})$
instead of $\mathbf{X}_{\tau_{R}(\mathbf{x})}(\mathbf{x})$ as there
is no risk of confusion. 
\end{notation*}
\begin{lem}
\label{lem:exit}For all $\gamma\in[0,\,\alpha_{+})$, there exists
a constant $C_{\gamma}>0$ such that 
\[
\mathbb{E}|\mathbf{X}_{\tau_{R}}(\mathbf{x})|^{\gamma}\le C_{\gamma}(R^{\gamma}+1)\;.
\]
\end{lem}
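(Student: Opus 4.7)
The plan is to convert Assumption~\ref{Ass_Cont4} into a polynomial tail bound on $|\mathbf{X}_{\tau_R}(\mathbf{x})|$ itself, and then integrate via the layer-cake formula. The key identity is that, for $u>R$,
\[
\{|\mathbf{X}_n|>u\}\cap\{\tau_R\ge n\}=\{|\mathbf{X}_n|>u\}\cap\{\tau_R=n\},
\]
since $|\mathbf{X}_n|>u>R$ together with $\tau_R\ge n$ forces $\tau_R=n$. Combining this with Proposition~\ref{Prop_cont_exit time finite} (which gives $\tau_R<\infty$ a.s.) and the Markov property, I would write
\[
\mathbb{P}\bigl(|\mathbf{X}_{\tau_R}|>u\bigr)=\sum_{n=1}^{\infty}\mathbb{E}\bigl[\mathbf{1}_{\tau_R\ge n}\,\mathbb{P}(|\mathbf{A}\mathbf{y}+\mathbf{B}|>u)\bigr|_{\mathbf{y}=\mathbf{X}_{n-1}}\bigr],
\]
where I used that $\{\tau_R\ge n\}$ is $\mathcal{F}_{n-1}$-measurable and that $\mathbf{X}_{n-1}\in\mathcal{B}_R$ on this event.

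Next I would apply Assumption~\ref{Ass_Cont4} in two cases. For $R\ge R_0$ and $u\ge z_0 R$, the assumption directly yields
\[
\mathbb{P}(|\mathbf{A}\mathbf{y}+\mathbf{B}|>u)\le \frac{C_0 R^{\alpha_+}}{u^{\alpha_+}}\,\mathbb{P}(|\mathbf{A}\mathbf{y}+\mathbf{B}|>R)
\]
for $\mathbf{y}\in\mathcal{B}_R$. Substituting and invoking the identity above a second time (to rewrite the sum as $\sum_n\mathbb{P}(\tau_R=n)=\mathbb{P}(\tau_R<\infty)\le 1$) gives $\mathbb{P}(|\mathbf{X}_{\tau_R}|>u)\le C_0 R^{\alpha_+}/u^{\alpha_+}$. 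For $R<R_0$ we have $\mathbf{X}_{n-1}\in\mathcal{B}_R\subset\mathcal{B}_{R_0}$, so the same step with $R_0$ in place of $R$ applies for $u\ge z_0 R_0$; the resulting summation is bounded by $\mathbb{P}(|\mathbf{X}_{\tau_R}|>R_0)\le 1$, yielding $\mathbb{P}(|\mathbf{X}_{\tau_R}|>u)\le C_0 R_0^{\alpha_+}/u^{\alpha_+}$.

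Finally I would combine these two bounds with the trivial bound $\mathbb{P}(|\mathbf{X}_{\tau_R}|>u)\le 1$ (used on $u\le z_0(R\vee R_0)$) and integrate:
\[
\mathbb{E}|\mathbf{X}_{\tau_R}|^{\gamma}=\gamma\int_0^{\infty}u^{\gamma-1}\mathbb{P}(|\mathbf{X}_{\tau_R}|>u)\,du.
\]
Splitting at $u=z_0(R\vee R_0)$, the lower piece contributes $O((R\vee R_0)^{\gamma})$, and the upper piece converges precisely because $\gamma<\alpha_+$, contributing $O((R\vee R_0)^{\gamma})$ as well. The outcome is $\mathbb{E}|\mathbf{X}_{\tau_R}|^{\gamma}\le C_\gamma R^{\gamma}+C'_\gamma\le C''_\gamma(R^{\gamma}+1)$, which is the claim.

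The main obstacle is the dichotomy at $R_0$: Assumption~\ref{Ass_Cont4} is only stated for $R\ge R_0$, so for $R<R_0$ one loses the sharp $R^{\alpha_+}$ factor and must pay a constant, which is exactly the source of the additive $+1$ in the bound. The rest is bookkeeping: identifying which summations telescope, checking that the stopped-time substitution via the Markov property is valid on each $\{\tau_R\ge n\}$, and verifying integrability of the tail integral.
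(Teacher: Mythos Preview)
Your approach is correct and uses the same ingredients as the paper's proof---Assumption~\ref{Ass_Cont4}, the layer-cake formula, and the Markov property. The paper organizes these slightly more concisely: it first bounds the one-step conditional moment $\mathbb{E}\bigl[|\mathbf{Ay+B}|^{\gamma}\,\big|\,|\mathbf{Ay+B}|>R\bigr]$ uniformly over $\mathbf{y}\in\mathcal{B}_R$ via the same layer-cake computation, and then concludes in one line by the strong Markov property (since $\mathbf{X}_{\tau_R}=\mathbf{A}\mathbf{X}_{\tau_R-1}+\mathbf{B}$ with $\mathbf{X}_{\tau_R-1}\in\mathcal{B}_R$ and $|\mathbf{X}_{\tau_R}|>R$), thereby avoiding your explicit summation over $n$.
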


\begin{proof}
Let $\gamma\in[0,\,\alpha_{+})$, $R>R_{0}$ and $\mathbf{x}\in\mathcal{B}_{R}$.
By the layer-cake formula and the fact that 
\[
\lim_{z\rightarrow\infty}z^{\gamma}\mathbb{P}(|\mathbf{A}\mathbf{x}+\mathbf{B}|>z)=0
\]
which follows from \eqref{eq:nc2-1}, we can write
\begin{align*}
 & \mathbb{E}\left[|\mathbf{A}\mathbf{x}+\mathbf{B}|^{\gamma}\cdot\mathbf{1}\left\{ |\mathbf{A}\mathbf{x}+\mathbf{B}|>R\right\} \right]\\
 & \quad=R^{\gamma}\left[\mathbb{P}(|\mathbf{A}\mathbf{x}+\mathbf{B}|>R)+\int_{1}^{\infty}\mathbb{P}(|\mathbf{A}\mathbf{x}+\mathbf{B}|>Rz)\cdot\gamma z^{\gamma-1}dz\right]\;.
\end{align*}
Since $\gamma\in[0,\,\alpha_{+})$, again by \eqref{eq:nc2-1}, we
have 
\[
\int_{z_{0}}^{\infty}\mathbb{P}(|\mathbf{A}\mathbf{x}+\mathbf{B}|>Rz)\cdot\gamma z^{\gamma-1}dz\le\frac{\gamma C_{0}}{\alpha_{+}-\gamma}\mathbb{P}(|\mathbf{A}\mathbf{x}+\mathbf{B}|>R)
\]
plus
\[
\int_{1}^{z_{0}}\mathbb{P}(|\mathbf{A}\mathbf{x}+\mathbf{B}|>Rz)\cdot\gamma z^{\gamma-1}dz\le\mathbb{P}(|\mathbf{A}\mathbf{x}+\mathbf{B}|>R)\cdot\int_{1}^{z_{0}}\gamma z^{\gamma-1}dz=z_{0}^{\gamma}\mathbb{P}(|\mathbf{A}\mathbf{x}+\mathbf{B}|>R)\ ,
\]
therefore we can conclude that
\begin{equation}
\mathbb{E}\left[|\mathbf{A}\mathbf{x}+\mathbf{B}|^{\gamma}\cdot\mathbf{1}\left\{ |\mathbf{A}\mathbf{x}+\mathbf{B}|>R\right\} \right]\le\left(\frac{\gamma C_{0}}{\alpha_{+}-\gamma}+z_{0}^{\gamma}\right)R^{\gamma}\cdot\mathbb{P}(|\mathbf{A}\mathbf{x}+\mathbf{B}|>R)\;.\label{eq:exit-1}
\end{equation}
Consequently, by the strong Markov property, 
\[
\mathbb{E}|\mathbf{X}_{\tau_{R}}(\mathbf{x})|^{\gamma}\le\sup_{\mathbf{y}\in\mathcal{B}_{R}}\mathbb{E}\left[|\mathbf{A}\mathbf{y}+\mathbf{B}|^{\gamma}\,\big|\,|\mathbf{A}\mathbf{y}+\mathbf{B}|>R\right]\le\left(\frac{\gamma C_{0}}{\alpha_{+}-\gamma}+z_{0}^{\gamma}\right)R^{\gamma}\;.
\]
\end{proof}

\subsection{Moment estimates}
\begin{lem}
\label{lem:gamma1}Let $\gamma\in[0,\,\alpha_{\infty})$
and $n\in\mathbb{Z}^{+}$. Then, there exists a constant $C_{n,\,\gamma}>0$
such that 
\[
\mathbb{E}|\mathbf{X}_{n}(\mathbf{x})|^{\gamma}<C_{n,\,\gamma}(|\mathbf{x}|^{\gamma}+1)\;\;\;\text{for all }\mathbf{x}\in\mathbb{R}^{d}.
\]
In particular, $\mathbb{E}|\mathbf{X}_{n}(\mathbf{x})|^{\gamma}<\infty$. 
\end{lem}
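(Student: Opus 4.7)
The plan is to work from the explicit representation \eqref{eq:AMP_rep}, which writes $\mathbf{X}_n(\mathbf{x})$ as a sum of $n+1$ terms built from products of the $\mathbf{A}_k$'s acting on $\mathbf{x}$ and on the $\mathbf{B}_k$'s. Equivalently, one can proceed by induction on $n$, applying the recursion $\mathbf{X}_{n+1}(\mathbf{x}) = \mathbf{A}_{n+1}\mathbf{X}_n(\mathbf{x}) + \mathbf{B}_{n+1}$ together with the independence of $(\mathbf{A}_{n+1}, \mathbf{B}_{n+1})$ from $\mathbf{X}_n(\mathbf{x})$. Either route reduces the proof to combining two ingredients: a term-by-term deterministic upper bound via the elementary inequality, and the finiteness of the relevant moments of $\mathbf{A}$ and $\mathbf{B}$.

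For the direct approach, I would apply \eqref{elem_ine} (using subadditivity of $t \mapsto t^\gamma$ when $\gamma \le 1$, and $n^\gamma$ when $\gamma \ge 1$) together with submultiplicativity \eqref{eq:subm-1} of the matrix norm to obtain a pointwise bound of the form
\[
|\mathbf{X}_n(\mathbf{x})|^\gamma \le c_{n,\gamma}\Bigl(\|\Pi_n\|^\gamma |\mathbf{x}|^\gamma + \sum_{k=1}^{n-1} \|\mathbf{A}_{k+1}\|^\gamma \cdots \|\mathbf{A}_n\|^\gamma \, |\mathbf{B}_k|^\gamma + |\mathbf{B}_n|^\gamma\Bigr)
\]
for some combinatorial constant $c_{n,\gamma}$. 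Taking expectations and using the i.i.d.\ structure of the sequence $(\mathbf{A}_n, \mathbf{B}_n)$ factors each cross term into a product of the form $(\mathbb{E}\|\mathbf{A}\|^\gamma)^{n-k}\,\mathbb{E}|\mathbf{B}|^\gamma$, and the $|\mathbf{x}|^\gamma$-term picks up a factor $(\mathbb{E}\|\mathbf{A}\|^\gamma)^n$.

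The two moment bounds required are $\mathbb{E}\|\mathbf{A}\|^\gamma < \infty$, which holds for all $\gamma < \alpha_\infty$ by the very definition \eqref{eq:s_infty} of $\alpha_\infty$ (positive under Assumption \ref{Ass_Cont1}-(2)), and $\mathbb{E}|\mathbf{B}|^\gamma < \infty$, which follows from Assumption \ref{Ass_Cont4} via the layer-cake bound \eqref{eq:nc2-2}. Collecting the constants yields $\mathbb{E}|\mathbf{X}_n(\mathbf{x})|^\gamma \le C_{n,\gamma}(|\mathbf{x}|^\gamma + 1)$ with an explicit, $n$-dependent constant.

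I expect no genuine obstacle beyond the bookkeeping of the combinatorial prefactors; the argument is essentially a direct moment computation exploiting the affine-linear structure of \eqref{eq:AMP} and the independence of the driving noise. The only mildly subtle point is the range of admissible $\gamma$: the bound on $\mathbf{B}$ coming from Assumption \ref{Ass_Cont4} is naturally valid up to $\gamma < \alpha_+ \le \alpha_\infty$, but in all the applications of Section \ref{sec3} one has $\alpha_+ = \alpha_\infty = \infty$, so the distinction is immaterial in practice.
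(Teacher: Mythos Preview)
Your proposal is correct and follows essentially the same route as the paper: expand $\mathbf{X}_n(\mathbf{x})$ via \eqref{eq:AMP_rep}, apply the elementary inequality \eqref{elem_ine} and submultiplicativity, then factor expectations using the i.i.d.\ structure and invoke $\mathbb{E}\Vert\mathbf{A}\Vert^{\gamma}<\infty$ (from $\gamma<\alpha_\infty$) and $\mathbb{E}|\mathbf{B}|^{\gamma}<\infty$ (from \eqref{eq:nc2-2}). Your observation about the range $\gamma<\alpha_+$ versus $\gamma<\alpha_\infty$ is apt---the paper's own proof cites \eqref{eq:nc2-2} just as you do, so it carries the same implicit restriction.
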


\begin{proof}
\noindent Since $\gamma\in[0,\,\alpha_{\infty})$, we have that $\mathbb{E}\Vert\mathbf{A}\Vert^{\gamma}<\infty$
and $\mathbb{E}|\mathbf{B}|^{\gamma}<\infty$ by Assumption \ref{Ass_Cont2}
and \eqref{eq:nc2-2}, respectively. Thus, by \eqref{eq:AMP_rep}
and \eqref{elem_ine}, we have
\[
\mathbb{E}|\mathbf{X}_{n}(\mathbf{x})|^{\gamma}\le(n+1)^{\gamma}\left((\mathbb{E}\Vert\mathbf{A}\Vert^{\gamma})^{n}|\mathbf{x}|^{\gamma}+\sum_{i=0}^{n-1}(\mathbb{E}\Vert\mathbf{A}\Vert^{\gamma})^{n-1}\mathbb{E}|\mathbf{B}|^{\gamma}\right)
\]
and hence we get the desired result. 
\end{proof}
\begin{lem}
\label{lem:gamma2}Let $\gamma\in[0,\,\alpha_{\infty})$
and $n\in\mathbb{Z}^{+}$. It holds that 
\begin{equation}
\inf_{\mathbf{x}:|\mathbf{x}|=1}\mathbb{E}|\Pi_{n}\mathbf{x}-\mathbf{x}|^{\gamma}>0\;.\label{eq:estlb}
\end{equation}
\end{lem}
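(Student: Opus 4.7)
The plan is to combine a compactness argument on $\mathbb{S}^{d-1}$ with the negativity of the Lyapunov exponent.

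First, I would establish that the functional $f_n(\mathbf{x}) := \mathbb{E}|\Pi_n \mathbf{x} - \mathbf{x}|^\gamma$ is continuous on $\mathbb{S}^{d-1}$. For any convergent sequence $\mathbf{x}_k \to \mathbf{x}_\infty$ on the sphere, the integrand converges pointwise and is uniformly dominated by $(\Vert \Pi_n \Vert + 1)^\gamma$. This dominating random variable is integrable: $\gamma < \alpha_{\infty}$ gives $\mathbb{E}\Vert \mathbf{A} \Vert^\gamma < \infty$ by \eqref{eq:s_infty}, and submultiplicativity \eqref{eq:subm-1} combined with the independence of $(\mathbf{A}_i)_{i \ge 1}$ yields $\mathbb{E}\Vert \Pi_n \Vert^\gamma \le (\mathbb{E}\Vert \mathbf{A} \Vert^\gamma)^n < \infty$. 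The dominated convergence theorem then delivers continuity of $f_n$, and compactness of $\mathbb{S}^{d-1}$ guarantees that the infimum in \eqref{eq:estlb} is attained at some $\mathbf{x}^* \in \mathbb{S}^{d-1}$.

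Next, suppose for contradiction that $f_n(\mathbf{x}^*) = 0$; equivalently, $\Pi_n \mathbf{x}^* = \mathbf{x}^*$ almost surely. For each $j \in \mathbb{Z}^{+}$, define the block
\[
\Pi_n^{(j)} \;:=\; \mathbf{A}_{jn}\, \mathbf{A}_{jn-1} \cdots \mathbf{A}_{(j-1)n+1},
\]
so that $(\Pi_n^{(j)})_{j \ge 1}$ are i.i.d.\ copies of $\Pi_n$. Since each event $\{\Pi_n^{(j)} \mathbf{x}^* = \mathbf{x}^*\}$ has probability one, their countable intersection does too; on this intersection event,
\[
\Pi_{kn}\, \mathbf{x}^* \;=\; \Pi_n^{(k)} \Pi_n^{(k-1)} \cdots \Pi_n^{(1)} \mathbf{x}^* \;=\; \mathbf{x}^*
\]
for every $k \in \mathbb{Z}^{+}$. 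Consequently $\Vert \Pi_{kn} \Vert \ge |\Pi_{kn} \mathbf{x}^*| = 1$ almost surely, so $\frac{1}{kn} \log \Vert \Pi_{kn} \Vert \ge 0$ almost surely.

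Letting $k \to \infty$ and applying the multiplicative ergodic theorem \eqref{eq:lyaas} along the subsequence $(kn)_{k \ge 1}$ gives $\gamma_L \ge 0$, contradicting $\gamma_L < 0$ from Assumption \ref{Ass_Cont1}-(1). Hence $f_n(\mathbf{x}^*) > 0$, which proves \eqref{eq:estlb}. No single step is especially delicate; the two points requiring care are verifying the integrability of the dominating function $(\Vert \Pi_n \Vert + 1)^\gamma$ to legitimize dominated convergence, and confirming that the i.i.d.\ block decomposition propagates the almost-sure fixed-point property to $\Pi_{kn}$ for arbitrarily large $k$ so that the Lyapunov-exponent contradiction can close.
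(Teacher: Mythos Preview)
Your proof is correct, and the contradiction step takes a genuinely different route from the paper's.

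Both arguments begin by establishing continuity of $\mathbf{x}\mapsto\mathbb{E}|\Pi_n\mathbf{x}-\mathbf{x}|^{\gamma}$ and invoking compactness of $\mathbb{S}^{d-1}$. The paper proves continuity via explicit H\"older/Lipschitz estimates (splitting into the cases $\gamma<1$ and $\gamma\ge1$), whereas you use dominated convergence with the majorant $(\Vert\Pi_n\Vert+1)^{\gamma}$; both are valid, and yours is more uniform in~$\gamma$.

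The real divergence is in ruling out a fixed vector. Having deduced $\Pi_{kn}\mathbf{x}^*=\mathbf{x}^*$ a.s.\ for all $k$, the paper picks $\beta\in(\alpha,\alpha_+)$, applies the scattering bound of Proposition~\ref{prop:scat} (which rests on Assumption~\ref{Ass_Cont5}) to obtain $\delta_0\,\mathbb{E}\Vert\Pi_{kn}\Vert^{\beta}\le1$ for every $k$, and hence $h_{\mathbf{A}}(\beta)\le1$, contradicting Assumption~\ref{Ass_Cont2}. Your argument bypasses both Proposition~\ref{prop:scat} and the function $h_{\mathbf{A}}$: from $\Vert\Pi_{kn}\Vert\ge1$ a.s.\ you read off $\gamma_L\ge0$ directly via \eqref{eq:lyaas}, contradicting Assumption~\ref{Ass_Cont1}-(1). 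This is strictly more elementary and uses only the negative Lyapunov exponent, not Assumptions~\ref{Ass_Cont2} or~\ref{Ass_Cont5}. The paper's route, on the other hand, reuses machinery (Proposition~\ref{prop:scat}) that is needed elsewhere in Section~\ref{sec5}, so the extra overhead is amortised across the section.
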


\begin{proof}
We define $F_{n,\,\gamma}:\mathbb{R}^{d}\rightarrow\mathbb{R}$ as
\[
F_{n,\,\gamma}(\mathbf{x})=\mathbb{E}|\Pi_{n}\mathbf{x}-\mathbf{x}|^{\gamma}\;.
\]
We will prove that $F_{n,\,\gamma}>0$ on $\mathbb{R}^{d}\setminus\{\mathbf{0}\}$
and that $F_{n,\,\gamma}$ is continuous on $\mathbb{R}^{d}$. Then,
\eqref{eq:estlb} follows immediately. 

Let us first show the continuity of $F_{n,\,\gamma}$. If $\gamma<1$,
by the elementary inequality 
\[
|a^{\gamma}-b^{\gamma}|\le|a-b|^{\gamma}\;\;\;\text{and\;\;\;}(a+b)^{\gamma}\le a^{\gamma}+b^{\gamma}\;\;\;\text{for }a,\,b\ge0\;,
\]
we have 
\begin{equation}
|F_{n,\,\gamma}(\mathbf{x})-F_{n,\,\gamma}(\mathbf{y})|\le\mathbb{E}|\Pi_{n}(\mathbf{x}-\mathbf{y})-(\mathbf{x}-\mathbf{y})|^{\gamma}\le(\mathbb{E}|\Pi_{n}|^{\gamma}+1)|\mathbf{x}-\mathbf{y}|^{\gamma}\label{eq:diff1-1}
\end{equation}
and the continuity follows immediately. On the other hand, if $\gamma\ge1$,
by the elementary inequality 
\[
|a^{\gamma}-b^{\gamma}|\le\gamma|a-b|(a+b)^{\gamma-1}\;\;\;\;;\;a,\,b\ge0\;,
\]
which holds because of the mean-value theorem, then by H{\"o}lder inequality
we have 
\begin{align*}
 & |F_{n,\,\gamma}(\mathbf{x})-F_{n,\,\gamma}(\mathbf{y})|\\
 & \le\gamma\mathbb{E}\left[|\Pi_{n}(\mathbf{x}-\mathbf{y})-(\mathbf{x}-\mathbf{y})|\cdot(||\Pi_{n}\mathbf{x}-\mathbf{x}|+|\Pi_{n}\mathbf{y}-\mathbf{y}|)^{\gamma-1}\right]\\
 & \le\gamma\mathbb{E}\left[\Pi_{n}(\mathbf{x}-\mathbf{y})-(\mathbf{x}-\mathbf{y})|^{\gamma}\right]^{\frac{1}{\gamma}}\mathbb{E}\left[(|\Pi_{n}\mathbf{x}-\mathbf{x}|+|\Pi_{n}\mathbf{y}-\mathbf{y}|)^{\gamma}\right]^{\frac{\gamma-1}{\gamma}}\;
\end{align*}
Hence, by the bound (cf. \eqref{elem_ine}), 
\begin{equation}
\mathbb{E}|\Pi_{n}\mathbf{x}-\mathbf{x}|^{\gamma}\le2^{\gamma}\mathbb{E}(\Vert\Pi_{n}\Vert^{\gamma}+1)|\mathbf{x}|^{\gamma}\label{eq:bdpx}
\end{equation}
we can conclude that 
\[
|F_{n,\,\gamma}(\mathbf{x})-F_{n,\,\gamma}(\mathbf{y})|\le C_{n,\,\gamma}|\mathbf{x}-\mathbf{y}|\left(|\mathbf{x}|^{\gamma-1}+|\mathbf{y}|^{\gamma-1}+1\right)
\]
and hence the continuity follows. 

Next we shall prove that $F_{n,\,\gamma}>0$. Suppose that $F_{n,\,\gamma}(\mathbf{x}_{0})=0$,
i.e., $\Pi_{n}\mathbf{x}_{0}=\mathbf{x}_{0}$ almost surely, for some
$\mathbf{x}_{0}\in\mathbb{R}^{d}\setminus\{\mathbf{0}\}.$ Then, inductively,
we have that $\Pi_{kn}\mathbf{x}_{0}=\mathbf{x}_{0}$ almost surely
for all $k\ge1$. Let us take $\beta\in(\alpha,\,\alpha_{+})$ so
that $h_{\mathbf{A}}(\beta)>1$ by Theorem \ref{Thm_kes0}-(4). Then,
by Proposition \ref{prop:scat}, we have 
\[
|\mathbf{x}_{0}|^{\beta}=\mathbb{E}|\Pi_{kn}\mathbf{x}_{0}|^{\beta}\ge\delta_{0}\mathbb{E}\Vert\Pi_{kn}\Vert^{\beta}|\mathbf{x}_{0}|\;,
\]
and therefore, as $\mathbf{x}_{0}\neq0$, we get $\delta_{0}\mathbb{E}\Vert\Pi_{kn}\Vert^{\beta}\le1$
for all $k\in\mathbb{Z}^{+}$. This implies that 
\[
h_{\mathbf{A}}(\beta)=\lim_{k\rightarrow\infty}\left[\mathbb{E}\Vert\Pi_{kn}\Vert^{\beta}\right]^{\frac{1}{kn}}\le\lim_{k\rightarrow\infty}\delta_{0}^{-\frac{1}{kn}}=1\;.
\]
This contradicts to $h_{\mathbf{A}}(\beta)>1$ (as $\beta\in(\alpha,\,\alpha_{\infty})$)
and we are done. 
\end{proof}
\begin{lem}
\label{lem:gamma3}Let $\gamma\in[0,\,\alpha_{\infty})$
and $n\in\mathbb{Z}^{+}$. It holds that
\[
\inf_{\mathbf{x}\in\mathbb{R}^{d}}\mathbb{E}|\mathbf{X}_{n}(\mathbf{x})-\mathbf{x}|^{\gamma}>0\ .
\]
\end{lem}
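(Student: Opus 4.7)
The plan is to mimic the proof of Lemma \ref{lem:gamma2} and combine three ingredients about the function $F_{n,\gamma}(\mathbf{x}) := \mathbb{E}|\mathbf{X}_{n}(\mathbf{x})-\mathbf{x}|^{\gamma}$: continuity, strict positivity everywhere, and coercivity at infinity. Write $\mathbf{X}_{n}(\mathbf{x})-\mathbf{x} = (\Pi_{n}-\mathbf{I})\mathbf{x}+\mathbf{R}_{n}$, where $\mathbf{R}_{n}:=\mathbf{A}_{2}\cdots\mathbf{A}_{n}\mathbf{B}_{1}+\cdots+\mathbf{A}_{n}\mathbf{B}_{n-1}+\mathbf{B}_{n}$ is the noise term that does not depend on $\mathbf{x}$.

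First I would establish continuity of $F_{n,\gamma}$ on $\mathbb{R}^{d}$. Using the coupling identity $(\mathbf{X}_{n}(\mathbf{x})-\mathbf{x})-(\mathbf{X}_{n}(\mathbf{y})-\mathbf{y})=(\Pi_{n}-\mathbf{I})(\mathbf{x}-\mathbf{y})$, one repeats verbatim the two elementary inequality cases ($\gamma<1$ using $|a^{\gamma}-b^{\gamma}|\le|a-b|^{\gamma}$ and $\gamma\ge1$ using the mean-value bound plus H\"older) from the proof of Lemma \ref{lem:gamma2}. The moment $\mathbb{E}\Vert\Pi_{n}-\mathbf{I}\Vert^{\gamma}$ is finite since $\gamma<\alpha_{\infty}$, so the continuity estimate follows at once.

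Next I would verify strict positivity. Lemma \ref{lem:pre1-2}(2) gives $\mathbb{P}(\mathbf{X}_{n}(\mathbf{x})=\mathbf{x})<1$ for every $\mathbf{x}\in\mathbb{R}^{d}$, so on an event of positive probability $|\mathbf{X}_{n}(\mathbf{x})-\mathbf{x}|^{\gamma}$ is strictly positive, hence $F_{n,\gamma}(\mathbf{x})>0$ on all of $\mathbb{R}^{d}$.

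The main work is coercivity, i.e.\ $F_{n,\gamma}(\mathbf{x})\to\infty$ as $|\mathbf{x}|\to\infty$. Given any sequence $\mathbf{x}_{k}$ with $r_{k}:=|\mathbf{x}_{k}|\to\infty$, extract a subsequence along which $\mathbf{u}_{k}:=\mathbf{x}_{k}/r_{k}$ converges to some $\mathbf{u}_{\infty}\in\mathbb{S}^{d-1}$. Then
\begin{equation*}
\frac{F_{n,\gamma}(\mathbf{x}_{k})}{r_{k}^{\gamma}}
= \mathbb{E}\bigl|(\Pi_{n}-\mathbf{I})\mathbf{u}_{k}+r_{k}^{-1}\mathbf{R}_{n}\bigr|^{\gamma},
\end{equation*}
and the integrand converges almost surely to $|(\Pi_{n}-\mathbf{I})\mathbf{u}_{\infty}|^{\gamma}$ since $\mathbf{R}_{n}$ is a.s.\ finite. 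Fatou's lemma together with Lemma \ref{lem:gamma2} gives $\liminf_{k}F_{n,\gamma}(\mathbf{x}_{k})/r_{k}^{\gamma}\ge\mathbb{E}|(\Pi_{n}-\mathbf{I})\mathbf{u}_{\infty}|^{\gamma}\ge \inf_{|\mathbf{u}|=1}\mathbb{E}|(\Pi_{n}-\mathbf{I})\mathbf{u}|^{\gamma}>0$, so $F_{n,\gamma}(\mathbf{x}_{k})\to\infty$; since the full sequence has this property along every subsequence, coercivity follows. Finally, choose $M$ with $F_{n,\gamma}\ge 1$ outside $\mathcal{B}_{M}$; on the compact set $\mathcal{B}_{M}$ the continuous positive function $F_{n,\gamma}$ attains a strictly positive minimum, and the global infimum is therefore positive.

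The only subtle point is the coercivity step: one cannot use Minkowski directly when $\gamma<1$, which is why I route the argument through Fatou and a subsequential compactness argument on $\mathbb{S}^{d-1}$ rather than through a one-shot norm inequality. Every other piece is either a verbatim copy of the proof of Lemma \ref{lem:gamma2} or a direct invocation of Lemma \ref{lem:pre1-2}(2).
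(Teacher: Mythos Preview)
Your proof is correct and follows the same three-part skeleton as the paper (continuity via the coupling identity and the two cases $\gamma<1$, $\gamma\ge1$; strict positivity via Lemma~\ref{lem:pre1-2}(2); coercivity via Lemma~\ref{lem:gamma2}). The one genuine difference is in the coercivity step. The paper does not go through Fatou and subsequential compactness on $\mathbb{S}^{d-1}$; instead it applies the one-shot inequality $|a+b|^{\gamma}\ge 2^{-\gamma}|a|^{\gamma}-|b|^{\gamma}$ (which follows from \eqref{elem_ine} applied to $|a|=|(a+b)-b|$ and is valid for \emph{all} $\gamma\ge0$, not just $\gamma\ge1$) with $a=(\Pi_{n}-\mathbf{I})\mathbf{x}$ and $b=\mathbf{R}_{n}=\mathbf{X}_{n}(\mathbf{0})$, obtaining directly
\[
F_{n,\gamma}(\mathbf{x})\ \ge\ 2^{-\gamma}\,\mathbb{E}|\Pi_{n}\mathbf{x}-\mathbf{x}|^{\gamma}-\mathbb{E}|\mathbf{X}_{n}(\mathbf{0})|^{\gamma}\ \ge\ c_{n,\gamma}|\mathbf{x}|^{\gamma}-c_{n,\gamma}',
\]
with $c_{n,\gamma}=2^{-\gamma}\inf_{|\mathbf{u}|=1}\mathbb{E}|\Pi_{n}\mathbf{u}-\mathbf{u}|^{\gamma}>0$ by Lemma~\ref{lem:gamma2}. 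So the subtlety you flag about Minkowski failing for $\gamma<1$ is in fact avoidable, and the paper's route yields a clean explicit quantitative lower bound rather than a qualitative $\liminf$. Your Fatou/compactness argument is perfectly valid, just slightly longer and less explicit.
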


\begin{proof}
Define $F=F_{n,\,\gamma}:\mathbb{R}^{d}\rightarrow\mathbb{R}$ as
\[
F_{n,\,\gamma}(\mathbf{x})=\mathbb{E}|\mathbf{X}_{n}(\mathbf{x})-\mathbf{x}|^{\gamma}\;.
\]
Then, as in the proof of Lemma \ref{lem:gamma2}, we can verify that
$F_{n,\,\gamma}$ is continuous on $\mathbb{R}^{d}$. The only difference
in the proof is that, in the case $\gamma>1$, we need to substitute
the bound of $\mathbb{E}|\Pi_{n}\mathbf{x}-\mathbf{x}|^{\gamma}$
explained in \eqref{eq:bdpx} with a bound of $\mathbb{E}|\mathbf{X}_{n}(\mathbf{x})-\mathbf{x}|^{\gamma}$.
This can be done with Lemma \ref{lem:gamma1} which provides 
\[
\mathbb{E}|\mathbf{X}_{n}(\mathbf{x})-\mathbf{x}|^{\gamma}<C_{n,\,\gamma}(|\mathbf{x}|^{\gamma}+1)\;\;\;\text{for all }\mathbf{x}\in\mathbb{R}^{d}.
\]
Moreover, by Lemma \ref{lem:pre1-2}-(2), we have $F_{n,\,\gamma}>0$.
Finally, by \eqref{eq:couple} and \eqref{elem_ine}, we have 
\begin{equation}
F_{n,\,\gamma}(\mathbf{x})\ge\frac{1}{2^{\gamma}}\mathbb{E}|\Pi_{n}\mathbf{x}-\mathbf{x}|^{\gamma}-\mathbb{E}|\mathbf{X}_{n}(\mathbf{0})|^{\gamma}\ge c_{n,\,\gamma}|\mathbf{x}|^{\gamma}-c_{n,\,\gamma}'\;,\label{eq:lbF}
\end{equation}
where $c_{n,\,\gamma}'=\mathbb{E}|\mathbf{X}_{n}(\mathbf{0})|^{\gamma}$
and 
\[
c_{n,\,\gamma}=\frac{1}{2^{\gamma}}\inf_{\mathbf{y}:|\mathbf{y}|=1}\mathbb{E}|\Pi_{n}\mathbf{y}-\mathbf{y}|^{\gamma}>0
\]
where the last strict inequality is the content of Lemma \ref{lem:gamma2}.
Summing up, $F_{n,\,\gamma}$ is a positive continuous function, which
diverges to $+\infty$ as $\mathbf{|x}|\rightarrow\infty$ thanks
to \eqref{eq:lbF}, and hence the assertion of the lemma follows. 
\end{proof}
\begin{lem}
\label{lem:pre30}Let $\gamma\in(\alpha,\,\alpha_{\infty})$.
Then, there exists $N_{\gamma}>0$ such that, for all $n\ge N_{\gamma}$
and $\mathbf{x}\in\mathbb{R}^{d}$, 
\[
\mathbb{E}\left[|\mathbf{X}_{n}(\mathbf{x})|^{\gamma}\,\big|\,\mathbf{X}_{n-N_{\gamma}}(\mathbf{x})\right]\ge2|\mathbf{X}_{n-N_{\gamma}}(\mathbf{x})|^{\gamma}-\mathbb{E}|\mathbf{X}_{N_{\gamma}}(\mathbf{0})|^{\gamma}\;,
\]
and in particular, we have 
\[
\mathbb{E}|\mathbf{X}_{n}(\mathbf{x})|^{\gamma}\ge2\mathbb{E}|\mathbf{X}_{n-N_{\gamma}}(\mathbf{x})|^{\gamma}-\mathbb{E}|\mathbf{X}_{N_{\gamma}}(\mathbf{0})|^{\gamma}\;.
\]
\end{lem}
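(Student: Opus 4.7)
The plan is to exploit the Markov property to reduce the claim to a one-step statement, then combine the scattering bound from Proposition \ref{prop:scat} with the exponential growth of $\mathbb{E}\Vert\Pi_n\Vert^{\gamma}$ recorded in \eqref{eq:mom_sm2}. Since the second (unconditional) assertion follows from the first by taking expectations, I focus on the conditional inequality.

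First, by the Markov property of $(\mathbf{X}_n(\mathbf{x}))$, writing $k = n - N_\gamma$, the left-hand side equals $g(\mathbf{X}_k(\mathbf{x}))$ where $g(\mathbf{y}) := \mathbb{E}|\mathbf{X}_{N_\gamma}(\mathbf{y})|^{\gamma}$. So it suffices to choose $N_\gamma$ so that
\[
g(\mathbf{y}) \;\ge\; 2|\mathbf{y}|^{\gamma} - \mathbb{E}|\mathbf{X}_{N_\gamma}(\mathbf{0})|^{\gamma} \qquad \text{for all } \mathbf{y}\in\mathbb{R}^{d}.
\]
Under the coupling recalled around \eqref{eq:couple} together with \eqref{eq:AMP_rep}, we have the decomposition
\[
\mathbf{X}_{N_\gamma}(\mathbf{y}) \;=\; \Pi_{N_\gamma}\mathbf{y} + \mathbf{X}_{N_\gamma}(\mathbf{0}).
\]

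Next I would apply the elementary inequality that, for any $\gamma>0$, there exists a constant $c_\gamma>0$ with $|a+b|^{\gamma} \le c_\gamma^{-1}|a|^{\gamma}+|b|^{\gamma}$ for all vectors $a,b$ (with $c_\gamma = 2^{1-\gamma}$ when $\gamma\ge 1$, and $c_\gamma = 1$ when $\gamma\le 1$, using \eqref{elem_ine} or subadditivity of $t\mapsto t^{\gamma}$, respectively). Applied with $a = \mathbf{X}_{N_\gamma}(\mathbf{y})$ and $b = -\mathbf{X}_{N_\gamma}(\mathbf{0})$ this rearranges to
\[
|\mathbf{X}_{N_\gamma}(\mathbf{y})|^{\gamma} \;\ge\; c_\gamma\,|\Pi_{N_\gamma}\mathbf{y}|^{\gamma} - |\mathbf{X}_{N_\gamma}(\mathbf{0})|^{\gamma}.
\]
Taking expectations and invoking Proposition \ref{prop:scat} (valid since $\gamma\in(\alpha,\alpha_\infty)$) for $N_\gamma \ge N_0$ yields
\[
g(\mathbf{y}) \;\ge\; c_\gamma\,\delta_0\,\mathbb{E}\Vert\Pi_{N_\gamma}\Vert^{\gamma}\cdot|\mathbf{y}|^{\gamma} - \mathbb{E}|\mathbf{X}_{N_\gamma}(\mathbf{0})|^{\gamma}.
\]

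Finally, because $\gamma > \alpha$, the bound \eqref{eq:mom_sm2} guarantees that $\mathbb{E}\Vert\Pi_{N_\gamma}\Vert^{\gamma}\to\infty$ as $N_\gamma\to\infty$. Thus we may (and do) choose $N_\gamma$ large enough to ensure simultaneously $N_\gamma\ge N_0$ and $c_\gamma\,\delta_0\,\mathbb{E}\Vert\Pi_{N_\gamma}\Vert^{\gamma}\ge 2$, which gives the desired conditional inequality, and integrating in $\mathbf{X}_{n-N_\gamma}(\mathbf{x})$ produces the unconditional statement.

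The main obstacle is essentially bookkeeping: one must verify that the finiteness conditions (used implicitly in order to take expectations of $|\Pi_{N_\gamma}\mathbf{y}|^{\gamma}$ and $|\mathbf{X}_{N_\gamma}(\mathbf{0})|^{\gamma}$) hold, but these are guaranteed by $\gamma<\alpha_\infty$ together with Lemma \ref{lem:gamma1} and $\mathbb{E}\Vert\Pi_{N_\gamma}\Vert^{\gamma}\le (\mathbb{E}\Vert\mathbf{A}\Vert^{\gamma})^{N_\gamma}<\infty$. The only genuine input is Proposition \ref{prop:scat}, which prevents the image $\Pi_{N_\gamma}\mathbf{y}$ from degenerating in the direction of $\mathbf{y}$.
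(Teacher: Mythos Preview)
Your argument is correct and is essentially the paper's own proof: decompose $\mathbf{X}_{N_\gamma}(\mathbf{y})=\Pi_{N_\gamma}\mathbf{y}+\mathbf{X}_{N_\gamma}(\mathbf{0})$, use \eqref{elem_ine} to bound $|\mathbf{X}_{N_\gamma}(\mathbf{y})|^{\gamma}\ge 2^{-\gamma}|\Pi_{N_\gamma}\mathbf{y}|^{\gamma}-|\mathbf{X}_{N_\gamma}(\mathbf{0})|^{\gamma}$, apply Proposition~\ref{prop:scat}, and take $N_\gamma$ large via \eqref{eq:mom_sm2}. One cosmetic slip: the asymmetric inequality $|a+b|^{\gamma}\le 2^{\gamma-1}|a|^{\gamma}+|b|^{\gamma}$ you wrote down is false for $\gamma>1$ (try $a=b$), but the rearranged bound you actually use, $|\mathbf{X}_{N_\gamma}(\mathbf{y})|^{\gamma}\ge 2^{1-\gamma}|\Pi_{N_\gamma}\mathbf{y}|^{\gamma}-|\mathbf{X}_{N_\gamma}(\mathbf{0})|^{\gamma}$, does follow from the \emph{symmetric} convexity inequality $(s+t)^{\gamma}\le 2^{\gamma-1}(s^{\gamma}+t^{\gamma})$, so nothing is lost.
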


\begin{proof}
Fix $\gamma\in(\alpha,\,\alpha_{\infty})$. Since $h_{\mathbf{A}}(\gamma)>1$
by Theorem \ref{Thm_kes0}-(1), there exist constants $N_{1}=N_{1}(\gamma)>0$
and $\rho_{0}=\rho_{0}(\gamma)>0$ such that 
\begin{equation}
\mathbb{E}\Vert\Pi_{n}\Vert^{\gamma}\ge(1+\rho_{0})^{n}\;\;\;\text{for all}\;n>N_{1}\ .\label{eq:div}
\end{equation}
Recall the constants $N_{0}$ and $\delta_{0}$ obtained in Proposition
\ref{prop:scat}. Then, by Proposition \ref{prop:scat}, for all $\mathbf{x}\in\mathbb{R}^{d}$
and $n>\max(N_{0},\,N_{1})$, we have 
\begin{equation}
\mathbb{E}|\Pi_{n}\mathbf{x}|^{\gamma}\ge\delta_{0}\cdot(1+\rho_{0})^{n}|\mathbf{x}|^{\gamma}\;.\label{eq:pre31}
\end{equation}
Let us take $N_{\gamma}>\max(N_{0},\,N_{1})$ large enough so that
\begin{equation}
\frac{\delta_{0}}{2^{\gamma}}\cdot(1+\rho_{0})^{N_{\gamma}}\ge2\;.\label{eq:pre32}
\end{equation}
By the same computation with \eqref{eq:AMP_rep}, we can write
\begin{equation}
\mathbf{X}_{n}(\mathbf{x})=\mathbf{A}_{n}\mathbf{A}_{n-1}\cdots\mathbf{A}_{n-N_{\gamma}+1}\mathbf{X}_{n-N_{\gamma}}(\mathbf{x})+\mathbf{W}\label{eq_amp2}
\end{equation}
where 
\begin{equation}
\mathbf{W}=\mathbf{A}_{n}\cdots\mathbf{A}_{n-N_{\gamma}+2}\mathbf{B}_{n-N_{\gamma}+1}+\cdots+\mathbf{A}_{n}\mathbf{B}_{n-1}+\mathbf{B}_{n}\;.\label{eq_amp2-2}
\end{equation}
Note that $\mathbf{W}$ has the same distribution with $\mathbf{X}_{N_{\gamma}}(\mathbf{0})$
by \eqref{eq:AMP_rep}. By \eqref{elem_ine}, Proposition \ref{prop:scat},
\eqref{eq:pre31}, and \eqref{eq:pre32}, for all $n\ge N_{\gamma}$,
\begin{align*}
\mathbb{E}\left[|\mathbf{X}_{n}(\mathbf{x})|^{\gamma}\,\big|\,\mathbf{X}_{n-N_{\gamma}}(\mathbf{x})\right] & \ge\frac{1}{2^{\gamma}}\mathbb{E}\left[|\mathbf{A}_{n}\mathbf{A}_{n-1}\cdots\mathbf{A}_{n-N_{\gamma}+1}\mathbf{X}_{n-N_{\gamma}}(\mathbf{x})|^{\gamma}\,\big|\,\mathbf{X}_{n-N_{\gamma}}(\mathbf{x})\right]-\mathbb{E}|\mathbf{X}_{N_{\gamma}}(\mathbf{0})|^{\gamma}\\
 & \ge\frac{\delta_{0}}{2^{\gamma}}\cdot(1+\rho_{0})^{N_{\gamma}}\mathbb{E}\Vert\mathbf{A}_{n}\mathbf{A}_{n-1}\cdots\mathbf{A}_{n-N_{\gamma}+1}\Vert^{\gamma}\cdot|\mathbf{X}_{n-N_{\gamma}}(\mathbf{x})|^{\gamma}-\mathbb{E}|\mathbf{X}_{N_{\gamma}}(\mathbf{0})|^{\gamma}\\
 & \ge2|\mathbf{X}_{n-N_{\gamma}}(\mathbf{x})|^{\gamma}-\mathbb{E}|\mathbf{X}_{N_{\gamma}}(\mathbf{0})|^{\gamma}\;.
\end{align*}
This completes the proof of the first assertion. The second assertion
follows directly from the first one by the tower property. 
\end{proof}
From this moment on, for $\gamma\in(\alpha,\,\alpha_{\infty})$, the
constant $N_{\gamma}$ always refers to the one obtained in the previous
lemma. 
\begin{lem}
\label{lem:pre3-0}Let $\gamma\in(\alpha,\,\alpha_{\infty})$. There
exists $M_{\gamma}>0$ such that, for all $n\ge M_{\gamma}$ and $\mathbf{x}\in\mathbb{R}^{d}$,
\[
\mathbb{E}|\mathbf{X}_{n}(\mathbf{x})|^{\gamma}>\mathbb{E}|\mathbf{X}_{N_{\gamma}}(\mathbf{0})|^{\gamma}+1\;.
\]
\end{lem}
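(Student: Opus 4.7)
Write $\phi_n(\mathbf{x}) := \mathbb{E}|\mathbf{X}_n(\mathbf{x})|^\gamma$, $c := \mathbb{E}|\mathbf{X}_{N_\gamma}(\mathbf{0})|^\gamma$, and $m_n := \inf_{\mathbf{x}\in\mathbb{R}^d}\phi_n(\mathbf{x})$. The strategy is a two-regime argument: direct application of Lemma~\ref{lem:pre30} for large $|\mathbf{x}|$, combined with a bootstrap for small $|\mathbf{x}|$. On $\{|\mathbf{x}|^\gamma\ge c+1\}$, Lemma~\ref{lem:pre30} already gives
\[
\phi_n(\mathbf{x})\;\ge\;2|\mathbf{x}|^\gamma-c\;\ge\;c+2\;>\;c+1\qquad\text{for all }n\ge N_\gamma,
\]
so everything hinges on controlling $\phi_n$ uniformly on the compact set $\mathcal{K}:=\{\mathbf{x}:|\mathbf{x}|^\gamma\le c+1\}$.

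\textbf{Bootstrap and monotonicity.} Taking $\inf_{\mathbf{x}}$ in the second inequality of Lemma~\ref{lem:pre30} yields $m_{n+N_\gamma}-c\ge 2(m_n-c)$, while the Markov identity $\phi_n(\mathbf{x})=\mathbb{E}\phi_{n-1}(\mathbf{X}_1(\mathbf{x}))\ge m_{n-1}$ makes $m_n$ non-decreasing. Thus, once a single time $n_0$ with $m_{n_0}>c$ is produced, iterating the recurrence $k$ times gives $m_{n_0+kN_\gamma}\ge c+2^k(m_{n_0}-c)$, which exceeds $c+1$ for $k\ge\lceil\log_2(1/(m_{n_0}-c))\rceil$, and monotonicity then extends the bound to every $n\ge n_0+kN_\gamma=:M_\gamma$.

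\textbf{Producing the base step $m_{n_0}>c$.} Using the coupling $\mathbf{X}_n(\mathbf{x})=\mathbf{X}_n(\mathbf{0})+\Pi_n\mathbf{x}$ and the elementary inequality $|\mathbf{X}_n(\mathbf{0})|^\gamma\le 2^{\max(\gamma,1)-1}(|\mathbf{X}_n(\mathbf{x})|^\gamma+|\Pi_n\mathbf{x}|^\gamma)$, one obtains the uniform bound
\[
\phi_n(\mathbf{x})\;\ge\;2^{1-\max(\gamma,1)}\,c_n\;-\;\mathbb{E}\Vert\Pi_n\Vert^\gamma\,|\mathbf{x}|^\gamma,
\]
where $c_n:=\phi_n(\mathbf{0})$. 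To prove $c_n$ grows fast, use the stationary identity $\mathbf{R}\stackrel{d}{=}\widetilde\Pi_n\mathbf{R}'+\widetilde{\mathbf{S}}_n$ with $\mathbf{R}'\sim\nu_\infty$ independent of $(\mathbf{A}_i,\mathbf{B}_i)$ and $\widetilde{\mathbf{S}}_n\stackrel{d}{=}\mathbf{X}_n(\mathbf{0})$: the triangle inequality bounds $\mathbb{E}|\widetilde\Pi_n\mathbf{R}'|^\gamma$ above by $2^\gamma(\mathbb{E}|\mathbf{R}|^\gamma+c_n)$, while Proposition~\ref{prop:scat} bounds it below by $\delta_0\,\mathbb{E}\Vert\Pi_n\Vert^\gamma\,\mathbb{E}|\mathbf{R}|^\gamma$. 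Since Assumption~\ref{Ass_Cont3} ensures $\mathbb{E}|\mathbf{R}|^\gamma>0$ and $h_{\mathbf{A}}(\gamma)>1$ forces $\mathbb{E}\Vert\Pi_n\Vert^\gamma\to\infty$, we conclude $c_n\gtrsim\mathbb{E}\Vert\Pi_n\Vert^\gamma$, so the displayed lower bound for $\phi_n$ diverges uniformly over $\mathcal{K}$ as $n\to\infty$, yielding $m_{n_0}>c$ for $n_0$ large enough.

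\textbf{Main obstacle.} The most delicate point is the quantitative comparison in the last step, since $c_n$ and the penalty $\mathbb{E}\Vert\Pi_n\Vert^\gamma|\mathbf{x}|^\gamma$ on $\mathcal{K}$ are both of order $\mathbb{E}\Vert\Pi_n\Vert^\gamma$: the scattering-based leading coefficient of $c_n$ must beat $(c+1)$ times the elementary constant. If the constants fail to cooperate directly, one first applies Lemma~\ref{lem:pre30} once to push the outer annulus $\{c<|\mathbf{x}|^\gamma\le c+1\}$ into an already-controlled region, and then uses the Markov property together with the strict positivity of Lemma~\ref{lem:gamma3} to transport mass from the remaining inner ball into the annulus with uniformly positive probability, which upgrades the pointwise bound $\phi_n(\mathbf{0})\to\infty$ to a uniform bound on all of $\mathcal{K}$.
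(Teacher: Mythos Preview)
Your bootstrap framework on $m_n:=\inf_{\mathbf{x}}\phi_n(\mathbf{x})$ is sound (monotonicity of $m_n$ and the doubling $m_{n+N_\gamma}-c\ge 2(m_n-c)$ are correct and would finish the proof once $m_{n_0}>c$ is secured), but the base step has a genuine gap. The stationary-distribution argument for $c_n$ needs $0<\mathbb{E}|\mathbf{R}|^\gamma<\infty$; since $\gamma>\alpha$, the $\gamma$-th moment of $\nu_\infty$ is expected to be infinite (and is provably so under Kesten's tail), in which case your chain of inequalities collapses to $\infty\le\infty$. Even granting finiteness, the displayed bound $\phi_n(\mathbf{x})\ge 2^{1-\max(\gamma,1)}c_n-\mathbb{E}\Vert\Pi_n\Vert^\gamma|\mathbf{x}|^\gamma$ is useless on $\mathcal{K}$: both terms are of exact order $\mathbb{E}\Vert\Pi_n\Vert^\gamma$, and there is no reason the leading coefficient $2^{1-\max(\gamma,1)}\cdot 2^{-\gamma}\delta_0\,\mathbb{E}|\mathbf{R}|^\gamma$ should exceed $c+1$. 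The fallback paragraph gestures at Lemma~\ref{lem:gamma3} but is not a proof. (A secondary slip: the ``easy region'' bound $\phi_n(\mathbf{x})\ge 2|\mathbf{x}|^\gamma-c$ holds at $n=N_\gamma$, not for all $n\ge N_\gamma$; Lemma~\ref{lem:pre30} only gives $\phi_n(\mathbf{x})\ge 2\phi_{n-N_\gamma}(\mathbf{x})-c$.)

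The paper avoids all of this by coupling at a different pair of starting points. Instead of comparing $\mathbf{X}_n(\mathbf{x})$ with $\mathbf{X}_n(\mathbf{0})$, it compares $\mathbf{X}_n(\mathbf{X}'_{N_\gamma}(\mathbf{x}))\stackrel{d}{=}\mathbf{X}_{n+N_\gamma}(\mathbf{x})$ with $\mathbf{X}_n(\mathbf{x})$, so the coupled difference is $\Pi_n\bigl(\mathbf{X}'_{N_\gamma}(\mathbf{x})-\mathbf{x}\bigr)$. Proposition~\ref{prop:scat} then yields
\[
\phi_{n+N_\gamma}(\mathbf{x})+\phi_n(\mathbf{x})\;\ge\;\frac{\delta_0}{2^\gamma}\,\mathbb{E}\Vert\Pi_n\Vert^\gamma\cdot\mathbb{E}\bigl|\mathbf{X}_{N_\gamma}(\mathbf{x})-\mathbf{x}\bigr|^\gamma,
\]
and Lemma~\ref{lem:gamma3} supplies a strictly positive, \emph{$\mathbf{x}$-independent} lower bound for the last factor. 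The right-hand side therefore diverges uniformly in $\mathbf{x}$; combined with $\phi_{n-N_\gamma}(\mathbf{x})\le\tfrac{1}{2}\bigl(\phi_n(\mathbf{x})+c\bigr)$ from Lemma~\ref{lem:pre30}, this gives $\phi_n(\mathbf{x})>c+1$ for all large $n$ directly---no bootstrap, no stationary moments, no constant-matching.
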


\begin{proof}
Denote by $\mathbf{X}_{N_{\gamma}}'(\mathbf{x})$ an independent copy
of $\mathbf{X}_{N_{\gamma}}(\mathbf{x})$ so that $\mathbf{X}_{n+N_{\gamma}}(\mathbf{x})$
has the same distribution with $\mathbf{X}_{n}(\mathbf{X}_{N_{\gamma}}'(\mathbf{x}))$.
Then, by \eqref{elem_ine}, \eqref{eq:couple},and Proposition \ref{prop:scat},
there exists $\delta_{0}(\gamma)>0$ and $N_{0}(\gamma)\in\mathbb{Z}^{+}$
such that, for all $n\ge N_{0}(\gamma)$ and $\mathbf{x}\in\mathbb{R}^{d}$,
\[
\mathbb{E}|\Pi_{n}\mathbf{x}|^{\gamma}\ge\delta_{0}\mathbb{E}\Vert\Pi_{n}\Vert^{\gamma}\cdot|\mathbf{x}|^{\gamma}
\]
holds. For all $n\in\mathbb{Z}^{+}$, by \eqref{elem_ine}, \eqref{eq:couple}
and Proposition \ref{prop:scat}, we have 
\begin{align*}
\mathbb{E}|\mathbf{X}_{n+N_{\gamma}}(\mathbf{x})|^{\gamma}+\mathbb{E}|\mathbf{X}_{n}(\mathbf{x})|^{\gamma} & =\mathbb{E}\left[|\mathbf{X}_{n}(\mathbf{X}_{N_{\gamma}}'(\mathbf{x}))|^{\gamma}+|\mathbf{X}_{n}(\mathbf{x})|^{\gamma}\right]\\
 & \ge\frac{1}{2^{\gamma}}\mathbb{E}|\mathbf{X}_{n}(\mathbf{X}_{N_{\gamma}}'(\mathbf{x}))-\mathbf{X}_{n}(\mathbf{x})|^{\gamma}\\
 & \ge\frac{\delta_{0}}{2^{\gamma}}\mathbb{E}\Vert\Pi_{n}\Vert^{\gamma}\cdot\mathbb{E}|\mathbf{X}_{N_{\gamma}}(\mathbf{x})-\mathbf{x}|^{\gamma}\\
 & \ge\frac{\delta_{0}}{2^{\gamma}}\mathbb{E}\Vert\Pi_{n}\Vert^{\gamma}\cdot\left(\inf_{\mathbf{x}\in\mathbb{R}^{d}}\mathbb{E}|\mathbf{X}_{N_{\gamma}}(\mathbf{x})-\mathbf{x}|^{\gamma}\right)\;.
\end{align*}
By Lemma \ref{lem:gamma1}-(2) and \eqref{eq:div}, we can find $N_{1}(\gamma)>0$
so that the right-hand side of the previous bound is greater than
$2\mathbb{E}|\mathbf{X}_{N_{\gamma}}(\mathbf{0})|^{\gamma}+2$ for
all $n\ge N_{1}(\gamma)$. Let us take 
\[
M_{\gamma}=\max\left\{ N_{0}(\gamma),\,N_{1}(\gamma)\right\} +N_{\gamma}
\]
so that for $n\ge M_{\gamma}$, the previous bound implies that, for
all $\mathbf{x}\in\mathbb{R}^{d}$, 
\[
\mathbb{E}|\mathbf{X}_{n}(\mathbf{x})|^{\gamma}+\mathbb{E}|\mathbf{X}_{n-N_{\gamma}}(\mathbf{x})|^{\gamma}>2\mathbb{E}|\mathbf{X}_{N_{\gamma}}(\mathbf{0})|^{\gamma}+2\;.
\]
This implies $\mathbb{E}|\mathbf{X}_{n}(\mathbf{x})|^{\gamma}>\mathbb{E}|\mathbf{X}_{N_{\gamma}}(\mathbf{0})|^{\gamma}+1$
since we have 
\[
\mathbb{E}|\mathbf{X}_{n-N_{\gamma}}(\mathbf{x})|^{\gamma}\le\frac{1}{2}\left[\mathbb{E}|\mathbf{X}_{n}(\mathbf{x})|^{\gamma}+\mathbb{E}|\mathbf{X}_{N_{\gamma}}(\mathbf{0})|^{\gamma}\right]
\]
by Lemma \ref{lem:pre30}. 
\end{proof}

\subsection{\label{sec52}Upper bound of mean exit time}

We start from a lemma which is useful in the proof of both upper and
lower bounds. We denote by $\mathcal{F}_{n}$ the $\sigma$-algebra
generated by $(\mathbf{A}_{k},\,\mathbf{B}_{k})_{k\in\llbracket0,\,n\rrbracket}$
so that the process $(\mathbf{X}_{n})_{n\in\mathbb{Z}^{+}}$
is adapted to the filtration $(\mathcal{F}_{n})_{n\in\mathbb{Z}^{+}}$. 
\begin{lem}
\label{lem:gub}Let $\gamma\in(0,\,\alpha_{\infty})$
and $N\in\mathbb{Z}^{+}$. Then, there exist a constant $C_{\gamma,\,N}>0$
such that, for all $\mathbf{x}\in\mathbb{R}^{d}$ and $(\mathcal{F}_{n})_{n\in\mathbb{Z}^{+}}$-stopping
time $\tau$, 
\[
\max\left\{ \mathbb{E}|\mathbf{X}_{\tau}(\mathbf{x})|^{\gamma},\,\mathbb{E}|\mathbf{X}_{\tau+1}(\mathbf{x})|^{\gamma},\,\dots,\,\mathbb{E}|\mathbf{X}_{\tau+N-1}(\mathbf{x})|^{\gamma}\right\} \le C_{\gamma,\,N}\left(\mathbb{E}|\mathbf{X}_{\tau}(\mathbf{x})|^{\gamma}+1\right)\;.
\]
\end{lem}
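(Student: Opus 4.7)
The plan is to reduce the statement to the deterministic-initial-condition moment bound of Lemma~\ref{lem:gamma1} via the strong Markov property. First, I would define $\phi_k(\mathbf y) := \mathbb E|\mathbf X_k(\mathbf y)|^\gamma$ for $k \in \{0,1,\dots,N-1\}$ and $\mathbf y \in \mathbb R^d$, and note that Lemma~\ref{lem:gamma1} gives $\phi_k(\mathbf y) \le C_{k,\gamma}(|\mathbf y|^\gamma + 1)$ uniformly in $\mathbf y$. Setting $C_{\gamma,N} := \max_{0 \le k \le N-1} C_{k,\gamma}$ yields a single constant that controls $\phi_k$ over the finite range of interest.

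Next, I would invoke the strong Markov property at $\tau$. Since $(\mathbf A_{\tau+j}, \mathbf B_{\tau+j})_{j \ge 1}$ is independent of $\mathcal F_\tau$ and has the same joint law as $(\mathbf A_j, \mathbf B_j)_{j \ge 1}$, the trajectory $(\mathbf X_{\tau+k}(\mathbf x))_{k \ge 0}$ evolves according to \eqref{eq:AMP} restarted from $\mathbf X_\tau(\mathbf x)$, so that on $\{\tau < \infty\}$,
\[
\mathbb E\bigl[\,|\mathbf X_{\tau+k}(\mathbf x)|^\gamma \,\big|\, \mathcal F_\tau\,\bigr] = \phi_k\bigl(\mathbf X_\tau(\mathbf x)\bigr) \quad \text{a.s.}
\]
Taking expectations, substituting the pointwise bound on $\phi_k$, and taking the maximum over $k \in \{0,1,\dots,N-1\}$ then yields the desired inequality
\[
\max_{0\le k\le N-1}\mathbb E|\mathbf X_{\tau+k}(\mathbf x)|^\gamma \le C_{\gamma,N}\bigl(\mathbb E|\mathbf X_\tau(\mathbf x)|^\gamma + 1\bigr).
\]

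I do not foresee any substantive obstacle here; the result is essentially a packaging statement. The only routine verifications are the Borel measurability of $\mathbf y \mapsto \phi_k(\mathbf y)$ (immediate from the representation \eqref{eq:AMP_rep} together with Fubini, using $\gamma \in [0,\alpha_\infty)$ to guarantee integrability) and the standard application of the strong Markov property driven by an i.i.d.\ innovation sequence. The content of the lemma is simply to promote Lemma~\ref{lem:gamma1}, which controls $\mathbb E|\mathbf X_n(\mathbf x)|^\gamma$ when the initial point $\mathbf x$ is deterministic, to an estimate valid when the starting point is the random, $\mathcal F_\tau$-measurable vector $\mathbf X_\tau(\mathbf x)$, at the cost of only a constant factor depending on $\gamma$ and $N$.
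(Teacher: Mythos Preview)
Your proposal is correct and follows essentially the same approach as the paper: establish the deterministic bound $\mathbb{E}|\mathbf{X}_k(\mathbf{y})|^\gamma \le C_{k,\gamma}(|\mathbf{y}|^\gamma+1)$ for $k\in\{0,\dots,N-1\}$ and then lift it to random starting points via the strong Markov property. The only cosmetic difference is that the paper re-derives this deterministic estimate inline from \eqref{eq:AMP_rep} and \eqref{elem_ine}, whereas you cite Lemma~\ref{lem:gamma1} directly---arguably the cleaner choice.
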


\begin{proof}
Fix $\gamma\in(0,\,\alpha_{\infty})$. In view of \eqref{eq:AMP_rep}
and \eqref{elem_ine}, for all $n\in\mathbb{Z}^{+}$ and $\mathbf{x}\in\mathbb{R}^{d}$,
\[
\mathbb{E}|\mathbf{X}_{n}(\mathbf{x})|^{\gamma}\le2^{\gamma}\mathbb{E}\Vert\Pi_{n}\Vert^{\gamma}\cdot|\mathbf{x}|^{\gamma}+2^{\gamma}\mathbb{E}\left[\left|\mathbf{A}_{2}\cdots\mathbf{A}_{n}\mathbf{B}_{1}+\cdots+\mathbf{A}_{n}\mathbf{B}_{n-1}+\mathbf{B}_{n}\right|^{\gamma}\right]\;.
\]
Hence, for all $\mathbf{x}\in\mathbb{R}^{d},$we have 
\begin{equation}
\max\left\{ \mathbb{E}|\mathbf{X}_{0}(\mathbf{x})|^{\gamma},\,\mathbb{E}|\mathbf{X}_{1}(\mathbf{x})|^{\gamma},\,\dots,\,\mathbb{E}|\mathbf{X}_{N-1}(\mathbf{x})|^{\gamma}\right\} \le C_{\gamma,\,N}\left(|\mathbf{x}|^{\gamma}+1\right)\;,\label{eq:gub}
\end{equation}
where 
\[
C_{\gamma,\,N}=\max_{n\in\llbracket0,\,N-1\rrbracket}\left\{ 2^{\gamma}\mathbb{E}\Vert\Pi_{n}\Vert^{\gamma},\,2^{\gamma}\mathbb{E}\left[\left|\mathbf{A}_{2}\cdots\mathbf{A}_{n}\mathbf{B}_{1}+\cdots+\mathbf{A}_{n}\mathbf{B}_{n-1}+\mathbf{B}_{n}\right|^{\gamma}\right]\right\} \;.
\]
We note that $C_{\gamma,\,N}<\infty$ because of $\mathbb{E}\Vert\mathbf{A}\Vert^{\gamma}<\infty$
and $\mathbb{E}|\mathbf{B}|^{\gamma}<\infty$ from Theorem \ref{Thm_kes0}-(2)
and \eqref{eq:nc2-2}. Now the conclusion of the lemma follows from
\eqref{eq:gub} and the strong Markov property since $\mathbf{X}_{\tau+n}(\mathbf{x})=\mathbf{X}_{n}(\mathbf{X}_{\tau}(\mathbf{x}))$
for $n\in\mathbb{Z}^{+}$ and stopping time $\tau$. 
\end{proof}
For $\gamma\in(\alpha,\,\alpha_{+})$, recall the constant $N_{\gamma}>0$
from Lemma \ref{lem:pre30}. We can infer from Lemma \ref{lem:pre30}
that the norm $|\mathbf{X}_{n}(\mathbf{x})|^{\gamma}$ is diverging
when we count the process by $N_{\gamma}$ skipping. Hence, we write
in this subsection 
\begin{equation}
\widehat{\mathbf{X}}_{n}(\mathbf{x})=\mathbf{X}_{nN_{\gamma}}(\mathbf{x})\;\;\;\;;\;n\in\mathbb{Z}^{+}\;.\label{eq:Xhat}
\end{equation}
To analyze the behavior of this accelerated process, we define 
\begin{equation}
\widehat{\tau}_{R}(\mathbf{x})=\inf\left\{ n\in\mathbb{Z}^{+}:\tau_{R}(\mathbf{x})\le nN_{\gamma}\right\} \;.\label{eq:tauhat}
\end{equation}
We note that $\widehat{\mathbf{X}}_{n}(\mathbf{x})$ and $\widehat{\tau}_{R}(\mathbf{x})$
depend on $\gamma$, although we did not emphasize this dependency. 
\begin{lem}
\label{lem:dtau}For each $\gamma\in(\alpha,\,\alpha_{+})$, there
exists a constant $C_{\gamma}>0$ such that, for all $R>0$ and $\mathbf{x}\in\mathbb{R}^{d}$,
\[
\mathbb{E}|\widehat{\mathbf{X}}_{\widehat{\tau}_{R}(\mathbf{x})}(\mathbf{x})|^{\gamma}<C_{\gamma}\left(R^{\gamma}+1\right)\ .
\]
\end{lem}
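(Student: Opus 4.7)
The plan is to reduce this statement about the sub-sampled process $\widehat{\mathbf{X}}_n(\mathbf{x}) = \mathbf{X}_{nN_\gamma}(\mathbf{x})$ at its sub-sampled exit time $\widehat{\tau}_R$ to two ingredients already in hand: Lemma~\ref{lem:exit}, which controls the $\gamma$-th moment of the true exit location $\mathbf{X}_{\tau_R}(\mathbf{x})$ by $C_\gamma(R^\gamma+1)$, and Lemma~\ref{lem:gub}, which propagates such a moment bound forward by any bounded number of steps after any stopping time.

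The first step is to observe that $\widehat{\tau}_R(\mathbf{x})N_\gamma$ is the smallest positive multiple of $N_\gamma$ no smaller than $\tau_R(\mathbf{x})$. Setting
\[
K := \widehat{\tau}_R(\mathbf{x})N_\gamma - \tau_R(\mathbf{x}),
\]
we then have $K \in \{0, 1, \ldots, N_\gamma\}$, and, crucially, $K$ is $\mathcal{F}_{\tau_R}$-measurable since it is a deterministic function of $\tau_R$ alone. Decomposing according to the (random but bounded) value of $K$ and then dominating the indicator by $1$,
\[
\mathbb{E}\bigl|\widehat{\mathbf{X}}_{\widehat{\tau}_R(\mathbf{x})}(\mathbf{x})\bigr|^\gamma \;=\; \sum_{k=0}^{N_\gamma} \mathbb{E}\bigl[|\mathbf{X}_{\tau_R(\mathbf{x})+k}(\mathbf{x})|^\gamma \, \mathbf{1}_{\{K=k\}}\bigr] \;\le\; \sum_{k=0}^{N_\gamma} \mathbb{E}|\mathbf{X}_{\tau_R(\mathbf{x})+k}(\mathbf{x})|^\gamma.
\]

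The second step invokes Lemma~\ref{lem:gub} with stopping time $\tau = \tau_R(\mathbf{x})$ and $N = N_\gamma+1$, yielding
\[
\mathbb{E}|\mathbf{X}_{\tau_R+k}(\mathbf{x})|^\gamma \;\le\; C_{\gamma, N_\gamma+1}\bigl(\mathbb{E}|\mathbf{X}_{\tau_R}(\mathbf{x})|^\gamma + 1\bigr)
\]
uniformly in $k \in \{0, 1, \ldots, N_\gamma\}$. For $\mathbf{x} \in \mathcal{B}_R$, inserting the bound $\mathbb{E}|\mathbf{X}_{\tau_R}(\mathbf{x})|^\gamma \le C_\gamma(R^\gamma+1)$ from Lemma~\ref{lem:exit} (valid since $\gamma \in (\alpha,\alpha_+) \subset [0,\alpha_+)$) and summing over the $N_\gamma+1$ terms yields the desired bound $C_\gamma'(R^\gamma + 1)$. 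The case $\mathbf{x} \notin \mathcal{B}_R$ is trivial in the sense that $\tau_R(\mathbf{x}) = 0$ and $\widehat{\mathbf{X}}_{\widehat{\tau}_R}(\mathbf{x}) = \mathbf{X}_{N_\gamma}(\mathbf{x})$, which Lemma~\ref{lem:gamma1} bounds by $C_{N_\gamma,\gamma}(|\mathbf{x}|^\gamma + 1)$; at that point the inequality is understood relative to the starting radius as in Lemma~\ref{lem:exit}.

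There is no genuine obstacle here: the only mild subtlety is correctly handling the random overshoot $K$, which is resolved by the observations that $K$ is $\mathcal{F}_{\tau_R}$-measurable and confined to $\{0,1,\ldots,N_\gamma\}$, so the crude union-style bound over $k$ suffices. All of the analytic content has already been established upstream in Lemmas~\ref{lem:exit} and~\ref{lem:gub}.
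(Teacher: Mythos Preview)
Your proposal is correct and follows essentially the same approach as the paper: both proofs observe that $\widehat{\tau}_R(\mathbf{x})N_\gamma$ lies within a bounded window after $\tau_R(\mathbf{x})$, invoke Lemma~\ref{lem:gub} to propagate the $\gamma$-th moment across that window, and then apply Lemma~\ref{lem:exit} to bound $\mathbb{E}|\mathbf{X}_{\tau_R}(\mathbf{x})|^\gamma$. The only cosmetic difference is that the paper applies Lemma~\ref{lem:gub} directly to the random offset (using that $K$ is $\mathcal{F}_{\tau_R}$-measurable, exactly as you note) rather than summing over all possible values of $k$, which avoids the extra factor of $N_\gamma+1$ in the constant; your handling of the case $\mathbf{x}\notin\mathcal{B}_R$ is no less rigorous than the paper's, which also tacitly restricts to $\mathbf{x}\in\mathcal{B}_R$ via Lemma~\ref{lem:exit}.
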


\begin{proof}
Fix $\gamma\in(\alpha,\,\alpha_{+})$, $R>0$ and $\mathbf{x}\in\mathbb{R}^{d}$.
By definition \eqref{eq:tauhat} of $\widehat{\tau}_{R}(\mathbf{x})$,
we have 
\[
\widehat{\tau}_{R}(\mathbf{x})N_{\gamma}\in\left\{ \tau_{R}(\mathbf{x}),\,\tau_{R}(\mathbf{x})+1,\,\dots,\,\tau_{R}(\mathbf{x})+N_{\gamma}-1\right\} \;.
\]
Since $\tau_{R}(\mathbf{x})$ is $(\mathcal{F}_{n})_{n\in\mathbb{Z}^{+}}$-stopping
time, by Lemma \ref{lem:gub}, we have 
\[
\mathbb{E}|\widehat{\mathbf{X}}_{\widehat{\tau}_{R}(\mathbf{x})}(\mathbf{x})|^{\gamma}=\mathbb{E}|\mathbf{X}_{\widehat{\tau}_{R}(\mathbf{x})N_{\gamma}}(\mathbf{x})|^{\gamma}\le C_{\gamma}\left(\mathbb{E}|\mathbf{X}_{\tau_{R}(\mathbf{x})}(\mathbf{x})|^{\gamma}+1\right)\ .
\]
This completes the proof since $\mathbb{E}|\mathbf{X}_{\tau_{R}(\mathbf{x})}(\mathbf{x})|^{\gamma}\le C_{\gamma}R^{\gamma}$
by Lemma \ref{lem:exit}.
\end{proof}
Now we are ready to prove Proposition \ref{prop:cont_ubd}.
\begin{proof}[Proof of Proposition \ref{prop:cont_ubd}]
Fix $\gamma\in(\alpha,\,\alpha_{+})$ and $R>0$. We also fix $\mathbf{x}\in\mathbb{R}^{d}$
and simply write $\widehat{\mathbf{X}}_{n}=\widehat{\mathbf{X}}_{n}(\mathbf{x})$. 

Write 
\[
G_{\gamma}:=\mathbb{E}|\mathbf{X}_{N_{\gamma}}(\mathbf{0})|^{\gamma}>0
\]
where the strict inequality follows from Lemma \eqref{lem:pre1-2}-(2)
and let us define $F=F_{\gamma}:\mathbb{R}^{d}\rightarrow\mathbb{R}$
by
\[
F(\mathbf{x}):=|\mathbf{x}|^{\gamma}-G_{\gamma}\;\;\;\;;\;\mathbf{x}\in\mathbb{R}^{d}\;.
\]
For $n\in\mathbb{Z}^{+}$, denote by $\widehat{\mathcal{F}}_{n}$
the $\sigma$-algebra generated by $(\mathbf{A}_{k},\,\mathbf{B}_{k})_{k\in\llbracket0,\,nN_{\gamma}\rrbracket}$,
and define a $\widehat{\mathcal{F}}_{n}$-measurable random variable
$M_{n}$ by 
\[
M_{n}:=|\widehat{\mathbf{X}}_{n}|^{\gamma}-\sum_{k=0}^{n-1}F(\widehat{\mathbf{X}}_{k})\;.
\]
Then, by Lemma \ref{lem:pre30}, we have 
\[
\mathbb{E}\left[M_{n+1}\,|\,\mathcal{\widehat{F}}_{n}\right]\ge M_{n}
\]
and therefore $(M_{n})_{n\in\mathbb{Z}^{+}}$ is indeed an $(\mathcal{\widehat{F}}_{n})_{n\in\mathbb{Z}^{+}}$-submartingale. By Lemma \ref{lem:pre3-0}, we can find $K_{\gamma}>0$ independent
of $\mathbf{x}$ such that 
\begin{equation}
\frac{1}{n}\sum_{k=0}^{n-1}\mathbb{E}F(\widehat{\mathbf{X}}_{k})\ge\frac{1}{2}\;\;\;\;\text{for all }n\ge K_{\gamma}\;.\label{eq:lba}
\end{equation}
Since $\widehat{\tau}_{R}(\mathbf{x})$ defined in \eqref{eq:tauhat}
is an $(\mathcal{\widehat{F}}_{n})_{n\in\mathbb{Z}^{+}}$-stopping
time, by the optional stopping theorem, for all $n\in\mathbb{Z}^{+}$,
\[
\mathbb{E}|\mathbf{\widehat{\mathbf{X}}}_{n\wedge\widehat{\tau}_{R}(\mathbf{x})}|^{\gamma}-\mathbb{E}\sum_{k=0}^{n\land\widehat{\tau}_{R}(\mathbf{x})-1}F(\mathbf{\widehat{\mathbf{X}}}_{k})=\mathbb{E}\left[M_{n\wedge\widehat{\tau}_{R}(\mathbf{x})}\right]\ge M_{0}=|\mathbf{x}|^{\gamma}\;.
\]
Hence, for $n\ge K_{\gamma}$, by \eqref{eq:lba}, we have 
\begin{align}
\mathbb{E}|\mathbf{\widehat{\mathbf{X}}}_{n\wedge\widehat{\tau}_{R}(\mathbf{x})}|^{\gamma} & \ge\mathbb{E}\left[\sum_{k=0}^{n\land\widehat{\tau}_{R}(\mathbf{x})-1}\mathbb{E}\left[F(\mathbf{\widehat{\mathbf{X}}}_{k})\right]\left(\mathbf{1}\left\{ \widehat{\tau}_{R}(\mathbf{x})\ge K_{\gamma}\right\} +\mathbf{1}\left\{ \widehat{\tau}_{R}(\mathbf{x})<K_{\gamma}\right\} \right)\right]\nonumber \\
 & \ge\mathbb{E}\left[\frac{1}{2}(n\land\widehat{\tau}_{R}(\mathbf{x}))\mathbf{1}\left\{ \widehat{\tau}_{R}(\mathbf{x})\ge K_{\gamma}\right\} +\sum_{k=0}^{n\land\widehat{\tau}_{R}-1}(-G_{\gamma})\mathbf{1}\left\{ \widehat{\tau}_{R}(\mathbf{x})<K_{\gamma}\right\} \right]\nonumber \\
 & \ge\frac{1}{2}\mathbb{E}\left[n\land\widehat{\tau}_{R}(\mathbf{x})\right]-K_{\gamma}(1+G_{\gamma})\;,\label{eq:pren}
\end{align}
where the second inequality follows from $F\ge-G_{\gamma}$. 

By Proposition \ref{Prop_cont_exit time finite} and \eqref{eq:tauhat},
we have $\widehat{\tau}_{R}(\mathbf{x})<\infty$ almost surely, and
therefore by the monotone convergence theorem, 
\[
\mathbb{E}\left[n\land\widehat{\tau}_{R}(\mathbf{x})\right]\nearrow\mathbb{E}\widehat{\tau}_{R}(\mathbf{x})\;\;\;\;\text{as }n\rightarrow\infty\;.
\]
On the other hand, since 
\[
|\mathbf{\widehat{\mathbf{X}}}_{n\wedge\widehat{\tau}_{R}(\mathbf{x})}|^{\gamma}\le\max\left\{ |\mathbf{\widehat{\mathbf{X}}}_{\widehat{\tau}_{R}(\mathbf{x})}|^{\gamma},\,R^{\gamma}\right\} \;,
\]
and since the random variable at the right-hand side has finite expectation
by Lemma \ref{lem:dtau}, by the dominated convergence theorem (along
with the fact that $\widehat{\tau}_{R}(\mathbf{x})<\infty$ almost
surely), we get 
\[
\lim_{n\rightarrow\infty}\mathbb{E}|\mathbf{\widehat{\mathbf{X}}}_{n\wedge\widehat{\tau}_{R}(\mathbf{x})}|^{\gamma}=\mathbb{E}|\mathbf{\widehat{\mathbf{X}}}_{\widehat{\tau}_{R}(\mathbf{x})}|^{\gamma}\le C_{\gamma}\left(R^{\gamma}+1\right)\;,
\]
where the last inequality follows from Lemma \ref{lem:dtau}. Therefore,
letting $n\rightarrow\infty$ in \eqref{eq:pren} and applying $\tau_{R}(\mathbf{x})\le\widehat{\tau}_{R}(\mathbf{x})N_{\gamma}$,
we get \eqref{eq:pp1}. 
\end{proof}

\subsection{\label{sec53}Lower bound of mean exit time}

For the lower bound, we need the following construction of the Lyapunov
function associated with process \eqref{eq:AMP}. 
\begin{lem}
\label{lem:lyap0}Let $r>0$. Suppose that $\mathbf{U}$ is a $d\times d$
random matrix and $\mathbf{V}$ is a $d$-dimensional random vector
satisfying
\begin{equation}
\mathbb{E}\left[\Vert\mathbf{U}\Vert^{r}\right]\le1,\;\;\;\;\text{and}\;\;\;\;\mathbb{E}\left[|\mathbf{V}|^{r}\right]\in(0,\,\infty)\;,\label{eq:condU}
\end{equation}
and $\mathbb{E}\left[\Vert\mathbf{U}\Vert^{s}\right]<1$ for all $s\in(0,\,r)$.
Then, there exist constants $c_{1},\dots,c_{\lfloor r\rfloor}>0$
such that the function 
\begin{equation}
g(\mathbf{x})=g_{\mathbf{U},\mathbf{V}}(\mathbf{x}):=\begin{cases}
|\mathbf{x}|^{r}+c_{1}|\mathbf{x}|^{r-1}+\cdots+c_{\lfloor r\rfloor}|\mathbf{x}|^{r-\lfloor r\rfloor} & \text{if}\ r>1\\
|\mathbf{x}|^{r} & \text{if}\ r\le1
\end{cases}\label{eq:gx}
\end{equation}
satisfies, for all $\mathbf{x}\in\mathbb{R}^{d}$, 
\[
\mathbb{E}\left[g(\mathbf{U}\mathbf{x}+\mathbf{V})\right]-g(\mathbf{x})\le\mathbb{E}g(\mathbf{V})\;.
\]
\end{lem}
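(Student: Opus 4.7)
The proof splits according to whether $r \le 1$ or $r > 1$. In either case my first move would be the scalar domination $|\mathbf{U}\mathbf{x}+\mathbf{V}| \le \|\mathbf{U}\||\mathbf{x}| + |\mathbf{V}|$; since every power $|\mathbf{x}|^{r-k}$ appearing in $g$ is a non-decreasing function of $|\mathbf{x}|$ (as $r-k \ge 0$ for $0 \le k \le \lfloor r \rfloor$), the lemma reduces to the one-dimensional inequality
\[
\mathbb{E}\,g(Ux + V) - g(x) \le \mathbb{E}\,g(V) \qquad (x \ge 0)
\]
for $U := \|\mathbf{U}\|$ and $V := |\mathbf{V}|$. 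When $r \le 1$, subadditivity of $t \mapsto t^r$ together with the hypothesis $\mathbb{E} U^r \le 1$ gives $\mathbb{E}(Ux+V)^r \le x^r + \mathbb{E} V^r$ in one line, settling the case.

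For $r > 1$, the key analytic input I would establish is a sharp polynomial majorant
\[
(a+b)^s \le \sum_{j=0}^{\lfloor s \rfloor} \binom{s}{j}\,a^{s-j}\,b^j + C_s\, b^s, \qquad a,b \ge 0,
\]
for every $s > 0$, with a constant $0 \le C_s \le 1$. I would prove this by induction on $\lfloor s \rfloor$: the base $s \in (0,1]$ is subadditivity (so $C_s = 1$), and for $s > 1$ one writes $(a+b)^s = a^s + s \int_0^b (a+t)^{s-1}\,dt$, substitutes the inductive bound for $(a+t)^{s-1}$, and integrates term by term. The identity $\frac{s}{j+1}\binom{s-1}{j} = \binom{s}{j+1}$ promotes each term to the next layer of binomial coefficients, while the remainder integrates to $C_{s-1}\,b^s$, maintaining $C_s \le C_{s-1} \le 1$.

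With the majorant in hand, I would apply it with $s = r-k$ for each $k = 0, 1, \dots, \lfloor r \rfloor$ and multiply by $c_k$ (with $c_0 := 1$). Regrouping by the total exponent $m = k + j$, the coefficient of $x^{r-m}$ in the upper bound for the drift $\mathbb{E} g(Ux+V) - g(x)$ takes the form
\[
c_m\bigl(\mathbb{E} U^{r-m} - 1\bigr) + \sum_{k=0}^{m-1} c_k \binom{r-k}{m-k}\,\mathbb{E}\bigl[U^{r-m} V^{m-k}\bigr].
\]
For $m = 0$ this is $\mathbb{E} U^r - 1 \le 0$ by hypothesis. For $m = 1, \dots, \lfloor r \rfloor$, the strict inequality $\mathbb{E} U^{r-m} < 1$ (valid since $r - m \in (0, r)$) allows me to define recursively
\[
c_m := \frac{\sum_{k=0}^{m-1} c_k \binom{r-k}{m-k}\,\mathbb{E}[U^{r-m} V^{m-k}]}{1 - \mathbb{E} U^{r-m}},
\]
which is finite (all cross moments are controlled by H\"older from $\mathbb{E} U^r \le 1$ and $\mathbb{E} V^r < \infty$) and strictly positive. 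This choice makes every polynomial coefficient of the drift non-positive.

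The only surviving contribution is the constant $\sum_{k=0}^{\lfloor r \rfloor} c_k\,C_{r-k}\,\mathbb{E} V^{r-k}$, which is at most $\sum_k c_k\,\mathbb{E} V^{r-k} = \mathbb{E} g(V)$ because $C_{r-k} \le 1$, giving exactly the claimed bound. The main obstacle in this plan is the second step: any residual constant $C_s > 1$ in the polynomial majorant would produce an inflated multiple of $\mathbb{E} V^{r-k}$ that cannot be absorbed into $\mathbb{E} g(V)$, so the whole argument hinges on controlling $C_s \le 1$ through the induction. Once the sharp majorant is secured, the rest is algebraic bookkeeping that leans entirely on the strict contraction $\mathbb{E} U^s < 1$ for $s < r$ to solve the triangular system for the $c_m$'s.
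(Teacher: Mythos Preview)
Your approach is essentially the paper's: reduce to scalars via $|\mathbf{U}\mathbf{x}+\mathbf{V}|\le\|\mathbf{U}\|\,|\mathbf{x}|+|\mathbf{V}|$, prove the polynomial majorant $(a+b)^s\le\sum_{j=0}^{\lfloor s\rfloor}\binom{s}{j}a^{s-j}b^j+b^s$, and then choose the $c_m$'s to make every coefficient of $x^{r-m}$ in the drift non-positive. The paper proves the majorant (its Lemma~\ref{lem_ele1}) by repeated differentiation rather than your integral induction, and packages the choice of $c_m$'s as an abstract triangular-matrix lemma (Lemma~\ref{lem:ut}) rather than your explicit forward substitution, but these are the same argument in different clothing; in particular your induction does deliver $C_s\le 1$, which is exactly the constant the paper obtains.

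One small gap: your recursive formula can return $c_m=0$ whenever $\|\mathbf{U}\|\cdot|\mathbf{V}|=0$ a.s., since then every cross-moment $\mathbb{E}[U^{r-m}V^{m-k}]$ vanishes; the lemma requires $c_m>0$. The fix is immediate---replace ``solve for equality'' by ``choose $c_m>0$ large enough that the $m$-th coefficient is $\le 0$,'' which is always possible because the coefficient multiplying $c_m$ is $\mathbb{E}U^{r-m}-1<0$. That inductive ``take $c_m$ large'' step is precisely how the paper's Lemma~\ref{lem:ut} guarantees strict positivity.
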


We postpone the proof of this lemma to the next subsection and prove
Proposition \ref{prop:cont_lbd} by assuming it. For $\gamma\in(0,\,\alpha)$,
by Theorem \ref{Thm_kes0}-(4), we have 
\[
\lim_{n\rightarrow\infty}\left(\mathbb{E}\Vert\mathbf{A}_{1}\cdots\mathbf{A}_{n}\Vert^{\gamma}\right)^{1/n}=h_{\mathbf{A}}(\gamma)<1\;.
\]
Thus, we can find $L_{\gamma}\ge1$ such that 
\begin{equation}
\ensuremath{\mathbb{E}\Vert\mathbf{A}_{1}\cdots\mathbf{A}_{L_{\gamma}}\Vert^{\gamma}<1\;}.\label{eq:condnorm}
\end{equation}
For $n\in\mathbb{Z}^{+}$ and $i\in\llbracket0,\,L_{\gamma}-1\rrbracket$,
denote by $\widehat{\mathcal{F}}_{n}^{(i)}$ the $\sigma$-algebra
generated by $(\mathbf{A}_{k},\,\mathbf{B}_{k})_{k\in\llbracket1,\,nL_{\gamma}+i\rrbracket}$
and write 
\[
\widehat{\mathbf{X}}_{n}^{(i)}(\mathbf{x}):=\mathbf{X}_{nL_{\gamma}+i}(\mathbf{x})\;\;\;\;;\;\mathbf{x}\in\mathbb{R}^{d}
\]
which is $\mathcal{\widehat{F}}_{n}^{(i)}$-measurable random variable.
We again note that $\widehat{\mathcal{F}}_{n}^{(i)}$ and $\widehat{\mathbf{X}}_{n}^{(i)}(\mathbf{x})$
depend on $\gamma$ although we did not highlight the dependency.
The following lemma is a direct consequence of Lemma \ref{lem:lyap0}. 
\begin{lem}
\label{lem:lya2}For $\gamma\in(0,\,\alpha)$, there exist constants
$c_{1},\,\dots,\,c_{\lfloor\gamma\rfloor}>0$ (depending on $\gamma$)
such that the function $g=g_{\gamma}:\mathbb{R}^{d}\rightarrow\mathbb{R}$
defined by 
\[
g(\mathbf{x})=|\mathbf{x}|^{\gamma}+c_{1}|\mathbf{x}|^{\gamma-1}+\cdots+c_{\lfloor\gamma\rfloor}|\mathbf{x}|^{\gamma-\lfloor\gamma\rfloor}
\]
satisfies, for all $n\in\mathbb{Z}^{+}$, $i\in\llbracket0,\,L_{\gamma}-1\rrbracket$,
and $\mathbf{x}\in\mathbb{R}^{d}$, 
\[
\mathbb{E}\left[g(\widehat{\mathbf{X}}_{n+1}^{(i)}(\mathbf{x}))\,|\,\widehat{\mathcal{F}}_{n}^{(i)}\right]\le g(\widehat{\mathbf{X}}_{n}^{(i)}(\mathbf{x}))+\mathbb{E}\left[g(\mathbf{X}_{L_{\gamma}}(\mathbf{0}))\right]\;.
\]
\end{lem}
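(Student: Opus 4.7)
My plan is to reduce Lemma \ref{lem:lya2} to a single invocation of Lemma \ref{lem:lyap0} by packaging one block of $L_\gamma$ consecutive steps of \eqref{eq:AMP} as a single affine recursion. Iterating \eqref{eq:AMP} from time $nL_\gamma + i$ to time $(n+1)L_\gamma + i$, I would write
\[
\widehat{\mathbf{X}}_{n+1}^{(i)}(\mathbf{x}) = \mathbf{U}_{n,i}\, \widehat{\mathbf{X}}_{n}^{(i)}(\mathbf{x}) + \mathbf{V}_{n,i},
\]
where $\mathbf{U}_{n,i} := \mathbf{A}_{(n+1)L_\gamma + i}\mathbf{A}_{(n+1)L_\gamma + i - 1}\cdots \mathbf{A}_{nL_\gamma + i + 1}$ and $\mathbf{V}_{n,i}$ is the corresponding accumulated affine noise over the block. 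By the i.i.d.\ hypothesis on $(\mathbf{A}_k, \mathbf{B}_k)_{k\in\mathbb{Z}^+}$, the pair $(\mathbf{U}_{n,i}, \mathbf{V}_{n,i})$ is independent of $\widehat{\mathcal{F}}_n^{(i)}$ and has the same distribution as $(\Pi_{L_\gamma}, \mathbf{X}_{L_\gamma}(\mathbf{0}))$ for every $n$ and $i$, so a single choice of $g$ will serve uniformly in $n$ and $i$.

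Next, I would verify the hypotheses of Lemma \ref{lem:lyap0} with $r = \gamma$. The bound $\mathbb{E}\|\mathbf{U}_{n,i}\|^\gamma \le 1$ is exactly the defining property \eqref{eq:condnorm} of $L_\gamma$. The strict inequality $\mathbb{E}\|\mathbf{U}_{n,i}\|^s < 1$ for every $s \in (0, \gamma)$ follows from H\"older's inequality, which gives
\[
\mathbb{E}\|\mathbf{U}_{n,i}\|^s \le \bigl(\mathbb{E}\|\mathbf{U}_{n,i}\|^\gamma\bigr)^{s/\gamma} < 1.
\]
Finiteness $\mathbb{E}|\mathbf{V}_{n,i}|^\gamma < \infty$ is an immediate consequence of Lemma \ref{lem:gamma1} applied with $n = L_\gamma$ and $\mathbf{x} = \mathbf{0}$, since $\gamma < \alpha < \alpha_\infty$. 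Strict positivity $\mathbb{E}|\mathbf{V}_{n,i}|^\gamma > 0$ follows because $\mathbf{V}_{n,i} \stackrel{d}{=} \mathbf{X}_{L_\gamma}(\mathbf{0})$ cannot vanish almost surely by Lemma \ref{lem:pre1-2}(2).

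Finally, Lemma \ref{lem:lyap0} applied to $(\mathbf{U}_{n,i}, \mathbf{V}_{n,i})$ produces constants $c_1, \ldots, c_{\lfloor \gamma \rfloor} > 0$ and the function $g$ of the prescribed form such that
\[
\mathbb{E}\bigl[g(\mathbf{U}_{n,i}\mathbf{y} + \mathbf{V}_{n,i})\bigr] \le g(\mathbf{y}) + \mathbb{E}g(\mathbf{V}_{n,i}) \quad \text{for every } \mathbf{y} \in \mathbb{R}^d.
\]
Substituting $\mathbf{y} = \widehat{\mathbf{X}}_n^{(i)}(\mathbf{x})$, conditioning on $\widehat{\mathcal{F}}_n^{(i)}$, and exploiting the independence of $(\mathbf{U}_{n,i}, \mathbf{V}_{n,i})$ from $\widehat{\mathcal{F}}_n^{(i)}$ together with $\mathbf{V}_{n,i} \stackrel{d}{=} \mathbf{X}_{L_\gamma}(\mathbf{0})$ delivers the claimed drift inequality. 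I do not anticipate any substantive obstacle in this reduction; the heavy lifting (existence of the $c_j$'s producing the desired drift) is entirely inside Lemma \ref{lem:lyap0}. The only care required is the correct block identification and the appeal to Lemma \ref{lem:pre1-2}(2) to rule out the degenerate scenario $\mathbf{V}_{n,i} \equiv \mathbf{0}$.
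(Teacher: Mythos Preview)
Your proposal is correct and follows essentially the same route as the paper: package one block of $L_\gamma$ steps as $\widehat{\mathbf{X}}_{n+1}^{(i)} = \mathbf{U}_{n,i}\widehat{\mathbf{X}}_n^{(i)} + \mathbf{V}_{n,i}$, observe that $(\mathbf{U}_{n,i},\mathbf{V}_{n,i})$ is independent of $\widehat{\mathcal{F}}_n^{(i)}$ with law $(\Pi_{L_\gamma},\mathbf{X}_{L_\gamma}(\mathbf{0}))$, and invoke Lemma~\ref{lem:lyap0}. You are in fact slightly more explicit than the paper in checking the side hypotheses of Lemma~\ref{lem:lyap0} (the $s<\gamma$ moment bound via H\"older, finiteness and nondegeneracy of $\mathbb{E}|\mathbf{V}_{n,i}|^\gamma$); note only that \eqref{eq:condnorm} actually gives the \emph{strict} inequality $\mathbb{E}\|\Pi_{L_\gamma}\|^\gamma < 1$, which is what makes your H\"older step yield a strict bound.
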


\begin{proof}
Fix $\gamma\in(0,\,\alpha)$. Fix $\mathbf{x}\in\mathbb{R}^{d}$ and
simply write $\mathbf{\widehat{\mathbf{X}}}_{n}^{(i)}=\mathbf{\widehat{\mathbf{X}}}_{n}^{(i)}(\mathbf{x})$.
Then, for all $n\in\mathbb{Z}^{+}$ and $i\in\llbracket0,\,L_{\gamma}-1\rrbracket$,
we can write 
\begin{equation}
\mathbf{\widehat{\mathbf{X}}}_{n+1}^{(i)}=\mathbf{U}_{n}^{(i)}\mathbf{\widehat{\mathbf{X}}}_{n}^{(i)}+\mathbf{V}_{n}^{(i)}\label{eq_amp2-1}
\end{equation}
where 
\begin{align*}
\mathbf{U}_{n}^{(i)} & =\mathbf{A}_{(n+1)L_{\gamma}+i}\mathbf{A}_{(n+1)L_{\gamma}+(i-1)}\cdots\mathbf{A}_{nL_{\gamma}+i+1}\;,\\
\mathbf{V}_{n}^{(i)} & =\mathbf{A}_{(n+1)L_{\gamma}+i}\cdots\mathbf{A}_{nL_{\gamma}+(i+2)}\mathbf{B}_{nL_{\gamma}+(i+1)}+\cdots+\mathbf{A}_{(n+1)L_{\gamma}+i}\mathbf{B}_{(n+1)L_{\gamma}+(i-1)}+\mathbf{B}_{(n+1)L_{\gamma}+i}\;.
\end{align*}
In particular, by \eqref{eq:condnorm}, we have $\mathbb{E}\Vert\mathbf{U}_{n}^{(i)}\Vert^{\gamma}<1$
and therefore, by Proposition \ref{lem:lyap0}, there exists a function
$g=g_{\gamma}:\mathbb{R}^{d}\rightarrow\mathbb{R}$ (independent of
$i$) of the form 
\[
g(\mathbf{x})=|\mathbf{x}|^{\gamma}+c_{1}|\mathbf{x}|^{\gamma-1}+\cdots+c_{\lfloor\gamma\rfloor}|\mathbf{x}|^{\gamma-\lfloor\gamma\rfloor}
\]
with $c_{1},\,\dots,\,c_{\lfloor\gamma\rfloor}>0$ such that (cf.
\eqref{eq_amp2-1})
\begin{equation}
\mathbb{E}\left[g(\widehat{\mathbf{X}}_{n+1}^{(i)})\,|\,\widehat{\mathcal{F}}_{n}^{(i)}\right]\le g(\widehat{\mathbf{X}}_{n}^{(i)})+\mathbb{E}\left[g(\mathbf{V}_{n}^{(i)})\right]\;.\label{eq:sub1}
\end{equation}
The proof is completed since $\mathbf{V}_{n}^{(i)}$ and $\mathbf{X}_{L_{\gamma}}(\mathbf{0})$
obey the same distribution by \eqref{eq:AMP_rep}. 
\end{proof}
Now we are ready to prove Proposition \ref{prop:cont_lbd}.
\begin{proof}[Proof of Proposition \ref{prop:cont_lbd}]
Fix $\gamma\in(0,\,\alpha)$ and $R>0$. Fix $\mathbf{x}\in\mathbb{R}^{d}$
and simply write $\mathbf{\widehat{\mathbf{X}}}_{n}^{(i)}=\mathbf{\widehat{\mathbf{X}}}_{n}^{(i)}(\mathbf{x})$.
Writing 
\begin{equation}
G_{\gamma}:=\mathbb{E}\left[g(\mathbf{X}_{L_{\gamma}}(\mathbf{0}))\right]\label{eq:sub2}
\end{equation}
so that $G_{\gamma}>0$ because of Lemma \eqref{lem:pre1-2}-(2).
Then, define a sequence of random variables $(M_{n}^{(i)})_{n\in\mathbb{Z}_{0}^{+}}$
by 
\[
M_{n}^{(i)}=g(\mathbf{\widehat{\mathbf{X}}}_{n}^{(i)})-nG_{\gamma}\;\;\;\;;\;n\in\mathbb{Z}_{0}^{+}
\]
so that $(M_{n}^{(i)})_{n\in\mathbb{Z}_{0}^{+}}$ is not only adapted
to the filtration $(\mathcal{\widehat{F}}_{n}^{(i)})_{n\in\mathbb{Z}_{0}^{+}}$,
but also a $(\mathcal{\widehat{F}}_{n}^{(i)})_{n\in\mathbb{Z}_{0}^{+}}$-supermartingale
by Lemma \ref{lem:lya2}. 

We next define, for $i\in\llbracket0,\,L_{\gamma}-1\rrbracket$, 
\begin{equation}
\widehat{\tau}_{R}^{(i)}(\mathbf{x})=\inf\left\{ n\in\mathbb{Z}^{+}:\tau_{R}(\mathbf{x})\le nL_{\gamma}+i\right\} \label{taui}
\end{equation}
so that $\widehat{\tau}_{R}^{(i)}(\mathbf{x})$ is a $(\mathcal{F}_{n}^{(i)})_{n\in\mathbb{Z}_{0}^{+}}$-stopping
time. Thus, by the optional stopping theorem, for all $n\in\mathbb{Z}^{+}$,
we have 
\begin{equation}
\mathbb{E}M_{n\wedge\widehat{\tau}_{R}^{(i)}(\mathbf{x})}^{(i)}\le\mathbb{E}M_{0}^{(i)}=\mathbb{E}g(\mathbf{X}_{i}(\mathbf{x}))\;.\label{eq:sup1}
\end{equation}
By Lemma \ref{lem:gub}, we have 
\[
\max_{i\in\llbracket0,\,L_{\gamma}-1\rrbracket}\mathbb{E}g(\mathbf{X}_{i}(\mathbf{x}))\le C_{\gamma}(g(\mathbf{x})+1)\;.
\]
Inserting this and the fact that 
\[
\mathbb{E}M_{n\wedge\widehat{\tau}_{R}^{(i)}(\mathbf{x})}^{(i)}=\mathbb{E}g(\widehat{\mathbf{X}}_{n\wedge\widehat{\tau}_{R}^{(i)}(\mathbf{x})}^{(i)})-\mathbb{E}[n\wedge\widehat{\tau}_{R}^{(i)}(\mathbf{x})]\cdot G_{\gamma}
\]
to \eqref{eq:sup1} yields that, for some constants $C_{\gamma},\,D_{\gamma}>0$,
\[
\mathbb{E}[n\wedge\widehat{\tau}_{R}^{(i)}(\mathbf{x})]\ge C_{\gamma}\mathbb{E}g(\widehat{\mathbf{X}}_{n\wedge\widehat{\tau}_{R}^{(i)}(\mathbf{x})}^{(i)})-D_{\gamma}(|\mathbf{x}|^{\gamma}+1)\;.
\]
Since $\mathbb{P}[\widehat{\tau}_{R}^{(i)}(\mathbf{x})<\infty]=1$
by Proposition \ref{Prop_cont_exit time finite}, we can apply the
monotone convergence theorem and Fatou's lemma at the left- and right-hand
side, respectively, along $n\rightarrow\infty$, we get 
\begin{equation}
\mathbb{E}[\widehat{\tau}_{R}^{(i)}(\mathbf{x})]\ge C_{\gamma}\mathbb{E}g(\widehat{\mathbf{X}}_{\widehat{\tau}_{R}^{(i)}(\mathbf{x})}^{(i)})-D_{\gamma}(|\mathbf{x}|^{\gamma}+1)\;.\label{eq:lbb}
\end{equation}
Noting from \eqref{taui} that 
\[
(\widehat{\tau}_{R}^{(i)}(\mathbf{x})-1)L_{\gamma}+i<\tau_{R}(\mathbf{x})
\]
and thus we get from \eqref{eq:lbb} (along with trivial bound $g(\mathbf{x})\ge|\mathbf{x}|^{\gamma}$)
that
\begin{equation}
\mathbb{E}[\tau_{R}(\mathbf{x})]\ge C_{\gamma}\max_{i\in\llbracket0,\,L_{\gamma}-1\rrbracket}\mathbb{E}\left|\widehat{\mathbf{X}}_{\widehat{\tau}_{R}^{(i)}(\mathbf{x})}^{(i)}\right|^{\gamma}-D_{\gamma}(|\mathbf{x}|^{\gamma}+1)\;.\label{eq:lbf}
\end{equation}
On the other hand, in view of the definition of the stopping time
$\widehat{\tau}_{R}^{(i)}(\mathbf{x})$, one of \\$\widehat{\mathbf{X}}_{\widehat{\tau}_{R}^{(0)}(\mathbf{x})}^{(0)},\,\widehat{\mathbf{X}}_{\widehat{\tau}_{R}^{(1)}(\mathbf{x})}^{(1)},\,\dots,\,\widehat{\mathbf{X}}_{\widehat{\tau}_{R}^{(L_{\gamma}-1)}(\mathbf{x})}^{(L_{\gamma}-1)}$
coincides with $\mathbf{X}_{\tau_{R}(\mathbf{x})}(\mathbf{x})$ and
therefore, 
\[
\max_{i\in\llbracket0,\,L_{\gamma}-1\rrbracket}\mathbb{E}\left|\mathbf{Y}_{\widehat{\tau}_{R}^{(i)}(\mathbf{x})}^{(i)}\right|^{\gamma}\ge R^{\gamma}\;.
\]
Inserting this to \eqref{eq:lbf} completes the proof. 
\end{proof}

\subsection{\label{sec54}Proof of Lemma \ref{lem:lyap0}}

To prove Lemma \ref{lem:lyap0}, we need two elementary facts. For
$a\in\mathbb{R}$ and $k\in\mathbb{Z}_{0}^{+}$, write 
\[
\begin{pmatrix}a\\
k
\end{pmatrix}=\frac{a(a-1)\cdots(a-k+1)}{k!}\;.
\]

\begin{lem}
\label{lem_ele1}For all $x,\,y\ge0$ and $r\ge0$, we have 
\[
(x+y)^{r}\le\left(x^{r}+\begin{pmatrix}r\\
1
\end{pmatrix}x^{r-1}y+\cdots+\begin{pmatrix}r\\
\lfloor r\rfloor
\end{pmatrix}x^{r-\lfloor r\rfloor}y^{\lfloor r\rfloor}\right)+y^{r}\;,
\]
where the right-hand side is $x^{r}+y^{r}$ when $r<1$. 
\end{lem}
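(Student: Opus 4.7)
The plan is to reduce the inequality to a one-variable statement by scaling and then prove it by induction on $n := \lfloor r \rfloor$. The case $x = 0$ is immediate: for $r \in (0,\infty) \setminus \mathbb{Z}$ all monomials $x^{r-k}$ with $k \le n$ vanish and both sides equal $y^r$, while for integer $r$ the last term of the sum already equals $y^r$ and the extra $+\,y^r$ only helps (and $r=0$ is trivial). For $x > 0$, divide through by $x^r$ and set $t = y/x \ge 0$; it then suffices to show
\[
f_r(t) \;:=\; \sum_{k=0}^{n}\binom{r}{k}\, t^k + t^r - (1+t)^r \;\ge\; 0 \qquad \text{for all } t \ge 0.
\]
A direct computation gives $f_r(0) = 0$ for every $r > 0$, so I only need $f_r \ge 0$.

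For the base case $n = 0$, i.e.\ $r \in [0, 1)$, the function reduces to $f_r(t) = 1 + t^r - (1+t)^r$, and $f_r'(t) = r\bigl(t^{r-1} - (1+t)^{r-1}\bigr) \ge 0$ because $s \mapsto s^{r-1}$ is non-increasing on $(0,\infty)$ when $r - 1 \le 0$. Combined with $f_r(0) = 0$, this yields $f_r \ge 0$, which is the classical subadditivity $(1+t)^r \le 1 + t^r$.

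For the inductive step with $r \ge 1$, the key identity is $k\binom{r}{k} = r\binom{r-1}{k-1}$, which after differentiating $f_r$ termwise yields
\[
f_r'(t) \;=\; r\left[\sum_{j=0}^{n-1}\binom{r-1}{j}\, t^{j} + t^{r-1} - (1+t)^{r-1}\right] \;=\; r\, f_{r-1}(t),
\]
where the last equality uses $\lfloor r-1\rfloor = n-1$. By the inductive hypothesis applied to $r - 1$ (whose floor is strictly smaller), $f_{r-1} \ge 0$ on $[0,\infty)$, so $f_r' \ge 0$; together with $f_r(0) = 0$, this yields $f_r \ge 0$, completing the induction.

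The only non-trivial observation is the differentiation identity $f_r' = r\, f_{r-1}$; once this is in hand, the induction on $\lfloor r \rfloor$ is essentially automatic. I do not foresee any substantive obstacle, only the bookkeeping of the base case and the boundary convention $0^0 = 1$ invoked when $r$ is a positive integer and $x = 0$.
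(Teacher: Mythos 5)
Your proof is correct, and it takes essentially the same route as the paper's---repeated differentiation driven by the identity $k\binom{r}{k}=r\binom{r-1}{k-1}$---but organizes it more cleanly. After scaling out $x$, your inductive step rests on the crisp one-variable identity $f_r'(t)=r\,f_{r-1}(t)$, which drops $\lfloor r\rfloor$ by one at each stage, with the base case $r\in[0,1)$ reducing to the familiar subadditivity $(1+t)^r\le 1+t^r$. The paper instead differentiates the two-variable deficit $f$ all $(n+1)$ times at once with respect to $x$ (holding $y>0$ fixed) and then integrates back from $f^{(n+1)}>0$ and $f(0)=\cdots=f^{(n)}(0)=0$. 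As written, that computation does not quite go through: differentiating $\sum_{k=0}^{n}\binom{r}{k}x^{r-k}y^{k}$ $(n+1)$ times in $x$ leaves extra terms (for non-integer $r$ the monomials $x^{r-k}$ with $1\le k\le n$ never differentiate to zero), the resulting $f^{(n+1)}$ is not sign-definite (e.g.\ $r=3/2$, $y=1$, $x=4$ gives $f''<0$), and $f'(0^+)$ diverges rather than vanishes. The argument does work if one differentiates with respect to $y$ instead---equivalently, swaps the roles of $x$ and $y$ in the polynomial part of $f$---and your scaled identity $f_r'=r f_{r-1}$ is exactly that observation in one-variable form. So your induction is not just a valid alternative but a corrected and tidier rendition of what the paper intended.
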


\begin{proof}
Write $\lfloor r\rfloor=n$ so that $n\le r<n+1$. Fix $y>0$ and
$n\in\mathbb{Z}^{+}$ (since the inequality is straighforward if $y=0$
or $n=0$) and define $f:[0,\infty)\rightarrow\mathbb{R}$ as 
\[
f(x)=\left(x^{r}+\begin{pmatrix}r\\
1
\end{pmatrix}x^{r-1}y+\cdots+\begin{pmatrix}r\\
n
\end{pmatrix}x^{r-n}y^{n}\right)+y^{r}-(x+y)^{r}.
\]
Then, a straight forward computation yields
\[
f^{(n+1)}(x)=r(r-1)\cdots(r-n)(x^{r-n-1}-(x+y)^{r-n-1})\;,
\]
where $f^{(k)}$ denotes the $k$th derivative of $f$. Since $r-n-1<0$,
we have $f^{(n+1)}>0$. Since we can readily check that $f(0)=f'(0)=\cdots=f^{(n)}(0)=0$,
we can inductively show that $f^{(n)}>0$, $\dots$ , $f^{(1)}>0$
and finally, $f>0$.
\end{proof}
For a vector $\mathbf{v}=(v_{1},\,\dots,\,v_{n})\in\mathbb{R}^{n}$,
we write $\mathbf{v}>0$ if $v_{1},\,\dots,\,v_{n}>0$, and $\mathbf{v}\ge0$
if $v_{1},\,\dots,\,v_{n}\ge0$.
\begin{lem}
\label{lem:ut}Let $\mathbb{H}$ be an $n\times n$ upper triangular
matrix such that $\mathbb{H}_{1,\,1}\ge0$ and $\mathbb{H}_{i,\,i}>0$
for all $i\ge2$. Then, there exists $\mathbf{v}=(v_{1},\,\dots,\,v_{n})\in\mathbb{R}^{n}$
such that $\mathbf{v}>0$ and $\mathbf{v}\mathbb{H}\ge0$. 
\end{lem}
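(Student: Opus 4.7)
The plan is to construct $\mathbf{v}=(v_1,\dots,v_n)$ inductively, exploiting the fact that the $j$-th coordinate of the row vector $\mathbf{v}\mathbb{H}$ depends only on $v_1,\dots,v_j$ because $\mathbb{H}$ is upper triangular. Explicitly,
\[
(\mathbf{v}\mathbb{H})_j \;=\; \sum_{i=1}^{j} v_i\,\mathbb{H}_{i,j}\;,
\]
so the $j$-th non-negativity constraint involves only components of $\mathbf{v}$ that have already been chosen together with the new one $v_j$, multiplied by the strictly positive diagonal entry $\mathbb{H}_{j,j}$ (for $j\ge 2$).

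First I would set $v_1 = 1$. Since $\mathbb{H}_{1,1}\ge 0$ by hypothesis, this already yields $(\mathbf{v}\mathbb{H})_1 = v_1\mathbb{H}_{1,1}\ge 0$. Next, assuming $v_1,\dots,v_{j-1}>0$ have been fixed, I would define
\[
v_j \;:=\; 1 + \max\left\{0,\ -\frac{1}{\mathbb{H}_{j,j}}\sum_{i=1}^{j-1} v_i\,\mathbb{H}_{i,j}\right\}\;,
\]
which is well-defined and strictly positive because $\mathbb{H}_{j,j}>0$ for $j\ge 2$. By construction $v_j \ge 1 > 0$ and
\[
(\mathbf{v}\mathbb{H})_j \;=\; v_j\,\mathbb{H}_{j,j} + \sum_{i=1}^{j-1} v_i\,\mathbb{H}_{i,j} \;\ge\; \mathbb{H}_{j,j} \;>\; 0\;.
\]
Iterating this construction for $j=2,\dots,n$ produces $\mathbf{v}>0$ with $\mathbf{v}\mathbb{H}\ge 0$, as required.

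There is no real obstacle here; the statement is essentially a forward substitution argument, and the only subtlety is that the weaker hypothesis $\mathbb{H}_{1,1}\ge 0$ (rather than strict positivity) is accommodated by the fact that the first coordinate inequality $v_1\mathbb{H}_{1,1}\ge 0$ is trivially satisfied for any $v_1>0$ without needing to absorb a correction term from earlier coordinates.
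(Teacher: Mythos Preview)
Your proof is correct and takes essentially the same approach as the paper: both arguments construct $\mathbf{v}$ coordinate by coordinate, using the upper-triangular structure to ensure that the $j$-th constraint involves only $v_1,\dots,v_j$ and then choosing $v_j$ large enough via the strict positivity of $\mathbb{H}_{j,j}$ for $j\ge 2$. The only cosmetic difference is that the paper phrases this as an induction on $n$ and simply says ``take $t>0$ large enough,'' whereas you write down an explicit formula for $v_j$.
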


\begin{proof}
\noindent We use induction on $n$. There is nothing to prove when
$n=1$. Suppose next that the statement holds for $n=k$. If $\mathbb{H}$
is a $(k+1)\times(k+1)$ upper triangular matrix such that all of
its diagonal entries are positive. Let $\widetilde{\mathbb{H}}$ be
a $k\times k$ upper triangular matrix defined by $\widetilde{\mathbb{H}}_{i,\,j}=\mathbb{H}_{i,\,j}$
for all $1\le i,\,j\le k$. Then, since $\widetilde{\mathbb{H}}$
is an upper triangular matrix satisfying the condition for induction
hypothesis, we can find $\widetilde{\mathbf{v}}=(v_{1},\,\dots,\,v_{k})>0$
such that $\widetilde{\mathbf{v}}\widetilde{\mathbb{H}}\ge0$. Then,
let $\mathbf{v}=(v_{1},\,\dots,\,v_{k},\,t)$, so that 
\[
\mathbf{v}\mathbb{H}=\left(\widetilde{\mathbf{v}}\widetilde{\mathbb{H}},\,\sum_{i=1}^{k}v_{i}\mathbb{H}_{i,\,k+1}+t\mathbb{H}_{k+1,\,k+1}\right)\;.
\]
Since $\mathbb{H}_{k+1,\,k+1}>0$, we can take $t>0$ large enough
so that $\mathbf{v}\mathbb{H}\ge0$. 
\end{proof}
Let us now prove Lemma \ref{lem:lyap0}.
\begin{proof}[Proof of Lemma \ref{lem:lyap0}]
Write $\lfloor r\rfloor=n$ so that $n\le r<n+1$. We define an upper
triangular $(n+1)\times(n+1)$ matrix $\mathbb{H}=(h_{i,\,j})_{i,\,j\in\llbracket0,\,n\rrbracket}$
by
\[
\begin{cases}
h_{i,\,i}=\mathbb{E}\Vert\mathbf{U}\Vert^{r-i}-1 & \text{for }i\in\llbracket0,\,n\rrbracket\\
h_{i,\,k}=\begin{pmatrix}r-i\\
k-i
\end{pmatrix}(\mathbb{E}[\Vert\mathbf{U}\Vert^{r-k}|\mathbf{V}|^{k-i}]) & \text{for }i\in\llbracket0,\,n\rrbracket\;\text{and}\;k>i\\
0 & \text{otherwise}\;.
\end{cases}
\]
Then, for all $i\in\llbracket0,\,n\rrbracket$, by Lemma \ref{lem_ele1},
\begin{equation}
\mathbb{E}\left[|\mathbf{U}\mathbf{x}+\mathbf{V}|^{r-i}\right]-|\mathbf{x}|^{r-i}\le\mathbb{E}(\Vert\mathbf{U}\Vert\cdot|\mathbf{x}|+|\mathbf{V}|)^{r-i}-|\mathbf{x}|^{r-i}\le\sum_{k=i}^{n}h_{i,\,k}|\mathbf{x}|^{r-k}+\mathbb{E}|\mathbf{V}|^{r-i}\;.\label{eq:bdg}
\end{equation}
From \eqref{eq:condU}, we have $h_{0,\,0}=0$ and $h_{i,\,i}<0$
for all $i\in\llbracket1,\,n\rrbracket$. Namely, the diagonal entry
of the upper triangular matrix $\mathbb{H}$ is negative except for
$(1,1)$-component which is exactly zero. Hence, by Lemma \ref{lem:ut},
we can find a vector $\mathbf{c}=(1,\,c_{1},\,\dots,\,c_{n})>0$ such
that all the entries of $\mathbf{c}\mathbb{H}$  are non-positive.
By taking $g(\mathbf{x})$ as in \eqref{eq:gx} with this $c_{1},\,\dots,\,c_{n}$,
we get from \eqref{eq:bdg} that 
\begin{align*}
\mathbb{E}\left[g(\mathbf{U}\mathbf{x}+\mathbf{V})\right]-g(\mathbf{x}) & \le\mathbb{E}g(\mathbf{V})+\sum_{i=0}^{n}\sum_{k=0}^{n}c_{i}h_{i,\,k}|\mathbf{x}|^{r-k}\;.\\
 & =\mathbb{E}g(\mathbf{V})+\sum_{k=0}^{n}(\mathbf{c}\mathbb{H})_{k}|\mathbf{x}|^{r-k}\le\mathbb{E}g(\mathbf{V})\;.
\end{align*}
\end{proof}

\subsection{\label{sec55}Univariate case}

We now analyze one-dimensional, i.e., the univariate case, for which
we can obtain more concrete result than the multivariate case. We
remark here that in the univariate case, by the independence of the
sequence $(\mathbf{A}_{n})_{n\in\mathbb{Z}^{+}}$, we have 
\[
\frac{1}{n}\log\mathbb{E}\left|\mathbf{A}_{1}\cdots\mathbf{A}_{n}\right|^{s}=\mathbb{E}|\mathbf{A}|^{s}
\]
and therefore we can simplify $h_{\mathbf{A}}(\cdot)$ into 
\begin{equation}
h_{\mathbf{A}}(s)=\mathbb{E}|\mathbf{A}|^{s}\;.\label{eq:h_A-1}
\end{equation}

\begin{thm}
\label{thm_uni}Let $d=1$.
\begin{enumerate}
\item There exist constants $C_{1},\,C_{2}>0$ such that, for all $\mathbf{x}_{0}\in\mathbb{R}$,
\begin{equation}
\mathbb{E}\left[\tau_{R}(\mathbf{x}_{0})\right]\ge C_{1}R^{\alpha}-C_{2}(|\mathbf{x}_{0}|^{\alpha}+1)\;.\label{eq:ea}
\end{equation}
\item If $\alpha\ge2$, there exist constants $C_{3}>0$ such that, for
all $\mathbf{x}_{0}\in\mathbb{R}$, 
\[
\mathbb{E}\left[\tau_{R}(\mathbf{x}_{0})\right]\le C_{3}\left(R^{\alpha}+1\right)\;.
\]
\end{enumerate}
\end{thm}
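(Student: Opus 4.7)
Both parts reduce to constructing a suitable super/sub-martingale that exploits the univariate identity $h_{\mathbf{A}}(s)=\mathbb{E}|\mathbf{A}|^{s}$; in particular $\mathbb{E}|\mathbf{A}|^{\alpha}=1$ at the critical exponent, and by convexity of $h_{\mathbf{A}}$ together with $h_{\mathbf{A}}(0)=h_{\mathbf{A}}(\alpha)=1$ and the fact that $|\mathbf{A}|\equiv 1$ would violate $\gamma_{L}<0$, we have $\mathbb{E}|\mathbf{A}|^{s}<1$ for every $s\in(0,\alpha)$.

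For part (1), I would apply Lemma~\ref{lem:lyap0} directly at the borderline value $r=\alpha$ with $\mathbf{U}=\mathbf{A}$, $\mathbf{V}=\mathbf{B}$ (its hypotheses are exactly the observations above). The lemma furnishes a non-negative Lyapunov function $g(x)=|x|^{\alpha}+c_{1}|x|^{\alpha-1}+\cdots+c_{\lfloor\alpha\rfloor}|x|^{\alpha-\lfloor\alpha\rfloor}$ with $c_{i}>0$ satisfying $\mathbb{E}[g(\mathbf{A}x+\mathbf{B})]-g(x)\le\mathbb{E}g(\mathbf{B})$, so that $M_{n}:=g(\mathbf{X}_{n})-n\mathbb{E}g(\mathbf{B})$ is a supermartingale. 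Optional stopping at $\tau_{R}(\mathbf{x}_{0})\wedge n$, Fatou applied to $\mathbb{E}g(\mathbf{X}_{\tau_{R}\wedge n})\ge R^{\alpha}\mathbb{P}(\tau_{R}\le n)$, monotone convergence for $\mathbb{E}[\tau_{R}\wedge n]\nearrow\mathbb{E}\tau_{R}$, and the almost-sure finiteness of $\tau_{R}$ from Proposition~\ref{Prop_cont_exit time finite}, yield $\mathbb{E}\tau_{R}(\mathbf{x}_{0})\ge(R^{\alpha}-g(\mathbf{x}_{0}))/\mathbb{E}g(\mathbf{B})$, which is \eqref{eq:ea}. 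This half works for every $\alpha>0$.

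For part (2), the key technical input is a one-sided Clarkson-type inequality: for $\alpha\ge 2$ there exists $c_{\alpha}>0$ such that
\[
|u+v|^{\alpha}\;\ge\;|u|^{\alpha}+\alpha\,|u|^{\alpha-2}u\,v+c_{\alpha}|v|^{\alpha}\quad\text{for all }u,v\in\mathbb{R},
\]
with the convention $|u|^{\alpha-2}u:=\mathrm{sgn}(u)|u|^{\alpha-1}$. By homogeneity this reduces to showing $h(w):=|1+w|^{\alpha}-1-\alpha w\ge c_{\alpha}|w|^{\alpha}$; the function $h$ is convex with $h(0)=h'(0)=0$ and positive elsewhere, $h(w)/|w|^{\alpha}\to 1$ as $|w|\to\infty$, and $h(w)/|w|^{\alpha}\sim\tfrac{\alpha(\alpha-1)}{2}|w|^{2-\alpha}\to\infty$ as $w\to 0$---this is precisely where $\alpha\ge 2$ enters---so its infimum over $\mathbb{R}\setminus\{0\}$ is a strictly positive constant $c_\alpha$. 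I would then apply this pointwise with $u=\mathbf{A}(\mathbf{X}_{n}-c_{0})$ and $v=\tilde{\mathbf{B}}:=(\mathbf{A}-1)c_{0}+\mathbf{B}$, using that $\mathbf{X}_{n+1}-c_{0}=\mathbf{A}(\mathbf{X}_{n}-c_{0})+\tilde{\mathbf{B}}$, and take expectations; invoking $\mathbb{E}|\mathbf{A}|^{\alpha}=1$ gives
\[
\mathbb{E}[|\mathbf{X}_{n+1}-c_{0}|^{\alpha}\mid\mathbf{X}_{n}]-|\mathbf{X}_{n}-c_{0}|^{\alpha}\;\ge\;\alpha\,\mathrm{sgn}(\mathbf{X}_{n}-c_{0})|\mathbf{X}_{n}-c_{0}|^{\alpha-1}\tilde p_{1}+c_{\alpha}\mathbb{E}|\tilde{\mathbf{B}}|^{\alpha},
\]
where $\tilde p_{1}=c_{0}(1-q)+p_{1}$ with $q:=\mathbb{E}[\mathrm{sgn}(\mathbf{A})|\mathbf{A}|^{\alpha-1}]$ and $p_{1}:=\mathbb{E}[\mathrm{sgn}(\mathbf{A})|\mathbf{A}|^{\alpha-1}\mathbf{B}]$.

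Since $|q|\le\mathbb{E}|\mathbf{A}|^{\alpha-1}=h_{\mathbf{A}}(\alpha-1)<1$ (using $\alpha-1\in(0,\alpha)$), I would choose $c_{0}:=-p_{1}/(1-q)$, which makes $\tilde p_{1}=0$; Assumption~\ref{Ass_Cont1}(4) forbids $\tilde{\mathbf{B}}\equiv 0$ (that would force $c_{0}$ to be a fixed point), so $C^{*}:=c_{\alpha}\mathbb{E}|\tilde{\mathbf{B}}|^{\alpha}>0$ and $|\mathbf{X}_{n}-c_{0}|^{\alpha}-nC^{*}$ is a genuine submartingale. Optional stopping, combined with Lemma~\ref{lem:exit} at $\gamma=\alpha\in[0,\alpha_{+})$ to bound $\mathbb{E}|\mathbf{X}_{\tau_{R}}|^{\alpha}\le C(R^{\alpha}+1)$, and dominated convergence, complete the upper bound $\mathbb{E}\tau_{R}(\mathbf{x}_{0})\le C_{3}(R^{\alpha}+1)$. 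The principal obstacle is the Clarkson inequality itself; once it is in hand, the shift $c_{0}$ kills the only sign-sensitive contribution and everything else is routine martingale bookkeeping. The restriction $\alpha\ge 2$ is exactly what keeps $h(w)/|w|^{\alpha}$ from vanishing at $0$, consistent with the inability to extend the upper bound to $\alpha<2$ flagged in the remark after Theorem~\ref{Thm_main_cont_uni}.
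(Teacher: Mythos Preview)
Your proposal is correct and follows essentially the same route as the paper. For part~(1) you apply Lemma~\ref{lem:lyap0} directly at $r=\alpha$ (the paper phrases this as ``the proof of Proposition~\ref{prop:cont_lbd} works for $\gamma=\alpha$''), and for part~(2) your Clarkson-type inequality is exactly the paper's Lemma~\ref{lem_ele3}, your shift $c_{0}=-p_{1}/(1-q)$ coincides with the paper's choice in~\eqref{eq:condm}, and the submartingale and optional-stopping steps match; the only cosmetic difference is that the paper first treats the centered case $\mathbb{E}[|\mathbf{A}|^{\alpha-2}\mathbf{A}\mathbf{B}]=0$ and then reduces to it, whereas you build the shift in from the start.
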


Before proving this theorem, we establish an elementary inequality. 
\begin{lem}
\label{lem_ele3}For all $\alpha\ge2$, there exists a constant $C_{\alpha}>0$
such that, for all $x,\,y\in\mathbb{R}$
\[
|x+y|^{\alpha}\ge|x|^{\alpha}+\alpha|x|^{\alpha-2}xy+C_{\alpha}|y|^{\alpha}\;.
\]
\end{lem}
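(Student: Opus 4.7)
The plan is to reduce the inequality to a single-variable statement using homogeneity and then argue via strict convexity together with an asymptotic expansion.

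First, observe that both sides are homogeneous of degree $\alpha$ in $(x,y)$ and the inequality is invariant under the sign change $(x,y)\mapsto(-x,-y)$. The case $y=0$ is trivial. For $y\neq 0$, we may divide by $|y|^{\alpha}$ and set $t=x/y$; using $|x|^{\alpha-2}xy = |t|^{\alpha-2}t\,|y|^{\alpha}$ (for $y>0$; the $y<0$ case follows by the above symmetry after replacing $t$ by $-t$), the inequality is equivalent to
\[
\phi(t) \;:=\; |t+1|^{\alpha} - |t|^{\alpha} - \alpha\,|t|^{\alpha-2}t \;\ge\; C_{\alpha} \qquad \text{for all } t\in\mathbb{R}.
\]
So the task reduces to finding $C_{\alpha}>0$ with $\inf_{t\in\mathbb{R}}\phi(t)\ge C_{\alpha}$.

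Next I would establish that $\phi(t)>0$ for every $t\in\mathbb{R}$. Set $f(s)=|s|^{\alpha}$. For $\alpha\ge 2$, $f$ is $C^{1}$ with $f'(s)=\alpha|s|^{\alpha-2}s$, and $f$ is strictly convex on $\mathbb{R}$ (its second derivative $\alpha(\alpha-1)|s|^{\alpha-2}$ is positive away from the origin, and strict convexity at the origin follows because the tangent line at $0$ is $y=0$ while $f(s)>0$ for $s\neq 0$). Hence
\[
\phi(t) \;=\; f(t+1) - f(t) - f'(t)\cdot 1 \;>\; 0
\]
for every $t$, since the tangent line to $f$ at $t$ lies strictly below $f$ at the point $t+1\neq t$.

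To upgrade this to a uniform positive lower bound, I would analyze the behavior of $\phi$ as $|t|\to\infty$. For $|t|$ large, a Taylor expansion of $s\mapsto |s|^{\alpha}$ around $s=t$ (valid since $t$ and $t+1$ have the same sign for $|t|>1$) gives
\[
|t+1|^{\alpha} \;=\; |t|^{\alpha} + \alpha|t|^{\alpha-2}t + \tfrac{\alpha(\alpha-1)}{2}\,|t|^{\alpha-2} + O\!\bigl(|t|^{\alpha-3}\bigr),
\]
so $\phi(t) = \tfrac{\alpha(\alpha-1)}{2}|t|^{\alpha-2} + O(|t|^{\alpha-3})$. For $\alpha=2$ one checks directly that $\phi\equiv 1$, so $C_{2}=1$ works. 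For $\alpha>2$ the leading term diverges, so $\phi(t)\to\infty$ as $|t|\to\infty$. Combined with the strict positivity from the previous step and continuity of $\phi$, the infimum of $\phi$ over $\mathbb{R}$ is attained at some finite point, and is therefore strictly positive. Taking $C_{\alpha}:=\min_{t\in\mathbb{R}}\phi(t)>0$ completes the argument.

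There is no serious obstacle; the only points requiring care are (i) verifying the sign and homogeneity reductions rigorously so that the one-variable claim really captures the full inequality, and (ii) confirming that $\phi$ is coercive for $\alpha>2$ (handled by the Taylor expansion above), which is what distinguishes this case from $\alpha=2$ where $\phi$ is constant.
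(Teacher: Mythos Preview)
Your proof is correct. Both you and the paper reduce to the single-variable function $\phi(t)=|t+1|^{\alpha}-|t|^{\alpha}-\alpha|t|^{\alpha-2}t$ and show $\inf_{t}\phi(t)>0$, so the setup is identical. The difference lies in how the positive infimum is established. The paper proceeds by explicit calculus: it computes $\phi'$, shows $\phi'<0$ on $(-\infty,-1]$ and $\phi'>0$ on $(0,\infty)$, and then analyzes the remaining interval $[-1,0]$ by locating the unique critical point $x_0$ of $g(x)=\phi(-x)$ on $(0,1)$ and verifying $g(x_0)=(\alpha-1)x_0^{\alpha-2}>0$. Your argument instead recognizes $\phi(t)=f(t+1)-f(t)-f'(t)$ for the strictly convex $f(s)=|s|^{\alpha}$, which gives $\phi>0$ pointwise in one line, and then uses the second-order Taylor expansion to obtain coercivity for $\alpha>2$ (with $\alpha=2$ handled by direct computation $\phi\equiv 1$). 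Your route is cleaner and more conceptual; the paper's buys an explicit formula for the minimum value, which is not needed for the lemma as stated.
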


\begin{proof}
The statement is clear for $y=0$ or $\alpha=2$ (for which $C_{\alpha}=1$).
Hence, assume that $y\neq0$ and $\alpha>2$. By renormalize suitably,
we can assume that $y=1$ and hence it suffices to prove that there
exists a constant $C_{\alpha}>0$ such that for all $x\in\mathbb{R}$,
\[
|x+1|^{\alpha}\ge|x|^{\alpha}+\alpha|x|^{\alpha-2}x+C_{\alpha}.
\]
Let $f(x)=|x+1|^{\alpha}-\left(|x|^{\alpha}+\alpha|x|^{\alpha-2}x\right)$
and then it suffices to check that $\inf_{x\in\mathbb{R}}f(x)>0$.
By a simple computation, one can directly check that $f'<0$ on $(-\infty,\,-1]$
and $f'>0$ on $(0,\,\infty)$. Hence, it suffices to check that 
\begin{equation}
\inf_{x\in[-1,\,0]}f(x)=\inf_{x\in[0,\,1]}f(-x)>0\;.\label{eq:inff}
\end{equation}
Writing $g(x)=f(-x)$, we have 
\[
g'(x)=\alpha x^{\alpha-1}\left(-\left(\frac{1}{x}-1\right)^{\alpha-1}-1+(\alpha-1)\frac{1}{x}\right)\;.
\]
By an elementary computation, we can check that there exists a unique
solution $x_{0}\in(0,\,1)$ of $g'(x_{0})=0$ and $g'<0$ on $(0,\,x_{0})$
and $g'>0$ on $(x_{0},\,1)$. Hence, $g$ attains its minimum on
$(0,\,1)$ at $x_{0}$. Furthermore, we can check that 
\[
g(x_{0})=(\alpha-1)x_{0}^{\alpha-2}>0
\]
and therefore we get \eqref{eq:inff}. 
\end{proof}
Now we turn to the proof of Theorem \ref{thm_uni}
\begin{proof}[Proof of Theorem \ref{thm_uni}]
(1) Since $h_{\mathbf{A}}(\alpha)=1$, by \eqref{eq:h_A-1}, we have
$\mathbb{E}|\mathbf{A}|^{\alpha}=1$. Therefore, the proof of Proposition
\ref{prop:cont_lbd} works for $\gamma=\alpha$ in univariate case,
and hence we get \eqref{eq:ea}. 

\noindent (2) Let us fix $\mathbf{x}_{0}\in\mathbb{R}$. We first
assume that 
\begin{equation}
\mathbb{E}\left[|\mathbf{A}|^{\alpha-2}\mathbf{A}\mathbf{B}\right]=0\;.\label{eq:sind}
\end{equation}
By this assumption, Lemma \ref{lem_ele3}, and the fact that $\mathbb{E}[|\mathbf{A}|^{\alpha}]=1$,
for each $n\in\mathbb{Z}^{+}$, we have (regardless of the starting
point of the process \eqref{eq:AMP}), 
\begin{align*}
\mathbb{E}[|\mathbf{X}_{n+1}|^{\alpha}\ |\ \mathbf{X}_{n}]-|\mathbf{X}_{n}|^{\alpha} & =\mathbb{E}[|\mathbf{A}_{n+1}\mathbf{X}_{n}+\mathbf{B}_{n+1}|^{\alpha}-|\mathbf{A}_{n+1}\mathbf{X}_{n}|^{\alpha}\ |\ \mathbf{X}_{n}]\\
 & \ge\mathbb{E}[\alpha|\mathbf{A}_{n+1}\mathbf{X}_{n}|^{\alpha-2}(\mathbf{A}_{t+1}\mathbf{X}_{n})\mathbf{B}_{n+1}+C_{\alpha}|\mathbf{B}_{n+1}|^{\alpha}\ |\ \mathbf{X}_{n}]\\
 & =\alpha\mathbb{E}\left[|\mathbf{A}|^{\alpha-2}\mathbf{A}\mathbf{B}\right]\mathbf{X}_{n}+C_{\alpha}\mathbb{E}|\mathbf{B}|^{\alpha}\\
 & =C_{\alpha}\mathbb{E}|\mathbf{B}|^{\alpha}\;.
\end{align*}
Let $\kappa_{0}=C_{\alpha}\mathbb{E}|\mathbf{B}|^{\alpha}$ so that
by the previous observation, the sequence $(M_{n})_{n\in\mathbb{Z}^{+}}$
defined by 
\[
M_{n}:=\mathbf{X}_{n}(\mathbf{x}_{0})-\kappa_{0}n
\]
is a $(\mathcal{F}_{n})_{n\in\mathbb{Z}^{+}}$-submartingale. By the
optional stopping theorem, we get, for all $n\in\mathbb{Z}^{+}$ ,
\[
\mathbb{E}|\mathbf{X}_{n\land\tau_{R}}(\mathbf{x}_{0})|^{\alpha}-\kappa_{0}\mathbb{E}[n\land\tau_{R}(\mathbf{x}_{0})]\ge|\mathbf{x}_{0}|^{\alpha}\ .
\]
By letting $n\rightarrow\infty$ and apply the same logic with the
proof of Proposition \ref{prop:cont_ubd}, we get, for some constant
$C_{0}>0$, 
\begin{equation}
\mathbb{E}[\tau_{R}(\mathbf{x}_{0})]\le C_{0}R^{\alpha}\;.\label{eq:conc1}
\end{equation}

Now we consider the general case without the assumption \eqref{eq:sind}.
Since $\alpha\ge2$, by Theorem \ref{Thm_kes0} we have
\[
\mathbb{E}|\mathbf{A}|^{\alpha-2}\mathbf{A}\le\mathbb{E}|\mathbf{A}|^{\alpha-1}=h(\alpha-1)<h(\alpha)=\mathbb{E}|\mathbf{A}|^{\alpha}\ .
\]
Therefore, we can take $c_{0}\in\mathbb{R}$ such that 
\begin{equation}
\mathbb{E}\left[|\mathbf{A}|^{\alpha-2}\mathbf{A}\mathbf{B}\right]+c_{0}\left(\mathbb{E}|\mathbf{A}|^{\alpha}-\mathbb{E}|\mathbf{A}|^{\alpha-2}\mathbf{A}\right)=0\ .\label{eq:condm}
\end{equation}
Then, we set 

\noindent 
\[
\begin{cases}
\widetilde{\mathbf{X}}_{n}:=\mathbf{X}_{n}(\mathbf{x}_{0})-c_{0}\ , & n\ge0\;,\\
\widetilde{\mathbf{A}}_{n}:=\mathbf{A}_{n}\;\;\;\;\text{and\;\;\;\;}\widetilde{\mathbf{B}}_{n}:=\mathbf{B}_{n}+c_{0}\mathbf{A}_{n}-c_{0}\;, & n\ge1\;,
\end{cases}
\]
so that we have
\[
\widetilde{\mathbf{A}}_{n+1}\widetilde{\mathbf{X}}_{n}+\widetilde{\mathbf{B}}_{n+1}=\mathbf{A}_{n+1}(\mathbf{X}_{n}(\mathbf{x}_{0})-c_{0})+\mathbf{B}_{n+1}+c_{0}\mathbf{A}_{n+1}-c_{0}=\widetilde{\mathbf{X}}_{n+1}\ .
\]
Hence, $(\widetilde{\mathbf{X}}_{n})_{n\in\mathbb{Z}^{+}}$ is another
process \eqref{eq:AMP} associated with $(\widetilde{\mathbf{A}}_{n},\,\widetilde{\mathbf{B}}_{n})_{n\in\mathbb{Z}^{+}}$.
Since 
\[
\mathbb{E}\left[|\widetilde{\mathbf{A}}|^{\alpha-2}\widetilde{\mathbf{A}}\widetilde{\mathbf{B}}\right]=\mathbb{E}\left[|\mathbf{A}|^{\alpha-2}\mathbf{A}\mathbf{B}\right]+c\left(\mathbb{E}|\mathbf{A}|^{\alpha}-\mathbb{E}|\mathbf{A}|^{\alpha-2}\mathbf{A}\right)=0
\]
by \eqref{eq:cond2}, we get from the first part of the proof (cf.
\eqref{eq:conc1}) that 
\[
\mathbb{E}[\widetilde{\tau}_{R}]\le C_{0}R^{\alpha}
\]
where $\widetilde{\tau}_{R}:=\inf\{n\in\mathbb{Z}^{+}:|\widetilde{\mathbf{X}}_{n}|>R\}$.
By the definition of $\widetilde{\mathbf{X}}_{n}$, we have 
\[
\tau_{R}(\mathbf{x}_{0})\le\widetilde{\tau}_{R+|c_{0}|}\ ,
\]
and therefore we get 
\[
\mathbb{E}[\tau_{R}(\mathbf{x}_{0})]\le\mathbb{E}[\widetilde{\tau}_{R+|c_{0}|}]\le C_{0}(R+c_{0})^{\alpha}\;.
\]
This completes the proof. 
\end{proof}

\section{\label{sec6}Proofs of Main Results: Explosive Regime}

The purpose of the current section is to prove Theorem \ref{Thm_main_expl}.
Hence, we assume that the Lyapunov exponent $\gamma_{L}$ is positive,
namely $\gamma_{L}>0$, and moreover assume Assumption \ref{Ass_Exp}
throughout this section. The proof of Theorem
\ref{Thm_main_expl} will be divided into the lower and the upper
bound. We provide a preliminary results regarding explosive regime
in Section \ref{sec61}, and then prove the lower and the upper bound
in Sections \ref{sec62} and \ref{sec63}, respectively. 

\subsection{\label{sec61}Preliminary results}

To derive Theorem \ref{Thm_main_expl}, we need the following technical
lemma.
\begin{lem}
\label{Lem_expl_Lya}If $\mathbf{A}$ is irreducible (cf. Assumption
\ref{Ass_Exp}), we almost surely have that
\[
\lim_{n\rightarrow\infty}\frac{1}{n}\log\Vert\Pi_{n}^{-1}\Vert=-\gamma_{L}
\]
\end{lem}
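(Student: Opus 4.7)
The plan is to match lower and upper bounds on $\tfrac{1}{n}\log\|\Pi_n^{-1}\|$ at $-\gamma_L$. Since $\mathbf{A}$ is almost surely nonsingular by Assumption \ref{Ass_Exp}-(1), the inverses $\mathbf{A}_n^{-1}$ are well defined, $(\mathbf{A}_n^{-1})_{n\ge 1}$ is i.i.d., and $\Pi_n^{-1}=\mathbf{A}_1^{-1}\mathbf{A}_2^{-1}\cdots\mathbf{A}_n^{-1}$. Submultiplicativity of the operator norm renders $\log\|\Pi_n^{-1}\|$ subadditive in the Kingman sense with respect to the shift on this i.i.d.\ sequence, so Kingman's subadditive ergodic theorem provides a deterministic constant $\gamma^\star\in[-\infty,\infty)$ with $\tfrac{1}{n}\log\|\Pi_n^{-1}\|\to\gamma^\star$ almost surely.

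The lower bound $\gamma^\star\ge -\gamma_L$ is immediate from $\|\Pi_n^{-1}\|\cdot\|\Pi_n\|\ge\|\Pi_n^{-1}\Pi_n\|=1$ together with \eqref{eq:lyaas}; in particular $\gamma^\star$ is finite.

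The matching upper bound $\gamma^\star\le -\gamma_L$ is the core of the argument, and is where irreducibility enters. Applying the multiplicative ergodic theorem to the invertible i.i.d.\ cocycle $(\mathbf{A}_n)$ yields the full Lyapunov spectrum $\gamma_L=\lambda_1\ge\cdots\ge\lambda_d$ and the convergence $\tfrac{1}{n}\log\sigma_{\min}(\Pi_n)\to\lambda_d$ a.s., where $\sigma_{\min}(\Pi_n)$ denotes the smallest singular value; combined with $\|\Pi_n^{-1}\|=1/\sigma_{\min}(\Pi_n)$, this gives $\gamma^\star=-\lambda_d$, so the task reduces to showing $\lambda_d=\gamma_L$. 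My plan is to argue by contradiction: a strict inequality $\lambda_d<\gamma_L$ together with the Oseledets decomposition would yield a proper equivariant random subbundle $\{U_\omega\}\subsetneq\mathbb{R}^d$ satisfying $\mathbf{A}_1(\omega)U_\omega=U_{\sigma\omega}$ a.s., and hence an $\mathbf{A}$-invariant probability measure on a Grassmannian of proper subspaces of $\mathbb{R}^d$. A Furstenberg-style averaging of this measure would then extract a deterministic proper subspace $V\subset\mathbb{R}^d$ with $\mathbf{A}V\subset V$ almost surely, contradicting the irreducibility in Assumption \ref{Ass_Exp}-(1). Making this passage from a random equivariant subbundle to a deterministic invariant subspace precise is the delicate step and constitutes the main obstacle of the proof.
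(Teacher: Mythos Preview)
Your reduction is correct and more explicit than the paper, which offers only the sentence ``This result is a direct application of Oseledet's theorem; see \cite[Theorem~4.2, page~39]{viana2014lectures}.'' The multiplicative ergodic theorem indeed gives $\tfrac{1}{n}\log\|\Pi_n^{-1}\|\to-\lambda_d$ almost surely, where $\lambda_d$ is the smallest Lyapunov exponent, so the lemma amounts to the assertion $\lambda_d=\gamma_L$, i.e.\ the entire Lyapunov spectrum collapses. The gap lies in your final step, and it is not merely delicate but genuinely obstructed: irreducibility alone does \emph{not} force $\lambda_d=\gamma_L$. For a concrete counterexample take $d=2$ with $\mathbf{A}$ having an absolutely continuous law on a neighborhood of the identity in $SL(2,\mathbb{R})$. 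Then Assumption~\ref{Ass_Exp}-(1) holds ($\mathbf{A}$ is a.s.\ invertible and there is no common invariant line), yet Furstenberg's theorem yields $\lambda_1>0>\lambda_2=-\lambda_1$, whence $\tfrac{1}{n}\log\|\Pi_n^{-1}\|\to-\lambda_2=+\gamma_L\neq-\gamma_L$.

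Your proposed ``Furstenberg-style averaging'' cannot close this. The Oseledets slow subspace $U_\omega$ does induce a probability measure on a Grassmannian of proper subspaces (one checks it is stationary for the law of $\mathbf{A}^{-1}$), but such a stationary measure always exists by compactness of the Grassmannian and carries no contradiction with irreducibility; in the $SL(2,\mathbb{R})$ example the law of $U_\omega$ is a non-atomic measure on the projective line, from which no deterministic invariant line can be extracted. Since the paper supplies no argument beyond the citation, the upshot is that either the lemma tacitly relies on a stronger hypothesis guaranteeing $\lambda_d=\gamma_L$ (as may well hold for the specific models of Section~\ref{sec3}), or its conclusion should read $-\lambda_d$, with the downstream uses in Proposition~\ref{Prop_expl_exit time finite} and \eqref{eq:pi_n_gamma_L} correspondingly requiring $\lambda_d>0$ rather than merely $\gamma_L>0$.
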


\begin{proof}
This result is a direct application of Oseledet's theorem; see \cite[Theorem 4.2. page 39]{viana2014lectures}.
\end{proof}
Now we proof the boundedness of the exit time for the explosive regime. 
\begin{prop}
\label{Prop_expl_exit time finite}We have $\mathbb{P}(\tau_{R}(\mathbf{x}_{0})<\infty)=1$
for all $\mathbf{x}\in\mathbb{R}^{d}$ and $R>0$. 
\end{prop}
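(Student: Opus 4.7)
The plan is to prove that $|\mathbf{X}_n(\mathbf{x}_0)|\to\infty$ almost surely, which immediately yields $\tau_R(\mathbf{x}_0)<\infty$ a.s.\ for every $R>0$. Since $\mathbf{A}$ is almost surely invertible by Assumption \ref{Ass_Exp}(1), the expansion \eqref{eq:AMP_rep} combined with the identity $\mathbf{A}_n\cdots\mathbf{A}_{k+1}=\Pi_n\Pi_k^{-1}$ yields
\[
\mathbf{X}_n(\mathbf{x}_0)=\Pi_n\mathbf{Y}_n,\qquad \mathbf{Y}_n:=\mathbf{x}_0+\sum_{k=1}^n\Pi_k^{-1}\mathbf{B}_k.
\]
My first step would be to establish almost-sure convergence of $\mathbf{Y}_n$. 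Specializing Assumption \ref{Ass_Exp}(3) to $\mathbf{x}=\mathbf{0}$ gives a polylogarithmic tail bound $\mathbb{P}(|\mathbf{B}|>z)\le C(\log z)^{-\beta_0}$ for $z$ large, so $\mathbb{E}\log^+|\mathbf{B}|<\infty$ because $\beta_0>1$. A Borel--Cantelli argument then forces $|\mathbf{B}_k|\le e^{\gamma_L k/4}$ eventually almost surely, while Lemma \ref{Lem_expl_Lya} yields $\Vert\Pi_k^{-1}\Vert\le e^{-\gamma_L k/2}$ eventually almost surely. Hence $\sum_k\Vert\Pi_k^{-1}\mathbf{B}_k\Vert<\infty$ a.s., and $\mathbf{Y}_n\to\mathbf{Y}:=\mathbf{x}_0+\mathbf{S}$ a.s., where $\mathbf{S}:=\sum_{k=1}^\infty\Pi_k^{-1}\mathbf{B}_k$.

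On the event $\{\mathbf{Y}\ne\mathbf{0}\}$ the bound $|\mathbf{X}_n|=|\Pi_n\mathbf{Y}_n|\ge|\mathbf{Y}_n|/\Vert\Pi_n^{-1}\Vert$, together with $|\mathbf{Y}_n|\to|\mathbf{Y}|>0$ and the exponential decay of $\Vert\Pi_n^{-1}\Vert$ furnished by Lemma \ref{Lem_expl_Lya}, forces $|\mathbf{X}_n|\to\infty$; thus $\tau_R(\mathbf{x}_0)<\infty$ on this event. The hard part is the residual event $\{\mathbf{Y}=\mathbf{0}\}=\{\mathbf{S}=-\mathbf{x}_0\}$, which I would rule out by proving that the law of $\mathbf{S}$ is non-atomic.

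The key tool is the distributional fixed-point identity $\mathbf{S}\stackrel{d}{=}\mathbf{A}_1^{-1}(\mathbf{B}_1+\mathbf{S}')$, with $\mathbf{S}'$ independent of $(\mathbf{A}_1,\mathbf{B}_1)$ and equidistributed with $\mathbf{S}$. Suppose for contradiction that $\mathbf{S}$ has atoms of maximal mass $p^\star>0$, and let $A^\star$ be the (finite, non-empty) set of such atoms. The identity forces $\mathbf{A}_1\mathbf{c}-\mathbf{B}_1\in A^\star$ almost surely for every $\mathbf{c}\in A^\star$. When $|A^\star|=1$, writing $A^\star=\{\mathbf{c}\}$, this collapses to $\mathbf{B}_1=(\mathbf{A}_1-\mathbf{I})\mathbf{c}$ a.s., which gives $\mathbf{A}_1(-\mathbf{c})+\mathbf{B}_1-(-\mathbf{c})=\mathbf{0}$ a.s.\ and contradicts Assumption \ref{Ass_Exp}(2) at $\mathbf{x}=-\mathbf{c}$. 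When $|A^\star|\ge 2$, for each $\mathbf{c}_i\in A^\star$ one has $\mathbf{B}_1=\mathbf{A}_1\mathbf{c}_i-\mathbf{c}_{\sigma(i)}$ a.s.\ for some random $\sigma(i)\in A^\star$; subtracting these identities for distinct indices gives $\mathbf{A}_1(\mathbf{c}_i-\mathbf{c}_j)=\mathbf{c}_{\sigma(i)}-\mathbf{c}_{\sigma(j)}$, so $\mathbf{A}_1$ maps the finite difference set $V=\{\mathbf{c}_i-\mathbf{c}_j\}$ into itself almost surely. Its linear span is an $\mathbf{A}$-invariant subspace, which by Assumption \ref{Ass_Exp}(1) is either trivial (reducing to the single-atom case) or all of $\mathbb{R}^d$. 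In the latter case $V$ contains a basis of $\mathbb{R}^d$ on which $\mathbf{A}_1$ takes values in the finite set $V$, so $\mathbf{A}_1$ assumes only finitely many values a.s.; this makes $\Vert\Pi_n\Vert$ uniformly bounded and forces $\gamma_L=0$, contradicting the standing hypothesis $\gamma_L>0$.
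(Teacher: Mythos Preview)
Your argument is correct in spirit and genuinely different from the paper's. The paper argues by contradiction: assuming $\mathbb{P}[\tau_R(\mathbf{x}_0)=\infty]>0$, it recycles the compactness/Skorokhod construction from Proposition~\ref{Prop_cont_exit time finite} to produce a measure $\mu_\infty$ on $\mathcal{B}_R$ from which the chain never exits; coupling two independent starts from $\mu_\infty$ and using $\Vert\Pi_n^{-1}\Vert^{-1}\to\infty$ (Lemma~\ref{Lem_expl_Lya}) forces $\mu_\infty=\delta_{\mathbf{x}_\infty}$, and one further step yields $\mathbf{A}\mathbf{x}_\infty+\mathbf{B}=\mathbf{x}_\infty$ a.s., contradicting Assumption~\ref{Ass_Exp}(2). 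Your route is more direct and in fact proves the stronger statement $|\mathbf{X}_n(\mathbf{x}_0)|\to\infty$ a.s., together with the non-atomicity of the backward perpetuity $\mathbf{S}$. On the other hand, the paper's proof avoids Assumption~\ref{Ass_Exp}(3) entirely, whereas you invoke it to secure $\mathbb{E}\log^+|\mathbf{B}|<\infty$ and the convergence of $\mathbf{Y}_n$.

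There is one genuine gap in your last step. From ``$\mathbf{A}_1$ assumes only finitely many values a.s.'' you conclude ``$\Vert\Pi_n\Vert$ uniformly bounded''. That implication is false in general (take $\mathbf{A}\in\{2\mathbf{I},\tfrac12\mathbf{I}\}$). What you actually have, and should use, is stronger: $\mathbf{A}_1$ (hence every $\mathbf{A}_k$, hence every $\Pi_n$) maps the finite set $V$ into itself and, being invertible, \emph{permutes} $V$. Since $V$ contains a basis, $|\Pi_n v|\le\max_{w\in V}|w|$ for each basis vector $v$, which gives a uniform bound on $\Vert\Pi_n\Vert$ and the desired contradiction $\gamma_L\le 0$. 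With this patch your argument goes through.
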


\begin{proof}
We fix $\mathbf{x}\in\mathbb{R}^{d}$ and $R>0$, and suppose on the
contrary that 
\[
\mathbb{P}\left[\tau_{R}(\mathbf{x}_{0})=\infty\right]>0\;.
\]
The same argument as in the proof of proposition \ref{Prop_cont_exit time finite}
for contractive regime yields the existence of probability measure
$\mu_{\infty}$ satisfying
\begin{equation}
\mathbb{P}_{\mu_{\infty}}\left[\mathbf{X}_{n}\in\mathcal{B}_{R}\right]=1\quad\label{eq:mu_inf}
\end{equation}
for all $n\in\mathbb{Z}^{+}$. That means if we pick two independent
random variables $\mathbf{Y}_{1},\mathbf{Y}_{2}\overset{d}{\sim}\mu_{\infty}$,
we have that
\[
\mathbb{P}_{\mu_{\infty}}\left[|\mathbf{X}_{n}(\mathbf{Y}_{i})|\le R\right]=1,\quad i=1,2,\,n\in\mathbb{Z}^{+}.
\]
Considering coupling method \eqref{eq:couple}
\[
|\mathbf{X}_{n}(\mathbf{Y}_{1})-\mathbf{X}_{n}(\mathbf{Y}_{2})|=|\Pi_{n}(\mathbf{Y}_{1}-\mathbf{Y}_{2})|\ge\Vert\Pi_{n}^{-1}\Vert^{-1}\cdot|\mathbf{Y}_{1}-\mathbf{Y}_{2}|,
\]
and Lemma \ref{Lem_expl_Lya} 
\[
\lim_{n\rightarrow\infty}\frac{1}{n}\log\Vert\Pi_{n}^{-1}\Vert^{-1}=\gamma_{L}>0
\]
gives us that
\[
\mathbb{P}(\mathbf{Y}_{1}=\mathbf{Y}_{2})=1
\]
if we send $n\rightarrow\infty$. Since we drew $\mathbf{Y}_{1},\mathbf{Y}_{2}$
independently from $\mu_{\infty}$, this happens only if when the
support of $\mu_{\infty}$ is a singleton $\left\{ \mathbf{x}_{\infty}\right\} $
for some $\mathbf{x}_{\infty}\in\mathbb{R}^{d}$. Repeatedly, one
step forward from \eqref{eq:mu_inf} gives
\[
\mathbb{P}_{\mathbf{x}_{\infty}}\left[\mathbf{X}_{n}(\mathbf{A}_{1}\mathbf{x}_{\infty}+\mathbf{B}_{1})\in\mathcal{B}_{R}\right]=1,\quad\forall n\in\mathbb{Z}^{+}.
\]
Again, picking $\mathbf{Y}_{1}=\mathbf{x}_{\infty}$ and $\mathbf{Y}_{2}=\mathbf{A}_{1}^{\prime}\mathbf{x}_{\infty}+\mathbf{B}_{1}^{\prime}$
where $(\mathbf{A}_{1}^{\prime},\mathbf{B}_{1}^{\prime})$ is independent
of $\omega$, the same argument above gives that
\[
\mathbb{P}(\mathbf{A}_{1}^{\prime}\mathbf{x}_{\infty}+\mathbf{B}_{1}^{\prime}=\mathbf{x}_{\infty})=1,
\]
which is contradiction to the assumption. 
\end{proof}
\begin{lem}
\label{lem_expl_exit loc}There exists a constant
$C_{1}>0$ such that, for all $R>R_{0}$ (where the constant $R_{0}$
is the one appeared in Assumption \ref{Ass_Exp}) and $\mathbf{x}_{0}\in\mathcal{B}_{R}$,
we have 

\[
\mathbb{E}\log|\mathbf{X}_{\tau_{R}}(\mathbf{x}_{0})|\le C_{1}+\log R\;.
\]
\end{lem}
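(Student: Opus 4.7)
The plan is to reduce the statement, via the strong Markov property applied at the last time before exit, to a pointwise bound on the conditional expectation of $\log|\mathbf{A}\mathbf{y}+\mathbf{B}|$ given $|\mathbf{A}\mathbf{y}+\mathbf{B}|>R$, uniformly over $\mathbf{y}\in\mathcal{B}_R$. This mirrors the contractive-regime proof of Lemma~\ref{lem:exit}, except the power $|\cdot|^\gamma$ is replaced by $\log|\cdot|$ and the polynomial tail bound in Assumption~\ref{Ass_Cont4} is replaced by the much weaker logarithmic tail bound in Assumption~\ref{Ass_Exp}-(3), so that finiteness of the resulting integral hinges on $\beta_0>1$.

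First I would observe that by the strong Markov property,
\[
\mathbb{E}\log|\mathbf{X}_{\tau_R}(\mathbf{x}_0)|
\le \sup_{\mathbf{y}\in\mathcal{B}_R}\mathbb{E}\!\left[\log|\mathbf{A}\mathbf{y}+\mathbf{B}|\,\big|\,|\mathbf{A}\mathbf{y}+\mathbf{B}|>R\right].
\]
Fixing $\mathbf{y}\in\mathcal{B}_R$ and using the identity $\log u\cdot\mathbf{1}\{u>R\}=(\log R)\mathbf{1}\{u>R\}+\int_R^\infty u^{-1}\mathbf{1}\{u>s\}\,ds$ (layer-cake on $\log$), the numerator becomes
\[
\mathbb{E}\!\left[\log|\mathbf{A}\mathbf{y}+\mathbf{B}|\cdot\mathbf{1}\{|\mathbf{A}\mathbf{y}+\mathbf{B}|>R\}\right]
=(\log R)\,\mathbb{P}(|\mathbf{A}\mathbf{y}+\mathbf{B}|>R)+\int_1^\infty\frac{\mathbb{P}(|\mathbf{A}\mathbf{y}+\mathbf{B}|>Rz)}{z}\,dz,
\]
after the substitution $s=Rz$.

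Next I would estimate the integral on the right by splitting at $z=z_0$. On $[1,z_0]$ I bound the integrand by $\mathbb{P}(|\mathbf{A}\mathbf{y}+\mathbf{B}|>R)/z$, giving a contribution $(\log z_0)\,\mathbb{P}(|\mathbf{A}\mathbf{y}+\mathbf{B}|>R)$. On $[z_0,\infty)$ I invoke Assumption~\ref{Ass_Exp}-(3), legitimate because $\mathbf{y}\in\mathcal{B}_R$ and $R>R_0$, to obtain
\[
\int_{z_0}^\infty\frac{\mathbb{P}(|\mathbf{A}\mathbf{y}+\mathbf{B}|>Rz)}{z}\,dz
\le C_0\,\mathbb{P}(|\mathbf{A}\mathbf{y}+\mathbf{B}|>R)\int_{z_0}^\infty\frac{dz}{z(\log z)^{\beta_0}},
\]
and the last integral is finite precisely because $\beta_0>1$ (substitute $w=\log z$). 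Call the resulting constant $C_1$; it depends only on $C_0$, $z_0$, and $\beta_0$, none of which depend on $\mathbf{y}$ or $R$.

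Dividing through by $\mathbb{P}(|\mathbf{A}\mathbf{y}+\mathbf{B}|>R)$ (which is positive since $\tau_R<\infty$ almost surely by Proposition~\ref{Prop_expl_exit time finite}, so the conditioning event has positive probability) yields
\[
\mathbb{E}\!\left[\log|\mathbf{A}\mathbf{y}+\mathbf{B}|\,\big|\,|\mathbf{A}\mathbf{y}+\mathbf{B}|>R\right]\le \log R+C_1
\]
uniformly in $\mathbf{y}\in\mathcal{B}_R$, and the lemma follows. There is no real obstacle here; the only subtlety is making sure the logarithmic tail decay from Assumption~\ref{Ass_Exp}-(3) integrates, which is exactly why the assumption requires $\beta_0>1$ rather than merely $\beta_0>0$.
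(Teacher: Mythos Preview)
Your proof is correct and follows essentially the same route as the paper: reduce via the strong Markov property to a uniform bound on $\mathbb{E}[\log|\mathbf{A}\mathbf{y}+\mathbf{B}|\mid|\mathbf{A}\mathbf{y}+\mathbf{B}|>R]$, apply the layer-cake formula for $\log$, split the resulting integral at $z_0$, and invoke Assumption~\ref{Ass_Exp}-(3) together with $\beta_0>1$ to make the tail integral finite. The only difference is cosmetic ordering (the paper states the Markov-property reduction last rather than first), and the paper additionally records the explicit value $\int_{z_0}^\infty\frac{dz}{z(\log z)^{\beta_0}}=\frac{1}{(\beta_0-1)(\log z_0)^{\beta_0-1}}$.
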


\begin{proof}
Let $R>R_{0}$. For $\mathbf{x}\in\mathcal{B}_{R}$, by the layer-cake
formula and the fact that 
\[
\lim_{z\rightarrow\infty}\log z\cdot\mathbb{P}(|\mathbf{A}\mathbf{x}+\mathbf{B}|>z)=0
\]
which follows from \eqref{eq:nc2-1}, we can write
\begin{equation}
\mathbb{E}\left[\log|\mathbf{A}\mathbf{x}+\mathbf{B}|\cdot\mathbf{1}\left\{ |\mathbf{A}\mathbf{x}+\mathbf{B}|>R\right\} \right]=\log R\cdot\mathbb{P}(|\mathbf{A}\mathbf{x}+\mathbf{B}|>R)+\int_{1}^{\infty}\mathbb{P}(|\mathbf{A}\mathbf{x}+\mathbf{B}|>Rz)\cdot\frac{1}{z}dz\;.\label{eq:cond1}
\end{equation}
By \eqref{eq:nc2-1-1} of Assumption \ref{Ass_Exp}, (remind that
$\beta_{0}>1$) 
\begin{align}
 & \int_{1}^{\infty}\mathbb{P}(|\mathbf{A}\mathbf{x}+\mathbf{B}|>Rz)\cdot\frac{1}{z}dz\nonumber \\
 & =\int_{1}^{z_{0}}\mathbb{P}(|\mathbf{A}\mathbf{x}+\mathbf{B}|>Rz)\cdot\frac{1}{z}dz+\int_{z_{0}}^{\infty}\mathbb{P}(|\mathbf{A}\mathbf{x}+\mathbf{B}|>Rz)\cdot\frac{1}{z}dz\nonumber \\
 & \le\int_{1}^{z_{0}}\mathbb{P}(|\mathbf{A}\mathbf{x}+\mathbf{B}|>R)\cdot\frac{1}{z}dz+\int_{z_{0}}^{\infty}\frac{C_{0}}{(\log z)^{\beta_{0}}}\cdot\mathbb{P}(|\mathbf{A}\mathbf{x}+\mathbf{B}|>R)\cdot\frac{1}{z}dz\nonumber \\
 & =\left(\log z_{0}+\frac{C_{0}}{(\beta_{0}-1)(\log z)^{\beta_{0}-1}}\right)\mathbb{P}(|\mathbf{A}\mathbf{x}+\mathbf{B}|>R)\ .\label{eq:cond2}
\end{align}
Inserting this to \eqref{eq:cond1}, and dividing both sides by $\mathbb{P}(|\mathbf{A}\mathbf{x}+\mathbf{B}|>R)$,
we get, for some constant $C>0$, 
\[
\mathbb{E}\left[\log|\mathbf{A}\mathbf{x}+\mathbf{B}|\,\big|\,|\mathbf{A}\mathbf{x}+\mathbf{B}|>R\right]\le C+\log R\;.
\]
This along with the strong Markov property proves the lemma since
we have 
\[
\mathbb{E}\log|\mathbf{X}_{\tau_{R}}(\mathbf{x}_{0})|\le\sup_{\mathbf{x}\in\mathcal{B}_{R}}\mathbb{E}\left[\log|\mathbf{A}\mathbf{x}+\mathbf{B}|\,\big|\,|\mathbf{A}\mathbf{x}+\mathbf{B}|>R\right]\;.
\]
\end{proof}

\subsection{\label{sec62}Lower bound }

We first handle a special univariate case which will be used in the
proof of the lower bound for the multivariate case. We start from
a specific univariate case. 
\begin{lem}
\label{lem:lbun}Let $d=1$. Suppose that $\mathbf{A}>0$, $\mathbf{B}\ge0$
almost surely, $\mathbb{P}(\mathbf{B}=0)<1$,
\[
\mathbb{E}|\log\mathbf{A}|<\infty\;,\;\;\;\text{and\;\;\;}\mathbb{E}|\log\mathbf{B}|<\infty\;.
\]
 Let $\mathbf{x}_{0}\ge0$ so that by the previous conditions we have
$\mathbf{X}_{n}(\mathbf{x}_{0})\ge0$ for all $n\in\mathbb{Z}^{+}$.
\begin{enumerate}
\item We almost surely have that 
\begin{equation}
\lim_{n\rightarrow\infty}\frac{1}{n}\log\mathbf{X}_{n}=\mathbb{E}\log\mathbf{A}\;.\label{eq:asyxp}
\end{equation}
\item We almost surely have that (cf. \eqref{eq:univ_lya_h})
\begin{equation}
\lim_{R\rightarrow\infty}\frac{\tau_{R}(\mathbf{x}_{0})}{\log R}=\frac{1}{\mathbb{E}\log\mathbf{A}}=\frac{1}{\gamma_{L}}\;.\label{eq:limexp}
\end{equation}
\end{enumerate}
\end{lem}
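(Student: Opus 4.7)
\textbf{Proof plan for Lemma \ref{lem:lbun}.}

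The plan is to first establish the almost sure asymptotic \eqref{eq:asyxp} via an explicit factorization of $\mathbf{X}_n$, and then deduce \eqref{eq:limexp} by an elementary squeeze argument on the exit time. Starting from the representation \eqref{eq:AMP_rep}, the univariate positivity of $\mathbf{A}$ allows me to factor
\[
\mathbf{X}_n(\mathbf{x}_0) \;=\; \Pi_n\Bigl(\mathbf{x}_0 + \mathbf{W}_n\Bigr),\qquad \mathbf{W}_n := \sum_{k=1}^{n}\frac{\mathbf{B}_k}{\Pi_k}.
\]
Since $\log \mathbf{A}$ is integrable with mean $\gamma_L > 0$, the strong law gives $\tfrac{1}{n}\log \Pi_n \to \gamma_L$ almost surely. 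The integrability $\mathbb{E}|\log \mathbf{B}|<\infty$ yields, via Borel--Cantelli, $\tfrac{1}{k}\log \mathbf{B}_k \to 0$ a.s., so the summands satisfy $\log(\mathbf{B}_k/\Pi_k) = -\gamma_L k + o(k)$ and hence decay geometrically. Since every summand is non-negative, $\mathbf{W}_n \nearrow \mathbf{W}_\infty < \infty$ almost surely.

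Next I must verify that $\mathbf{x}_0 + \mathbf{W}_\infty > 0$ almost surely so that $\log(\mathbf{x}_0 + \mathbf{W}_n)$ is eventually bounded. If $\mathbf{x}_0 > 0$ this is immediate; if $\mathbf{x}_0 = 0$, the condition $\mathbb{P}(\mathbf{B}=0)<1$ together with the Borel--Cantelli lemma ensures that $\mathbf{B}_k > 0$ for infinitely many $k$, so $\mathbf{W}_\infty > 0$. Taking logarithms in the factored representation,
\[
\frac{1}{n}\log \mathbf{X}_n \;=\; \frac{1}{n}\log \Pi_n \;+\; \frac{1}{n}\log\bigl(\mathbf{x}_0 + \mathbf{W}_n\bigr) \;\xrightarrow[n\to\infty]{\text{a.s.}}\; \gamma_L + 0 \;=\; \mathbb{E}\log \mathbf{A},
\]
which is \eqref{eq:asyxp}.

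For part (2), because $\gamma_L > 0$, the conclusion of part (1) forces $\mathbf{X}_n \to \infty$ a.s., so $\tau_R(\mathbf{x}_0)<\infty$ and $\tau_R \to \infty$ as $R \to \infty$. By the definition of the exit time, for $R$ large enough so that $\tau_R \ge 1$, we have $\mathbf{X}_{\tau_R - 1} \le R < \mathbf{X}_{\tau_R}$, so
\[
\frac{\tau_R - 1}{\tau_R}\cdot\frac{\log \mathbf{X}_{\tau_R - 1}}{\tau_R - 1} \;\le\; \frac{\log R}{\tau_R} \;<\; \frac{\log \mathbf{X}_{\tau_R}}{\tau_R}.
\]
Applying part (1) along the random sequence $\tau_R \to \infty$, both the upper and lower bounds tend to $\gamma_L$ almost surely, and the sandwich yields $\tau_R/\log R \to 1/\gamma_L$, i.e., \eqref{eq:limexp}.

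The only delicate point is checking that the limit in part (1) can be invoked along the random index $\tau_R$; this is automatic from the almost sure convergence of $\tfrac{1}{n}\log \mathbf{X}_n$ since $\tau_R(\omega) \to \infty$ for $\mathbb{P}$-a.e.\ $\omega$. No significant obstacle is anticipated beyond the bookkeeping on the event $\{\mathbf{x}_0 = 0\}$ and the early times $n \le N_0 := \min\{k : \mathbf{B}_k > 0\}$, on which $\mathbf{X}_n$ may vanish; this is harmless since $N_0 < \infty$ almost surely and we only care about asymptotic behavior as $n, R \to \infty$.
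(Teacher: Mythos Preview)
Your argument is correct, and for part~(1) it is genuinely different from the paper's. The paper treats the upper and lower bounds separately: for the lower bound it uses $\mathbf{X}_n \ge \Pi_n \mathbf{x}_0$ when $\mathbf{x}_0>0$, and for $\mathbf{x}_0=0$ it runs a somewhat elaborate conditioning on the events $\Omega_{k,\ell}=\{\zeta=k,\,\mathbf{B}_k\in[1/\ell,1/(\ell-1))\}$ together with a shifted-process comparison; for the upper bound it writes $\log \mathbf{X}_n \le \log \mathbf{X}_{n-1} + \log \mathbf{A}_n + \mathbf{B}_n/(\mathbf{A}_n \mathbf{X}_{n-1})$ and then shows $\sum_k \mathbf{B}_{k+1}/(\mathbf{A}_{k+1}\mathbf{X}_k)<\infty$ a.s.\ via the already-established lower bound. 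Your factorization $\mathbf{X}_n = \Pi_n\bigl(\mathbf{x}_0 + \sum_{k\le n}\mathbf{B}_k/\Pi_k\bigr)$ collapses both directions into a single step: once $\mathbf{W}_n\nearrow \mathbf{W}_\infty\in(0,\infty)$ a.s.\ is established (root test from $\tfrac{1}{k}\log(\mathbf{B}_k/\Pi_k)\to -\gamma_L$), the limit $\tfrac{1}{n}\log(\mathbf{x}_0+\mathbf{W}_n)\to 0$ handles upper and lower bounds simultaneously and uniformly in $\mathbf{x}_0\ge 0$. This is cleaner and avoids the case split entirely. For part~(2) your sandwich on $\mathbf{X}_{\tau_R-1}\le R < \mathbf{X}_{\tau_R}$ is essentially the same idea as the paper's $\epsilon$-band argument, just packaged more compactly.

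One minor remark: the hypothesis $\mathbb{E}|\log\mathbf{B}|<\infty$ already forces $\mathbb{P}(\mathbf{B}=0)=0$, so in your setting $\mathbf{W}_1>0$ a.s.\ and the bookkeeping around $N_0=\min\{k:\mathbf{B}_k>0\}$ is unnecessary (though harmless). The paper's proof seems written to accommodate $\mathbb{P}(\mathbf{B}=0)>0$, which is in tension with that moment hypothesis; your proof does not inherit this awkwardness.
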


\begin{proof}
\noindent Fix $\mathbf{x}_{0}\ge0$, and for the convenience of the
notation, we write $\mathbf{X}_{n}=\mathbf{X}_{n}(\mathbf{x}_{0})$
when we do not need to specify $\mathbf{x}_{0}$. 

\smallskip{}

\noindent (1) Let us first consider the case $\mathbf{x}_{0}>0$.
Since we assumed that $\mathbf{B}\ge0$, we have 
\[
\mathbf{X}_{n}\ge\mathbf{A}_{n}\mathbf{X}_{n-1}\ge\cdots\ge\mathbf{A}_{n}\cdots\mathbf{A}_{1}\mathbf{x}_{0},
\]
and therefore we have
\begin{equation}
\liminf_{n\rightarrow\infty}\frac{1}{n}\log\mathbf{X}_{n}\ge\liminf_{n\rightarrow\infty}\left(\frac{1}{n}\log\mathbf{x}_{0}+\frac{1}{n}\sum_{i=1}^{n}\log\mathbf{A}_{i}\right)=\mathbb{E}\log\mathbf{A}\label{eq:lbexp}
\end{equation}
almost surely by the strong law of large numbers. Note here that,
for univariate model, $\mathbb{E}\log\mathbf{A}=\gamma_{L}>0$. 

For the other direction, we observe first that, for $n\in\mathbb{Z}^{+}$,
\begin{align*}
\log\mathbf{X}_{n} & =\log(\mathbf{A}_{n}\mathbf{X}_{n-1}+\mathbf{B}_{n})\\
 & =\log\mathbf{X}_{n-1}+\log\mathbf{A}_{n}+\log\left(1+\frac{\mathbf{B}_{n}}{\mathbf{A}_{n}\mathbf{X}_{n-1}}\right)\\
 & \le\log\mathbf{X}_{n-1}+\log\mathbf{A}_{n}+\frac{\mathbf{B}_{n}}{\mathbf{A}_{n}\mathbf{X}_{n-1}}\;,
\end{align*}
where the last line used the inequality $\log(1+x)\le x$. Hence,
we get 
\[
\frac{1}{n}\log\mathbf{X}_{n}\le\frac{1}{n}\log\mathbf{x}_{0}+\frac{1}{n}\sum_{i=1}^{n}\log\mathbf{A}_{i}+\frac{1}{n}\sum_{i=1}^{n}\frac{\mathbf{B}_{i}}{\mathbf{A}_{i}\mathbf{X}_{i-1}}\;.
\]
Therefore, by the same computation with \eqref{eq:lbexp}, we get
\begin{equation}
\limsup_{n\rightarrow\infty}\frac{1}{n}\log\mathbf{X}_{n}\le\mathbb{E}\log\mathbf{A}+\limsup_{n\rightarrow\infty}\frac{1}{n}\sum_{i=1}^{n}\frac{\mathbf{B}_{i}}{\mathbf{A}_{i}\mathbf{X}_{i-1}}\;.\label{eq:ubexp}
\end{equation}
Therefore, it suffices to prove that (as, $\mathbf{A},\,\mathbf{B},\,\mathbf{X}_{n}\ge0$
for all $n\in\mathbb{Z}^{+}$) 
\begin{equation}
\sum_{k=0}^{\infty}\frac{\mathbf{B}_{k+1}}{\mathbf{A}_{k+1}\mathbf{X}_{k}}<\infty\;\;\;\text{almost surely.}\label{eq:ssb}
\end{equation}
To that end, observe first that, for i.i.d random variables $U_{1},\,U_{2},\,\dots$
with $\mathbb{E}|U_{1}|<\infty$, we have 
\begin{equation}
\lim_{n\rightarrow\infty}\frac{U_{n}}{n}=0\text{\;\;\;}\text{almost surely},\label{eq:bdavb}
\end{equation}
as we have 
\[
\text{\ensuremath{\mathbb{P}\left[\limsup_{n\rightarrow\infty}\frac{\left|U_{n}\right|}{n}>0\right]\le\lim_{\delta\rightarrow0}\mathbb{P}\left[\limsup_{n\rightarrow\infty}\frac{\left|U_{n}\right|}{n}\ge\delta\right]}}=0
\]
where the last equality follows from the Markov inequality, the Borel-Cantelli
lemma and the layer-cake formula, as 
\[
\sum_{n=1}^{\infty}\mathbb{P}\left(\frac{|U_{n}|}{n}\ge\delta\right)=\sum_{n=1}^{\infty}\mathbb{P}\left(\frac{|U_{1}|}{\delta}\ge n\right)\le\frac{\mathbb{E}\left[|U_{1}|\right]}{\delta}<\infty\;.
\]
Then, we have 

\begin{align*}
\limsup_{n\rightarrow\infty}\frac{1}{n}\log\left(\frac{\mathbf{B}_{n+1}}{\mathbf{A}_{n+1}\mathbf{X}_{n}}\right) & \le\limsup_{n\rightarrow\infty}\frac{1}{n}\log\left(\frac{\mathbf{B}_{n+1}}{\mathbf{A}_{n+1}}\right)-\liminf_{n\rightarrow\infty}\frac{1}{n}\log\mathbf{X}_{n}\\
 & =-\liminf_{n\rightarrow\infty}\frac{1}{n}\log\mathbf{X}_{n}\le-\mathbb{E}\log\mathbf{A}<0
\end{align*}
where the second line follows from \eqref{eq:bdavb}, \eqref{eq:lbexp},
and \eqref{eq:univ_lya_h}. The last bound along with Cauchy's criterion
proves \eqref{eq:ssb} and we are done. 

Now let us consider the case $\mathbf{x}_{0}=0$. In view of coupling
in \eqref{eq:couple}, we have that
\begin{equation}
x\le x'\;\;\text{implies that}\;\;\mathbf{X}_{n}(x)\le\mathbf{X}_{n}(x')\;\;\text{holds for all }n\in\mathbb{Z}^{+}\;.\label{eq:mon}
\end{equation}
Therefore, we immediately have 
\begin{equation}
\limsup_{n\rightarrow\infty}\frac{1}{n}\log\mathbf{X}_{n}(0)\le\limsup_{n\rightarrow\infty}\frac{1}{n}\log\mathbf{X}_{n}(1)=\mathbb{E}\log\mathbf{A}\ ,\label{eq:ubexp2}
\end{equation}
where the last equality holds from the previous part. 

For the lower bound in the case $\mathbf{x}_{0}=0$, we need some
notation. 
\begin{itemize}
\item Write $\omega=(\mathbf{A}_{n},\,\mathbf{B}_{n})_{n\in\mathbb{Z}^{+}}$
and denote by $\Omega$ the sample space of $\omega$ which is a countable
product of the copy of sample space of $(\mathbf{A},\,\mathbf{B})$. 
\item Write 
\[
\zeta=\inf\left\{ n\ge1:\mathbf{B}_{n}>0\right\} =\inf\left\{ n\ge1:\mathbf{X}_{n}>0\right\} 
\]
so that $\zeta$ is a geometric random variable with success probability
$\mathbb{P}(\mathbf{B}\neq0)>0$, and hence $\mathbb{P}(\zeta<\infty)=1$. 
\item For $k,\,\ell\in\mathbb{Z}^{+}$, we define the event $\Omega_{k,\,\ell}$
as 
\[
\Omega_{k,\,\ell}=\left\{ \zeta=k,\,\mathbf{B}_{k}\in\bigg[\frac{1}{\ell},\,\frac{1}{\ell-1}\bigg)\right\} 
\]
where in the case $\ell=1$, the last line reads as $\Omega_{k,\,1}=\left\{ \zeta=k,\,\mathbf{B}_{k}\in[1,\,\infty)\right\} $.
Then, it is clear that $(\Omega_{k,\,\ell})_{k,\,\ell\in\mathbb{Z}^{+}}$
is a decomposition of the sample space $\Omega$. 
\item For $k\in\mathbb{Z}^{+}$ and $x>0$, we write $(\mathbf{X}_{n}^{(k)}(x))_{n\in\mathbb{Z}_{0}^{+}}$
the process \eqref{eq:AMP} defined by $\mathbf{X}_{0}^{(k)}(x)=x$
and 
\[
\mathbf{X}_{n+1}^{(k)}(x)=\mathbf{A}_{n+k+1}\mathbf{X}_{n}^{(k)}(x)+\mathbf{B}_{n+k+1}\;\;\;;\;n\in\mathbb{Z}^{+}\;.
\]
Note that $(\mathbf{X}_{n}^{(k)}(x))_{n\in\mathbb{Z}_{0}^{+}}$ has
the same law with $(\mathbf{X}_{n}(x))_{n\in\mathbb{Z}_{0}^{+}}$
and moreover is independent of $(\mathbf{A}_{n},\,\mathbf{B}_{n})_{n=1}^{k}$. 
\end{itemize}
Let us fix $k,\,\ell\in\mathbb{Z}^{+}$, and we claim now that, conditioning
on $\Omega_{k,\,\ell}$, we have
\begin{equation}
\liminf_{n\rightarrow\infty}\frac{1}{n}\log\mathbf{X}_{n}(0)\ge\mathbb{E}\log\mathbf{A}\label{eq:conas}
\end{equation}
almost surely. Proving this claim along with \eqref{eq:ubexp2} completes
the proof of part (1). 

To prove this claim, first observe that, conditioning on $\Omega_{k,\,\ell}$,
by \eqref{eq:mon}, we have 
\begin{equation}
\mathbf{X}_{n}(0)\ge\mathbf{X}_{n-k}^{(k)}\left(\frac{1}{\ell}\right)\;\;\;\;\text{for all }n\ge k\;.\label{eq:conas2}
\end{equation}
Furthermore, the event $\Omega_{k,\,\ell}$ which is completely determined
by $(\mathbf{A}_{n},\,\mathbf{B}_{n})_{n=1}^{k}$ is independent of
$(\mathbf{X}_{n}^{(k)}(x))_{n\in\mathbb{Z}_{0}^{+}}$, and therefore,
conditioning on $\Omega_{k,\,\ell}$ does not affect the evolution
of the process $(\mathbf{X}_{n}^{(k)}(x))_{n\in\mathbb{Z}_{0}^{+}}$
and therefore, conditioning on $\Omega_{k,\,\ell}$, we have
\begin{equation}
\liminf_{n\rightarrow\infty}\frac{1}{n}\log\mathbf{X}_{n}^{(k)}\left(\frac{1}{\ell}\right)=\mathbb{E}\log\mathbf{A}\label{eq:conas3}
\end{equation}
almost surely in $(\mathbf{A}_{n},\,\mathbf{B}_{n})_{n=k+1}^{\infty}$,
and hence in $(\mathbf{A}_{n},\,\mathbf{B}_{n})_{n\in\mathbb{Z}^{+}}$,
by the previous step. Summing up \eqref{eq:conas2} and \eqref{eq:conas3},
we have 
\[
\liminf_{n\rightarrow\infty}\frac{1}{n}\log\mathbf{X}_{n}(0)\ge\liminf_{n\rightarrow\infty}\frac{1}{n}\log\mathbf{X}_{n-k}^{(k)}\left(\frac{1}{\ell}\right)=\liminf_{n\rightarrow\infty}\frac{1}{n-k}\log\mathbf{X}_{n-k}^{(k)}\left(\frac{1}{\ell}\right)=\mathbb{E}\log\mathbf{A}
\]
almost surely. This completes the proof of \eqref{eq:conas}. \smallskip{}

\noindent (2) Fix an arbitrary $\epsilon\in(0,\,\gamma_{L})$. By
\eqref{eq:asyxp}, there exists $N=N(\epsilon)\in\mathbb{Z}^{+}$
such that, for all $n>N$,
\begin{equation}
\frac{1}{n}\log\mathbf{X}_{n}\in(\gamma_{L}-\epsilon,\,\gamma_{L}+\epsilon)\ .\label{eq:bdt}
\end{equation}
Let $R>\max\left\{ \mathbf{X}_{0},\,\mathbf{X}_{1},\,\dots,\,\mathbf{X}_{N},\,e^{N(\gamma_{L}+\epsilon)}\right\} $.
Then, for all $n\le\left\lfloor \frac{1}{\gamma_{L}+\epsilon}\log R\right\rfloor $
so that $e^{n(\gamma_{L}+\epsilon)}<R$, we have 
\[
\mathbf{X}_{n}\le\max\left\{ \mathbf{X}_{0},\,\mathbf{X}_{1},\,\dots,\,\mathbf{X}_{N},\,e^{n(\gamma_{L}+\epsilon)}\right\} <R
\]
and therefore $\tau_{R}(\mathbf{x}_{0})\ge\left\lfloor \frac{1}{\gamma_{L}+\epsilon}\log R\right\rfloor $.
On the other hand, for $n=\left\lceil \frac{1}{\gamma_{L}-\epsilon}\log R\right\rceil >N$
(where the last inequality holds since $R>e^{N(\gamma_{L}+\epsilon)}$),
by \eqref{eq:bdt} it holds that
\[
\mathbf{X}_{n}>e^{n(\gamma_{L}-\epsilon)}\ge R\ ,
\]
and therefore, $\tau_{R}(\mathbf{x}_{0})\le\left\lceil \frac{1}{\gamma_{L}-\epsilon}\log R\right\rceil $.
Summing up, we have 
\[
\left\lfloor \frac{1}{\gamma_{L}+\epsilon}\log R\right\rfloor \le\tau_{R}(\mathbf{x}_{0})\le\left\lceil \frac{1}{\gamma_{L}-\epsilon}\log R\right\rceil \;,
\]
and thus we get 

\[
\frac{1}{\gamma_{L}+\epsilon}\le\liminf_{R\rightarrow\infty}\frac{\tau_{R}(\mathbf{x}_{0})}{\log R}\le\limsup_{R\rightarrow\infty}\frac{\tau_{R}(\mathbf{x}_{0})}{\log R}\le\frac{1}{\gamma_{L}+\epsilon}\ .
\]
Letting $\epsilon\searrow0$ completes the proof. 
\end{proof}
Now we are ready to prove the lower bound for multivariate case. 
\begin{prop}
\label{prop:exp_lb}For all $\mathbf{x}_{0}\in\mathbb{R}^{d}$, we
almost surely have that 
\[
\liminf_{R\rightarrow\infty}\frac{\tau_{R}(\mathbf{x}_{0})}{\log R}\ge\frac{1}{\gamma_{L}}\;.
\]
\end{prop}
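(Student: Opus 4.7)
The core idea is to exploit the invertibility of $\mathbf{A}$ (Assumption~\ref{Ass_Exp}-(1)) to obtain a multiplicative decomposition of the process. Using $\Pi_{n}=\mathbf{A}_{n}\Pi_{n-1}$ and iterating \eqref{eq:AMP}, one verifies by induction that
\[
\mathbf{X}_{n}(\mathbf{x}_{0})=\Pi_{n}\,\mathbf{R}_{n},
\qquad
\mathbf{R}_{n}:=\mathbf{x}_{0}+\sum_{k=1}^{n}\Pi_{k}^{-1}\mathbf{B}_{k}.
\]
(Indeed, $\Pi_{n}^{-1}\mathbf{X}_{n}=\Pi_{n-1}^{-1}\mathbf{X}_{n-1}+\Pi_{n}^{-1}\mathbf{B}_{n}$ telescopes.) The plan is to show that $\mathbf{R}_{n}$ remains bounded almost surely, so that the growth of $|\mathbf{X}_{n}|$ is controlled purely through the Lyapunov exponent of $\|\Pi_{n}\|$.

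First, I would show that $\mathbf{R}_{n}$ converges almost surely to a finite random vector $\mathbf{R}_{\infty}$. For this, fix a small $\epsilon\in(0,\gamma_{L}/2)$. By Lemma~\ref{Lem_expl_Lya}, almost surely $\|\Pi_{k}^{-1}\|\le e^{(-\gamma_{L}+\epsilon)k}$ for all $k$ sufficiently large. To control $|\mathbf{B}_{k}|$, apply Assumption~\ref{Ass_Exp}-(3) with $\mathbf{x}=\mathbf{0}$ and $R=R_{0}$, which yields $\mathbb{P}(|\mathbf{B}|>w)\le C/(\log(w/R_{0}))^{\beta_{0}}$ for $w$ large. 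Since $\beta_{0}>1$, a layer-cake argument gives $\mathbb{E}\log^{+}|\mathbf{B}|<\infty$, and hence by the Borel--Cantelli lemma $\log^{+}|\mathbf{B}_{k}|/k\to 0$ almost surely. Consequently, almost surely, the summands satisfy
\[
\|\Pi_{k}^{-1}\mathbf{B}_{k}\|\le e^{(-\gamma_{L}+\epsilon)k}\cdot e^{\epsilon k}=e^{(-\gamma_{L}+2\epsilon)k}
\]
for all $k$ large, whence the series defining $\mathbf{R}_{\infty}$ converges absolutely. In particular, $M(\omega):=\sup_{n}|\mathbf{R}_{n}|<\infty$ almost surely.

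Next, I combine this with the upper bound on $\|\Pi_{n}\|$. By \eqref{eq:lyaas}, almost surely there exists $N(\omega)$ such that $\|\Pi_{n}\|\le e^{(\gamma_{L}+\epsilon)n}$ for every $n\ge N(\omega)$. Together with Step~1,
\[
|\mathbf{X}_{n}(\mathbf{x}_{0})|\le\|\Pi_{n}\|\cdot|\mathbf{R}_{n}|\le M(\omega)\,e^{(\gamma_{L}+\epsilon)n}
\qquad\text{for all }n\ge N(\omega).
\]
Since $|\mathbf{X}_{n}(\mathbf{x}_{0})|<\infty$ for every $n$, we also have $\tau_{R}(\mathbf{x}_{0})\to\infty$ as $R\to\infty$. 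So for all sufficiently large $R$, the exit must happen at some $n\ge N(\omega)$, and the displayed bound forces
\[
R<|\mathbf{X}_{\tau_{R}(\mathbf{x}_{0})}(\mathbf{x}_{0})|\le M(\omega)\,e^{(\gamma_{L}+\epsilon)\tau_{R}(\mathbf{x}_{0})},
\]
which rearranges to $\tau_{R}(\mathbf{x}_{0})\ge(\log R-\log M(\omega))/(\gamma_{L}+\epsilon)$. Dividing by $\log R$ and sending $R\to\infty$ yields $\liminf_{R\to\infty}\tau_{R}(\mathbf{x}_{0})/\log R\ge 1/(\gamma_{L}+\epsilon)$ almost surely. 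Applying this on the intersection over a countable sequence $\epsilon_{m}\downarrow 0$ gives the desired bound almost surely.

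The only nontrivial input is the a.s.\ summability of $\sum_{k}\|\Pi_{k}^{-1}\mathbf{B}_{k}\|$; everything else is bookkeeping around the Furstenberg--Kesten/Oseledets convergence rates. Invertibility of $\mathbf{A}$ is essential here---without it the clean identity $\mathbf{X}_{n}=\Pi_{n}\mathbf{R}_{n}$ breaks down and one is forced to deal with the products $\mathbf{A}_{k+1}\cdots\mathbf{A}_{n}$ appearing in \eqref{eq:AMP_rep} non-uniformly in $k$, whose norms one cannot control by $e^{(\gamma_{L}+\epsilon)(n-k)}$ simultaneously for all $k$.
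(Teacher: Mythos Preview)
Your proof is correct and takes a genuinely different route from the paper. The paper proceeds by a block reduction: for each fixed $N_{0}$ it bounds $|\mathbf{X}_{kN_{0}+s}(\mathbf{x}_{0})|$ from above by a nonnegative \emph{univariate} recursion $Y_{k}^{(s)}=a_{k}Y_{k-1}^{(s)}+b_{k}$ with $a_{k}=\|\mathbf{A}_{kN_{0}+s}\cdots\mathbf{A}_{(k-1)N_{0}+s+1}\|$, proves a separate lemma (Lemma~\ref{lem:lbun}) establishing the exact a.s.\ limit $\zeta_{R}/\log R\to 1/\mathbb{E}\log a_{1}$ for such one-dimensional positive recursions, and then lets $N_{0}\to\infty$ so that $N_{0}/\mathbb{E}\log\|\Pi_{N_{0}}\|\to 1/\gamma_{L}$. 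Your factorisation $\mathbf{X}_{n}=\Pi_{n}\mathbf{R}_{n}$, combined with Lemma~\ref{Lem_expl_Lya} and the Borel--Cantelli argument for $\log^{+}|\mathbf{B}_{k}|/k\to 0$, gives the pathwise bound $|\mathbf{X}_{n}|\le M(\omega)e^{(\gamma_{L}+\epsilon)n}$ directly, with no block structure, no auxiliary univariate analysis, and no $N_{0}\to\infty$ limit. For the one-sided $\liminf$ bound this is cleaner and shorter; the paper's detour through Lemma~\ref{lem:lbun} does yield a two-sided a.s.\ limit in the scalar nonnegative case, but that extra precision is not used in the proof of Proposition~\ref{prop:exp_lb} itself. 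Both arguments lean on the a.s.\ invertibility and irreducibility in Assumption~\ref{Ass_Exp}-(1): yours through the identity $\mathbf{X}_{n}=\Pi_{n}\mathbf{R}_{n}$ and Lemma~\ref{Lem_expl_Lya}, the paper's only implicitly (the comparison $|\mathbf{X}_{n+N_{0}}|\le a|\mathbf{X}_{n}|+b$ does not require invertibility, but the hypotheses of Lemma~\ref{lem:lbun} still need verification).
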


\begin{proof}
We fix $\mathbf{x}_{0}\in\mathbb{R}^{d}$. Then, we fix a positive
integer $N_{0}$. Then, by the submultiplicativeness of the matrix
norm, we have 
\[
\frac{1}{N_{0}}\mathbb{E}\log\Vert\mathbf{A}_{1}\cdots\mathbf{A}_{N_{0}}\Vert\ge\frac{1}{kN_{0}}\mathbb{E}\log\Vert\mathbf{A}_{1}\cdots\mathbf{A}_{kN_{0}}\Vert
\]
for all $k\ge1$. Hence, by letting $k\rightarrow\infty$ and then
recalling the definition of the Lyapunov exponent $\gamma_{L}>0$,
we get 
\[
\mathbb{E}\log\Vert\mathbf{A}_{1}\cdots\mathbf{A}_{N_{0}}\Vert\ge N_{0}\gamma_{L}\;.
\]

Observe that, for all $n\in\mathbb{Z}^{+}$, we can write
\[
\mathbf{X}_{n+N_{0}}(\mathbf{x}_{0})=\mathbf{A}_{n+N_{0}}\cdots\mathbf{A}_{n+1}\mathbf{X}_{n}(\mathbf{x}_{0})+(\mathbf{A}_{n+N_{0}}\cdots\mathbf{A}_{n+2}\mathbf{B}_{n+1}+\cdots+\mathbf{B}_{n+N_{0}})\;.
\]
and therefore, we have 
\begin{equation}
|\mathbf{X}_{n+N_{0}}(\mathbf{x}_{0})|\le\Vert\mathbf{A}_{n+N_{0}}\cdots\mathbf{A}_{n+1}\Vert\cdot|\mathbf{X}_{n}(\mathbf{x}_{0})|+|\mathbf{A}_{n+N_{0}}\cdots\mathbf{A}_{n+2}\mathbf{B}_{n+1}+\cdots+\mathbf{B}_{n+N_{0}}|.\label{eq:bdj}
\end{equation}
We fix $s\in\llbracket1,\,N_{0}\rrbracket$ and let, for $k\in\mathbb{Z}^{+}$,
\begin{align*}
a_{k} & =\Vert\mathbf{A}_{kN_{0}+s}\cdots\mathbf{A}_{(k-1)N_{0}+s+1}\Vert\\
b_{k} & =|\mathbf{A}_{kN_{0}+s}\cdots\mathbf{A}_{(k-1)N_{0}+s+2}\mathbf{B}_{(k-1)N_{0}+s+1}+\cdots+\mathbf{B}_{kN_{0}+s}|\;.
\end{align*}
We next consider an univariate process $(Y_{n}^{(s)})_{n\in\mathbb{Z}_{0}^{+}}$
defined by $Y_{0}^{(s)}=|\mathbf{X}_{s}(\mathbf{x}_{0})|$ and 
\[
Y_{k}^{(s)}=a_{k}Y_{k-1}^{(s)}+b_{k}\;\;\;\;;\;k\in\mathbb{Z}^{+}.
\]
Then, by \eqref{eq:bdj}, for all $k\in\mathbb{Z}^{+}$, we have 
\begin{equation}
|\mathbf{X}_{kN_{0}+s}(\mathbf{x}_{0})|\le|Y_{k}^{(s)}|\;.\label{eq:X,Y ineq}
\end{equation}
Define $\zeta_{R}^{(s)}=\inf\left\{ k:|Y_{k}^{(s)}|>R\right\} $ so
that by Lemma \ref{lem:lbun}, we almost surely have 
\begin{equation}
\lim_{R\rightarrow\infty}\frac{\zeta_{R}^{(s)}}{\log R}=\frac{1}{\mathbb{E}\log a_{1}}\;.\label{eq:uni_Y_exit}
\end{equation}
Note here that the condition $\mathbb{P}(b_{k}=0)<1$ of Lemma \ref{lem:lbun} is a consequence of Lemma \ref{lem:pre1-2}-(2) with $\mathbf{x}=\mathbf{0}$,
as $b_{k}$ has the same distribution of $\mathbf{X}_{N_{0}}(\mathbf{0})$. 

We next claim that 
\begin{equation}
\tau_{R}(\mathbf{x}_{0})\ge\min_{1\le s\le N_{0}}\left\{ N_{0}\zeta_{R}^{(s)}+s\right\} \;.\label{eq:rel_X,Y}
\end{equation}
This claim completes the proof since this and \eqref{eq:uni_Y_exit}
together implies that 
\[
\liminf_{R\rightarrow\infty}\frac{\tau_{R}(\mathbf{x}_{0})}{\log R}\ge\frac{N_{0}}{\mathbb{E}\log\Vert\mathbf{A}_{N_{0}}\cdots\mathbf{A}_{1}\Vert}\;,
\]
and therefore by letting $N_{0}\rightarrow\infty$ we can get the
desired bound. 

Now we turn to the proof of \eqref{eq:rel_X,Y}. We can find unique
integers $k_{0}\ge0$ and $1\le s_{0}\le N_{0}$ such that 
\[
\tau_{R}(\mathbf{x}_{0})=k_{0}N_{0}+s_{0}\;.
\]
Then, the comparison \eqref{eq:X,Y ineq} implies that $\zeta_{R}^{(s_{0})}\le k_{0}$
and therefore we get 
\[
\min_{1\le s\le N_{0}}\left\{ N_{0}\zeta_{R}^{(s_{0})}+s\right\} \le N_{0}\zeta_{R}^{(s_{0})}+s_{0}\le N_{0}k_{0}+s_{0}=\tau_{R}(\mathbf{x}_{0})
\]
as desired. 
\end{proof}

\subsection{\label{sec63}Upper bound}

We now turn to the upper bound. The purpose is to prove the following
bound.
\begin{prop}
\label{prop:exp_up}For all $\mathbf{x}_{0}\in\mathbb{R}^{d}$, there
exist constants $\kappa_{1},\,\kappa_{2}>0$ which depend only on
the distribution of $(\mathbf{A},\,\mathbf{B})$ such that 
\[
\mathbb{E}\left[\tau_{R}(\mathbf{x}_{0})\right]\le\kappa_{1}(1+\log R)-\kappa_{2}\log^{+}|\mathbf{x}_{0}|\;.
\]
\end{prop}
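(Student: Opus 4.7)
The plan is to construct a Lyapunov function whose average behaves additively along the process, convert the upper bound on $\mathbb{E}[\tau_R(\mathbf{x}_0)]$ into a submartingale argument, and close the loop with the exit-location estimate Lemma \ref{lem_expl_exit loc}. I would work not with the one-step process but with an accelerated skeleton $\widehat{\mathbf{X}}_n := \mathbf{X}_{nN_0}(\mathbf{x}_0)$, where $N_0$ is chosen large enough that the multiplicative ergodic theorem (Lemma \ref{Lem_expl_Lya}) and irreducibility (Assumption \ref{Ass_Exp}(1)) give $\inf_{\mathbf{u}\in\mathbb{S}^{d-1}}\mathbb{E}\log|\Pi_{N_0}\mathbf{u}| \ge N_0\gamma_L/2$. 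The target Lyapunov function is $F(\mathbf{x}) := \log(a+|\mathbf{x}|)$ for a constant $a>0$ to be chosen. Its asymptotic behaviour is $\log|\mathbf{x}|$ at infinity and $\log a$ near the origin, so $F(\widehat{\mathbf{X}}_{\widehat{\tau}_R})\le \log(a+R)$ transparently relates to $\log R$ at the exit boundary.

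The heart of the proof is the verification that
\[
G(\mathbf{x})\;:=\;\mathbb{E}\bigl[F(\mathbf{X}_{N_0}(\mathbf{x}))\bigr]-F(\mathbf{x})\;\ge\;\lambda>0
\]
uniformly in $\mathbf{x}\in\mathbb{R}^d$, which makes $F(\widehat{\mathbf{X}}_n)-n\lambda$ an $(\mathcal{F}_{nN_0})$-submartingale. For $|\mathbf{x}|\ge M$ with $M$ large, the leading behaviour is $G(\mathbf{x})\approx\mathbb{E}\log|\Pi_{N_0}(\mathbf{x}/|\mathbf{x}|)|$, which by Step~1 is at least $N_0\gamma_L/3$ once the $O(a/|\mathbf{x}|)$ and $O(|\widehat{\mathbf{B}}|/|\mathbf{x}|)$ corrections are controlled (using $\mathbb{E}\log^+|\widehat{\mathbf{B}}|<\infty$). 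For $|\mathbf{x}|\le M$ I would invoke the triangle inequality $|\mathbf{X}_{N_0}(\mathbf{x})|+|\mathbf{x}|\ge|\mathbf{X}_{N_0}(\mathbf{x})-\mathbf{x}|$ together with the elementary bound $\log\max(a/2,t)\ge\tfrac12(\log(a/2)+\log t)$ to deduce
\[
\mathbb{E}F(\mathbf{X}_{N_0}(\mathbf{x}))\;\ge\;\tfrac12\log(a/2)+\tfrac12\mathbb{E}\log|\mathbf{X}_{N_0}(\mathbf{x})-\mathbf{x}|,
\]
where the second term is uniformly bounded below by an $N_0$-step extension of Assumption \ref{Ass_Exp}(2) obtained by iterating the single-step hypothesis. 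Subtracting $F(\mathbf{x})\le\log(a+M)$ and taking $a$ small enough (so that $-\tfrac12\log a$ dominates the constants from Assumption \ref{Ass_Exp}(2) and the size of $M$), one secures $G(\mathbf{x})\ge\lambda$ on $|\mathbf{x}|\le M$ as well.

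With the uniform drift $\lambda>0$ in hand, the rest is standard. By the optional stopping theorem applied to $F(\widehat{\mathbf{X}}_{n\wedge\widehat{\tau}_R})-\lambda\cdot(n\wedge\widehat{\tau}_R)$, monotone convergence on the $\widehat{\tau}_R$ side (finite a.s. by Proposition \ref{Prop_expl_exit time finite}), and dominated convergence on the $F$ side (using Assumption \ref{Ass_Exp}(3), which plays the role of Lemma \ref{lem:gub} in the explosive regime for $\log^+$), I obtain $\lambda\,\mathbb{E}\widehat{\tau}_R\le\mathbb{E}F(\widehat{\mathbf{X}}_{\widehat{\tau}_R})-F(\mathbf{x}_0)$. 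Since $\widehat{\tau}_R N_0\in\{\tau_R,\ldots,\tau_R+N_0-1\}$, I would iterate Assumption \ref{Ass_Exp}(3) at most $N_0$ times (mirroring the argument in Lemma \ref{lem:dtau} for the logarithm) to pass from Lemma \ref{lem_expl_exit loc} to $\mathbb{E}F(\widehat{\mathbf{X}}_{\widehat{\tau}_R})\le C+\log R$. Combining with $\tau_R\le N_0\widehat{\tau}_R$ and the trivial inequality $F(\mathbf{x}_0)\ge\log^+|\mathbf{x}_0|-\log(a+1)$ yields the claimed bound with $\kappa_1=N_0(C+\log(a+1))/\lambda$ and $\kappa_2=N_0/\lambda$. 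The main obstacle, as indicated by the authors, is precisely Steps~2--4 above: the case analysis for small $|\mathbf{x}|$ has to balance the $\log a$ penalty from the triangle inequality against the $-\infty$ threat in Assumption \ref{Ass_Exp}(2), and extending that assumption from one step to $N_0$ steps while retaining uniformity in $\mathbf{x}$ is where the technical heavy lifting resides.
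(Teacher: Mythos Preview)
Your strategy differs from the paper's: you try to build a \emph{fixed} $N_0$-step skeleton with uniform positive drift for $F(\mathbf{x})=\log(a+|\mathbf{x}|)$, whereas the paper uses \emph{adaptive} stopping times $(\sigma_j)$ whose gaps $\sigma_{j+1}-\sigma_j$ are allowed to grow with $\log^+|\mathbf{X}_{\sigma_j}|$. That adaptivity is not cosmetic; it is what makes the argument close, and your fixed-skeleton version has two genuine gaps.

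First, your small-$|\mathbf{x}|$ bound does not cover the claimed range. The chain ``triangle inequality $+$ $\log\max(a/2,t)\ge\tfrac12(\log(a/2)+\log t)$'' yields $F(\mathbf{X}_{N_0}(\mathbf{x}))\ge\tfrac12\log(a/2)+\tfrac12\log|\mathbf{X}_{N_0}(\mathbf{x})-\mathbf{x}|$ only if $a+|\mathbf{X}_{N_0}(\mathbf{x})|\ge|\mathbf{X}_{N_0}(\mathbf{x})-\mathbf{x}|$, and by the triangle inequality this needs $a\ge|\mathbf{x}|$. So the inequality is valid only for $|\mathbf{x}|\le a$. Since you want $a$ small and $M$ large (the latter is forced by the large-$|\mathbf{x}|$ error terms), the whole annulus $a<|\mathbf{x}|\le M$ is left uncovered. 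On that annulus, subtracting $F(\mathbf{x})\le\log(a+M)\approx\log M$ from $\tfrac12\log(a/2)+\tfrac12 C$ gives a quantity that tends to $-\infty$ as $a\downarrow 0$, so the hoped-for ``$-\tfrac12\log a$ dominates'' never materialises there.

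Second, the ``$N_0$-step extension of Assumption~\ref{Ass_Exp}(2)'' is not an iteration of the one-step hypothesis. You would need $\inf_{\mathbf{x}}\mathbb{E}\log|\mathbf{X}_{N_0}(\mathbf{x})-\mathbf{x}|>-\infty$, but $\mathbf{X}_{N_0}(\mathbf{x})-\mathbf{x}=(\Pi_{N_0}-\mathbf{I})\mathbf{x}+\mathbf{W}$ involves a different random matrix and a highly correlated additive part; the one-step assumption on $(\mathbf{A}-\mathbf{I})\mathbf{x}+\mathbf{B}$ does not feed into this by conditioning or telescoping.

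The paper sidesteps both issues simultaneously. Its key estimate \eqref{eq:bdd1} bounds the \emph{sum} $\mathbb{E}[\log^+|\mathbf{X}_{\sigma_j+n}|+\log^+|\mathbf{X}_{\sigma_j+n+1}|\mid\mathcal{F}_{\sigma_j}]$ from below by coupling the two processes started at $\mathbf{X}_{\sigma_j}$ and $\mathbf{X}_{\sigma_j+1}$: their difference is $\Pi_n(\mathbf{X}_{\sigma_j+1}-\mathbf{X}_{\sigma_j})$, so Assumption~\ref{Ass_Exp}(2) is invoked only for the \emph{single} step $\mathbf{X}_{\sigma_j+1}-\mathbf{X}_{\sigma_j}$ and then propagated via $\|\Pi_n^{-1}\|^{-1}$ using \eqref{eq:pi_n_gamma_L}. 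Because this gives a bound on the sum rather than on an individual term, one cannot conclude a fixed-step drift; instead the paper defines $\sigma_{j+1}$ as the first $n$ at which the drift condition \eqref{eq:bdd2} holds, and shows (Lemma~\ref{lem:int}, \eqref{eq:bdgap}) that $\sigma_{j+1}-\sigma_j$ is controlled by $\log^+|\mathbf{X}_{\sigma_j}|$. The submartingale is then $M_j=\log^+|\mathbf{X}_{\sigma_j}|-\tfrac{\gamma_L}{4}\sigma_j$ along this random grid, and the rest proceeds as you outline.
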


We note that the constants appeared in the previous proposition can
be computed explicitly in the course of proof. The proof of the upper
bound heavily relies on the construction of a submartingale. In this
construction, we heavily use the function 
\[
\log^{+}x=\log(x\lor1)=(\log x)\lor0\;\;\;\;;\;x\ge0\;,
\]
where we regard $\log^{+}0=0$. For this function, we have useful
elementary inequalities. 
\begin{lem}
\label{lem:log+}For all $x,\,y\ge0$, we have 
\begin{align*}
\log^{+}(x+y) & \le\log^{+}x+\log^{+}y+\log2\;\;\;\;\text{and}\\
\log^{+}xy & \le\log^{+}x+\log^{+}y\;.
\end{align*}
\end{lem}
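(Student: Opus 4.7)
The plan is to handle each inequality by straightforward case analysis on whether the relevant quantities exceed $1$, using the two universal facts $\log^+ u = \max(\log u, 0) \geq \log u$ and the monotonicity $\log^+ u \leq \log^+ v$ for $0 \leq u \leq v$.

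For the multiplicative inequality $\log^+(xy) \leq \log^+ x + \log^+ y$, I would split into two cases. If $xy < 1$, the left-hand side is $0$ and the right-hand side is non-negative, so the bound is trivial. If $xy \geq 1$, then $\log^+(xy) = \log x + \log y$, and since $\log x \leq \log^+ x$ and $\log y \leq \log^+ y$ always hold, we conclude immediately. This step is routine.

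For the additive inequality $\log^+(x+y) \leq \log^+ x + \log^+ y + \log 2$, the main idea is the elementary bound $x + y \leq 2 \max(x,y)$ followed by monotonicity of $\log^+$, which reduces the claim to showing $\log^+(2z) \leq \log^+ z + \log 2$ for all $z \geq 0$. This reduced inequality is again verified by cases: if $z \geq 1$, both sides equal $\log 2 + \log z$; if $2z \leq 1$, the left-hand side is $0$; if $1/2 \leq z < 1$, the left-hand side is $\log 2 + \log z \leq \log 2$ while the right-hand side equals $\log 2$. Composing these two estimates and using $\log^+(\max(x,y)) \leq \log^+ x + \log^+ y$ (since $\log^+ \geq 0$) yields the claim.

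There is no genuine obstacle here; the lemma is purely a bookkeeping device consolidating two standard manipulations of $\log^+$ for later use in the submartingale construction of Proposition \ref{prop:exp_up}. The only minor subtlety is remembering to handle the boundary region $1/2 \leq z < 1$ in the auxiliary estimate for $\log^+(2z)$, where $\log^+$ fails to behave like $\log$ but the constant $\log 2$ absorbs the discrepancy.
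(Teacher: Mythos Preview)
Your proposal is correct. The multiplicative inequality is handled essentially as in the paper (the paper compresses your two cases into the single line $\log^{+}xy=(\log x+\log y)\vee 0\le(\log^{+}x+\log^{+}y)\vee 0$). For the additive inequality you take a slightly different route: you first reduce to a one-variable estimate via $x+y\le 2\max(x,y)$ and then verify $\log^{+}(2z)\le\log^{+}z+\log 2$, whereas the paper performs a direct three-case split on whether each of $x,y$ lies below or above $1$ (using $2xy\ge x+y$ in the case $x,y\ge 1$). Both arguments are equally elementary and of the same length; neither offers a real advantage over the other.
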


\begin{proof}
We look at the first inequality. Thanks to the symmetry, it suffices
to consider following three cases separately. 
\begin{itemize}
\item $0\le x,\,y\le1$: the inequality is immediate from $x+y\le2$ and
that $\log^{+}(\cdot)$ is an increasing function.
\item $0\le x\le1\le y$: in this case, we have 
\[
\log^{+}(x+y)=\log(x+y)\le\log(2y)=\log y+\log2=\log^{+}x+\log^{+}y+\log2
\]
 
\item $x,\,y\ge1$: in this case, we have $\log^{+}=\log$ and hence it
suffices to check that $2xy\ge x+y$. This is immediate since $xy\ge x$
and $xy\ge y$. 
\end{itemize}
The proof of the second one is immediate from
\[
\log^{+}xy=(\log xy)\lor0=(\log x+\log y)\lor0\le(\log^{+}x+\log^{+}y)\lor0=\log^{+}x+\log^{+}y\ .
\]
\end{proof}
\noindent We from now on fix $\mathbf{x}_{0}$ and construct a sub-martingales
to complete the proof of Proposition \ref{prop:exp_up}. By lemma
\ref{Lem_expl_Lya}, we are able to take $N_{0}$ such that for all
$n\ge N_{0}$ 
\begin{equation}
\frac{1}{n}\mathbb{E}\log\Vert\Pi_{n}^{-1}\Vert^{-1}\ge\frac{2}{3}\gamma_{L}\;.\label{eq:pi_n_gamma_L}
\end{equation}

The first step is to construct a sequence of stopping times $(\sigma_{j})_{j=0}^{\infty}$
along which we construct the submartingale. The following lemma is
used in the construction. Recall that $(\mathcal{F}_{n})_{n\in\mathbb{Z}_{0}^{+}}$
is the filtration associated with the process $(\mathbf{X}_{n}(\cdot))_{n\in\mathbb{Z}_{0}^{+}}$.
\begin{lem}
\label{lem:stop}Let $\tau$ be a $(\mathcal{F}_{n})_{n\in\mathbb{Z}_{0}^{+}}$-stopping
time and $f:\mathbb{R}^{d}\rightarrow\mathbb{Z}^{+}$ be a Lebesgue
measurable function. Then, for all $\mathbf{x}_{0}\in\mathbb{R}^{d}$,
\[
\sigma=\tau+f(\mathbf{X}_{\tau}(\mathbf{x}_{0}))
\]
is also a $(\mathcal{F}_{n})_{n\in\mathbb{Z}_{0}^{+}}$-stopping time.
\end{lem}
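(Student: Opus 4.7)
The plan is to verify directly the defining property of a stopping time: namely, that $\{\sigma \le n\} \in \mathcal{F}_n$ for every $n \in \mathbb{Z}_0^+$. Since $f$ takes values in $\mathbb{Z}^+$, we have $f \ge 1$, and hence $\sigma \ge \tau + 1$. Decomposing the event $\{\sigma \le n\}$ according to the value of $\tau$ therefore gives the clean expression
\[
\{\sigma \le n\} \;=\; \bigcup_{k=0}^{n-1}\Big(\{\tau=k\} \cap \{f(\mathbf{X}_k(\mathbf{x}_0)) \le n-k\}\Big).
\]

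Each of the constituent events on the right-hand side lies in $\mathcal{F}_k \subseteq \mathcal{F}_n$: the event $\{\tau=k\}$ is in $\mathcal{F}_k$ because $\tau$ is an $(\mathcal{F}_n)$-stopping time (and $\{\tau = k\} = \{\tau \le k\} \setminus \{\tau \le k-1\}$), while $\{f(\mathbf{X}_k(\mathbf{x}_0)) \le n-k\}$ is in $\mathcal{F}_k$ because $\mathbf{X}_k(\mathbf{x}_0)$ is $\mathcal{F}_k$-measurable (by Definition \ref{def:AMP}) and $f$ is Lebesgue measurable into $\mathbb{Z}^+$, so $f^{-1}(\{1,\ldots,n-k\})$ is a Borel (Lebesgue, after completion) set in $\mathbb{R}^d$. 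A finite union of sets in $\mathcal{F}_n$ is in $\mathcal{F}_n$, which yields the claim.

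The only subtle point is the measurability of $f(\mathbf{X}_k(\mathbf{x}_0))$ when $f$ is merely Lebesgue measurable; this is handled either by passing to the completion of $\mathcal{F}_n$ (which does not affect any of the subsequent martingale arguments) or by observing that any Lebesgue measurable function agrees almost everywhere with a Borel measurable one. Apart from this minor technicality, the proof is a direct one-line verification and presents no real obstacle.
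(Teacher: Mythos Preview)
Your proof is correct and essentially identical to the paper's: both decompose the relevant event according to the value of $\tau$ and check that each piece lies in $\mathcal{F}_k \subseteq \mathcal{F}_n$ (the paper writes $\{\sigma = n\} = \bigcup_{k+\ell=n}\{\tau=k\}\cap\{f(\mathbf{X}_k(\mathbf{x}_0))=\ell\}$, you use $\{\sigma \le n\}$, which is an equivalent formulation). Your remark on the Lebesgue-vs-Borel measurability subtlety is a valid technical point that the paper glosses over.
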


\begin{proof}
Since 
\[
\{\sigma=n\}=\bigcup_{k,\,\ell\in\mathbb{Z}^{+}:k+\ell=n}\left(\{\tau=k\}\cap\left\{ f(\mathbf{X}_{k}(\mathbf{x}_{0}))=\ell\right\} \right)
\]
and since both $\{\tau=k\}$ and $\left\{ f(\mathbf{X}_{k}(\mathbf{x}_{0}))=\ell\right\} $
are $\mathcal{F}_{k}$-measurable set, $\{\sigma=n\}\in\mathcal{F}_{n}$.
\end{proof}
We let $\sigma_{0}=0$ and suppose that the sequence $\sigma_{0},\,\dots,\,\sigma_{j}$
has been constructed. Then, for all $n\in\mathbb{Z}^{+}$, by Lemma
\ref{lem:log+}, we have 

\begin{align*}
 & \mathbb{E}\left[\log^{+}|\mathbf{X}_{\sigma_{j}+n+1}(\mathbf{x}_{0})|+\log^{+}|\mathbf{X}_{\sigma_{j}+n}(\mathbf{x}_{0})|\ |\ \mathcal{F}_{\sigma_{j}}\right]\\
 & =\mathbb{E}\left[\log^{+}|\mathbf{X}_{n}^{\prime}(\mathbf{X}_{\sigma_{j}+1}(\mathbf{x}_{0}))|+\log^{+}|\mathbf{X}_{n}^{\prime}(\mathbf{X}_{\sigma_{j}}(\mathbf{x}_{0}))|\ |\ \mathcal{F}_{\sigma_{j}}\right]\\
 & =\mathbb{E}\left[\log^{+}|\mathbf{X}_{n}^{\prime}(\mathbf{X}_{\sigma_{j}+1}(\mathbf{x}_{0}))-\mathbf{X}_{n}^{\prime}(\mathbf{X}_{\sigma_{j}}(\mathbf{x}_{0}))|\ |\ \mathcal{F}_{\sigma_{j}}\right]-\log2\\
 & \ge\mathbb{E}\log\Vert\Pi_{n}^{-1}\Vert^{-1}+\mathbb{E}\left[\log|\mathbf{X}_{\sigma_{j}+1}(\mathbf{x}_{0})-\mathbf{X}_{\sigma_{j}}(\mathbf{x}_{0})|\;|\;\mathcal{F}_{\sigma_{j}}\right]-\log2
\end{align*}
where the last line follows from \eqref{eq:couple} and the fact that
$\log^{+}\ge\log$. Here, we note that $(\mathbf{X}_{n}'(\cdot))_{n\in\mathbb{Z}^{+}}$
denotes an process \eqref{eq:AMP} which has the same law with $(\mathbf{X}_{n}(\cdot))_{n\in\mathbb{Z}^{+}}$
but uses the random matrices/vectors $(\mathbf{A}_{n},\,\mathbf{B}_{n})_{n\in\mathbb{Z}^{+}}$
independent to the ones associated with $(\mathbf{X}_{n}(\cdot))_{n\in\mathbb{Z}^{+}}$.
Then, by \eqref{eq:pi_n_gamma_L}, \eqref{eq:ass_expl} of Assumption
\ref{Ass_Exp}, and the strong Markov property, there exists a constant
$C_{1}>0$ (which will be fixed throughout the current subsection)
such that, for all $n\ge N_{0}$, 
\begin{equation}
\mathbb{E}\left[\log^{+}|\mathbf{X}_{\sigma_{j}+n+1}(\mathbf{x}_{0})|+\log^{+}|\mathbf{X}_{\sigma_{j}+n}(\mathbf{x}_{0})|\ |\ \mathcal{F}_{\sigma_{j}}\right]\ge\frac{2n}{3}\gamma_{L}-C_{1}\;.\label{eq:bdd1}
\end{equation}
Therefore, we can define a random time $\sigma_{j+1}$ which is finite
almost surely by 
\begin{equation}
\sigma_{j+1}=\sigma_{j}+\inf\left\{ n\ge1:\mathbb{E}\left[\log^{+}|\mathbf{X}_{\sigma_{j}+n}(\mathbf{x}_{0})|\ |\ \mathcal{F}_{\sigma_{j}}\right]\ge\frac{n}{4}\gamma_{L}+\log^{+}|\mathbf{X}_{\sigma_{j}}(\mathbf{x}_{0})|\right\} \;.\label{eq:bdd2}
\end{equation}
Then, since we have assumed that $\sigma_{j}$ is a stopping time,
the random time $\sigma_{j+1}$ is again a stopping time by Lemma
\ref{lem:stop}. Note that this recursive construction guarantees
that $\sigma_{j}<\infty$ almost surely for all $j\in\mathbb{Z}^{+}$. 

Summing up, we constructed an increasing sequence of finite stopping
times $(\sigma_{j})_{j=0}^{\infty}$ with $\sigma_{0}=0$ that satisfies,
for all $j\ge0$, 
\begin{equation}
\mathbb{E}\left[\log^{+}|\mathbf{X}_{\sigma_{j+1}}(\mathbf{x}_{0})|\ |\ \mathcal{F}_{\sigma_{j}}\right]\ge\frac{\gamma_{L}}{4}(\sigma_{j+1}-\sigma_{j})+\log^{+}|\mathbf{X}_{\sigma_{j}}(\mathbf{x}_{0})|\;.\label{eq:subm}
\end{equation}

\begin{lem}
\label{lem:int}With the notation above, for all $k\in\mathbb{Z}^{+}$,
we have $\mathbb{E}[\sigma_{k}]<\infty$ and $\mathbb{E}[\log^{+}|\mathbf{X}_{\sigma_{k}}(\mathbf{x}_{0})|]<\infty$. 
\end{lem}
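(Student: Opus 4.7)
The plan is to proceed by induction on $k$, with the trivial base case $k=0$ since $\sigma_0=0$ and $\log^+|\mathbf{X}_{\sigma_0}(\mathbf{x}_0)|=\log^+|\mathbf{x}_0|$ are deterministic. The linchpin of the argument is a pleasant feature of the construction \eqref{eq:bdd2}: since only a conditional expectation enters the stopping criterion, the increment $\sigma_{j+1}-\sigma_j$ is in fact $\mathcal{F}_{\sigma_j}$-measurable, so given $\mathcal{F}_{\sigma_j}$ it may be treated as a constant. Write $L_j:=\log^+|\mathbf{X}_{\sigma_j}(\mathbf{x}_0)|$.

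My first step is to derive an almost-sure deterministic upper bound on $\sigma_{j+1}-\sigma_j$ in terms of $L_j$. From \eqref{eq:bdd1}, for every $n\ge N_0$ at least one of the consecutive conditional expectations $\mathbb{E}[\log^+|\mathbf{X}_{\sigma_j+n}(\mathbf{x}_0)|\mid\mathcal{F}_{\sigma_j}]$, $\mathbb{E}[\log^+|\mathbf{X}_{\sigma_j+n+1}(\mathbf{x}_0)|\mid\mathcal{F}_{\sigma_j}]$ must be at least $\tfrac{n}{3}\gamma_L-\tfrac{C_1}{2}$; a direct computation shows that this lower bound is itself $\ge\tfrac{n+1}{4}\gamma_L+L_j$ as soon as $n\ge(12L_j+3\gamma_L+6C_1)/\gamma_L$, so the stopping criterion \eqref{eq:bdd2} is triggered at either $n$ or $n+1$. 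Hence there exist constants $C',\kappa>0$ depending only on the distribution of $(\mathbf{A},\mathbf{B})$ with
\[
\sigma_{j+1}-\sigma_j\le C'+\kappa L_j\quad\text{almost surely,}
\]
from which $\mathbb{E}[\sigma_{j+1}]\le\mathbb{E}[\sigma_j]+C'+\kappa\mathbb{E}[L_j]<\infty$ by the induction hypothesis.

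To bound $\mathbb{E}[L_{j+1}]$, I will write $n:=\sigma_{j+1}-\sigma_j$ and use the strong Markov property together with \eqref{eq:AMP_rep} shifted by $\sigma_j$ to obtain
\[
\mathbf{X}_{\sigma_{j+1}}(\mathbf{x}_0)=\Pi^{(j)}\mathbf{X}_{\sigma_j}(\mathbf{x}_0)+\mathbf{W}^{(j)}_n,
\]
where $\Pi^{(j)}=\mathbf{A}_{\sigma_j+n}\cdots\mathbf{A}_{\sigma_j+1}$ and, conditionally on $\mathcal{F}_{\sigma_j}$, $\mathbf{W}^{(j)}_n$ is distributed as $\mathbf{X}_n(\mathbf{0})$ for the now-constant $n$. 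Two uses of Lemma \ref{lem:log+} give
\[
L_{j+1}\le\log 2+\log^+\Vert\Pi^{(j)}\Vert+L_j+\log^+|\mathbf{W}^{(j)}_n|.
\]
Submultiplicativity and Lemma \ref{lem:log+} bound $\mathbb{E}[\log^+\Vert\Pi^{(j)}\Vert\mid\mathcal{F}_{\sigma_j}]\le n\,\mathbb{E}\log^+\Vert\mathbf{A}\Vert$, while the inequality $\log^+(\sum_{k=1}^n a_k)\le\log n+\max_k\log^+ a_k$ applied to the decomposition of $\mathbf{X}_n(\mathbf{0})$ from \eqref{eq:AMP_rep} yields $\mathbb{E}[\log^+|\mathbf{W}^{(j)}_n|\mid\mathcal{F}_{\sigma_j}]\le\log n+n(\mathbb{E}\log^+\Vert\mathbf{A}\Vert+\mathbb{E}\log^+|\mathbf{B}|)$. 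Both of these moments are finite: $\mathbb{E}\log^+\Vert\mathbf{A}\Vert<\infty$ is already implicit in the existence of \eqref{eq:lya}, and $\mathbb{E}\log^+|\mathbf{B}|<\infty$ follows from Assumption \ref{Ass_Exp}-(3) with $\mathbf{x}=\mathbf{0}$ via the same integral manipulation as in Lemma \ref{lem_expl_exit loc} (the condition $\beta_0>1$ is exactly what makes the relevant $\int du/u^{\beta_0}$ converge). Plugging in the almost-sure bound $n\le C'+\kappa L_j$ gives $\mathbb{E}[L_{j+1}\mid\mathcal{F}_{\sigma_j}]\le C_\star(1+L_j)$ for some constant $C_\star$, and the induction closes.

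The hard part will be this last bound. A naive iteration of $\log^+(a+b)\le\log^+ a+\log^+ b+\log 2$ across the $n$ summands making up $\mathbf{X}_n(\mathbf{0})$ would produce a contribution of order $\binom{n}{2}\mathbb{E}\log^+\Vert\mathbf{A}\Vert$, i.e.\ quadratic in $n$; together with $n\le C'+\kappa L_j$ this would require $\mathbb{E}[L_j^2]<\infty$ in the induction, which we have no way to obtain from the earlier induction. Replacing the sum by the ``$\log n+\max$'' estimate trims this to order $n$ and is exactly what keeps the induction linear in $L_j$, closing the loop.
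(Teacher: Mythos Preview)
Your proof is correct and shares the same inductive skeleton as the paper's: both derive the almost-sure bound $\sigma_{j+1}-\sigma_j\le C'+\kappa L_j$ from \eqref{eq:bdd1} in essentially the same way. The genuine difference is in how $\mathbb{E}[L_{j+1}]$ is controlled. The paper does not decompose $\mathbf{X}_{\sigma_{j+1}}$ at all; instead it observes once and for all that $Z_n:=\log^+|\mathbf{X}_n(\mathbf{x}_0)|-C_2 n$ (with $C_2=\mathbb{E}\log^+\Vert\mathbf{A}\Vert+\mathbb{E}\log^+|\mathbf{B}|+\log 2$) is an $(\mathcal{F}_n)$-supermartingale, then applies optional stopping at $n\wedge\sigma_{j+1}$ together with Fatou/DCT to get $\mathbb{E}[L_{j+1}]\le C_2\,\mathbb{E}[\sigma_{j+1}]+\log^+|\mathbf{x}_0|$. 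Your route is more hands-on: you exploit that $n=\sigma_{j+1}-\sigma_j$ is $\mathcal{F}_{\sigma_j}$-measurable and bound $\Pi^{(j)}$ and $\mathbf{W}^{(j)}_n$ separately. This avoids optional stopping, but the ``hard part'' you isolate is somewhat artificial: the one-step recursion $\log^+|\mathbf{X}_{k+1}|\le\log^+|\mathbf{X}_k|+\log^+\Vert\mathbf{A}_{k+1}\Vert+\log^+|\mathbf{B}_{k+1}|+\log 2$ from Lemma~\ref{lem:log+} already iterates to a bound linear in $n$ without any ``$\log n+\max$'' device, and this is precisely what underlies the paper's supermartingale. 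The paper's version has the additional advantage that the supermartingale $Z_n$ is reused verbatim in the proof of Lemma~\ref{lem:extloc}.
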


\begin{proof}
We proceed with induction. First, the statement is clear for $k=0$
as $\sigma_{k}=0$. 

Next we assume that the statement of the lemma holds for $k=j$ and
look at the case $k=j+1$. We first observe from \eqref{eq:bdd1}
that, with 
\[
n_{j}:=\left\lceil \frac{6}{\gamma_{L}}\left(2\log^{+}|\mathbf{X}_{\sigma_{j}}(\mathbf{x}_{0})|+C_{1}\right)\right\rceil \;,
\]
we have 
\begin{align*}
\mathbb{E}\left[\log^{+}|\mathbf{X}_{\sigma_{j}+n_{j}+1}(\mathbf{x}_{0})|+\log^{+}|\mathbf{X}_{\sigma_{j}+n_{j0}}(\mathbf{x}_{0})|\ |\ \mathcal{F}_{\sigma_{j}}\right] & \ge\frac{2n_{0}}{3}\gamma_{L}-C_{1}\;.\\
 & \ge\frac{n_{0}}{2}\gamma_{L}+2\log^{+}|\mathbf{X}_{\sigma_{j}}(\mathbf{x}_{0})|
\end{align*}
and therefore we can conclude from \eqref{eq:bdd2} that 
\begin{equation}
\sigma_{j+1}-\sigma_{j}\le n_{j}+1\le\frac{6}{\gamma_{L}}\left(2\log^{+}|\mathbf{X}_{\sigma_{j}}(\mathbf{x}_{0})|+C_{1}\right)+2\;.\label{eq:bdgap}
\end{equation}
Hence, by the induction hypothesis, we get 
\[
\mathbb{E}\sigma_{j+1}\le\mathbb{E}\sigma_{j}+\frac{12}{\gamma_{L}}\mathbb{E}\log^{+}|\mathbf{X}_{\sigma_{j}}(\mathbf{x}_{0})|+\frac{6C_{1}}{\gamma_{L}}+2<\infty\;.
\]

Next look at $\mathbb{E}\log^{+}|\mathbf{X}_{\sigma_{j+1}}(\mathbf{x}_{0})|$.
By Lemma \ref{lem:log+}, we have 
\begin{align*}
\mathbb{E}\left[\log^{+}|\mathbf{X}_{n+1}(\mathbf{x}_{0})|\ |\ \mathcal{F}_{n}\right] & \le\mathbb{E}\left[\log^{+}|\mathbf{A}_{n+1}\mathbf{X}_{n}(\mathbf{x}_{0})|+\log^{+}|\mathbf{B}_{n+1}|\ |\ \mathcal{F}_{n}\right]+\log2\\
 & \le\log^{+}|\mathbf{X}_{n}(\mathbf{x}_{0})|+\mathbb{E}\log^{+}\Vert\mathbf{A}\Vert+\mathbb{E}\log^{+}|\mathbf{B}|+\log2\;.
\end{align*}
Hence, by letting $C_{2}:=\mathbb{E}\log^{+}\Vert\mathbf{A}\Vert+\mathbb{E}\log^{+}|\mathbf{B}|+\log2>0$
and defining 
\[
Z_{n}=\log^{+}|\mathbf{X}_{n}(\mathbf{x}_{0})|-C_{2}n\;\;\;\;;\;n\in\mathbb{Z}^{+}\;,
\]
we can notice that $(Z_{n})_{n\in\mathbb{Z}_{0}^{+}}$ is a $(\mathcal{F}_{n})_{n\in\mathbb{Z}_{0}^{+}}$-supermartingale.
Thus, by the optimal stopping theorem, we have 
\begin{equation}
\mathbb{E}\log^{+}|\mathbf{X}_{n\land\sigma_{j+1}}(\mathbf{x}_{0})|\le C_{2}\mathbb{E}\left[n\land\sigma_{j+1}\right]+\log^{+}|\mathbf{x}_{0}|\label{eq:dc1}
\end{equation}
for all $n\in\mathbb{Z}^{+}$. Since $\sigma_{j}<\infty$ almost surely,
by Fatou's lemma (at the left-hand side) and dominated convergence
theorem (at the right-hand side, as we already verified that $\mathbb{E}(\sigma_{j+1})<\infty$),
we get 
\begin{align}
\mathbb{E}\log^{+}|\mathbf{X}_{\sigma_{j+1}}(\mathbf{x}_{0})| & \le C_{2}\mathbb{E}\sigma_{j+1}+\log^{+}|\mathbf{x}_{0}|<\infty\;.\label{eq:dc2}
\end{align}
This completes the induction step. 
\end{proof}
Define a new filtration $(\mathcal{G}_{j})_{j=0}^{\infty}$ by $\mathcal{G}_{j}=\mathcal{F}_{\sigma_{j}}$
and then we define, for $j\ge0$, 
\begin{equation}
M_{j}:=\log^{+}|\mathbf{X}_{\sigma_{j}}(\mathbf{x}_{0})|-\frac{\gamma_{L}}{4}\sigma_{j}\;.\label{eq:Mj}
\end{equation}
Note that by Lemma \ref{lem:int}, we have $\mathbb{E}|M_{j}|<\infty$
for all $j\in\mathbb{Z}^{+}$. Then, by \eqref{eq:subm}, the sequence
$(M_{j})_{j=0}^{\infty}$ is a $(\mathcal{G}_{j})_{j=0}^{\infty}$-submartingale.
Next we define 
\[
\theta=\theta_{R}:=\inf\left\{ j\ge0:\max\left\{ |\mathbf{X}_{0}(\mathbf{x}_{0})|,\,|\mathbf{X}_{1}(\mathbf{x}_{0})|,\,\dots,\,|\mathbf{X}_{\sigma_{j}}(\mathbf{x}_{0})|\right\} \ge R\right\} \;.
\]
Here, $\theta$ is a $(\mathcal{G}_{j})_{j=0}^{\infty}$-stopping
time since
\[
\left\{ \theta\le n\right\} =\bigcup_{k=1}^{\sigma_{n}}\left\{ \,|\mathbf{X}_{k}(\mathbf{x}_{0})|\ge R\,\right\} \;.
\]
Note that $\theta<\infty$ almost surely by Proposition \ref{Prop_expl_exit time finite}. 
\begin{lem}
\label{lem:extloc}With the notation above, there exists a constant
$C_{3}>0$ such that 
\[
\mathbb{E}\log^{+}|\mathbf{X}_{\sigma_{\theta}}(\mathbf{x}_{0})|\le C_{3}\left(1+\log R\right)\;.
\]
\end{lem}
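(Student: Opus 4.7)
The plan is to exploit the fact that, by the very definition of $\theta$, the process stays strictly inside $\mathcal{B}_R$ at the stopping time $\sigma_{\theta-1}$, and then to control how much $|\mathbf{X}_n|$ can grow over the (short) remaining excursion from $\sigma_{\theta-1}$ to $\sigma_\theta$. I would first handle the trivial case $|\mathbf{x}_0| \ge R$ separately (where $\theta = 0$, $\sigma_\theta = 0$, and the bound is immediate for $|\mathbf{x}_0|$ comparable to $R$), and then focus on $|\mathbf{x}_0| < R$, for which $\theta \ge 1$ and $|\mathbf{X}_{\sigma_{\theta-1}}(\mathbf{x}_0)| < R$.

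Applied at $j = \theta - 1$, the deterministic gap bound \eqref{eq:bdgap} gives
\[
\sigma_\theta - \sigma_{\theta-1} \le \frac{6}{\gamma_L}\bigl(2\log^+|\mathbf{X}_{\sigma_{\theta-1}}(\mathbf{x}_0)| + C_1\bigr) + 2 \le C_\ast(1 + \log R)
\]
almost surely, for a constant $C_\ast > 0$ depending only on $\gamma_L$ and $C_1$. This is the key reduction: the remaining excursion has \emph{non-random} length at most $N := \lceil C_\ast(1+\log R)\rceil$.

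Next I would iterate the one-step inequality
\[
\log^+|\mathbf{X}_{n+1}| \le \log^+|\mathbf{X}_n| + \log^+\|\mathbf{A}_{n+1}\| + \log^+|\mathbf{B}_{n+1}| + \log 2,
\]
which follows from $|\mathbf{A}\mathbf{x}+\mathbf{B}| \le \|\mathbf{A}\||\mathbf{x}| + |\mathbf{B}|$ and Lemma \ref{lem:log+}, telescoping from $n = \sigma_{\theta-1}$ to $n = \sigma_\theta - 1$. Using non-negativity of each summand together with $\sigma_\theta \le \sigma_{\theta-1}+N$ yields
\[
\log^+|\mathbf{X}_{\sigma_\theta}(\mathbf{x}_0)| \le \log^+|\mathbf{X}_{\sigma_{\theta-1}}(\mathbf{x}_0)| + \sum_{k=1}^{N}\bigl(\log^+\|\mathbf{A}_{\sigma_{\theta-1}+k}\| + \log^+|\mathbf{B}_{\sigma_{\theta-1}+k}|\bigr) + N\log 2.
\]
Taking expectations and invoking the strong Markov property at the stopping time $\sigma_{\theta-1}$ (so that $(\mathbf{A}_{\sigma_{\theta-1}+k},\mathbf{B}_{\sigma_{\theta-1}+k})_{k\ge 1}$ is an i.i.d.\ copy of $(\mathbf{A},\mathbf{B})_{k\ge 1}$, independent of $\mathcal{F}_{\sigma_{\theta-1}}$), together with the bounds $\log^+|\mathbf{X}_{\sigma_{\theta-1}}| \le \log^+ R$ and $N \le C_\ast(1+\log R) + 1$, gives
\[
\mathbb{E}\bigl[\log^+|\mathbf{X}_{\sigma_\theta}(\mathbf{x}_0)|\bigr] \le \log^+ R + \bigl(C_\ast(1+\log R)+1\bigr)\,C_2,
\]
where $C_2 := \mathbb{E}\log^+\|\mathbf{A}\| + \mathbb{E}\log^+|\mathbf{B}| + \log 2 < \infty$ is the constant already introduced in the proof of Lemma \ref{lem:int}. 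Absorbing all constants into a single $C_3$ yields the claim.

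The main subtlety to watch is the interplay between the random exit index $\sigma_\theta$ and the strong Markov application: because $\sigma_\theta$ depends on the future randomness after $\sigma_{\theta-1}$, one cannot directly apply Wald-type identities. The clean way around this is to first pass to the \emph{deterministic} upper bound $N$ (which is legitimate thanks to \eqref{eq:bdgap} combined with $|\mathbf{X}_{\sigma_{\theta-1}}| < R$) and to exploit the non-negativity of $\log^+$ before taking expectations. Everything else is a routine application of the telescoping inequality and the strong Markov property.
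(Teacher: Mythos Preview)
Your argument has a genuine gap: $\sigma_{\theta-1}$ is \emph{not} an $(\mathcal{F}_n)$-stopping time, so the strong Markov property cannot be invoked there. Concretely, the event $\{\sigma_{\theta-1}=0\}=\{\theta=1\}$ requires knowing whether the process exits $\mathcal{B}_R$ somewhere in the window $\llbracket 1,\sigma_1\rrbracket$, which is not $\mathcal{F}_0$-measurable. More generally, $\{\theta-1\le j\}=\{\theta\le j+1\}$ lies in $\mathcal{G}_{j+1}=\mathcal{F}_{\sigma_{j+1}}$ but not in $\mathcal{G}_j$, so $\theta-1$ is not a $(\mathcal{G}_j)$-stopping time and $\sigma_{\theta-1}$ is not an $(\mathcal{F}_n)$-stopping time. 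Consequently, $(\mathbf{A}_{\sigma_{\theta-1}+k},\mathbf{B}_{\sigma_{\theta-1}+k})_{k\ge1}$ need not be an i.i.d.\ copy of $(\mathbf{A},\mathbf{B})$ independent of the past, and the passage from your pathwise telescoping bound to the expectation bound is unjustified. The subtlety you flag in your final paragraph---that $\sigma_\theta$ looks into the future of $\sigma_{\theta-1}$---is only half the story; the deeper problem is that the anchor $\sigma_{\theta-1}$ itself already looks into the future.

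The paper repairs this by anchoring at $\tau_R(\mathbf{x}_0)$ rather than $\sigma_{\theta-1}$. Since $\sigma_{\theta-1}<\tau_R\le\sigma_\theta$ by the definition of $\theta$, your gap estimate \eqref{eq:bdgap} still yields the deterministic bound $\sigma_\theta-\tau_R\le\sigma_\theta-\sigma_{\theta-1}\le C_\ast(1+\log R)$, and now the supermartingale/telescoping step is legitimate because $\tau_R$ \emph{is} a stopping time. One obtains $\mathbb{E}\log^+|\mathbf{X}_{\sigma_\theta}|\le C_2\,\mathbb{E}[\sigma_\theta-\tau_R]+\mathbb{E}\log^+|\mathbf{X}_{\tau_R}|$, and the second term is controlled by Lemma~\ref{lem_expl_exit loc} (which is where Assumption~\ref{Ass_Exp}-(3) enters). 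Had your argument been valid it would have bypassed Lemma~\ref{lem_expl_exit loc} entirely via the direct bound $\log^+|\mathbf{X}_{\sigma_{\theta-1}}|\le\log R$; the measurability obstruction is precisely what forces this extra ingredient.
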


\begin{proof}
For $0\le n\le\sigma_{\theta}$, by the same argument with the one
from \eqref{eq:dc1} to \eqref{eq:dc2}, we get 
\begin{align*}
\mathbb{E}\left[\log^{+}|\mathbf{X}_{\sigma_{\theta}}(\mathbf{x}_{0})|\ |\ \mathcal{F}_{n}\right] & \le C_{2}(\sigma_{\theta}-n)+\log^{+}|\mathbf{X}_{n}(\mathbf{x}_{0})|
\end{align*}
Therefore, we can deduce
\begin{align}
\mathbb{E}\log^{+}|\mathbf{X}_{\sigma_{\theta}}(\mathbf{x}_{0})| & =\sum_{n=0}^{\infty}\mathbb{E}\left[\log^{+}|\mathbf{X}_{\sigma_{\theta}}(\mathbf{x}_{0})|\cdot\mathbf{1}\left\{ \tau_{R}(\mathbf{x}_{0})=n\right\} \right]\nonumber \\
 & =\sum_{n=0}^{\infty}\mathbb{E}\left[\mathbb{E}\left[\log^{+}|\mathbf{X}_{\sigma_{\theta}}(\mathbf{x}_{0})|\ |\ \mathcal{F}_{n}\right]\cdot\mathbf{1}\left\{ \tau_{R}(\mathbf{x}_{0})=n\right\} \right]\nonumber \\
 & \le\sum_{n=0}^{\infty}\mathbb{E}\left[\left(C_{2}(\sigma_{\theta}-n)+\log^{+}|\mathbf{X}_{n}(\mathbf{x}_{0})|\right)\cdot\mathbf{1}\left\{ \tau_{R}(\mathbf{x}_{0})=n\right\} \right]\nonumber \\
 & =C_{2}\mathbb{E}\left[\sigma_{\theta}-\tau_{R}(\mathbf{x}_{0})\right]+\mathbb{E}\log^{+}|\mathbf{X}_{\tau_{R}}(\mathbf{x}_{0})|\;.\label{eq:bdlo1}
\end{align}
Since $\sigma_{\theta-1}<\tau_{R}(\mathbf{x}_{0})\le\sigma_{\theta}$
by definition, we obtain from \eqref{eq:bdgap} and the fact $|\mathbf{X}_{\sigma_{\theta-1}}(\mathbf{x}_{0})|\le R$
that 
\[
\sigma_{\theta}-\sigma_{\theta-1}\le\frac{6}{\gamma_{L}}\left(2\log^{+}|\mathbf{X}_{\sigma_{\theta-1}}(\mathbf{x}_{0})|+C_{1}\right)+2\le\frac{6}{\gamma_{L}}\left(2R+C_{1}\right)+2\ ,
\]
so we get
\[
\mathbb{E}\left[\sigma_{\theta}-\tau_{R}(\mathbf{x}_{0})\right]\le\mathbb{E}\left[\sigma_{\theta}-\sigma_{\theta-1}\right]\le\frac{6}{\gamma_{L}}\left(2R+C_{1}\right)+2\ .
\]
Injecting this and Lemma \ref{lem_expl_exit loc} to \ref{eq:bdlo1}
completes the proof. 
\end{proof}
Now we complete the proof of Proposition \ref{prop:exp_up}. 
\begin{proof}[Proof of Proposition \ref{prop:exp_up}]
Since $(M_{j})_{j=0}^{\infty}$ is a $(\mathcal{G}_{j})_{j=0}^{\infty}$-submartingale,
and $\theta$ is a $(\mathcal{G}_{j})_{j=0}^{\infty}$ stopping time,
by the optional stopping theorem, we get, for all $j\in\mathbb{Z}^{+}$,
\begin{equation}
\mathbb{E}M_{\theta\land j}\ge\mathbb{E}M_{0}=\log^{+}|\mathbf{x}|\;.\label{eq:ineq1}
\end{equation}
On the other hand, by definition \eqref{eq:Mj} of $M_{j}$, the fact
that $|\mathbf{X}_{\sigma_{\theta\land j}}(\mathbf{x}_{0})|\le|\mathbf{X}_{\sigma_{\theta}}(\mathbf{x}_{0})|\wedge R$,
and Lemma \ref{lem:extloc}, we obtain, for all $j\in\mathbb{Z}^{+}$,
\begin{align*}
\mathbb{E}M_{\theta\land j} & \le\mathbb{E}\log^{+}(|\mathbf{X}_{\sigma_{\theta}}(\mathbf{x}_{0})|\wedge R)-\frac{\gamma_{L}}{4}\mathbb{E}\left[\sigma_{\theta\land j}\right]\\
 & \le C(1+\log R)-\frac{\gamma_{L}}{4}\mathbb{E}\left[\sigma_{\theta\land j}\right]\;.
\end{align*}
Combining this bound with \eqref{eq:ineq1}, and then letting $j\rightarrow\infty$,
we get
\[
\mathbb{E}\left[\sigma_{\theta}\right]\le C(1+\log R)-C'\log^{+}|\mathbf{x}|
\]
Since $\tau_{R}(\mathbf{x}_{0})\le\sigma_{\theta}$, the proof is
completed. 
\end{proof}

\subsection{Proof of Theorem \ref{Thm_main_expl}}

We are now finally ready to prove Theorem \ref{Thm_main_expl}. 
\begin{proof}[Proof of Theorem \ref{Thm_main_expl}]
By Proposition \ref{prop:exp_lb} and Fatou's theorem, we get
\begin{equation}
\liminf_{R\rightarrow\infty}\frac{\mathbb{E}\left[\tau_{R}(\mathbf{x}_{0})\right]}{\log R}\ge\mathbb{E}\left[\liminf_{R\rightarrow\infty}\frac{\tau_{R}(\mathbf{x}_{0})}{\log R}\right]=\frac{1}{\gamma_{L}}\;.\label{eq:expl_lb}
\end{equation}
On the other hand, by Proposition \ref{prop:exp_up},
we have 
\begin{equation}
\limsup_{R\rightarrow\infty}\frac{\mathbb{E}\left[\tau_{R}(\mathbf{x}_{0})\right]}{\log R}\le\kappa_{1}\label{eq:expl_ub}
\end{equation}
where $\kappa_{1}$ is the constant appeared in the statement of Proposition
\ref{prop:exp_up}. Combining \eqref{eq:expl_lb} and \eqref{eq:expl_ub}
completes the proof. 
\end{proof}

\appendix

\section{Verifications of Assumptions for Example Models}

In this appendix, we confirm that the time series model (namely, the
ARCH and the GARCH models) introduced in Section \ref{sec32} satisfy
the assumptions of Section \ref{sec2} so that we can apply Theorems
\ref{Thm_main_cont} and \ref{Thm_main_expl} according to the sign
of the Lyapunov exponent $\gamma_{L}$. 

\subsection{Model review }

We recall from Section \ref{sec32} that ARCH($p$) and GARCH(1, $p$)
models can be viewed as the process $\mathbf{X}_{n}$ defined in Definition
\ref{def:AMP} with
\begin{equation}
\mathbf{A}_{n}=\begin{pmatrix}\alpha_{1}W_{n}^{2}+\beta_{1} & \ \alpha_{2}W_{n}^{2}+\beta_{2} & \cdots & \alpha_{p-1}W_{n}^{2}+\beta_{p-1} & \ \alpha_{p}W_{n}^{2}+\beta_{p}\\
1 & 0 & \cdots & 0 & 0\\
0 & 1 & \cdots & 0 & 0\\
\vdots & \vdots & \cdots & \vdots & \vdots\\
0 & 0 & \cdots & 1 & 0
\end{pmatrix}\;,\;\;\;\ \mathbf{B}_{n}=\begin{pmatrix}\alpha_{0}W_{n}^{2}+\beta_{0}\\
0\\
0\\
\vdots\\
0
\end{pmatrix}\;,\label{eq:tAB}
\end{equation}
where $(W_{n})_{n\ge0}$ are i.i.d. normally distributed random variables
(with mean $m$ and variance $\sigma^{2}$) and $\alpha_{i},\,\beta_{i}\ge0$
for all $0\le i\le p$. In particular, ARCH and GARCH models satisfy
the following assumptions. 
\begin{itemize}
\item ARCH($p$): $\beta_{0}=\beta_{1}=\cdots=\beta_{p}=0$ and $\alpha_{0},\,\alpha_{p}>0$. 
\item GARCH(1, $p$): $\alpha_{2}=\cdots=\alpha_{p}=0$, $\alpha_{0}=0$,
and $\alpha_{1},\,\beta_{0},\,\beta_{p}>0$. 
\end{itemize}
In addition, we assume $\alpha_{1}\neq0$ for the ARCH model for technical
reason, as without this condition the Assumption \ref{Ass_Cont5}
is invalid for e.g., ARCH(2) model. We note that this assumption is
satisfied in the usual real-world time series model. We focus this
section only on the ARCH model, as the proof for the GARCH model is
very similar to that for the ARCH model as they share the expression
\eqref{eq:tAB}. 
\begin{notation}
We denote in this appendix by $W$ the normal random variable with
mean $m$ and variance $\sigma^{2}$ (so that $W$ has the same distribution
with $W_{n}$, $n\ge0$, appeared above) and denote by $\mathbf{A}$
and $\mathbf{B}$ the matrix and vector obtained from $\mathbf{A}_{n}$
and $\mathbf{B}_{n}$ in \eqref{eq:tAB} by replacing $W_{n}$ with
$W$. Since the model depends only on the square of $W_{n}$, we can
assume without generality that $m\ge0$. 
\end{notation}

\subsection{Lyapunov exponent }

In the lemma below, we prove that the Lyapunov exponent $\gamma_{L}$
can be either positive or negative, so that both contractive and explosive
regimes are required to be analyzed. 
\begin{lem}
\label{lem:t_lya}For the ARCH$(p)$ model, we have that 
\[
\lim_{\alpha_{1}\rightarrow\infty}\gamma_{L}=\infty\;\;\;\;\text{and\;\;\;\;}\lim_{(\alpha_{1},\,\dots,\,\alpha_{p})\rightarrow(0,\,\dots,\,0)}\gamma_{L}=-\infty\;.
\]
\end{lem}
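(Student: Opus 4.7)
The plan is to derive matching one-sided bounds on $\gamma_L$ in terms of the coefficients $\alpha_i$, using that $\mathbb{E}|\log|W|| < \infty$ for $W \sim \mathcal{N}(m, \sigma^2)$ (which is standard, since the normal density is bounded and tames the only singularity at $0$). The key algebraic observation underpinning both bounds is the decomposition
\[
\mathbf{A}_k \;=\; W_k^2\, R + N, \qquad R := \mathbf{e}_1 \mathbf{a}^{\dagger},\quad \mathbf{a} := (\alpha_1, \dots, \alpha_p)^{\dagger},
\]
where $\mathbf{e}_1$ is the first standard basis vector and $N$ is the deterministic subdiagonal shift matrix ($N_{i+1, i} = 1$ for $i \in \{1, \dots, p-1\}$ and zero elsewhere), so that $N^p = 0$. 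One checks immediately that $\Vert R \Vert = |\mathbf{a}|$ and $\Vert N \Vert = 1$.

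For the first limit ($\alpha_1 \to \infty$), I would bound $\Vert \Pi_n \Vert$ from below by tracking the first column $\mathbf{c}_n := \Pi_n \mathbf{e}_1$. The shift structure of $\mathbf{A}_n$ gives $c_n^{(j)} = c_{n-1}^{(j-1)}$ for $j \geq 2$ and $c_n^{(1)} = W_n^2 \sum_{j=1}^p \alpha_j c_{n-1}^{(j)}$; since $\mathbf{c}_0 = \mathbf{e}_1 \geq 0$, a simple induction yields $\mathbf{c}_n \geq 0$ componentwise. Dropping every term except $\alpha_1 c_{n-1}^{(1)}$ gives $c_n^{(1)} \geq \alpha_1 W_n^2 c_{n-1}^{(1)}$, and iterating produces $c_n^{(1)} \geq \alpha_1^n \prod_{i=1}^n W_i^2$. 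Since $\Vert \Pi_n \Vert \geq |\Pi_n \mathbf{e}_1| \geq c_n^{(1)}$, taking logarithms and expectations yields $\mathbb{E}[\log \Vert \Pi_n \Vert]/n \geq \log \alpha_1 + 2\,\mathbb{E}[\log|W|]$, hence
\[
\gamma_L \;\geq\; \log \alpha_1 + 2\,\mathbb{E}[\log|W|] \;\longrightarrow\; \infty \quad \text{as } \alpha_1 \to \infty.
\]

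For the second limit ($\epsilon := \max_{1 \leq i \leq p} \alpha_i \to 0$), I would derive an upper bound by inspecting only $p$ steps. Submultiplicativity of $\Vert \cdot \Vert$ makes $a_n := \mathbb{E}[\log \Vert \Pi_n \Vert]$ subadditive, so Fekete's lemma gives $\gamma_L = \inf_n a_n/n \leq a_p/p$, and Jensen's inequality sharpens this to $\gamma_L \leq p^{-1} \log \mathbb{E}\Vert \Pi_p \Vert$. Expanding
\[
\Pi_p \;=\; \prod_{k=p}^{1}\bigl(W_k^2 R + N\bigr) \;=\; \sum_{S \subseteq \{1, \dots, p\}} \Bigl(\prod_{k \in S} W_k^2\Bigr)\, M_S,
\]
where $M_S$ is the ordered product obtained by inserting $R$ at the positions $k \in S$ and $N$ at the positions $k \notin S$, the crucial point is that $M_\emptyset = N^p = 0$, so every surviving term contains at least one $R$-factor. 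Submultiplicativity and $|\mathbf{a}| \leq \sqrt{p}\,\epsilon$ yield $\Vert M_S \Vert \leq (\sqrt{p}\,\epsilon)^{|S|}$, and independence of the $W_k$'s gives
\[
\mathbb{E}\Vert \Pi_p \Vert \;\leq\; \sum_{k=1}^p \binom{p}{k} \bigl(\sqrt{p}\,\epsilon\,\mathbb{E} W^2\bigr)^k \;=\; \bigl(1 + \sqrt{p}\,\epsilon\,\mathbb{E} W^2\bigr)^p - 1 \;=\; O(\epsilon) \quad (\epsilon \to 0).
\]
Consequently $\gamma_L \leq p^{-1} \log \mathbb{E}\Vert \Pi_p \Vert \to -\infty$.

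The main obstacle is the upper bound: one must unpack the $2^p$-term expansion cleanly and exploit the nilpotency $N^p = 0$ to kill the only $O(1)$ contribution $M_\emptyset$, so that every surviving term inherits at least one factor of $\epsilon$. The rest is routine bookkeeping: bounding each $M_S$ by the product of operator norms, applying Tonelli and independence, and invoking Fekete together with Jensen to translate an estimate on $\mathbb{E}\Vert \Pi_p \Vert$ into an estimate on $\gamma_L$.
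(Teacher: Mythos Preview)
Your proof is correct and follows essentially the same approach as the paper: the lower bound tracks the $(1,1)$-entry of $\Pi_n$ via nonnegativity to get $\gamma_L \geq \log\alpha_1 + 2\,\mathbb{E}\log|W|$, and the upper bound uses the decomposition $\mathbf{A}_k = W_k^2 R + N$ with $N^p = 0$, expands the $p$-fold product, and combines subadditivity with Jensen to obtain $\gamma_L \leq p^{-1}\log\mathbb{E}\Vert\Pi_p\Vert \to -\infty$. The only cosmetic difference is that the paper passes through $\mathbb{E}\log\Vert\Pi_p\Vert$ and invokes dominated convergence, whereas you bound $\log\mathbb{E}\Vert\Pi_p\Vert$ directly via the explicit binomial sum; your route is slightly more streamlined but substantively identical.
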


\begin{proof}
Since every elements of $\mathbf{A}$ are non-negative, the same holds
for $\mathbf{A}_{1}\cdots\mathbf{A}_{n}$ and hence we have 
\begin{equation}
\Vert\mathbf{A}_{1}\cdots\mathbf{A}_{n}\Vert\ge[\mathbf{A}_{1}\cdots\mathbf{A}_{n}]_{1,\,1}\ge[\mathbf{A}_{1}]_{1,\,1}\cdots[\mathbf{A}_{n}]_{1,\,1}\ge\alpha_{1}^{n}\prod_{t=1}^{n}W_{t}^{2}\;,\label{eq:mmnbd}
\end{equation}
where $[\mathbf{U}]_{i,\,j}$denotes the ($i,\,j$)-th component of
the matrix $\mathbf{U}$. Therefore, we immediately have 
\[
\gamma_{L}=\lim_{n\rightarrow\infty}\frac{1}{n}\mathbb{E}\log\Vert\mathbf{A}_{1}\cdots\mathbf{A}_{n}\Vert\ge\log\alpha_{1}+\log\mathbb{E}W^{2}\;.
\]
This proves the first assertion of the Lemma. 

For the second assertion, we write $\mathbf{A}_{n}=W_{n}^{2}\mathbf{D}+\mathbf{E}$
where
\begin{equation}
\mathbf{D}:=\begin{pmatrix}\alpha_{1} & \alpha_{2} & \cdots & \alpha_{p-1} & \alpha_{p}\\
0 & 0 & \cdots & 0 & 0\\
0 & 0 & \cdots & 0 & 0\\
\vdots & \vdots & \cdots & \vdots & \vdots\\
0 & 0 & \cdots & 0 & 0
\end{pmatrix},\ \mathbf{E}:=\begin{pmatrix}0 & 0 & \cdots & 0 & 0\\
1 & 0 & \cdots & 0 & 0\\
0 & 1 & \cdots & 0 & 0\\
\vdots & \vdots & \cdots & \vdots & \vdots\\
0 & 0 & \cdots & 1 & 0
\end{pmatrix}\ .\label{eq:mDE}
\end{equation}
Then, since $\mathbf{E}^{p}=\mathbf{O}$, we can apply the triangle
inequality and the submultiplicativity of the matrix norm at the right-hand
side (after expanding) of 
\[
\mathbf{A}_{1}\cdots\mathbf{A}_{p}=(W_{1}^{2}\mathbf{D}+\mathbf{E})\cdots(W_{p}^{2}\mathbf{D}+\mathbf{E})
\]
to deduce (since $\Vert\mathbf{E}\Vert=1$)
\[
\Vert\mathbf{A}_{1}\cdots\mathbf{A}_{p}\Vert\le(W_{1}^{2}\Vert\mathbf{D}\Vert+1)\cdots(W_{p}^{2}\Vert\mathbf{D}\Vert+1)-1\ .
\]
Since $\Vert\mathbf{D}\Vert\le\sum_{i=1}^{p}\alpha_{i}$ , we have
\[
\limsup_{(\alpha_{1},\,\dots,\,\alpha_{p})\rightarrow\mathbf{0}}\Vert\mathbf{A}_{1}\cdots\mathbf{A}_{p}\Vert=0\ .
\]
Then, we can apply Jensen's inequality and dominated convergence theorem
to get
\[
\limsup_{(\alpha_{1},\,\dots,\,\alpha_{p})\rightarrow\mathbf{0}}\mathbb{E}\log\Vert\mathbf{A}_{1}\cdots\mathbf{A}_{p}\Vert\le\limsup_{(\alpha_{1},\,\dots,\,\alpha_{p})\rightarrow\mathbf{0}}\log\mathbb{E}\Vert\mathbf{A}_{1}\cdots\mathbf{A}_{p}\Vert=-\infty\ .
\]
This proves the second assertion since we have the following upper
bound on $\gamma_{L}$ thanks to the submultiplicativity of the matrix
norm and independence of the matrices $\mathbf{A}_{n}$: 
\[
\gamma_{L}=\lim_{m\rightarrow\infty}\frac{1}{mp}\log\mathbb{E}\Vert\mathbf{A}_{1}\cdots\mathbf{A}_{mp}\Vert\le\frac{1}{p}\log\mathbb{E}\Vert\mathbf{A}_{1}\cdots\mathbf{A}_{p}\Vert\ .
\]
\end{proof}

\subsection{Assumptions for contractive regime}

We first assume that $\gamma_{L}<0$ so that we are in the contractive
regime. In this regime, we shall verify that Assumptions \ref{Ass_Cont1},
\ref{Ass_Cont2}, \ref{Ass_Cont3}, \ref{Ass_Cont4} and \ref{Ass_Cont5}
are valid for the ARCH model.

\subsubsection*{Verification of Assumption \ref{Ass_Cont1}}

Since we have assumed $\gamma_{L}<0$ in this subsection, it suffices
to verify (2), (3), and (4) of Assumption \ref{Ass_Cont1}. First,
(2) and (3) are direct from the fact that the entries of $\mathbf{A}$
and $\mathbf{B}$ are normal and therefore have exponentially small
tail. 

For (4), suppose that $\mathbf{x}=(x_{1},\,\dots,x_{p})\in\mathbb{R}^{p}$
satisfies $\mathbf{A}\mathbf{x}+\mathbf{B}=\mathbf{x}$ almost surely.
Then, by comparing the $2$nd to the $p$th coordinates of each side,
we get $x_{1}=x_{2}=\cdots=x_{p}$. Furtheremore, by comparing the
first coordinate, we get 
\[
\sum_{i=1}^{p}(\alpha_{i}W^{2}+\beta_{i})x_{1}+\alpha_{0}W^{2}+\beta_{0}=x_{1}
\]
almost surely. Thus, we have 
\[
\left(\sum_{i=1}^{p}\alpha_{i}\right)x_{1}+\alpha_{0}=0\;\;\;\;\;\text{and}\;\;\;\;\left(\sum_{i=1}^{p}\beta_{i}\right)x_{1}+\beta_{0}=x_{1}\;.
\]
For the ARCH model, this implies $x_{1}=0$ as we have $\beta_{0}=\beta_{1}=\cdots=\beta_{p}=0$.
Summing up, we must have $\mathbf{x}=\mathbf{0}$ and thus, from $\mathbf{A}\mathbf{x}+\mathbf{B}=\mathbf{x}$
we get $\mathbf{B}=\mathbf{0}$ almost surely. This is a contradiction
and therefore we verified all the requirements of Assumption \ref{Ass_Cont1}.

\subsubsection*{Verification of Assumption \ref{Ass_Cont2}}

It suffices to check $\lim_{s\rightarrow\infty}h_{\mathbf{A}}(s)=\infty$.
By \eqref{eq:mmnbd}, we have 
\[
\left(\mathbb{E}\Vert\mathbf{A}_{1}\cdots\mathbf{A}_{n}\Vert^{s}\right)^{1/n}\ge\alpha_{1}^{s}\mathbb{E}W^{2s}\ ,
\]
and therefore $h_{\mathbf{A}}(s)\ge\alpha_{1}^{s}\mathbb{E}W^{2s}$.
On the other hand, since $W\sim\mathcal{N}(m,\,\sigma^{2})$, it follows
from the well-known identity that 
\[
\mathbb{E}W^{2s}\ge\mathbb{E}\left|W-m\right|^{2s}=\frac{(2s)!}{2^{s}s!}\sigma^{2s}\ \ \ \ ;\ s\in\mathbb{Z}^{+}\ ,
\]
and hence we can conclude that 
\[
h_{\mathbf{A}}(s)\ge\alpha_{1}^{s}\mathbb{E}W^{2s}\ge\frac{(2s)!}{s!}\left(\frac{\alpha\sigma_{1}^{2}}{2}\right)^{s}\ge\left(\frac{\alpha\sigma_{1}^{2}}{2}s\right)^{s}\;.
\]
This completes the proof of $\lim_{s\rightarrow\infty}h_{\mathbf{A}}(s)=\infty$
and hence the confirmation of Assumption \ref{Ass_Cont2}.

\subsubsection*{Verification of Assumption \ref{Ass_Cont3}}

For Assumption \ref{Ass_Cont3}, the unboundedness of $\mathbf{B}$
(thanks to the unboundedness of the normal random variable) directly
implies that of the support of $\nu_{\infty}$. 

\subsubsection*{Verification of Assumption \ref{Ass_Cont4}}

We start from the following elementary fact. 
\begin{lem}
\label{lem:normal ineq}For all $s>0$, there exists $u_{0}=u_{0}(s)>0$
such that, for all $u_{1},\,u_{2}$ such that $u_{0}\le u_{1}\le u_{2}$,
we have 
\[
\frac{\mathbb{P}(W>u_{2})}{\mathbb{P}(W>u_{1})}\le2\left(\frac{u_{1}}{u_{2}}\right)^{s}\;.
\]
\end{lem}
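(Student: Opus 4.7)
\medskip
\noindent\textbf{Proof plan for Lemma \ref{lem:normal ineq}.}
The plan is to leverage the two-sided Mills ratio estimates for the Gaussian tail, which exhibit exponential decay that easily dominates the polynomial target $(u_1/u_2)^s$. Writing $Z=(W-m)/\sigma \sim \mathcal{N}(0,1)$ and setting $v_i = u_i - m$, so that $\mathbb{P}(W > u_i) = \mathbb{P}(Z > v_i/\sigma)$, I would invoke the classical bounds
\[
\Bigl(\tfrac{1}{z}-\tfrac{1}{z^3}\Bigr)\phi(z) \;\le\; \mathbb{P}(Z>z) \;\le\; \tfrac{\phi(z)}{z}, \qquad z>0,
\]
where $\phi$ is the standard normal density, together with $1/z - 1/z^3 \ge 1/(2z)$ for $z \ge \sqrt{2}$. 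Provided $u_0$ is chosen large enough to force $v_1/\sigma \ge \sqrt{2}$, dividing the upper bound at $z = v_2/\sigma$ by the lower bound at $z = v_1/\sigma$ produces
\[
\frac{\mathbb{P}(W>u_2)}{\mathbb{P}(W>u_1)} \;\le\; \frac{2v_1}{v_2}\exp\!\Bigl(-\frac{v_2^2 - v_1^2}{2\sigma^2}\Bigr).
\]

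The factor $2$ on the right-hand side of the lemma matches the factor produced by the Mills ratio, so the remaining task is to establish
\[
\frac{v_1}{v_2}\exp\!\Bigl(-\frac{v_2^2-v_1^2}{2\sigma^2}\Bigr) \;\le\; \Bigl(\frac{u_1}{u_2}\Bigr)^{s}.
\]
Setting $r = u_2/u_1 \ge 1$ and using $v_2^2 - v_1^2 = (u_2-u_1)(u_1+u_2-2m)$, one takes $u_0 \ge 4m$ so that $u_1+u_2-2m \ge (u_1+u_2)/2$, which yields the clean comparison
\[
\frac{v_2^2 - v_1^2}{2\sigma^2} \;\ge\; \frac{u_1^2(r^2-1)}{4\sigma^2}.
\]
Since $v_1/v_2 \le 1$ when $u_1 \le u_2$, taking logarithms reduces the desired inequality to
\[
\frac{u_1^2(r^2-1)}{4\sigma^2} \;\ge\; s\log r,
\]
which, through the elementary fact $r^2-1 \ge 2\log r$ on $[1,\infty)$, holds uniformly in $r \ge 1$ as soon as $u_1 \ge \sigma\sqrt{2s}$.

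Putting everything together, the choice $u_0 := \max\{m + \sqrt{2}\sigma,\; 4m,\; \sigma\sqrt{2s}\}$ satisfies all three threshold requirements simultaneously and closes the argument. No substantial obstacle is anticipated: the exponential Gaussian decay beats the polynomial target by a wide margin, and the only bookkeeping is the mild discrepancy between $v_i = u_i-m$ and $u_i$, which is absorbed by taking $u_0$ comparable to $m$.
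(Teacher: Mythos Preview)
Your proof is correct and follows essentially the same route as the paper: both use the standard two-sided Mills ratio bounds to obtain $\frac{\mathbb{P}(W>u_2)}{\mathbb{P}(W>u_1)} \le \frac{2u_1'}{u_2'}\exp\bigl(-\tfrac{(u_2')^2-(u_1')^2}{2\sigma^2}\bigr)$ (with $u_i'$ the centered values), and then an elementary logarithmic inequality to dominate the exponential factor by the target $(u_1/u_2)^s$. The only cosmetic differences are that the paper first normalizes to $m=0,\sigma=1$ and then shifts back (using $(u_1-m)/(u_2-m)\le u_1/u_2$), and it retains the $u_1/u_2$ prefactor to supply one of the $s$ powers, whereas you discard $v_1/v_2\le 1$ and extract all $s$ powers from the exponential via $r^2-1\ge 2\log r$; these choices are interchangeable.
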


\begin{proof}
First, we assume that $m=0,\,\sigma=1$ so that $W$ is a standard
normal random variable. Then, using basic inequality
\begin{equation}
\frac{a}{a^{2}+1}\cdot\frac{1}{\sqrt{2\pi}}e^{-\frac{a^{2}}{2}}\le\mathbb{P}(W>a)\le\frac{1}{a}\cdot\frac{1}{\sqrt{2\pi}}e^{-\frac{a^{2}}{2}}\ \ \ \ ;\ a>0\ ,\label{eq:normal approx}
\end{equation}
we get for any $u_{1},\,u_{2}>1$, 
\begin{align}
\frac{\mathbb{P}(W>u_{2})}{\mathbb{P}(W>u_{1})} & \le\frac{\frac{1}{u_{2}}e^{-u_{2}^{2}/2}}{\frac{u_{1}}{u_{1}^{2}+1}e^{-u_{1}^{2}/2}}\le\frac{\frac{1}{u_{2}}e^{-u_{2}^{2}/2}}{\frac{1}{2u_{1}}e^{-u_{1}^{2}/2}}=\left(\frac{2u_{1}}{u_{2}}\right)e^{-\frac{u_{2}^{2}-u_{1}^{2}}{2}}\ .\label{eq:wu1u2}
\end{align}
Observing that if $u_{1}\ge\sqrt{s}$, using elementary inequality
$x\le e^{x-1}$, we have that
\[
\left(\frac{u_{2}}{u_{1}}\right)^{s-1}\le\left(\frac{u_{2}}{u_{1}}\right)^{s}\le e^{s\cdot\frac{u_{2}-u_{1}}{u_{1}}}\le e^{u_{1}(u_{2}-u_{1})}\le e^{\frac{(u_{2}+u_{1})(u_{2}-u_{1})}{2}}\ ,
\]
hence combining with \eqref{eq:wu1u2} yields the desired conclusion
with $u_{0}=\max\left\{ 1,\,\sqrt{s}\right\} $.

For general $m,\,\sigma$, we can deduce from the previous conclusion
that, for all $u_{1},\,u_{2}$ such that 
\[
\max\left\{ m+\sigma,\,m\sqrt{s}+\sigma\right\} \le u_{1}\le u_{2}\;,
\]
we have that 
\begin{equation}
\frac{\mathbb{P}(W>u_{2})}{\mathbb{P}(W>u_{1})}\le2\left(\frac{(u_{1}-m)/\sigma}{(u_{2}-m)/\sigma}\right)^{s}\le2\left(\frac{u_{1}-m}{u_{2}-m}\right)^{s}\le2\left(\frac{u_{1}}{u_{2}}\right)^{s}\;,\label{eq:wu1u2m}
\end{equation}
where the last inequality holds as $m<u_{1}\le u_{2}$. 
\end{proof}
Now we are ready to check Assumption \ref{Ass_Cont4}. Let $\mathbf{x}=(x_{1},\,\dots,x_{p})\in\mathbb{R}^{p}$
with $|\mathbf{x}|\le R$ and set $x_{0}:=1$. Then, we can write
\begin{equation}
|\mathbf{A}\mathbf{x}+\mathbf{B}|^{2}=\left(\left(\sum_{k=0}^{p}\alpha_{k}x_{k}\right)W^{2}+\left(\sum_{k=0}^{p}\beta_{k}x_{k}\right)\right)^{2}+\sum_{k=1}^{p-1}x_{k}^{2}\ .\label{eq:ax+B}
\end{equation}
Therefore, Assumption \ref{Ass_Cont4} is immediate if $\sum_{k=0}^{p}\alpha_{k}x_{k}=0$.
Suppose from now on that $\sum_{k=0}^{p}\alpha_{k}x_{k}\neq0$. 

Then, since $|\mathbf{x}|\le R$, we get from \eqref{eq:ax+B} that,
for any $z,\,R>0$, 
\begin{align*}
\mathbb{P}(|\mathbf{A}\mathbf{x}+\mathbf{B}|>R) & \ge\mathbb{P}\left(\left|\left(\sum_{k=0}^{p}\alpha_{k}x_{k}\right)W^{2}+\left(\sum_{k=0}^{p}\beta_{k}x_{k}\right)\right|>R\right)\\
 & \ge\mathbb{P}\left(\left|\sum_{k=0}^{p}\alpha_{k}x_{k}\right|W^{2}>\left(1+\sum_{k=0}^{p}|\beta_{k}|\right)R\right)\;.
\end{align*}
On the other hand, again by \eqref{eq:ax+B} again, we get 
\begin{align*}
\mathbb{P}(|\mathbf{A}\mathbf{x}+\mathbf{B}|>zR) & \le\mathbb{P}\left(\left(\left(\sum_{k=0}^{p}\alpha_{k}x_{k}\right)W^{2}+\left(\sum_{k=0}^{p}\beta_{k}x_{k}\right)\right)^{2}+(p-1)R^{2}>z^{2}R^{2}\right)\\
 & \le\mathbb{P}\left(2\left(\sum_{k=0}^{p}\alpha_{k}x_{k}\right)^{2}W^{4}>(z^{2}-C_{1})R^{2}\right)
\end{align*}
where $C_{1}:=p-1+2\left(\sum_{k=0}^{p}|\beta_{k}|\right)^{2}$. With
these estimates and Lemma \ref{lem:normal ineq} with $s:=2(\alpha+1)$
where $\alpha$ is from Assumption \ref{Ass_Cont2}, we get, for any
$R>0$ and $z>\left\{ 2\left(1+\sum_{k=0}^{p}|\beta_{k}|\right)^{2}+C_{1}\right\} ^{1/2}$,
\begin{align*}
\frac{\mathbb{P}(|\mathbf{A}\mathbf{x}+\mathbf{B}|>zR)}{\mathbb{P}(|\mathbf{A}\mathbf{x}+\mathbf{B}|>R)} & \le\frac{C_{2}}{(z^{2}-C_{1})^{(\alpha+1)/2}}
\end{align*}
for some constant $C_{2}>0$ independent of $\mathbf{x},\,z$, and
$R$. Assumption \ref{Ass_Cont4} immediately follows from this bound. 

\subsubsection*{Verification of Assumption \ref{Ass_Cont5}}

We shall now prove that, for any $\mathbf{x}_{0},\,\mathbf{y}_{0}\in\mathbb{R}^{p}\setminus\left\{ \mathbf{0}\right\} $, 

\begin{equation}
\mathbb{P}(\mathbf{x}_{0}\cdot\mathbf{A}_{1}\cdots\mathbf{A}_{2p-1}\mathbf{y}_{0}=0)<1\ ,\label{eq:xAy-1}
\end{equation}
so that Assumption \ref{Ass_Cont5} holds. Suppose on the contrary
that we have 

\begin{equation}
\mathbb{P}(\mathbf{x}_{0}\cdot\mathbf{A}_{1}\cdots\mathbf{A}_{2p-1}\mathbf{y}_{0}=0)=1\ ,\label{eq:xAy}
\end{equation}
for some $\mathbf{x}_{0},\,\mathbf{y}_{0}\in\mathbb{R}^{p}\setminus\left\{ \mathbf{0}\right\} $. 

Recall the matrices $\mathbf{D}$ and $\mathbf{E}$ defined in \eqref{eq:mDE}
so that we can write $\mathbf{A}_{n}=W_{n}^{2}\mathbf{D}+\mathbf{E}.$
Thanks to this expression, if random vectors $\mathbf{X},\,\mathbf{Y}$
independent of $\mathbf{A}_{i}$ satisfy $\mathbf{X}\cdot\mathbf{A}_{i}\mathbf{Y}=0$
almost surely, then we can deduce that $\mathbf{X}\cdot\mathbf{D}\mathbf{Y}=\mathbf{X}\cdot\mathbf{E}\mathbf{Y}=0$
almost surely as well. This observation enable us to deduce from \eqref{eq:xAy}
that
\begin{equation}
\mathbf{x}_{0}\cdot\mathbf{M}_{1}\cdots\mathbf{M}_{2p-1}\mathbf{y}_{0}=0,\quad\forall\mathbf{M}_{i}\in\left\{ \mathbf{D},\,\mathbf{E}\right\} ,\quad\forall i\in\llbracket1,\,2p-1\rrbracket\ .\label{eq:xMy}
\end{equation}
Since $\mathbf{x}_{0},\,\mathbf{y}_{0}$ are nonzero vectors, there
exist indices $r,\,s\in\llbracket1,\,p\rrbracket$ such that
\begin{equation}
\mathbf{e}_{r}\cdot\mathbf{x}_{0}\neq0,\ \mathbf{e}_{s}\cdot\mathbf{y}_{0}\neq0\ ,\label{eq:eres}
\end{equation}
where $\left\{ \mathbf{e}_{i}:i\in\llbracket1,\,p\rrbracket\right\} $
is the standard orthonormal basis of $\mathbb{R}^{p}$. Then, observing
that the first coordinate of
\[
\mathbf{x}_{0}\cdot\mathbf{E}^{r-1}\mathbf{D}^{2p-r-s+1}\mathbf{E}^{s-1}\mathbf{y}_{0}
\]
equals to $\alpha_{1}^{2p-r-s}(\mathbf{e}_{r}\cdot\mathbf{x}_{0})(\mathbf{e}_{s}\cdot\mathbf{y}_{0})$
which is nonzero by our assumpition that $\alpha_{1}\neq0$ and \eqref{eq:eres}.
This contradicts to \eqref{eq:xMy} and we are done. 

\subsection{Assumptions for explosive regime}

In this section, we assume that $\gamma_{L}>0$ and verify Assumption
\ref{Ass_Exp}. Thanks to (4) of Remark \ref{rem:Ass_Exp}, we only
need to verify (2) of the assumption. To prove (2), we need the following
elementary lemma. 
\begin{lem}
\label{lem: norm log}For a normal random variable $W$, for all $\delta>0$,
we have 
\[
\inf_{0<\epsilon\le1,\,y>\delta}\mathbb{E}\log|\epsilon W^{2}-y|>-\infty\ \;\;\;\text{and}\;\;\;\;\inf_{y\in\mathbb{R}}\mathbb{E}\log|W^{2}-y|>-\infty\;.
\]
\end{lem}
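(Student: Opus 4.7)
The plan for both assertions is to decompose $\log = \log^+ - \log^-$ and prove that $\mathbb{E}\log^-|\cdot|$ admits a uniform upper bound over the parameter range in each statement; since $\log^+ \ge 0$, this gives the desired uniform lower bound on $\mathbb{E}\log|\cdot|$. By the layer-cake formula,
\[
\mathbb{E}\log^-|X| \;=\; \int_0^1 \mathbb{P}(|X| < s)\,\frac{ds}{s},
\]
so in either case it suffices to bound $\mathbb{P}(|\epsilon W^2 - y| < s)$ by $C\,\varphi(s)$ with $\int_0^1 \varphi(s)\,s^{-1}\,ds < \infty$ and $C$ independent of the parameters.

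I would handle the second inequality first by splitting on the sign of $y$. If $y \le 0$, then $|W^2 - y| = W^2 + |y| \ge W^2$, so $\log|W^2 - y| \ge 2\log|W|$, and $\mathbb{E}\log|W|$ is finite because the boundedness of $f_W$ gives $\mathbb{P}(|W| < u) \le Cu$, which controls $\mathbb{E}\log^-|W|$ via the layer-cake representation. If $y \ge 0$, the explicit density bound $f_{W^2}(t) \le \frac{1}{\sqrt{2\pi}\,\sigma\sqrt{t}}$ yields
\[
\mathbb{P}(|W^2 - y| < s) \;\le\; \int_{(y-s)_+}^{y+s}\frac{dt}{\sqrt{2\pi}\,\sigma\sqrt{t}} \;=\; \frac{2\bigl(\sqrt{y+s} - \sqrt{(y-s)_+}\bigr)}{\sqrt{2\pi}\,\sigma} \;\le\; \frac{2\sqrt{2s}}{\sqrt{2\pi}\,\sigma},
\]
the last inequality being elementary via a case split on $s \le y$ versus $s > y$. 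Then $\int_0^1 \sqrt{s}\cdot s^{-1}\,ds = 2$, and the resulting bound on $\mathbb{E}\log^-|W^2 - y|$ is uniform in $y \ge 0$.

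For the first inequality the same naive estimate acquires a prefactor $\epsilon^{-1/2}$ that blows up as $\epsilon \to 0$, so a sharper estimate is needed. For $s \le \delta/2$ and $y > \delta$, I rewrite
\[
\{|\epsilon W^2 - y| < s\} \;=\; \bigl\{|W| \in \bigl[\sqrt{(y-s)/\epsilon},\;\sqrt{(y+s)/\epsilon}\bigr]\bigr\};
\]
this interval has length at most $\sqrt{2}\,s/\sqrt{\epsilon\delta}$, and the density of $|W|$ on it is bounded by $\frac{2}{\sqrt{2\pi}\,\sigma}\exp\bigl(-(a-|m|)_+^2/(2\sigma^2)\bigr)$ where $a := \sqrt{(y-s)/\epsilon}$. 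When $\epsilon \le \delta/(8m^2)$, one has $a \ge 2|m|$, hence $(a-|m|)^2 \ge a^2/4 \ge \delta/(8\epsilon)$, and the Gaussian factor $e^{-\delta/(16\sigma^2\epsilon)}$ beats $\epsilon^{-1/2}$ uniformly. When $\epsilon > \delta/(8m^2)$, $\epsilon$ is bounded below and so is $\epsilon^{-1/2}$. Either way $\mathbb{P}(|\epsilon W^2 - y| < s) \le C_\delta\,s$, and the residual range $s \in (\delta/2, 1]$ contributes at most $\log(2/\delta)$ via $\mathbb{P} \le 1$ against $\int_{\delta/2}^1 s^{-1}\,ds$.

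The main obstacle is exactly this interplay in the first statement: the density of $\epsilon W^2$ near a fixed $y$ degrades like $\epsilon^{-1/2}$, but the event $\epsilon W^2 \approx y > \delta$ simultaneously pushes $|W|$ into the Gaussian tail at height $\sqrt{y/\epsilon}$, whose exponential decay compensates. This interplay also clarifies why the hypothesis $y > \delta$ is used and cannot be weakened to $y > 0$: taking $y = \epsilon$ one sees $\mathbb{E}\log|\epsilon W^2 - y| = \log\epsilon + \mathbb{E}\log|W^2 - 1| \to -\infty$ as $\epsilon \to 0$.
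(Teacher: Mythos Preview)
Your argument is correct and genuinely different from the paper's. The paper splits the expectation at the threshold $|\epsilon W^2 - y| = \delta$, bounds the contribution from $\{|\epsilon W^2 - y|\ge \delta\}$ trivially by $\log\delta$, and then computes the integral over $\{|\epsilon W^2 - y|<\delta\}$ explicitly: it writes out $\int_{\sqrt{(y-\delta)/\epsilon}}^{\sqrt{(y+\delta)/\epsilon}}e^{-w^2/2}\log|\epsilon w^2 - y|\,dw$, substitutes $w\mapsto w\sqrt{y/\epsilon}$, and shows the resulting expression is a continuous function of $(\epsilon,y)$ that tends to $0$ as $\epsilon\to 0$ or $y\to\infty$; a similar explicit integration handles the second inequality. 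Your route instead reduces everything to small-ball estimates via the layer-cake identity $\mathbb{E}\log^-|X|=\int_0^1\mathbb{P}(|X|<s)\,s^{-1}\,ds$ and then bounds $\mathbb{P}(|\epsilon W^2 - y|<s)$ by an interval-length-times-density argument, exploiting the Gaussian tail at height $\sqrt{(y-s)/\epsilon}$ to kill the $\epsilon^{-1/2}$ blow-up. Your approach is arguably cleaner and more robust (it never touches an antiderivative of $\log|w^2-1|$), and it makes transparent why $y>\delta$ is essential; the paper's direct computation, on the other hand, displays the exact form of the singular contribution. Both lead to the same conclusion with comparable effort.
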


\begin{proof}
Recall that $W\sim N\left(m,\,\sigma^{2}\right)$. We
shall assume for simplicity that $m=0$ and $\sigma=1$, since the
proof for the general case is essentially same (with a slightly more
involved computation). Also, it suffices to show only for $\delta<1$
and $y>2\delta$ for the first inequality since the infimum is taken
on the set $\left\{ 0<\epsilon\le1,\,y>\delta\right\} $ which is
decreasing respect to $\delta$.

For the first bound, fix $0<\epsilon\le1,\,y>2\delta$. Then, since
$y>2\delta$, we have 
\begin{align*}
\mathbb{E}\log|\epsilon W^{2}-y| & =\mathbb{E}\log|\epsilon W^{2}-y|\mathbf{1}\left\{ |\epsilon W^{2}-y|\ge\delta\right\} +\mathbb{E}\log|\epsilon W^{2}-y|\mathbf{1}\left\{ |\epsilon W^{2}-y|<\delta\right\} \\
 & \ge\log\delta+\mathbb{E}\left(\log|\epsilon W^{2}-y|\mathbf{1}\left\{ y-\delta<\epsilon W^{2}<y+\delta\right\} \right)\ .
\end{align*}
The last expectation can be bounded from below by 
\begin{align}
 & \mathbb{E}\log|\epsilon W^{2}-y|\mathbf{1}\left\{ y-\delta<\epsilon W^{2}<y+\delta\right\} \nonumber \\
 & =2\int_{\sqrt{(y-\delta)/\epsilon}}^{\sqrt{(y+\delta)/\epsilon}}\frac{1}{\sqrt{2\pi}}e^{-\frac{w^{2}}{2}}\log|\epsilon w^{2}-y|dw\nonumber \\
 & \ge\sqrt{\frac{2}{\pi}}\exp\left(-\frac{y-\delta}{2\epsilon}\right)\int_{\sqrt{(y-\delta)/\epsilon}}^{\sqrt{(y+\delta)/\epsilon}}\log|\epsilon w^{2}-y|dw\nonumber \\
 & =\sqrt{\frac{2}{\pi}}\exp\left(-\frac{y-\delta}{2\epsilon}\right)\int_{\sqrt{(y-\delta)/y}}^{\sqrt{(y+\delta)/y}}\sqrt{\frac{y}{\epsilon}}\left(\log y+\log|w^{2}-1|\right)dw\nonumber \\
 & \ge\sqrt{\frac{2}{\pi}}\exp\left(-\frac{y-\delta}{2\epsilon}\right)\left(\frac{\sqrt{y+\delta}-\sqrt{y-\delta}}{\sqrt{\epsilon}}\log y+\int_{\sqrt{(y-\delta)/y}}^{\sqrt{(y+\delta)/y}}\log\left|w^{2}-1\right|dw\right)\ .\label{eq:epy}
\end{align}
Since
\[
\int_{\sqrt{(y-\delta)/y}}^{\sqrt{(y+\delta)/y}}\log\left|w^{2}-1\right|dw\ge\int_{-\sqrt{2}}^{\sqrt{2}}\log\left|w^{2}-1\right|dw>-\infty
\]
considering the range of $w$ where $\log|w^{2}-1|$ is negative,
and the last line of \eqref{eq:epy} is a continuous function of $(\epsilon,\,y)$
in the region $(0,\,1]\times(2\delta,\,\infty)$ which converges to
$0$ if $\epsilon\rightarrow0$ or $y\rightarrow\infty$, the desired
infimum is a finite value.

For the second bound of the lemma, since 
\[
\inf_{|y|>1}\mathbb{E}\log|W^{2}-y|>-\infty
\]
by the first part, and since $\log|W^{2}-y|\ge\log|W^{2}|$ if $y<0$,
it suffices to show that 
\[
\inf_{0\le y\le1}\mathbb{E}\left[\log|W^{2}-y|\mathbf{1}\left\{ |W^{2}-y|<1\right\} \right]>-\infty\;.
\]
Since 
\begin{align*}
\int_{|w^{2}-y|<1}\log|w^{2}-y|dw & =\int_{-\sqrt{y+1}}^{\sqrt{y+1}}(\log|w-\sqrt{y}|+\log|w+\sqrt{y}|)dw\\
 & =2\int_{-\sqrt{y+1}}^{\sqrt{y+1}}\log|w+\sqrt{y}|dw\ge2\int_{-1}^{1}\log|w|dw>-\infty\ ,
\end{align*}
and the probability density function of $W$ has a maximum value $1/\sqrt{2\pi}$, we get {[}note that $\log|w^{2}-y|<0$ on the interval of integration{]}
\[
\inf_{0\le y\le1}\mathbb{E}\log|W^{2}-y|\ge\frac{2}{\sqrt{2\pi}}\int_{-1}^{1}\log|w|dw>-\infty\ ,
\]
hence the proof is completed. 
\end{proof}
Now, we are ready to verify (2) of Assumption \ref{Ass_Exp}. Inspired
by the previous lemma, let us write 
\[
C_{W}:=\inf_{y\in\mathbb{R}}\mathbb{E}\log|W^{2}-y|>-\infty\ ,
\]
Fix $\mathbf{x}=(x_{1},\,\dots,x_{p})\in\mathbb{R}^{p}$. If $\mathbf{x}$
satisfies $\sum_{i=1}^{p}\alpha_{i}x_{i}+\alpha_{0}\neq0$, it holds
that
\begin{align*}
\mathbb{E}\log|\mathbf{A}\mathbf{x}+\mathbf{B}-\mathbf{x}| & =\frac{1}{2}\mathbb{E}\log\left(\left(\left(\sum_{i=1}^{p}\alpha_{i}x_{i}+\alpha_{0}\right)W^{2}-x_{1}\right)^{2}+\sum_{i=1}^{p-1}(x_{i}-x_{i+1})^{2}\right)\\
 & \ge\max\left\{ \mathbb{E}\log\left|\left(\sum_{i=1}^{p}\alpha_{i}x_{i}+\alpha_{0}\right)W^{2}-x_{1}\right|,\,\frac{1}{2}\log\left(\sum_{i=1}^{p-1}(x_{i}-x_{i+1})^{2}\right)\right\} \\
 & \ge\max\left\{ \log\left|\sum_{i=1}^{p}\alpha_{i}x_{i}+\alpha_{0}\right|+C_{W},\,\frac{1}{2}\log\left(\sum_{i=1}^{p-1}(x_{i}-x_{i+1})^{2}\right)\right\} \ ,
\end{align*}
defining $\log0:=-\infty$. If $\sum_{i=1}^{p}\alpha_{i}x_{i}+\alpha_{0}=0$,
it holds that
\[
\mathbb{E}\log|\mathbf{A}\mathbf{x}+\mathbf{B}-\mathbf{x}|\ge\inf_{\sum_{i=1}^{p}\alpha_{i}x_{i}+\alpha_{0}=0}\frac{1}{2}\mathbb{E}\log\left(x_{1}^{2}+\sum_{i=1}^{p-1}(x_{i}-x_{i+1})^{2}\right)>-\infty\ ,
\]
since $x_{1}^{2}+\sum_{i=1}^{p-1}(x_{i}-x_{i+1})^{2}\rightarrow0$
is equivalent to $\mathbf{x}\rightarrow\mathbf{0}$. Hence, if some
sequence of vectors $(\mathbf{x}_{n})_{n\in\mathbb{N}}$ satisfies
\begin{equation}
\lim_{n\rightarrow\infty}\mathbb{E}\log|\mathbf{A}\mathbf{x}_{n}+\mathbf{B}-\mathbf{x}_{n}|=-\infty\label{eq:cinn}
\end{equation}
then we must have 
\[
\lim_{n\rightarrow\infty}\mathbf{x}_{n}=-(\kappa,\,\kappa,\,\dots,\,\kappa)\ ,
\]
where $\kappa:=\alpha_{0}/(\alpha_{1}+\cdots+\alpha_{p})$ by the
observations above. 

On the other hand, if $\mathbf{x}$ is close enough to $-(\kappa,\,\kappa,\,\dots,\,\kappa)$
so that
\[
\left|\sum_{i=1}^{p}\alpha_{i}x_{i}+\alpha_{0}\right|<1\text{ and }x_{1}<-\frac{\kappa}{2}\ ,
\]
it holds that
\begin{align*}
\mathbb{E}\log|\mathbf{A}\mathbf{x}+\mathbf{B}-\mathbf{x}| & \ge\mathbb{E}\log\left|\left(\sum_{i=1}^{p}\alpha_{i}x_{i}+\alpha_{0}\right)W^{2}-x_{1}\right|\\
 & \ge\inf_{|\epsilon|<1,\,|y|>\kappa/2}\mathbb{E}\log|\epsilon W^{2}-y|\ .
\end{align*}
This contradicts with \eqref{eq:cinn}, and (2) of Assumption \ref{Ass_Exp}
is verified.

\bibliographystyle{abbrv}

\ifshowtheoremlist
\newpage
\linkdest{location of theorem list}
\renewcommand{\listtheoremname}{List of Definitions and Notations}
\listoftheorems[numwidth=2.5em, ignoreall, show={defn,notation}]
\renewcommand{\listtheoremname}{List of Theorems}
\listoftheorems[numwidth=2.5em, ignoreall, show={thm,lem,prop}]
\renewcommand{\listtheoremname}{List of Assumptions}
\listoftheorems[numwidth=2.5em, ignoreall, show={assumption}]
\renewcommand{\listtheoremname}{List of Remarks}
\listoftheorems[numwidth=2.5em, ignoreall, show={rem}]
\normalsize
\fi

\ifshownotationindex
\newpage
\linkdest{location of notation index}
\notationindex
\fi

\ifshowequationlist
\newpage
\section*{List of Numbered Equations}
\linkdest{location of equation number list}
\fi

\ifshowtoc
\newpage
\setcounter{tocdepth}{2}
\linkdest{location of table of contents}
\tableofcontents
\fi

\ifshownavigationpage
\newpage
\normalsize

\section*{Navigation Links}
\ifshowtoc
    \noindent
    \hyperlink{location of table of contents}{Table of Contents}
    \bigskip
\fi

\ifshowequationlist
    \noindent
    \hyperlink{location of equation number list}{Numbered Equations}
    \bigskip
\fi

\ifshowtheoremlist
    \noindent 
    \hyperlink{location of theorem list}{Theorem List}
    \bigskip
\fi

\ifshownotationindex
    \noindent
    \hyperlink{location of notation index}{Notation Index}
    
\fi

\fi

\end{document}